\documentclass[11pt,twoside]{preprint}

\usepackage{hyperref}
\usepackage{breakurl}

\usepackage{times}
\usepackage{microtype}
\usepackage{amsmath}
\usepackage{amssymb}
\usepackage{mathtools}
\usepackage{tikz}
\usepackage{wasysym}
\usepackage{centernot}
\usepackage{color}
\usepackage{setspace}
\usepackage{appendix}
\usepackage{booktabs}
\usepackage{multirow}

%
%
\usepackage{mathrsfs}
%
%
\usepackage{mhequ}
\usepackage{mhenvs}
\usepackage{mhsymb}
\usepackage[top=3.5cm, bottom=3cm, left=2cm, right=2cm]{geometry}

\colorlet{symbols}{blue!90!black}
\def\symbol#1{\textcolor{symbols}{#1}}
\def\1{\mathbf{\symbol{1}}}

\def\emptyset{\mathop{\centernot\ocircle}}

\colorlet{testcolor}{green!60!black}


\definecolor{darkred}{rgb}{0.9,0.1,0.1}

\def\comment#1{\ifthenelse{\isodd{\value{page}}}{\marginpar{\raggedright\scriptsize{\textcolor{darkred}{#1}}}}{\marginpar{\raggedleft\scriptsize{\textcolor{darkred}{#1}}}}}  

\newop{diag}

\def\DD{\mathscr{D}}
\let\D\CD

\def\sR{\mathsf{R}}
\def\sC{\mathsf{C}}
\def\sM{\mathsf{M}}

\def\bG{{\mathbb{G}}}

\def\bR{{\mathbb R}}

\def\sf{{\mathsf f}}

\def\EE{{\mathscr E}}
\def\FF{{\mathscr F}}
\def\sE{{\mathscr E}}
\def\sF{{\mathscr F}}
\def\sA{{\mathscr A}}

\def\s{{\mathrm{s}}}

\def\L{{\mathcal{L}}}
\def\cE{\mathcal{E}}
\def\cF{\mathcal{F}}

\def\shortline{{\raisebox{0mm}{-}}}

\def\rDelta{\mathring{{\Delta}}}

\def\limsup{\mathop{\overline{\mathrm{lim}}}}
\def\liminf{\mathop{\underline{\mathrm{lim}}}}

\def\${|\!|\!|}
\def\l|{\left|\!\left|\!\left|}
\def\r|{\right|\!\right|\!\right|}

\begin{document}

\title{On a stiff problem in two-dimensional space}
\author{Liping Li$^{1,2, 4}$\thanks{The first named author is partially supported by NSFC (No. 11688101, No. 11801546 and No. 11931004), Key Laboratory of Random Complex Structures and Data Science, Academy of Mathematics and Systems Science, Chinese Academy of Sciences (No. 2008DP173182), and Alexander von Humboldt Foundation in Germany.}, Wenjie Sun$^{3,5}$}
\institute{RCSDS, HCMS, Academy of Mathematics and Systems Science, Chinese Academy of Sciences,  China.  
\and Department of Mathematics, Bielefeld University,  Germany. 
\and School of Mathematical Sciences, Tongji University, China.
\and 
\email{liliping@amss.ac.cn} \and 
\email{wjsun@tongji.edu.cn}}

\maketitle

\begin{abstract}
In this paper we will study a stiff problem in two-dimensional space and especially characterize its probabilistic counterpart.  Roughly speaking,  the heat equation with a parameter $\varepsilon>0$ is under consideration:
\[
\partial_t u^\varepsilon(t,x)=\frac{1}{2}\nabla \cdot \left(\mathbf{A}_\varepsilon(x)\nabla u^\varepsilon(t,x) \right),\quad t\geq 0, x\in \bR^2, 
\]
where $\mathbf{A}_\varepsilon(x)=\text{Id}_2$, the identity matrix, for $x\notin \Omega_\varepsilon:=\{x=(x_1,x_2)\in \bR^2: |x_2|<\varepsilon\}$ and $$\mathbf{A}_\varepsilon(x):=\begin{pmatrix} a_\varepsilon^{\raisebox{0mm}{-}} & 0 \\ 0 & a^\shortmid_\varepsilon  \end{pmatrix},\quad x\in \Omega_\varepsilon$$ with two constants $a^\shortline_\varepsilon, a^\shortmid_\varepsilon>0$.  There exists a diffusion process $X^\varepsilon$ on $\bR^2$ associated to this heat equation in the sense that $u^\varepsilon(t,x):=\mathbf{E}^xu^\varepsilon(0,X_t^\varepsilon)$ is its unique weak solution. Note that $\Omega_\varepsilon$ collapses to the $x_1$-axis,  a barrier of zero volume, as $\varepsilon\downarrow 0$. The main purpose of this paper is to figure out all possible limiting process $X$ of $X^\varepsilon$ as $\varepsilon\downarrow 0$.  In addition,  the limiting flux $u$ of $u^\varepsilon$ as $\varepsilon\downarrow 0 $ and all possible boundary conditions satisfied by $u$ at the barrier will be also obtained. 
\end{abstract}

\tableofcontents

\section{Introduction}
The stiff problem,  first raised in \cite{SanchezPalencia:1980dg}, is concerned with a thermal conduction model with a singular barrier of zero volume.  Generally speaking,  for $d\geq 1$,  let $\Omega\subset \bR^d$ be a domain and,  for any $\varepsilon>0$, $\Omega_\varepsilon\subset \Omega$  be another domain collapsing to a barrier of dimension $d-1$ as $\varepsilon\downarrow 0$.  Consider the following heat equation
\begin{equation}\label{eq:heat1}
	\partial_t u^\varepsilon(t,x)=\frac{1}{2}\nabla \cdot \left(\mathbf{A}_\varepsilon(x)\nabla u^\varepsilon(t,x) \right),\quad t\geq 0, x\in \Omega, 
\end{equation}  
where the conductivity matrix function $\mathbf{A}_\varepsilon(x)$ (conductivity in abbreviation) is nice and independent of $\varepsilon$ in $\Omega\setminus \bar\Omega_\varepsilon$, but probably singular near the boundary $\partial \Omega_\varepsilon$ or even in $\Omega_\varepsilon$.  The solution to \eqref{eq:heat1} is usually called a \emph{flux}.  Then the stiff problem focuses on the existence and related properties of the {\emph{limiting flux}, i.e. the limit $u$ of $u^\varepsilon$ as $\varepsilon\downarrow 0$, in a certain meaning.  

\begin{figure}
\centering
\includegraphics[scale=0.5]{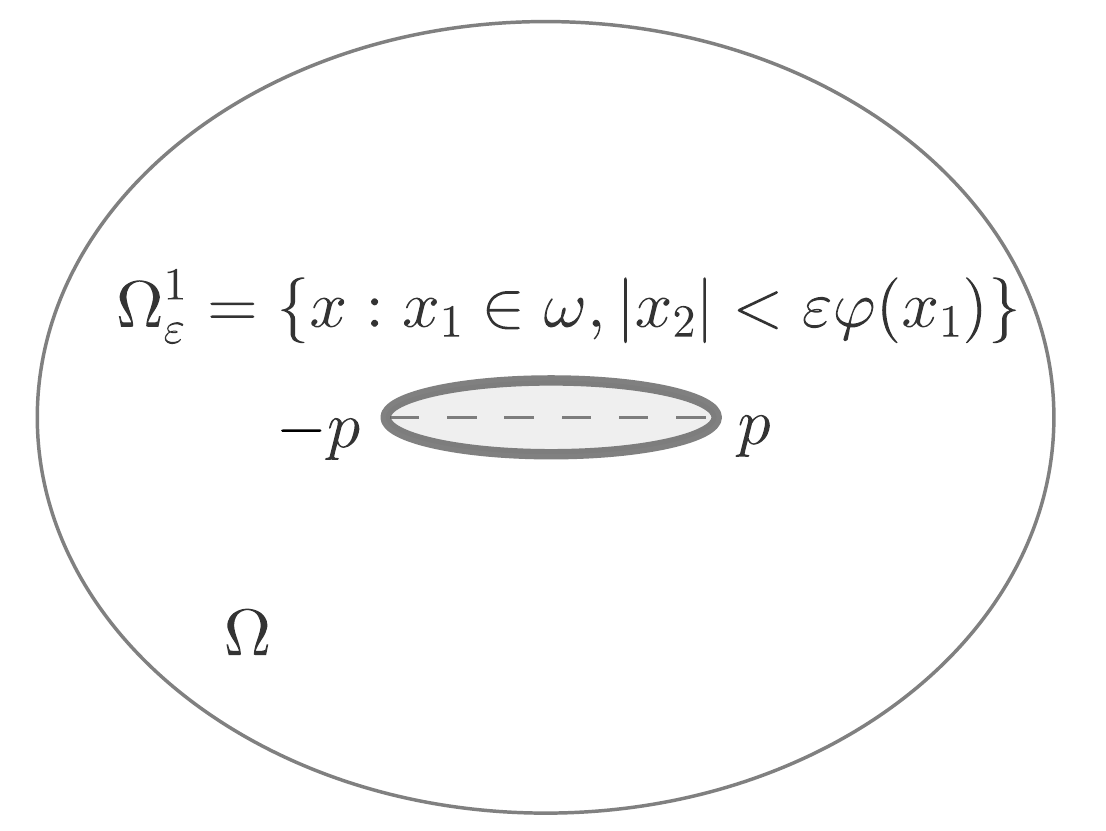}
\includegraphics[scale=0.52]{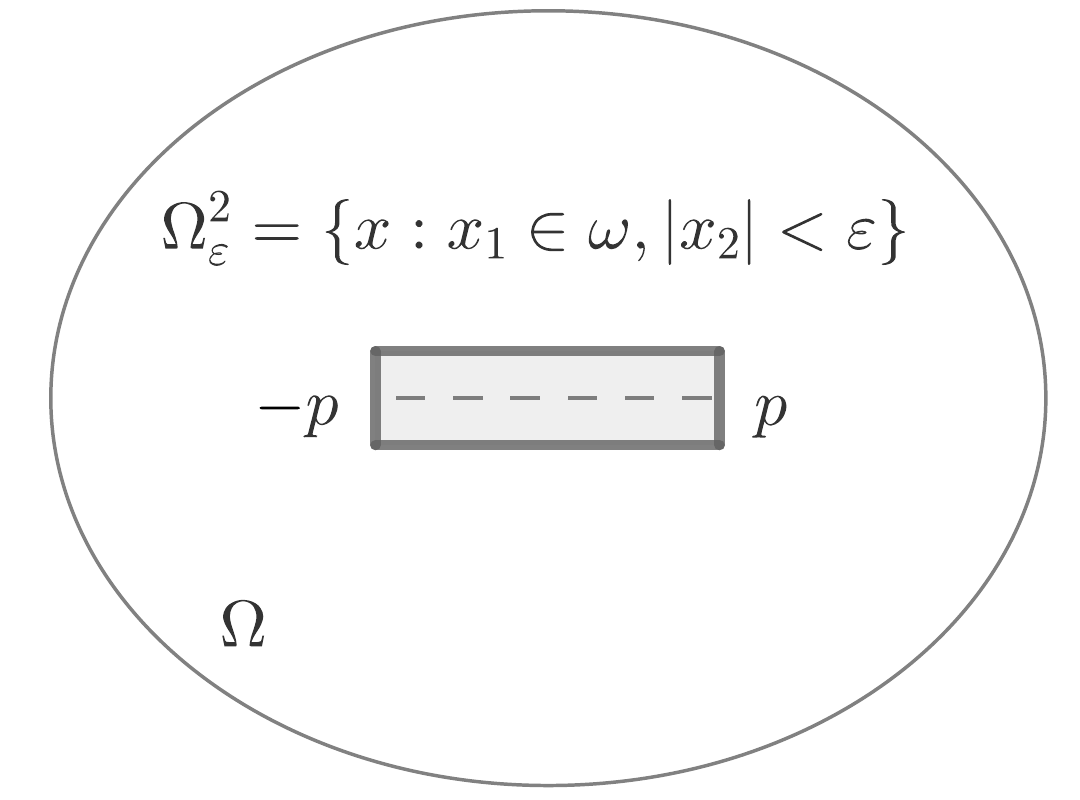}
\caption{$\Omega^1_\varepsilon$ and $\Omega^2_\varepsilon$}
\label{Figure1}
\end{figure}

In \cite{SanchezPalencia:1980dg},  two special cases in two-dimensional space are paid particular attention to.  As illustrated in Figure~\ref{Figure1},  the first one is the heat transfer through a narrow plate $\Omega^1_\varepsilon$ with a small conductivity,  where $\mathbf{A}_\varepsilon(x)$ is taken to be the identity matrix $\text{Id}_2$ outside $\Omega^1_\varepsilon$ and $\varepsilon\cdot \text{Id}_2$ inside $ \Omega_\varepsilon^1$,  and the second is the heat transfer through $\Omega^2_\varepsilon$ with a high conductivity,  where $\mathbf{A}_\varepsilon(x)$ is still taken to be $\text{Id}_2$ outside $\Omega^2_\varepsilon$  but $\frac{1}{\varepsilon}\cdot \text{Id}_2$ inside $\Omega^2_\varepsilon$ instead.  In both two cases, the limiting flux $u$ is shown to exist in an $L^2$-sense by an argument of functional analysis.   In a series work of Wang et al. (see \cite{CPW12} and the references therein),  a related problem  in two-dimensional space, where the inner barrier is replaced by an outer layer surrounding $\Omega$,  is studied by an argument of PDE.  It turns out that the limiting flux exists as a solution to the standard heat equation $\partial_t u=\frac{1}{2}\Delta u$ in $\Omega$, and meanwhile several \emph{effective boundary conditions} on $\partial \Omega$ satisfied by $u$ are obtained regarding various cases.

It is well known that the heat equation \eqref{eq:heat1} is usually associated to a diffusion process in probability,  i.e.  there exists a diffusion process $X^\varepsilon$ on $\Omega$ such that the unique weak solution to \eqref{eq:heat1} is 
\[
	u^\varepsilon(t,x)=\mathbf{E}^x u^\varepsilon(0,X^\varepsilon_t).  
\]
However, all literatures mentioned above give no insight into the limit of these diffusions.  
As far as we know, it is Lejay in \cite{L16} who first figures out the probabilistic counterpart of a stiff problem in one-dimensional space (also called a \emph{thin layer problem} in \cite{L16}),  where $\Omega=\bR, \Omega_\varepsilon=I_\varepsilon:=(-\varepsilon,\varepsilon)$ and $\mathbf{A}_\varepsilon(x):=a_\varepsilon(x)$ is taken to be constant $1$ outside $I_\varepsilon$ and $\kappa \varepsilon$ inside $I_\varepsilon$ for a given parameter $\kappa>0$.  Lejay shows that, as $\varepsilon\downarrow 0$, $X^\varepsilon$ converges to the so-called \emph{snapping out Brownian motion} (SNOB in abbreviation) in a sense.  Accordingly the limiting flux is not continuous at $0$ and in fact, the boundary condition 
\[
	\nabla u(t,0-)=\nabla u(t,0+)=\frac{\kappa}{2}\left(u(t,0+)-u(t,0-) \right)
\]
is satisfied.  Nevertheless, the derivation in \cite{L16} does not clarify the essential principle behind the convergence of $u^\varepsilon$ or $X^\varepsilon$.  In a recent paper \cite{LS19} general one-dimensional stiff problems with $\Omega=\bR, \Omega_\varepsilon=I_\varepsilon$ are studied by us.   We find that the \emph{thermal resistance},  rather than the conductivity,  of the barrier should play a central role in the convergence of $X^\varepsilon$,  and a phase transition in terms of the thermal resistance is manifested in this stiff problem.  

In this paper we will continue to study a stiff problem in two-dimensional space and especially characterize its probabilistic counterpart.  As illustrated in Figure~\ref{2-dimstiff}, we impose from now on
\[
	\Omega=\bR^2,\quad \Omega_\varepsilon=\{x=(x_1,x_2)\in \bR^2: x_1\in \bR, |x_2|<\varepsilon\},
\]
and $\mathbf{A}_\varepsilon(x)=\text{Id}_2$ for $x\in \Omega^c_\varepsilon$ while
\begin{equation}\label{eq:Avarepsilon}
	\mathbf{A}_\varepsilon(x)=\begin{pmatrix} a_\varepsilon^{\raisebox{0mm}{-}} & 0 \\ 0 & a^\shortmid_\varepsilon  \end{pmatrix},  \quad x\in \Omega_\varepsilon,
\end{equation}
where the \emph{tangent conductivity} $a_\varepsilon^{\raisebox{0mm}{-}}$ and the \emph{normal conductivity} $a^\shortmid_\varepsilon$ are two strictly positive constants depending on $\varepsilon$.  Here and hereafter the (outer) normal/tangent direction means that of $\partial \Omega^c_\varepsilon$,  i.e.  the direction along the $x_1$-/$x_2$-axis. 
 Note that $\Omega_\varepsilon$ collapses to the barrier,  i.e. the $x_1$-axis, as $\varepsilon\downarrow 0$.  Heuristically $\Omega_\varepsilon$ may be regarded as the limit of $\Omega^1_\varepsilon$ or $\Omega^2_\varepsilon$ as $p\rightarrow \infty$.  But unlike the two cases appearing in \cite{SanchezPalencia:1980dg},  $a^\shortmid_\varepsilon$ and $a^\shortline_\varepsilon$ are possibly different.  As will be explained in \S\ref{SEC41},  there exists a diffusion process $X^\varepsilon$ on $\bR^2$ associated to \eqref{eq:heat1}.  More precisely,  given an initial condition $u^\varepsilon(0,\cdot)=u_0\in H^1(\bR^2)$,  
\begin{equation}\label{eq:19}
	u^\varepsilon(t,x)=\mathbf{E}^x u_0(X^\varepsilon_t)
\end{equation}
is the unique weak solution to \eqref{eq:heat1}. Our main purpose is to explore the limit $X$ of $X^\varepsilon$ as well as the limit $u$ of $u^\varepsilon$ as $\varepsilon\downarrow 0$.  Since $\mathbf{A}_\varepsilon$ converges to the identity matrix outside the barrier,  we believe that $X$ should be equivalent to a two-dimensional Brownian motion before hitting the barrier and $u$ should be a weak solution to the standard heat equation outside the barrier.  
Hence the crucial problem is to characterize the behaviour of $X$ near the barrier and to obtain the boundary condition satisfied by $u$ at the barrier.  
The main result,  which will be stated in Theorem~\ref{THMAIN},  demonstrates that the desirable limits are due to the collaboration of two effects,  called \emph{normal resisting} and \emph{tangent accelerating}.  In what follows let us explain them respectively.  


\begin{figure}
\centering
\includegraphics[scale=0.55]{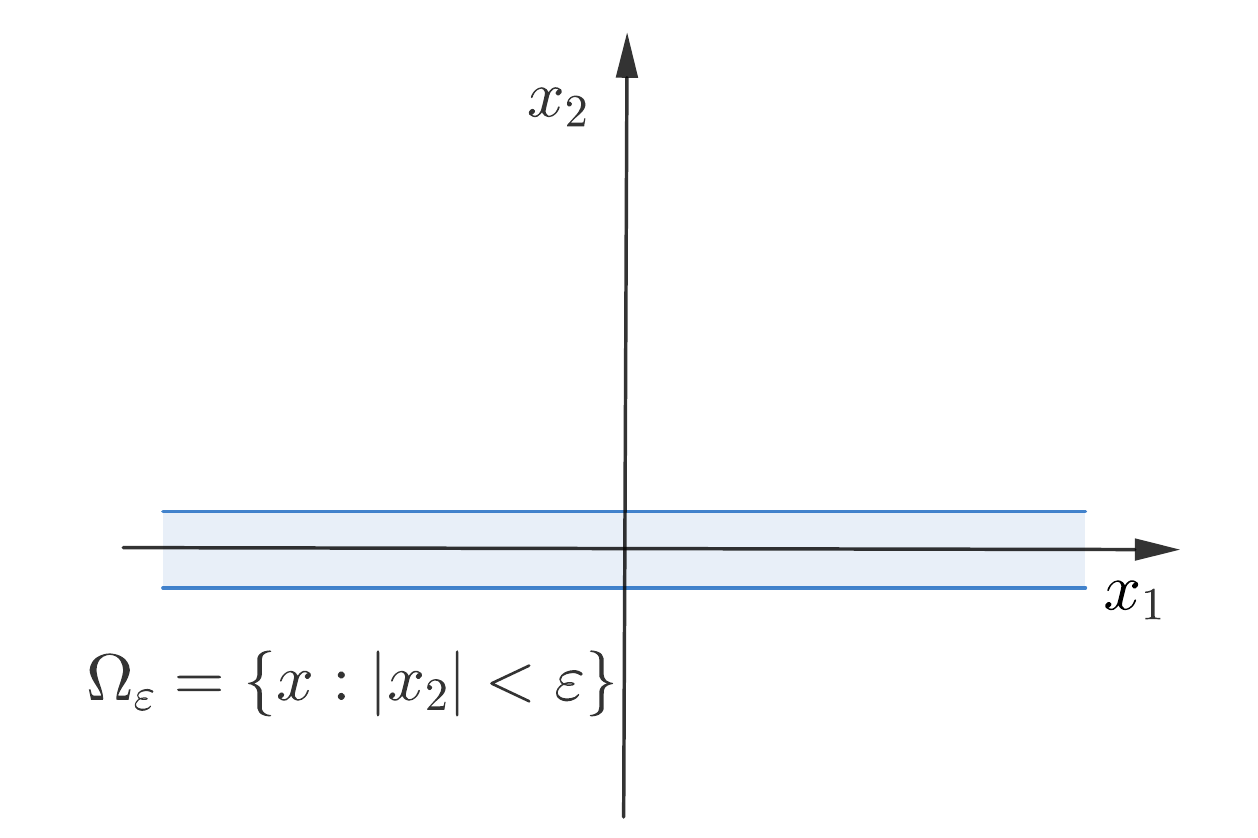}
\caption{A stiff problem in two-dimensional space}
\label{2-dimstiff}
\end{figure}

Roughly speaking,  the effect of normal resisting acts in resisting the heat flow to pass through the barrier along the normal direction.  To explain more details,  the idea for solving general one-dimensional stiff problems in \cite{LS19} will be first reviewed.  Regarding one-dimensional cases we have noted that the thermal resistance plays a central role.  To be precise,  when $\Omega_\varepsilon=I_\varepsilon$ and $\mathbf{A}_\varepsilon(x)=a_\varepsilon(x)$ (but $a_\varepsilon(x)$ is not necessarily assumed to be $\kappa \varepsilon$ inside $I_\varepsilon$),  the thermal resistance is quantified by the measure 
\[
	\lambda_\varepsilon(dx)=\frac{dx}{a_\varepsilon(x)},
\]  
which obviously measures the ability of the material to resist the heat transfer.  In addition,   $\bar{\gamma}:=\lim_{\varepsilon\downarrow \infty} \lambda_\varepsilon(I_\varepsilon)$, called the \emph{total thermal resistance} of the barrier, is assumed to exist in $[0,\infty]$.  Then a heuristic observation $\lambda_\varepsilon \rightarrow \lambda + \bar{\gamma}\cdot \delta_0$ as $\varepsilon\downarrow 0$,  where $\lambda$ is the Lebesgue measure and $\delta_0$ is the Dirac measure at $0$, illustrated in Figure~\ref{1-dimstiff}  sheds light on this stiff problem: (We should emphasize that the stage in \cite{LS19} is pretty wide,  and even $\lambda_\varepsilon$ is not necessarily assumed to be absolutely continuous.  But for convenience's sake only Brownian case is stated here.)
\begin{itemize}
\item[(1)] $\bar{\gamma}=0$: The barrier makes no sense and $X^\varepsilon$ converges to a one-dimensional Brownian motion. 
\item[(2)] $0<\bar{\gamma}<\infty$: The heat flow can penetrate the barrier only partially,  and $X^\varepsilon$ converges to the SNOB.  Lejay's thin layer problem, where $\bar{\gamma}=\frac{2}{\kappa}$, satisfies it.  The SNOB is a Feller process on $\bG:=(-\infty, 0-]\cup [0+,\infty)$, in which $0\in \bR$ corresponds to two distinct points.  It behaves like a reflecting Brownian motion on $\bG_-:=(-\infty, 0-]$ or $\bG_+:=[0+,\infty)$ while may change its sign and start as a new reflecting Brownian motion on the other component of $\bG$ by chance when it hits $0-$ or $0+$.  To be more exact,  sign changing is realized by jumps between $0+$ and $0-$.  Loosely speaking,  the SNOB consists of two components: Brownian motion outside $\{0\pm\}$ and \emph{snapping out jumps} inside $\{0\pm\}$.  
\item[(3)] $\bar{\gamma}=\infty$: The resisting is so strong that the heat flow cannot penetrate the barrier.  Meanwhile $X^\varepsilon$ converges to a reflecting Brownian motion on $\bG$,  namely a distinct union of two reflecting Brownian motions on $\bG_\pm$.    
\end{itemize}
Note that when $0<\bar{\gamma}\leq \infty$,  $0$ is the only discontinuous point of the limiting flux $u$;  see \cite[Theorem~5.1]{LS19}.  Hence splitting $\bR$ into two components $\bG_\pm$ makes it possible to build a ``diffusion process" associated to $u$.  Returning back to the two-dimensional case in Figure~\ref{2-dimstiff},  we may think of the collapsing of $\Omega_\varepsilon$ as the analogical collapsing of $I_\varepsilon$, where the endpoints $\pm \varepsilon$ are replaced by the horizontal lines at heights $\pm \varepsilon$.  Inspired by the argument in \cite{LS19},  it is expected to find out certain parameters measuring the normal resisting of $\Omega_\varepsilon$ as well as the barrier.  In practise, since $1/a^\shortmid_\varepsilon$ is the normal resistance ratio in $\Omega_\varepsilon$ and $\varepsilon$ measures the scale of $\Omega_\varepsilon$,  these parameters will be taken to be
\begin{equation}\label{eq:Rmid}
	\sR^\shortmid_\varepsilon:=\frac{\varepsilon}{a^\shortmid_\varepsilon}, 
\end{equation}
called the \emph{normal total resistance} of $\Omega_\varepsilon$, and the limit $\sR^\shortmid:=\lim_{\varepsilon\downarrow 0} \sR^\shortmid_\varepsilon$,  called the \emph{normal total resistance} (of the barrier), is assumed to exist in $[0,\infty]$ in Theorem~\ref{THMAIN}. 
When $0<\sR^\shortmid\leq \infty$,  we will see that the heat transfer can be effectively resisted by the barrier,  and like the stiff problems in \cite{LS19},  the state space $\bG^2=\bR\times \bG$ of the limiting process $X$ is obtained by splitting $\bR^2$ into two distinct components along the $x_1$-axis.  Particularly when $0<\sR^\shortmid<\infty$,  $X$ may enjoy the so-called \emph{snapping out jumps} taking place between the dual points $(x_1, 0\pm)$ in  $\partial \bG^2:=\bR\times \{0\pm\}$.  

\begin{figure}
\centering
\includegraphics[scale=0.55]{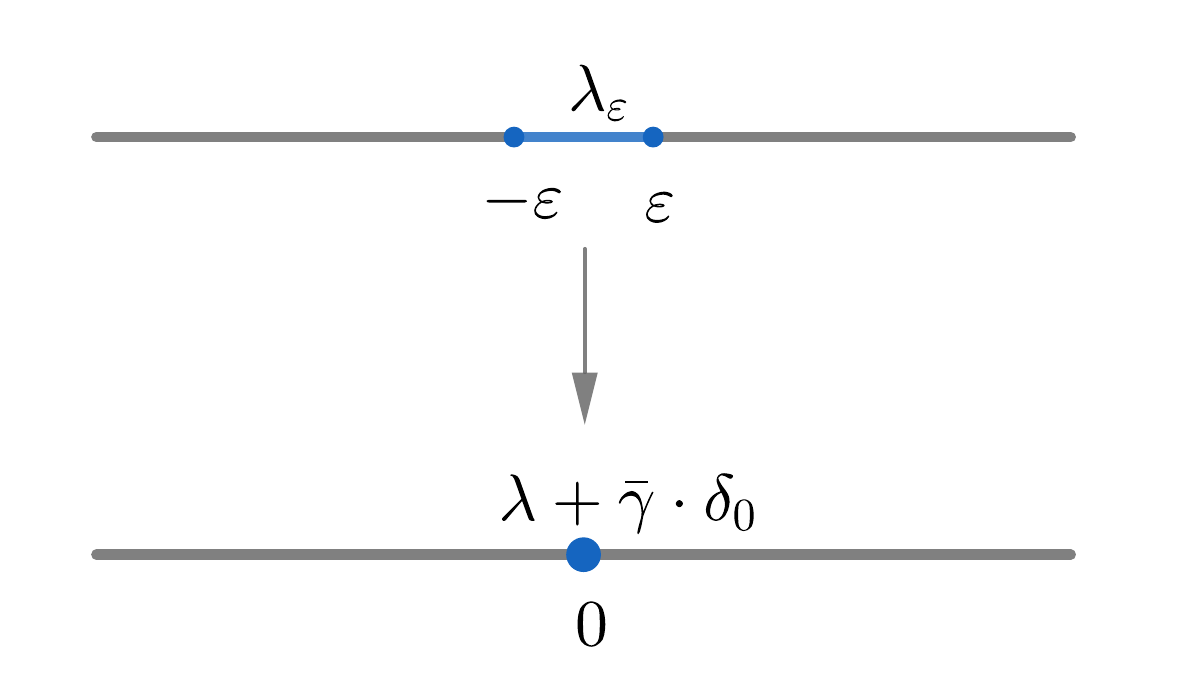}
\caption{Stiff problems in one-dimensional space}
\label{1-dimstiff}
\end{figure}


Another effect of tangent accelerating is based on the heuristic observation as follows.  Since $\Omega_\varepsilon$ is very narrow,  the normal movements of $X^\varepsilon$ in $\Omega_\varepsilon$ can be neglected,  while $X^\varepsilon$ may speed up along the tangent direction in $\Omega_\varepsilon$ when the tangent conductivity is very high.  In other words,  it moves more rapidly along the tangent direction than a Brownian motion, when passes through $\Omega_\varepsilon$.  To quantify this accelerating effect,  we set
\[
	\sC^\shortline_\varepsilon:=\varepsilon a^\shortline_\varepsilon,
\]
called the \emph{tangent total conductivity} of $\Omega_\varepsilon$, and the limit $\sC^\shortline:=\lim_{\varepsilon\downarrow 0} \sC^\shortline_\varepsilon$, called the \emph{tangent total conductivity} (of the barrier), is assumed to exist in $[0,\infty]$.  It will be shown that when $\sC^\shortline >0$,  the limiting process $X$ also enjoys tangent acceleration upon hitting the barrier.  Particularly when $\sC^\shortline=\infty$,  $X$ speeds up along the tangent direction upon hitting the barrier so rapidly,  that it may impossibly leave the barrier.  In other words,  $X$ becomes an absorbing Brownian motion on $\bR^2_0:=\{(x_1,x_2)\in \bR^2: x_2\neq 0\}$.



With these two effects at hand,  it is necessary to ask whether and how they work in collaboration to yield the limiting process.  
To obtain the final conclusion,  we further introduce the following parameters:
\[
	\sM_\varepsilon:= \sqrt{\frac{\sC^\shortline_\varepsilon}{\sR^\shortmid_\varepsilon}},
\]
called the \emph{mixing scale} at $\varepsilon$, and the limit $\sM:=\lim_{\varepsilon\downarrow 0}\sM_\varepsilon$, called the \emph{mixing scale}, is assumed to exist in $[0,\infty]$ as well.  Then in Theorem~\ref{THMAIN} we eventually conclude that
\begin{itemize}
\item[(1)] When $\sM=0$,  normal resisting plays a central role,  while tangent accelerating makes no sense.  Furthermore, the limiting process $X$ exists in a sense and manifests a \emph{normal phase transition}:
\begin{description}
\item[(N1)] $\sR^\shortmid=0$: The barrier makes no sense,  and $X$ is  a two-dimensional Brownian motion.
\item[(N2)] $0<\sR^\shortmid<\infty$: This is analogical to the case $0<\bar{\gamma}<\infty$ in \cite{LS19}.  The state space of $X$ is $\bG^2$,  and $X$ enjoys so-called \emph{snapping out jumps} on $\partial \bG^2$.  In other words,  whenever hitting a boundary point $(x_1, 0\pm)$, $X$ has a chance to jump to its dual point $(x_1,0\mp)$. 
\item[(N3)] $\sR^\shortmid=\infty$: This is analogical to the case $\bar{\gamma}=\infty$ in \cite{LS19},  and $X$ consists of two distinct components on $\bG^2_\pm$ respectively, each of which is a reflecting Brownian motion. 
\end{description}
\item[(2)] When $\sM=\infty$,  tangent accelerating plays a central role,  while  normal resisting makes no sense.  The limiting process $X$ exists and manifests a \emph{tangent phase transition}:
\begin{description}
\item[(T1)] $\sC^\shortline=0$:  Clearly this case is the same as \textbf{(N1)}.
\item[(T2)] $0<\sC^\shortline<\infty$: Due to the absence of normal resisting, $X$ is still a diffusion process on $\bR^2$.  Meanwhile $X$ enjoys tangent accelerating upon hitting the barrier,  which is quantified by $2\sC^\shortline L_t$,  as illustrated in \eqref{representationoftype4}.    Here $L_t$ is the local time of $X$ at the $x_1$-axis.  
\item[(T3)] $\sC^\shortline=\infty$: $X$ is the absorbing Brownian motion on $\bR^2_0$. 
\end{description}
\item[(3)] The most interesting case is $0<\sM<\infty$.  Unless $\sR^\shortmid=\sC^\shortline=0$,  both normal resisting and the tangent accelerating make sense.  In fact,  $X$ exists and manifests a \emph{mixing phase transition}:
\begin{description}
\item[(M1)] $\sC^\shortline, \sR^\shortmid=0$: This trivial case is the same as \textbf{(N1)}.
\item[(M2)] $0<\sC^\shortline, \sR^\shortmid<\infty$: Under the effects of normal resisting and tangent accelerating,  $X$ stays on $\bG^2$ and enjoys two kinds of jump on $\partial \bG^2$: The first kind, called \emph{self-interacting jump},  takes place on $\partial \bG^2_+:=\bR\times \{0+\}$ or $\partial \bG^2_-:=\bR\times \{0-\}$ separately; while the second kind, called \emph{interacting jump}, takes place between $\partial \bG^2_+$ and $\partial \bG^2_-$.  At a heuristic level,  $X$ can be thought of as a mixture of those limiting processes in \textbf{(N2)} and \textbf{(T2)}; see Remark~\ref{RM33}. 
\item[(M3)] $\sC^\shortline, \sR^\shortmid=\infty$: $X$ consists of two distinct components on $\bG^2_\pm$ respectively and enjoys only self-interacting jumps on $\partial \bG^2_\pm$  mentioned in \textbf{(M2)}.
\end{description}
\end{itemize}
Particularly,  the first case illustrated in Figure~\ref{Figure1} is an analogue with a bounded barrier of \textbf{(N2)},  since in this case $\sC^\shortline_\varepsilon=\varepsilon^2, \sR^\shortmid_\varepsilon=1$ and $\sM_\varepsilon=\varepsilon$, and meanwhile the second case illustrated in Figure~\ref{Figure1} is an analogue with a bounded barrier of \textbf{(T2)}, since currently $\sC^\shortline_\varepsilon=1, \sR^\shortmid_\varepsilon=\varepsilon^2$ and $\sM_\varepsilon=1/\varepsilon$. 

\begin{table}
\centering
\begin{tabular}{cccc}
\toprule  
Phase  &   Boundary condition  & 1st continuity & 2nd continuity \\
\midrule  
\textbf{(N1)},  \textbf{(T1)},  \textbf{(M1)}   & (B.I) & Y & Y	\\
\midrule
\textbf{(N2)} &  (B.II) & N & Y \\
\textbf{(N3)} &  (B.III) & N & Y \\
\midrule
\textbf{(T2)} &  (B.IV) & Y & N\\
\textbf{(T3)} &  (B.V) & Y & N	\\
\midrule
\textbf{(M2)} &  (B.VI) & N & N \\ 
\textbf{(M3)} & (B.VII) & N & N \\
\bottomrule 
\end{tabular}
\caption{1st and 2nd continuities for limiting flux}
\label{table2}
\end{table}

As a byproduct of Theorem~\ref{THMAIN}, we will obtain in \S\ref{SEC42} that the limiting flux $u$ exists as a weak solution to the standard heat equation outside the barrier.  For all the cases above,  $u\in H^1(\bG^2)$ and various boundary conditions at the barrier are satisfied by $u$; see Corollary~\ref{COR44}.  Although these boundary conditions are not stated here,  we will provide alternative evidence for the phase transitions in Theorem~\ref{THMAIN} by means of them.  The limiting flux $u$ is called to satisfy the \emph{first kind of continuity} (at the barrier) if 
\[
	\gamma_+ u_+=\gamma_- u_-,
\]	
and the \emph{second kind of continuity} (at the barrier) if 
\[
	\left.\frac{\partial u_{+}}{\partial x_2}\right|_{x_2=0+}=\left.\frac{\partial u_{-}}{\partial x_2}\right|_{x_2=0-},
\]
where $u_\pm:=u|_{\bG^2_\pm}$,  $\gamma_\pm u_\pm$ is the trace of $u_\pm$ on $\partial \bG^2_\pm$,  and $\left.\frac{\partial u_{\pm}}{\partial x_2}\right|_{x_2=0\pm}$ is the trace of the normal derivative of $u_\pm$ on $\partial \bG^2_\pm$.   In Table~\ref{table2},  we summarize the results concerning these two continuities, where ``Y" means the first or second kind of continuity holds and ``N" means it does not hold.  Obviously,  in spite of the trivial phases, different phase transitions (i.e.  normal/tangent/mixing ones) lead to different continuities.  More significantly,  normal resisting breaks the first kind of continuity, while tangent accelerating breaks the second kind of continuity. 

The approach to prove the main results in this paper is by virtue of the theory of Dirichlet forms.  A Dirichlet form is a symmetric Markovian bilinear form on an $L^2(E,m)$ space, where $E$ is a nice topological space and $m$ is a fully supported positive Radon measure on it.  The theory of Dirichlet forms is closely related to probability theory due to a series of works by Fukushima, Silverstein in 1970's, and Ma and R\"ockner in 1990's etc.  It is now well known that a so-called \emph{regular} or \emph{quasi-regular} Dirichlet form is always associated with a symmetric Markov process. We refer the notions and terminologies in the theory of Dirichlet forms to \cite{CF12, FOT11}.  In particular, it is easy to figure out that the diffusion process $X^\varepsilon$ in \eqref{eq:19} is associated with the following regular Dirichlet form on $L^2(\bR^2)$:
\begin{equation}\label{defofEEVare}
\begin{aligned}
  \FF^\varepsilon &=H^1(\mathbb{R}^2)\\
 \EE^\varepsilon (u,u) &=
 \frac{1}{2}\int_{\mathbb{R}^2\setminus\Omega_\varepsilon} |\nabla u|^2dx+\dfrac{a^{\raisebox{0mm}{-}}_\varepsilon}{2}\int_{\Omega_\varepsilon} |\partial_{x_1} u|^2dx+\dfrac{a^{\shortmid}_\varepsilon}{2}\int_{\Omega_\varepsilon} |\partial_{x_2} u|^2dx,\quad u\in \FF^\varepsilon.
 \end{aligned}
\end{equation}
To show the convergence of $X^\varepsilon$, more exactly, the convergence of finite dimensional distributions of $X^\varepsilon$, we will employ the so-called \emph{Mosco convergence} of Dirichlet forms.  The terminologies and results concerning this concept will be reviewed in Appendix \ref{SECA1} for readers' convenience. 

Although this paper concentrates on the stiff problem illustrated in Figure~\ref{2-dimstiff},  the argument based on the effects of normal resisting and tangent accelerating should be still helpful in studying general stiff problems even in high dimensional space.  For a small domain $\Omega_\varepsilon\subset \bR^d$ collapsing to the barrier $\omega$ of dimension $d-1$,  we may think of the collapsing direction of $\Omega_\varepsilon$ as an analogue of the normal direction and the hyperplane that contains $\omega$ as an analogue of the tangent direction.  By introducing sensible parameters measuring the analogical normal resisting and tangent accelerating,  it is possible to formulate the limiting process as well as the limiting flux for a general stiff problem.  

The rest of this paper is organized as follows.  In \S\ref{SEC2},  we will characterize all possible limiting processes by means of their associated Dirichlet forms.  Following \cite{LS19}, we also say that each limit gives a phase related to the stiff problem.  A brief summarization of all the phases will be presented in Table~\ref{table1}.  The main result, Theorem~\ref{THMAIN}, will be stated and proved in \S\ref{SEC3}.  Furthermore,  in \S\ref{SEC34} we will show that the normal/tangent/mixing phase transition appearing in Theorem~\ref{THMAIN} is continuous in the critical parameter $\sR^\shortmid$ or $\sC^\shortline$.  Finally in \S\ref{SEC4} we reconsider this stiff problem in terms of heat equations. Particularly the existence of the limiting flux $u$ is shown and related boundary conditions at the barrier are derived.  


\subsection*{Notations}

We prepare notations that will be frequently used for handy reference.  Let $\mathbb{G}:=(-\infty,0-]\cup[0+,\infty)$, where $0$ in $\mathbb{R}$ corresponds to either $0+$ or $0-$ viewed as two distinct points. In other words, $\mathbb{G}$ is composed of two connected components, say $\mathbb{G}_+:=(-\infty,0-]$ and $\mathbb{G}_-:=[0+,\infty)$. Write
$\bG^2:=\bR\times \bG$ and $\bG^2_\pm:=\bR\times \bG_\pm$.  
Similarly, denote 
\[\mathbb{R}^2_+:=\{(x_1,x_2): x_1\in\mathbb{R}, x_2\in(0, \infty)\}, \quad \mathbb{R}^2_-:=\{(x_1,x_2): x_1\in\mathbb{R},  x_2\in (-\infty,0)\},\]
and $\Omega_\varepsilon^+:=\mathbb{R}^2_+\cap \Omega_\varepsilon$, $\Omega_\varepsilon^-:=\mathbb{R}^2_-\cap \Omega_\varepsilon$. The closure of $\bR^2_\pm$ is denoted by $\bar{\bR}^2_\pm$. 

Denote $H=L^2(\mathbb{R}^2)=L^2(\mathbb{G}^2)=L^2(\mathbb{R}^2_0)$, where $\mathbb{R}^2_0:=\mathbb{R}^2_+\cup\mathbb{R}^2_-$.  For an open set $D\subset \bR^d$ with $d=1$ or $2$,  $C_c^\infty(D)$ stands for the family of all smooth functions with compact support in $D$ and
\[
	C_c^\infty(\bar{D}):=\{f|_{\bar{D}}: f\in C_c^\infty(\bR^d)\}. 
\] 
Let $H^1(D):=\{u\in L^2(D): \nabla u\in L^2(D)\}$ be the Sobolev space of first order on $D$, where $\nabla u$ is defined in the sense of Schwartz distribution, and the closure of $C_c^\infty(D)$ in $H^1(D)$ is denoted by $H^1_0(D)$. Set
\[
	H^1_\Delta(D):=\{u\in H^1(D): \Delta u\in L^2(D)\},
\]
where $\Delta u$ is also in the sense of Schwartz distribution.  
Set $H^1(\bG^2_\pm):=H^1(\bR^2_\pm)$ and $H^1_\Delta(\bG^2_\pm):=H^1_\Delta(\bR^2_\pm)$.  Let
\[
	H^1(\bG^2):=\{u\in L^2(\bG^2): u|_{\bG^2_\pm}\in H^1(\bG^2_\pm)\},\quad H^1_\Delta(\bG^2):=\{u\in L^2(\bG^2): u|_{\bG^2_\pm}\in H^1_\Delta(\bG^2_\pm)\}.
\]
Then by zero extension,  we have $H^1(\bG^2_\pm)\subset H^1(\bG^2)$ and $H^1_\Delta(\bG^2_\pm)\subset H^1_\Delta(\bG^2)$.  
By the embedding map $\iota$ with $\iota(u)|_{\bG^2_\pm}:=u|_{\bar{\bR}^2_\pm}$ for a function $u$ defined on $\bR^2$,  one may also write	$H^1(\bR^2)\subset H^1(\bG^2)$ and $H^1_\Delta(\bR^2)\subset H^1_\Delta(\bG^2)$.

Generally speaking, the operator $\Delta$ (or $\nabla$) is defined on the space $\mathscr{D}'(U)$ of all Schwartz distributions on an open set $U$.  To be more exact, we should write $\Delta_U$ (or $\nabla_U$) in place of $\Delta$ (or $\nabla$).  For example,  $\Delta_U: \mathscr{D}'(U)\rightarrow \mathscr{D}'(U)$ and for $T\in \mathscr{D}'(U)$,  
\[
	\left\langle \Delta_U T,\varphi\right\rangle:= \left\langle T, \Delta \varphi \right\rangle,\quad \forall \varphi\in C_c^\infty(U),
\]
where $\Delta \varphi$ is the normal Laplacian.  When $U=\mathbb{R}^2_0$,  we write
\begin{equation}\label{eq:rDelta}
	\mathring{\Delta}:=\Delta_{\bR^2_0}
\end{equation}
for convenience.  Otherwise if no confusions cause,  the superscript $U$ will be omitted. 

For a function $u$ defined on $\bG^2$ (resp. $\bR^2$),  $u_\pm:=u|_{\bG^2_\pm}$ (resp.  $u_\pm:=u|_{\bR^2_\pm}$) denote the restrictions of $u$ to $\bG^2_\pm$ (resp. $\bR^2$). 
As noted in Appendix~\ref{AP1},  for a function $u\in H^1(\bG^2_\pm)$, the trace of $u$ on the boundary,  i.e.  $\bR\times \{0\pm\}$, is denoted by $u(\cdot\pm)$ or $\gamma_\pm u$. For $u\in H^1(\bR^2)$, the trace of $u$ on the $x_1$-axis is denoted by $u|_\bR$ or $\gamma u$.  In addition, for $u\in H^1_\Delta(\bG^2_\pm)$,  $\gamma_\pm^{\partial_2} u$ or $\left.\frac{\partial u}{\partial x_2}\right|_{x_2=0\pm}$ stands for the trace of the normal derivative of $u$ on $\partial \bG^2_\pm$.  In abuse of notations,  for $u\in H^1(\bG^2)$ (resp. $u\in H^1_\Delta(\bG^2)$),  we also use $u(\cdot\pm)$ or $\gamma_\pm u$  (resp.  $\gamma_\pm^{\partial_2} u$ or $\left.\frac{\partial u}{\partial x_2}\right|_{x_2=0\pm}$) to stand for $u_\pm(\cdot\pm)$ or $\gamma_\pm u_\pm$  (resp.  $\gamma_\pm^{\partial_2} u_\pm$ or $\left.\frac{\partial u_\pm}{\partial x_2}\right|_{x_2=0\pm}$).

The symbol $\lesssim$ (resp. $\gtrsim$) means that the left (resp. right) term is bounded by the right (resp. left) term multiplying a non-essential constant.

\section{All possible limiting phases}\label{SEC2}

In this section, we will introduce seven different phases related to the two-dimensional stiff problem in Figure~\ref{2-dimstiff}. These phases will be described by means of Dirichlet forms and associated Markov processes. 

Given a Dirichlet form $(\sE,\sF)$ on $H$,  let $\mathcal{L}$ be its generator on $H$ with the domain $\mathcal{D}(\L)$.  Note that $u\in \D(\L)$, $f=\L u$ if and only if $u\in\FF$ and $\EE(u,v)=(-f, v)_H$ for any $v\in\FF$ due to \cite[Corollary 1.3.1]{FOT11}.  This fact will be used to formulate the generators appearing in this section.  Recall that $\rDelta$ is the Laplacian operator defined on all Schwartz distributions on $\bR^2_0$, see \eqref{eq:rDelta}.  We will see that  all these generators are the restrictions of $\rDelta$ to different subspaces of $H^1_\Delta(\bG^2)$. 


\subsection{Phase of type I}

The phase of type I is given by the Dirichlet form on $L^2(\mathbb{R}^2)$:
\begin{equation}\label{eq:type3}
\begin{aligned}
  \FF^\mathrm{I}&=H^1(\mathbb{R}^2),\\
 \EE^{\mathrm{I}} (u,u)&=
 \frac{1}{2}\int_{\mathbb{\mathbb{R}}^2} |\nabla u|^2dx,\quad u\in \FF^\mathrm{I},
 \end{aligned}
\end{equation}
which corresponds to the Brownian motion $X^\mathrm{I}$ on $\mathbb{R}^2$. Clearly, the generator of $X^\mathrm{I}$ on $L^2(\bR^2)$ is $\L^\mathrm{I}=\frac{1}{2}\Delta$ with the domain
{\begin{equation}\label{eq:type3-generator}
\D\left(\L^\mathrm{I}\right)=\left\{u\in\FF^\mathrm{I}: \Delta u\in L^2\left(\mathbb{R}^2\right)\right\}=:H^1_\Delta(\bR^2).
\end{equation}}
The lemma below gives an alternative expression of $\mathcal{L}^\mathrm{I}$. 

\begin{lemma}
The generator $\L^\mathrm{I}$  is equal to $\frac{1}{2}\rDelta$ restricted to 
\[
	\tilde{\D}:=\left\{u\in H^1_\Delta(\bG^2): \gamma_+u_+=\gamma_-u_-,  \left.\frac{\partial u_{+}}{\partial x_2}\right|_{x_2=0+}=\left.\frac{\partial u_{-}}{\partial x_2}\right|_{x_2=0-} \right\}.
\]
In other words,  $H^1_\Delta=\tilde{\D}$ and for any $u\in H^1_\Delta(\bR^2)$, it holds that $\L^\mathrm{I}u=\frac{1}{2}\rDelta u$. 
\end{lemma}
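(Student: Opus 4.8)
The plan is to prove the two set-theoretic inclusions $H^1_\Delta(\bR^2)\subset\tilde{\D}$ and $\tilde{\D}\subset H^1_\Delta(\bR^2)$ --- both understood under the embedding $\iota$ that sends a function $u$ on $\bR^2$ to the pair $(u_+,u_-)$ of its restrictions to $\bR^2_\pm$ --- and, in tandem, to check that the two Laplacians $\Delta u$ and $\rDelta u$ coincide in $L^2(\bR^2)=L^2(\bR^2_0)$. Since $\L^\mathrm{I}u=\frac12\Delta u$ for $u\in H^1_\Delta(\bR^2)$ by \eqref{eq:type3-generator}, the operator identity $\L^\mathrm{I}u=\frac12\rDelta u$ then follows at once. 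I would rely on two standard facts. The first is the gluing lemma: functions $u_\pm\in H^1(\bR^2_\pm)$ glue, by zero extension, to an element of $H^1(\bR^2)$ if and only if $\gamma_+u_+=\gamma_-u_-$, and conversely every $u\in H^1(\bR^2)$ has coinciding one-sided traces. The second is the generalized Green identity for $H^1_\Delta$ functions on a half-plane, recalled in Appendix~\ref{AP1}, which furnishes the normal-derivative trace $\gamma_\pm^{\partial_2}w$ and the formula, valid for every $\varphi\in C_c^\infty(\bR^2)$,
\[
\int_{\bR^2_\pm}\bigl(w\,\Delta\varphi-\varphi\,\Delta w\bigr)\,dx=\mp\int_{\bR}\Bigl(\gamma_\pm w\,\partial_{x_2}\varphi-\gamma_\pm^{\partial_2}w\,\varphi\Bigr)\Big|_{x_2=0\pm}\,dx_1,
\]
the sign reflecting that the outward normal of $\bR^2_\pm$ along the $x_1$-axis points in the $\mp e_2$ direction.

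The technical heart of the argument is to add these two identities, with $w=u_\pm$, into a single master identity: for every $u\in H^1_\Delta(\bG^2)$ and every $\varphi\in C_c^\infty(\bR^2)$,
\begin{equation}\label{eq:master-green}
\int_{\bR^2}u\,\Delta\varphi\,dx-\int_{\bR^2_0}\varphi\,\rDelta u\,dx=\int_{\bR}\Bigl[\bigl(\gamma_-u_--\gamma_+u_+\bigr)\partial_{x_2}\varphi+\bigl(\gamma_+^{\partial_2}u_+-\gamma_-^{\partial_2}u_-\bigr)\varphi\Bigr]\Big|_{x_2=0}\,dx_1,
\end{equation}
using that $\partial_{x_2}\varphi$ and $\varphi$ are continuous across the axis and that $\bR^2_0$ has full Lebesgue measure. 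Once \eqref{eq:master-green} is in hand, both inclusions become short. For $H^1_\Delta(\bR^2)\subset\tilde{\D}$: given $u\in H^1(\bR^2)$ with $\Delta u\in L^2(\bR^2)$, testing $\Delta u$ against $\varphi\in C_c^\infty(\bR^2_\pm)$ gives $\Delta u_\pm=(\Delta u)|_{\bR^2_\pm}\in L^2(\bR^2_\pm)$, so $u\in H^1_\Delta(\bG^2)$ and $\rDelta u=(\Delta u)|_{\bR^2_0}$; the one-sided traces of $u$ agree since $u\in H^1(\bR^2)$, so the first bracket in \eqref{eq:master-green} vanishes and the left-hand side reduces to $\langle u,\Delta\varphi\rangle-\langle\Delta u,\varphi\rangle=0$; hence $\int_{\bR}(\gamma_+^{\partial_2}u_+-\gamma_-^{\partial_2}u_-)\varphi|_{x_2=0}\,dx_1=0$ for all $\varphi$, and testing against functions of the form $\psi(x_1)\chi(x_2)$ with $\chi(0)=1$ forces $\gamma_+^{\partial_2}u_+=\gamma_-^{\partial_2}u_-$, i.e.\ $u\in\tilde{\D}$.

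For the converse $\tilde{\D}\subset H^1_\Delta(\bR^2)$: given $u\in\tilde{\D}$, the condition $\gamma_+u_+=\gamma_-u_-$ and the gluing lemma give $u\in H^1(\bR^2)$; in \eqref{eq:master-green} both brackets now vanish by the two defining conditions of $\tilde{\D}$, leaving $\langle u,\Delta\varphi\rangle=\int_{\bR^2}\varphi\,\rDelta u\,dx$ for every $\varphi\in C_c^\infty(\bR^2)$, which is precisely the assertion that the distributional Laplacian of $u$ on $\bR^2$ equals the $L^2$ function $\rDelta u$; thus $u\in H^1_\Delta(\bR^2)$ with $\Delta u=\rDelta u$, and $\L^\mathrm{I}u=\frac12\Delta u=\frac12\rDelta u$. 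Together the two inclusions give $H^1_\Delta(\bR^2)=\tilde{\D}$ and the stated formula.

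I expect the only real obstacle to be the justification of the generalized Green identity invoked above: for $w\in H^1_\Delta(\bR^2_\pm)$ carrying no extra Sobolev regularity one must know that the normal-derivative trace $\gamma_\pm^{\partial_2}w$ is well defined --- a priori only as an element of a negative-order boundary space, so that the two boundary integrals there have to be read as duality pairings against $\varphi|_{x_2=0}$ and $\partial_{x_2}\varphi|_{x_2=0}$ --- and that Green's formula persists at that level of regularity. This is exactly what the trace results assembled in Appendix~\ref{AP1} supply, so in the write-up it will be enough to quote them; everything else is bookkeeping, chiefly keeping the two outward-normal signs consistent and repeatedly invoking the identification $L^2(\bR^2)=L^2(\bR^2_0)=L^2(\bG^2)$.
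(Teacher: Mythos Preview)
Your proposal is correct and follows essentially the same route as the paper: both directions rest on the gluing lemma (Lemma~\ref{LMB1}) and the Green--Gauss identity of Appendix~\ref{AP1}, with the duality reading of the normal-derivative trace. The only cosmetic difference is that you package the two half-plane identities into a single ``master identity'' via the second Green formula, whereas the paper treats each inclusion separately using the first Green formula \eqref{eq:Green2}; the content is identical.
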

\begin{proof}
Take $u\in H^1_\Delta (\bR^2)$,  and it follows from Lemma~\ref{LMB1} that $u\in H^1(\bG^2)$ and $\gamma_+u_+=\gamma_-u_-$.  In addition,  since $\Delta u\in L^2(\bR^2)$,  it holds that $\Delta u_\pm=(\Delta u)_\pm$ and the Green-Gauss formula \eqref{eq:Green2} implies that for any $\varphi\in C_c^\infty(\bR^2)$, 
\[
\left\langle	\left.\frac{\partial u_{+}}{\partial x_2}\right|_{x_2=0+}-\left.\frac{\partial u_{-}}{\partial x_2}\right|_{x_2=0-}, \varphi(\cdot, 0)\right\rangle =-(\Delta u, \varphi)_H-(\nabla u, \nabla \varphi)_H=0. 
\]
This indicates $u\in \tilde{\D}$.  Therefore $H^1_\Delta(\bR^2)\subset \tilde{D}$ and for any $u\in H^1_\Delta (\bR^2)$,  it holds $\rDelta u=\Delta u$.  

To the contrary, it suffices to show $\tilde{\D}\subset H^1_\Delta(\bR^2)$.   To do this, take $u\in \tilde{\D}$, and Lemma~\ref{LMB1} yields that $u\in H^1(\bR^2)$.  By applying \eqref{eq:Green2} and using the last equality in the definition of $\tilde{\D}$, we have for any $\varphi\in C_c^\infty(\bR^2)$, 
\begin{equation}\label{eq:23}
	\int_{\bR^2}\nabla u \cdot \nabla \varphi dx = -\int_{\bR^2_+}\Delta u_+ \cdot \varphi_+dx -\int_{\bR^2_-}\Delta u_-\cdot  \varphi_- dx.
\end{equation}
Note that the right hand side of \eqref{eq:23} is equal to $-\int_{\bR^2} \rDelta u \cdot  \varphi dx$,  and $\rDelta u\in L^2(\bG^2)=L^2(\bR^2)$.  Therefore we can conclude that $\Delta u=\rDelta u\in L^2(\bR^2)$ by the definition of the weak divergence $\Delta u=\nabla \cdot (\nabla u)$ of $\nabla u\in L^2(\bR^2)$.  That completes the proof.
\end{proof}

\subsection{Phase of type II}

The phase of  type II (related to a constant $\kappa>0$) is given by the quadratic form on $L^2(\bG^2)$: 
\begin{equation}\label{eq:type2}
\begin{aligned}
 \FF^\mathrm{II}&=H^1(\mathbb{G}^2),\\
 \EE^\mathrm{II} (u,u)&=
 \frac{1}{2}\int_{\mathbb{G}^2} |\nabla u|^2dx+ \frac{\kappa}{4}\int_{\mathbb{R}} \left(u(x_1+)-u(x_1-)\right)^2dx_1,\quad u\in \FF^\mathrm{II},
 \end{aligned}
\end{equation}
where $u(\cdot \pm):=\gamma_\pm u_\pm$ are the traces of $u_\pm:=u|_{\bG^2_\pm}\in H^1(\mathbb{G}^2_\pm)$ on the boundary (see Appendix~\ref{AP1}). 
The following lemma obtains the regularity of \eqref{eq:type2} and the expression of its associated generator.  Hereafter, we will denote its associated Markov process by $X^\mathrm{II}$. 


\begin{lemma}\label{LEM21}
\begin{itemize}
\item[(1)] The quadratic form $\left(\mathscr{E}^\mathrm{II}, \sF^\mathrm{II}\right)$ is a regular Dirichlet form  on $L^2(\bG^2)$.  
\item[(2)] The generator $\L^\mathrm{II}$ of $\left(\mathscr{E}^\mathrm{II}, \sF^\mathrm{II}\right)$ on $L^2(\bG^2)$ is $\frac{1}{2}\rDelta u$ restricted to the domain
{\begin{equation}\label{eq:type2-generator}
\D\left(\L^\mathrm{II}\right)=\left\{u\in H^1_\Delta(\bG^2):\left.\frac{\partial u_{\pm}}{\partial x_2}\right|_{x_2=0\pm}=\frac{\kappa}{2}\left(u(x_1+)-u(x_1-)\right)\right\}.
\end{equation}}
\end{itemize}
\end{lemma}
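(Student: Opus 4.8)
The plan is to establish (1) and (2) in tandem, since the standard route to regularity of a form like $\EE^{\mathrm{II}}$ also produces the description of the generator. First I would verify that $(\EE^{\mathrm{II}},\FF^{\mathrm{II}})$ is a closed symmetric form on $L^2(\bG^2)$: closedness follows because $\FF^{\mathrm{II}}=H^1(\bG^2)=H^1(\bR^2_+)\oplus H^1(\bR^2_-)$ is complete under its own norm, and the boundary penalty term $\frac{\kappa}{4}\int_\bR(u(x_1+)-u(x_1-))^2\,dx_1$ is a bounded perturbation of the $H^1$-norm thanks to the trace inequality $\|\gamma_\pm u_\pm\|_{L^2(\bR)}^2\lesssim \|u_\pm\|_{H^1(\bR^2_\pm)}^2$ recalled in Appendix~\ref{AP1}; hence the $\EE^{\mathrm{II}}_1$-norm is equivalent to the $H^1(\bG^2)$-norm and the form is closed. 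The Markovian property (normal contraction operates on $\FF^{\mathrm{II}}$ and decreases $\EE^{\mathrm{II}}$) is checked directly: the Dirichlet-integral part is classically Markovian, and for the penalty term one uses that if $v=(0\vee u)\wedge 1$ then $|v(x_1+)-v(x_1-)|\le |u(x_1+)-u(x_1-)|$ pointwise, which holds because $t\mapsto (0\vee t)\wedge 1$ is a $1$-Lipschitz contraction, so the traces (which commute with such truncations) satisfy the same bound. Thus $(\EE^{\mathrm{II}},\FF^{\mathrm{II}})$ is a Dirichlet form.

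For regularity I would produce a core inside $C_c(\bG^2)\cap\FF^{\mathrm{II}}$ that is dense both uniformly and in $\EE^{\mathrm{II}}_1$-norm. The natural candidate is the set of $u$ with $u_\pm\in C_c^\infty(\bar{\bR}^2_\pm)$; density in $H^1(\bG^2)$ of each summand $C_c^\infty(\bar{\bR}^2_\pm)$ in $H^1(\bR^2_\pm)$ is standard (the half-plane is a Lipschitz, indeed smooth, domain), and since the $\EE^{\mathrm{II}}_1$-norm is equivalent to the $H^1(\bG^2)$-norm this gives $\EE^{\mathrm{II}}_1$-density; uniform density of these smooth functions in $C_\infty(\bG^2)$ is clear. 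Hence $(\EE^{\mathrm{II}},\FF^{\mathrm{II}})$ is regular, proving (1).

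For (2) I would use the characterization quoted from \cite[Corollary 1.3.1]{FOT11}: $u\in\D(\L^{\mathrm{II}})$ with $\L^{\mathrm{II}}u=f$ iff $u\in\FF^{\mathrm{II}}$ and $\EE^{\mathrm{II}}(u,v)=(-f,v)_H$ for all $v\in\FF^{\mathrm{II}}$. Take such a $u$. Testing first against $v\in C_c^\infty(\bR^2_0)$ (which kills the boundary term and the boundary contributions of the Dirichlet integral) shows $-\tfrac12\rDelta u_\pm = (f)_\pm$ in the distributional sense on $\bR^2_\pm$, so in particular $\rDelta u\in L^2(\bG^2)$, i.e. $u\in H^1_\Delta(\bG^2)$ and $f=\tfrac12\rDelta u$. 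Now test against general $v\in\FF^{\mathrm{II}}$ with $v_\pm\in C_c^\infty(\bar{\bR}^2_\pm)$ and apply the Green--Gauss formula \eqref{eq:Green2} on each half-plane $\bR^2_\pm$: the interior terms reproduce $(-\tfrac12\rDelta u,v)_H$, leaving the boundary identity
\begin{equation*}
\tfrac12\int_\bR \Bigl(\gamma_-^{\partial_2}u_-\,\gamma_- v_- - \gamma_+^{\partial_2}u_+\,\gamma_+ v_+\Bigr)\,dx_1 + \tfrac{\kappa}{4}\int_\bR\bigl(u(x_1+)-u(x_1-)\bigr)\bigl(v(x_1+)-v(x_1-)\bigr)\,dx_1 = 0,
\end{equation*}
with the sign of each normal-derivative term dictated by the outward normal of $\bR^2_\pm$. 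Since the pair of traces $(\gamma_+v_+,\gamma_-v_-)$ ranges over a dense subset of $L^2(\bR)\times L^2(\bR)$ as $v$ varies, I can vary $\gamma_+v_+$ and $\gamma_-v_-$ independently to read off
\begin{equation*}
\gamma_+^{\partial_2}u_+ = \tfrac{\kappa}{2}\bigl(u(x_1+)-u(x_1-)\bigr),\qquad \gamma_-^{\partial_2}u_- = \tfrac{\kappa}{2}\bigl(u(x_1+)-u(x_1-)\bigr),
\end{equation*}
which is exactly the boundary condition defining the set in \eqref{eq:type2-generator}. Conversely, any $u$ in that set satisfies $\EE^{\mathrm{II}}(u,v)=(-\tfrac12\rDelta u,v)_H$ for all smooth-trace $v$ by running \eqref{eq:Green2} backwards, and then for all $v\in\FF^{\mathrm{II}}$ by density, so $u\in\D(\L^{\mathrm{II}})$ with $\L^{\mathrm{II}}u=\tfrac12\rDelta u$. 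I expect the main obstacle to be bookkeeping: getting the orientation of the two normal-derivative traces consistent (the outward normal of $\bR^2_+$ at $x_2=0$ points in the $-x_2$ direction) and justifying that the joint trace map $v\mapsto(\gamma_+v_+,\gamma_-v_-)$ has dense range in $L^2(\bR)^2$ — the latter is where one must be a little careful, but it follows since already $C_c^\infty(\bR)\times C_c^\infty(\bR)$ is hit by choosing $v_\pm$ with prescribed boundary values supported near $x_2=0$, and $C_c^\infty(\bR)$ is dense in $L^2(\bR)$.
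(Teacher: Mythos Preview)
Your proposal is correct and follows essentially the same approach as the paper. For part (1), the paper abbreviates your argument by observing that the norm equivalence $\EE^{\mathrm{III}}_1\le \EE^{\mathrm{II}}_1\lesssim \EE^{\mathrm{III}}_1$ lets one inherit regularity directly from the reflecting Brownian motion form $(\EE^{\mathrm{III}},\FF^{\mathrm{III}})$, rather than verifying the Markov property and constructing a core by hand; for part (2), the paper proceeds exactly as you do, testing first against $v\in C_c^\infty(\bR^2_\pm)$ to obtain $u\in H^1_\Delta(\bG^2)$ and then applying the Green--Gauss formula \eqref{eq:GreenGaussformula} and setting $v_-\equiv 0$ or $v_+\equiv 0$ to isolate each boundary condition.
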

\begin{proof}
\begin{itemize}
\item[(1)] From \eqref{EQ2TRH2} we can conclude that  
\[
\int_{\mathbb{R}}u(x_1\pm)^2dx_1\leq C\|u\|^2_{H^1({\mathbb{G}^2})}, 
\]
so that
\[\EE^\mathrm{III}_1(u,u)\leq \EE^\mathrm{II}_1(u,u)\lesssim\EE^\mathrm{III}_1(u,u), \]
{where $\left(\sE^\mathrm{III},\sF^\mathrm{III}\right)$ is the Dirichlet form of reflecting Brownian motion on $\mathbb{G}^2$, as we shall introduce in Section 2.3.} Thus the norm 
 $\|\cdot \|_{\EE^\mathrm{III}_1}$ is equivalent to $\|\cdot\|_{\EE^\mathrm{II}_1}$,  and the regularity of  $\left(\sE^\mathrm{III},\sF^\mathrm{III}\right)$ implies that $(\EE^\mathrm{II},\FF^{\mathrm{II}})$ is also a regular Dirichlet form on $L^2(\mathbb{G}^2)$. 



\item[(2)] Denote the right hand side of \eqref{eq:type2-generator} by $\mathcal{G}$.   By means of the Green-Gauss formula in Lemma~\ref{LMB3},   it is straightforward to verify that for any $u\in \mathcal{G}$,  
\[
	u\in \sF^\mathrm{II},\quad \sE^\mathrm{II}(u,v)=(-\frac{1}{2}\rDelta u, v)_H,\quad \forall v\in \sF^\mathrm{II}. 
\]
This yields
\[
	\mathcal{G}\subset\D(\L^\mathrm{II}),\quad \L^\mathrm{II}u=\frac{1}{2}\rDelta u,\quad \forall u\in \mathcal{G}.
\]
To show $\mathcal{D}(\L^\mathrm{II})\subset \mathcal{G}$, suppose that $u\in \D(\L^\mathrm{II})\subset H^1(\bG^2)$ with $f:=\L^\mathrm{II} u$. Then for any $v\in C_c^\infty(\bR^2_\pm$),  we have
\[
-(f,v)_{H}=\EE^\mathrm{II} (u,v)=\frac{1}{2}\int_{\mathbb{G}^2}\nabla u\cdot\nabla v dx=-\frac{1}{2}\left(\Delta u_{\pm},v_{\pm}\right)_{L^2(\mathbb{G}^2_{\pm})}.\]
Since $f_{\pm}\in L^2(\mathbb{G}^2_{\pm})$, it follows that 
\[
	\Delta u_{\pm}=2f_\pm=2\left(\L^\mathrm{II}u\right)|_\pm\in L^2(\mathbb{G}^2_{\pm}).
\]
Hence $u\in H^1_\Delta(\bG^2)$.  Now we take $v\in H^1(\mathbb{G}^2)$ instead.  Then it holds
\begin{equation}\label{EQ22L1}
(-f,v)_H=\frac{1}{2}\int_{\mathbb{G}^2}\nabla u\cdot\nabla v dx+\frac{\kappa}{4}\int_{\mathbb{R}} \left(u(x_1+)-u(x_1-)\right)\left(v(x_1+)-v(x_1-)\right)dx_1.
\end{equation}
By applying the Green-Gauss formula \eqref{eq:GreenGaussformula} to \eqref{EQ22L1},  it follows that
\[
\begin{split}
\frac{\kappa}{2}\int_{\mathbb{R}} \left(u(x_1+)-\right.
& \left. u(x_1-)\right)\left(v(x_1+)-v(x_1-)\right)dx_1\\
&=\left\langle\left.\frac{\partial u_{+}}{\partial x_2}\right|_{x_2=0+}, \gamma_+v\right\rangle-
\left\langle\left.\frac{\partial u_{-}}{\partial x_2}\right|_{x_2=0-}, \gamma_-v \right\rangle.
\end{split}
\]
Eventually by letting $v_-\equiv0 $ or $v_+\equiv0 $, we can conclude that 
\[\left.\frac{\partial u_{\pm}}{\partial x_2}\right|_{x_2=0\pm}=\frac{\kappa}{2}\left(u(x_1+)-u(x_1-)\right),\]
which, together with $u\in H^1_\Delta(\bG^2)$, implies $\mathcal{D}(\L^\mathrm{II})\subset \mathcal{G}$. 
\end{itemize}
That completes the proof. 
\end{proof}

Now we have a position to present a probabilistic representation of $X^\mathrm{II}$ by the SNOB $\beta^\s$ on $\bG$ (with a parameter $\kappa$).  Note that $\beta^\s$ is irreducible and associated with the regular Dirichlet form on $L^2(\bG)$ (see \cite{LS19}): 
\begin{equation*}
\begin{aligned}
 \mathcal{F}^\s&=H^1(\bG), \\
 \cE^\s(u,u)&=
 \frac{1}{2}\int_\mathbb{G} \left(u'(x)\right)^2dx+\dfrac{\kappa}{4}(u(0+)-u(0-))^2,\quad u\in \cF^{\s}.
 \end{aligned}
 \end{equation*}
The following lemma states that $X^\mathrm{II}$ is indeed the independent coupling of one-dimensional Brownian motion and $\beta^\s$. 

\begin{lemma}
$X^\mathrm{II}$ is equivalent to the process $(\beta, \beta^\s)$ on $\bG^2$, where $\beta=(\beta_t)_{t\geq 0}$ is a one-dimensional Brownian motion and $\beta^\s$ is the SNOB with the parameter $\kappa$ independent of $\beta$. 
Particularly, $X^\mathrm{II}$ is irreducible. 
\end{lemma}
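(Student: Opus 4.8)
The plan is to show that the process $(\beta,\beta^\s)$ on $\bG^2$ is associated with the Dirichlet form $(\EE^\mathrm{II},\FF^\mathrm{II})$ on $L^2(\bG^2)$, from which equivalence with $X^\mathrm{II}$ follows by the uniqueness of the process attached to a regular Dirichlet form. The key observation is that $\bG^2 = \bR\times\bG$ is a product space, $L^2(\bG^2)=L^2(\bR)\otimes L^2(\bG)$, and the independent coupling of a one-dimensional Brownian motion $\beta$ (with Dirichlet form $(\frac12\int u'^2,H^1(\bR))$ on $L^2(\bR)$) and the SNOB $\beta^\s$ (with Dirichlet form $(\cE^\s,\cF^\s)$ on $L^2(\bG)$) is associated with the \emph{tensor product} Dirichlet form. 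The standard formula for the product of two Dirichlet forms $(\cE^{(1)},\cF^{(1)})$ on $L^2(E_1,m_1)$ and $(\cE^{(2)},\cF^{(2)})$ on $L^2(E_2,m_2)$ gives a form whose domain is the Hilbert-space completion of $\cF^{(1)}\otimes\cF^{(2)}$ and which acts, on $u\otimes v$, by
\[
\cE(u\otimes v, u\otimes v) = \cE^{(1)}(u,u)\,\|v\|^2_{L^2(E_2)} + \|u\|^2_{L^2(E_1)}\,\cE^{(2)}(v,v).
\]

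First I would recall (or cite, e.g. from \cite{FOT11} or the appendix on Mosco convergence / product forms) that the independent coupling $(\beta,\beta^\s)$ is a symmetric Markov process on $\bG^2$ whose Dirichlet form is exactly this tensor product of $(\frac12\int_{\bR}(u')^2dx, H^1(\bR))$ and $(\cE^\s,\cF^\s)$. Then I would identify the tensor product form explicitly. On elementary tensors $w=u\otimes v$ with $u\in H^1(\bR)$, $v\in H^1(\bG)$, the form value is
\[
\tfrac12\int_{\bR}(u')^2dx\cdot\int_{\bG}v^2dx + \int_{\bR}u^2dx\cdot\Bigl(\tfrac12\int_{\bG}(v')^2dx + \tfrac{\kappa}{4}(v(0+)-v(0-))^2\Bigr),
\]
which, writing $w(x_1,\cdot)$ in terms of the $\bG$-variable, is precisely
\[
\tfrac12\int_{\bG^2}|\nabla w|^2\,dx + \tfrac{\kappa}{4}\int_{\bR}(w(x_1+)-w(x_1-))^2\,dx_1,
\]
i.e. $\EE^\mathrm{II}(w,w)$; here I use that for a tensor $w=u\otimes v$ the trace satisfies $w(x_1\pm)=u(x_1)v(0\pm)$, so the boundary term factors correctly. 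Next I would argue that the domain matches: the tensor-product domain is the closure of $H^1(\bR)\otimes H^1(\bG)$ under the norm $\EE_1^\mathrm{II}(\cdot,\cdot)^{1/2}$, and since (by Lemma~\ref{LEM21}(1)) this norm is equivalent to $\|\cdot\|_{H^1(\bG^2)}$, and since $C_c^\infty(\bR)\otimes C_c^\infty(\bG_+)\oplus C_c^\infty(\bR)\otimes C_c^\infty(\bG_-)$ together with traces is dense in $H^1(\bG^2)=H^1(\bR^2_+)\oplus H^1(\bR^2_-)$, the closure is all of $\FF^\mathrm{II}$. Hence the tensor product form equals $(\EE^\mathrm{II},\FF^\mathrm{II})$, so $(\beta,\beta^\s)$ is associated with it, and by Lemma~\ref{LEM21} together with the uniqueness of the Hunt process associated to a regular Dirichlet form, $X^\mathrm{II}$ and $(\beta,\beta^\s)$ are equivalent. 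Irreducibility of $X^\mathrm{II}$ then follows because $\beta$ is irreducible on $\bR$ and $\beta^\s$ is irreducible on $\bG$ (stated above), and a product of irreducible symmetric Markov processes is irreducible.

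The main obstacle I anticipate is the careful bookkeeping around the tensor-product Dirichlet form: one must verify that the abstract product construction really yields the closure of $\FF^{(1)}\otimes\FF^{(2)}$ with the stated form expression (including that the boundary/trace term of $\EE^\mathrm{II}$ arises correctly from the Dirac-type term $\tfrac{\kappa}{4}(v(0+)-v(0-))^2$ in $\cE^\s$ after tensoring with $L^2(\bR)$), and that density of elementary tensors in $\FF^\mathrm{II}$ holds in the graph norm. If one prefers to avoid invoking a general product-form theorem, an alternative is to compute the generator (equivalently, the resolvent or semigroup) of $(\beta,\beta^\s)$ directly: the semigroup $P_t^\mathrm{II}$ should factor as $P_t^{\beta}\otimes P_t^{\beta^\s}$, and one checks on a core that its generator is $\tfrac12\partial_{x_1}^2 + \L^\s$ (with $\L^\s$ the SNOB generator), which by Lemma~\ref{LEM21}(2) coincides with $\L^\mathrm{II}=\tfrac12\rDelta$ on $\D(\L^\mathrm{II})$; this is essentially the same computation repackaged. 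Either way, irreducibility at the end is immediate once equivalence is established.
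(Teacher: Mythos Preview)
Your proposal is correct and follows essentially the same approach as the paper: identify the Dirichlet form of the independent coupling $(\beta,\beta^\s)$ as the product (tensor) Dirichlet form, check that it coincides with $(\EE^\mathrm{II},\FF^\mathrm{II})$, and deduce irreducibility from that of the factors. The only cosmetic difference is in the domain identification: the paper invokes a section-wise description of the product form (citing \cite{LY15,O89}) and then Maz'ya's characterization of $H^1(\bG^2)$ via one-dimensional sections, whereas you argue via density of elementary tensors in $H^1(\bG^2)$ under the equivalent $\EE^\mathrm{II}_1$-norm; both routes work.
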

\begin{proof}
Let ${X}^\s=(\beta, \beta^\s)$, and $\left({\EE}^\s,{\FF}^\s\right)$ be the Dirichlet form associated with ${X}^\s$. We are to derive $\left({\EE}^\s,{\FF}^\s\right)$ using the direct product of Drichlet forms. 
Note that the Dirichlet form of one-dimensional Brownian motion is $(\frac{1}{2}\mathbf{D}, H^1(\mathbb{R}))$, where $\mathbf{D}(u,u)=\int_{\mathbb{R}}\left(u'(x)\right)^2dx$,  and the Dirichlet form of $\beta^\s$ is $\left(\cE^\s,\cF^\s\right)$. 
Since $\beta$ and $\beta^\s$ are independent, $\left({\EE}^\s,{\FF}^\s\right)$ is the direct product of $\left(\frac{1}{2}\mathbf{D}, H^1(\mathbb{R})\right)$ and $\left(\cE^\s, \cF^\s\right)$. It follows from \cite[Proposition 3.2]{LY15} or \cite[Theorem 1.4]{O89} that 
\[
\begin{split}
{\FF}^\s=\{u\in L^2(\mathbb{G}^2): \
& u(\cdot, x_2)\in H^1(\mathbb{R}) \text{ for a.e. } x_2,  x_2\mapsto \|u(\cdot, x_2)\|_{\mathbf{D}_1}\in L^2(\mathbb{G}) \text{ and }\\
& u(x_1,\cdot)\in H^1(\mathbb{G}) \text{ for a.e. } x_1,  x_1\mapsto \|u(x_1,\cdot)\|_{\cE^{\s}_1}\in L^2(\mathbb{R})
\}\end{split}
\]
and
\[
	\sE^\s(u,u)=\frac{1}{2} \int_{\mathbb{G}}\mathbf{D}\left(u(\cdot, x_2), u(\cdot, x_2)\right)dx_2+\int_{\mathbb{R}}\cE^{\s}\left(u(x_1,\cdot), u(x_1, \cdot)\right)dx_1,\quad u\in \sF^\s. 
\]
From \cite[Theorem 1.1 and Theorem 1.2, \S 1.1.3]{M11} we know that
${\FF}^\s=H^1(\mathbb{G}^2)$ and for $u\in \sF^\s$, 
\[
\begin{split}
{\EE}^\s(u,u)
=\frac{1}{2}\int_{\mathbb{G}^2} |\nabla u|^2dx+ \frac{\kappa}{4}\int_{\mathbb{R}} \left(u(x_1+)-u(x_1-)\right)^2dx_1.
\end{split}
\]
These yield $\left({\EE}^\s,{\FF}^\s\right)=\left(\EE^\mathrm{II},\FF^{\mathrm{II}}\right)$.  Particularly since both  $\beta$ and $\beta^{\mathrm{s}}$ are irreducible, \cite[Theorem 2.6]{O89} implies that $X^{\mathrm{II}}$ is also irreducible.  That completes the proof. 
\end{proof}


\subsection{Phase of type III}

The phase of type III is given by the associated Dirichlet form of reflecting Brownian motion $X^\mathrm{III}$ on $\bG^2$, i.e. 
\begin{equation}\label{eq:p1}
\begin{aligned}
  \FF^\mathrm{III}&=H^1(\mathbb{G}^2),\\
 \EE^\mathrm{III} (u,u)&=
\int_{\mathbb{G}^2} |\nabla u|^2dx,\quad u\in \FF^\mathrm{III},
 \end{aligned}
\end{equation}
Note that $X^\mathrm{III}$ is the union of two distinct reflecting Brownian motions on $\mathbb{G}^2_\pm$.  
The following lemma states basic properties of $\left(\sE^\mathrm{III},\sF^\mathrm{III}\right)$.  The proof is trivial and we omit it.  

\begin{lemma}
The Dirichlet form $\left(\EE^\mathrm{III}, \FF^\mathrm{III}\right)$ is regular but not irreducible on $L^2(\mathbb{G}^2)$. Furthermore, the generator $\L^\mathrm{III}$ of $X^\mathrm{III}$ on $L^2(\bG^2)$ is  $\frac{1}{2}\rDelta u$ restricted to the domain
\begin{equation}\label{eq:type1-generator}
\D\left(\L^\mathrm{III}\right)=\left\{u\in H^1_\Delta(\bG^2): \left.\frac{\partial u_{\pm}}{\partial x_2}\right|_{x_2=0\pm}=0\right\}.
\end{equation}
\end{lemma}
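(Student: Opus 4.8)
The plan is to use that $\bG^2=\bG^2_+\sqcup\bG^2_-$ is a disjoint union of two closed half-planes, so that $\bigl(\EE^\mathrm{III},\FF^\mathrm{III}\bigr)$ is precisely the direct sum of two copies of the Dirichlet form of reflecting Brownian motion on a half-plane. All three assertions then follow from this splitting together with the computation already carried out in Lemma~\ref{LEM21}(2).

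For regularity and non-irreducibility: since $\bG^2_+$ and $\bG^2_-$ are disjoint, $L^2(\bG^2)=L^2(\bG^2_+)\oplus L^2(\bG^2_-)$, and by the definition of $H^1(\bG^2)$ one has $\FF^\mathrm{III}=H^1(\bG^2_+)\oplus H^1(\bG^2_-)$ with $\EE^\mathrm{III}(u,u)=\int_{\bG^2_+}|\nabla u|^2dx+\int_{\bG^2_-}|\nabla u|^2dx$. It is classical (cf.\ \cite{FOT11}) that on each closed half-plane $\bigl(\tfrac12\int|\nabla\cdot|^2,H^1\bigr)$ is a regular Dirichlet form having $C_c^\infty$ of the closed half-plane as a core, since $C_c^\infty(\bar{\bR}^2_\pm)$ is dense in $H^1(\bR^2_\pm)$ in the $H^1$-norm and uniformly dense in $C_c(\bar{\bR}^2_\pm)$; hence $C_c^\infty(\bar{\bR}^2_+)\oplus C_c^\infty(\bar{\bR}^2_-)$ is an $\EE^\mathrm{III}_1$-dense subset of $\FF^\mathrm{III}$ lying in $C_c(\bG^2)$ and uniformly dense there, which gives regularity. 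For non-irreducibility, $\bG^2_+$ is $\EE^\mathrm{III}$-invariant: $\1_{\bG^2_+}u=u_+$ extended by zero lies in $\FF^\mathrm{III}$ for every $u\in\FF^\mathrm{III}$ --- no compatibility across the two half-planes being required --- and $\EE^\mathrm{III}$ decomposes along it; since $0<m(\bG^2_+)<m(\bG^2)$, neither $\bigl(\EE^\mathrm{III},\FF^\mathrm{III}\bigr)$ nor $X^\mathrm{III}$ is irreducible, in accordance with $X^\mathrm{III}$ being the union of the reflecting Brownian motions on $\bG^2_\pm$.

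For the generator I would repeat the argument of Lemma~\ref{LEM21}(2) with the jump term deleted. Given $u\in\D(\L^\mathrm{III})\subset\FF^\mathrm{III}$ and $f=\L^\mathrm{III}u$, testing $\EE^\mathrm{III}(u,v)=(-f,v)_H$ first against $v\in C_c^\infty(\bR^2_\pm)$ gives $\Delta u_\pm\in L^2(\bG^2_\pm)$, i.e.\ $u\in H^1_\Delta(\bG^2)$; testing then against arbitrary $v\in H^1(\bG^2)$ and applying the Green--Gauss formula of Lemma~\ref{LMB3} on $\bG^2_\pm$ yields $\langle\gamma^{\partial_2}_+u_+,\gamma_+v\rangle-\langle\gamma^{\partial_2}_-u_-,\gamma_-v\rangle=0$ for all such $v$, and choosing $v$ supported in $\bG^2_+$ or in $\bG^2_-$ forces $\left.\frac{\partial u_{\pm}}{\partial x_2}\right|_{x_2=0\pm}=0$. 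Conversely, for $u$ in the set on the right-hand side of \eqref{eq:type1-generator}, reversing this computation shows $u\in\D(\L^\mathrm{III})$ with $\L^\mathrm{III}u=\tfrac12\rDelta u$.

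There is no genuine obstacle --- this is why the authors call it trivial --- but the one point that must be respected is the order of the two test-function steps in the generator computation: the normal-derivative traces $\gamma^{\partial_2}_\pm u_\pm$ are only defined once $u\in H^1_\Delta(\bG^2_\pm)$ has been established, so one must first localize to the interior half-planes and only afterwards integrate by parts up to the boundary. Everything else is bookkeeping on a disjoint union.
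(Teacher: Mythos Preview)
Your proposal is correct and is exactly the argument the paper has in mind: the authors omit the proof as ``trivial'' precisely because it reduces to the direct-sum structure of $\bG^2$ together with the generator computation already done in Lemma~\ref{LEM21}(2) with the $\kappa$-term removed. Your remark about first establishing $u\in H^1_\Delta(\bG^2_\pm)$ before invoking the normal-derivative trace is the only point requiring care, and you handle it correctly.
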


\subsection{Phase of type IV}

The phase of type IV is given by the closure of  the quadratic form
\begin{equation}\label{eq:type4}
\begin{aligned}
  \D[\EE^\mathrm{IV}] &=C_c^\infty(\mathbb{R}^2),\\
 \EE^\mathrm{IV} (u,u)&=
 \frac{1}{2}\int_{\mathbb{\mathbb{R}}^2} |\nabla u|^2dx+\lambda\int_\mathbb{R}\left(\frac{\partial u(x_1,0)}{\partial x_1}\right)^2dx_1,\quad u\in C_c^\infty(\mathbb{R}^2),
 \end{aligned}
\end{equation}
where $\lambda>0$ is a constant,  on $L^2(\bR^2)$. The closability of $(\EE^\mathrm{IV},\DD[\EE^\mathrm{IV}])$ is due to \cite[Theorem 3.1.4]{FOT11}, and we denote its closure by $(\EE^\mathrm{IV}, \FF^\mathrm{IV})$.  The following lemma obtains the expression of $(\sE^\mathrm{IV}, \sF^\mathrm{IV})$ and characterizes its associated Markov process $X^\mathrm{IV}$.  Note that $u|_\bR$ stands for the trace of $u$ on the $x_1$-axis; see Appendix~\ref{AP1}.

\begin{lemma}\label{LM24}
The Dirichlet form $(\EE^\mathrm{IV}, \FF^\mathrm{IV})$ on $L^2(\bR^2)$ is expressed as
\[
\begin{aligned}
 \FF^\mathrm{IV}&=\left\{u\in H^1(\mathbb{R}^2): u|_{\mathbb{R}}\in H^1(\mathbb{R})\right\},\\
 \EE^\mathrm{IV}(u,u)&=
 \frac{1}{2}\int_{\mathbb{\mathbb{R}}^2} |\nabla u|^2dx+{\lambda}\int_\mathbb{R}\left(\frac{d \left(u|_\mathbb{R}\right)}{d x_1}\right)^2dx_1,\quad u\in \FF^\mathrm{IV},
 \end{aligned}
\]
which is regular, strongly local and irreducible and recurrent. Furthermore, the associated Markov process $X^\mathrm{IV}$ enjoys the following representation: 
\begin{equation}\label{representationoftype4}
X^\mathrm{IV}_t-X^\mathrm{IV}_0=(\beta^1_{t+2\lambda L_t},  \beta^2_t),\quad t\geq 0,
\end{equation}
where $B_t:=(\beta^1_t,\beta^2_t)$ is a certain standard two-dimensional Brownian motion and $L_t$ is the local time of $X^\mathrm{IV}$ at the $x_1$-axis,  i.e.  the positive continuous additive functional (PCAF in abbreviation) corresponding to the smooth measure $dx_1\delta_0(dx_2)$ with respect to $X^\mathrm{IV}$.    
\end{lemma}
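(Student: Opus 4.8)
The plan is to proceed in three stages: first identify the closure $(\EE^\mathrm{IV},\FF^\mathrm{IV})$ explicitly, then read off its qualitative properties (regularity, strong locality, irreducibility, recurrence), and finally establish the probabilistic representation \eqref{representationoftype4} via the Fukushima decomposition.

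\emph{Step 1: the expression of the closure.} Write $\GA(u,u):=\tfrac12\int_{\bR^2}|\nabla u|^2\,dx+\lambda\int_\bR\bigl(\partial_{x_1}u(x_1,0)\bigr)^2\,dx_1$ on $C_c^\infty(\bR^2)$, and let $\FF^\mathrm{IV}_0:=\{u\in H^1(\bR^2): u|_\bR\in H^1(\bR)\}$ equipped with the norm $\|u\|_{H^1(\bR^2)}^2+\|u|_\bR\|_{H^1(\bR)}^2$. One inclusion is the easy one: $C_c^\infty(\bR^2)\subset\FF^\mathrm{IV}_0$, and on $C_c^\infty(\bR^2)$ the $\EE^\mathrm{IV}_1$-norm is clearly equivalent to the $\FF^\mathrm{IV}_0$-norm (using the trace theorem $\|u|_\bR\|_{L^2(\bR)}\lesssim\|u\|_{H^1(\bR^2)}$ from Appendix~\ref{AP1}), so $\FF^\mathrm{IV}\subseteq\FF^\mathrm{IV}_0$. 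For the reverse inclusion I must show $C_c^\infty(\bR^2)$ is dense in $\FF^\mathrm{IV}_0$ for this norm: given $u\in\FF^\mathrm{IV}_0$, first cut off with $\chi(x/R)$ to reduce to compact support in $\bR^2$ (this is harmless since multiplication by a fixed smooth cutoff is continuous on both $H^1(\bR^2)$ and, after checking the trace, on $H^1(\bR)$), then mollify. The delicate point is that a generic $\bR^2$-mollification need not converge in the $H^1(\bR)$-norm of the trace; I would instead use an anisotropic or two-step mollification (mollify first in $x_1$, which commutes with taking the trace and converges in $H^1(\bR)$, then in $x_2$), or cite the standard fact that $C_c^\infty(\bR^2)$ is dense in the space of $H^1(\bR^2)$-functions whose trace lies in $H^1(\bR)$. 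That establishes $\FF^\mathrm{IV}=\FF^\mathrm{IV}_0$ and, by continuity, the stated formula for $\EE^\mathrm{IV}$ on all of $\FF^\mathrm{IV}$.

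\emph{Step 2: qualitative properties.} Regularity: $C_c^\infty(\bR^2)$ is by construction a core, and it is dense in $C_c(\bR^2)$ in the uniform norm, so $(\EE^\mathrm{IV},\FF^\mathrm{IV})$ is regular. Strong locality is immediate from the strong locality of $\int|\nabla u|^2$ and $\int(\partial_{x_1}(u|_\bR))^2$ (both vanish when $u$ is constant on a neighbourhood of the support of $v$). Irreducibility: if $A$ is invariant then $\1_A u\in\FF^\mathrm{IV}$ for all $u\in\FF^\mathrm{IV}$ with $\EE^\mathrm{IV}(\1_Au,\1_Au)+\EE^\mathrm{IV}(\1_{A^c}u,\1_{A^c}u)=\EE^\mathrm{IV}(u,u)$; since the bulk part $\tfrac12\int_{\bR^2}|\nabla u|^2$ already forces $\1_A$ to be a.e. constant (irreducibility of planar Brownian motion's form), we get $m(A)\in\{0,\infty\}$. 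Recurrence: test with a sequence $u_n\in\FF^\mathrm{IV}$, $u_n\uparrow 1$, $\EE^\mathrm{IV}(u_n,u_n)\to0$ — e.g. $u_n(x)=\phi(\log\log(|x|\vee e^e)/n)$ truncated suitably; the planar-Brownian logarithmic cutoffs make $\int_{\bR^2}|\nabla u_n|^2\to0$, and since these can be chosen to depend on $|x|$ slowly the additional term $\lambda\int_\bR(\partial_{x_1}(u_n|_\bR))^2$ also tends to $0$. Hence the form is recurrent (and then conservative).

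\emph{Step 3: the representation.} Since $(\EE^\mathrm{IV},\FF^\mathrm{IV})$ is strongly local and regular, $X^\mathrm{IV}$ is a diffusion. Apply the Fukushima decomposition to the coordinate functions $f_i(x)=x_i$ (localized). For $f_2(x)=x_2$: the energy measure is $dx$ and there is no boundary contribution to the martingale part from the $x_1$-term, and one checks the CAF of bounded variation vanishes (the normal derivative jump is $0$ here — $f_2$ is harmonic off $\bR$ and the term $\lambda\int(\partial_{x_1}(u|_\bR))^2$ does not couple to $x_2$), so $\beta^2_t:=X^{\mathrm{IV},2}_t-X^{\mathrm{IV},2}_0$ is a martingale with quadratic variation $t$, i.e. a one-dimensional Brownian motion. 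For $f_1(x)=x_1$: the strong locality gives $X^{\mathrm{IV},1}_t-X^{\mathrm{IV},1}_0=M^{[1]}_t$, a continuous martingale; its quadratic variation is the PCAF with Revuz measure $\mu_{\langle f_1\rangle}$, which from the form $\EE^\mathrm{IV}$ is $dx + 2\lambda\,dx_1\delta_0(dx_2)$ — the bulk part contributing $t$ and the boundary term contributing $2\lambda L_t$, where $L_t$ is the PCAF with Revuz measure $dx_1\delta_0(dx_2)$ by definition of the local time. So $\langle M^{[1]}\rangle_t=t+2\lambda L_t$, and by the Dambis–Dubins–Schwarz theorem $M^{[1]}_t=\beta^1_{t+2\lambda L_t}$ for a Brownian motion $\beta^1$. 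It remains to see $(\beta^1,\beta^2)$ is a genuine two-dimensional Brownian motion: the cross-variation $\langle M^{[1]},\beta^2\rangle$ has Revuz measure the mixed energy measure, which is $0$ since $\nabla x_1\cdot\nabla x_2=0$ and the boundary term involves only $x_1$; hence $\beta^1\perp\beta^2$. The obstacle here is bookkeeping the boundary term's contribution to the Revuz measure of $\langle f_1\rangle$ correctly (the factor $2$), which follows by polarizing $\EE^\mathrm{IV}(f_1\wedge n, v)$ and comparing with $\int v\,d\mu_{\langle f_1\wedge n\rangle}$ on a neighbourhood where $f_1\wedge n=f_1$.

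The main obstacle is Step 1's density argument — specifically, producing smooth approximations that converge simultaneously in $H^1(\bR^2)$ and in $H^1(\bR)$ on the trace; an anisotropic mollification handles it, but it is the one point requiring genuine care rather than routine citation.
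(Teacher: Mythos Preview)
Your three-stage strategy matches the paper's exactly, and Steps~2 and~3 are carried out in essentially the same way (logarithmic cutoffs for recurrence; Fukushima decomposition, computation of $\mu_{\langle\phi_i\rangle}$ and $\mu_{\langle\phi_1,\phi_2\rangle}$, and a martingale representation theorem for \eqref{representationoftype4}). One small correction in Step~3: the vanishing of the zero-energy part $N^{[\phi_1]}$ is not a consequence of strong locality per se; the paper checks directly that $\EE^\mathrm{IV}(\phi_i,v)=0$ for all $v\in C_c^\infty(\bR^2)$ and then invokes \cite[Corollary~5.5.1]{FOT11}.

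The one substantive difference is in your flagged ``main obstacle'', the density of $C_c^\infty(\bR^2)$ in $\FF^\mathrm{IV}_0$. Rather than anisotropic mollification, the paper uses a cleaner trick: given $u\in\FF^\mathrm{IV}$ with compact support, define
\[
u_\delta(x_1,x_2)=\begin{cases} u(x_1,x_2-\delta), & x_2>\delta,\\ u|_\bR(x_1), & |x_2|\le\delta,\\ u(x_1,x_2+\delta), & x_2<-\delta,\end{cases}
\]
i.e.\ push the two halves apart and fill the strip with the trace value. One checks $u_\delta\to u$ in $\EE^\mathrm{IV}_1$, and since $u_\delta$ is \emph{independent of $x_2$} in $|x_2|<\delta$, an ordinary 2D mollification $v_\varepsilon$ of $u_\delta$ with $\varepsilon<\delta$ satisfies $v_\varepsilon(\cdot,0)=J_\varepsilon\ast(u|_\bR)$, so convergence of the trace in $H^1(\bR)$ is automatic. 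This completely avoids the commutation issue between $x_2$-mollification and the $H^1(\bR)$-trace that makes your two-step approach delicate.
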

\begin{proof}
We first show $(\sE^\mathrm{IV},\sF^\mathrm{IV})$ is a strongly local,  and irreducible Dirichlet form.  Clearly it is a symmetric bilinear quadratic form. To show its closeness, take an $\sE^\mathrm{IV}_1$-Cauchy sequence $\{u_n:n\geq 1\}$.  Then $u_n\in H^1(\bR^2)=\sF^\mathrm{I}$ and $u_n$ is $\sE^\mathrm{I}_1$-Cauchy. Thus there exists $u\in H^1(\bR^2)$ such that $u_n$ converges to $u$ under the $\sE^\mathrm{I}_1$-norm.  Since \eqref{traceineq} holds for every $u\in H^1(\bR^2)$,  it follows that $u_n|_\bR$ converges to $u|_\bR$ under the $L^2(\bR)$-norm.  On the other hand, note that $u_n$ is $\sE^\mathrm{IV}_1$-Cauchy and 
\[
	\int_\mathbb{R}\left(\frac{d \left(u_n|_\mathbb{R}-u_m|_\bR\right)}{d x_1}\right)^2dx_1\lesssim \sE^\mathrm{IV}(u_n-u_m,u_n-u_m).
\]
Hence we can obtain that $u_n|_\bR\in H^1(\bR)$ is $\mathbf{D}_1$-Cauchy and $u_n|_\bR$ converges to $u|_\bR\in H^1(\bR)$ under the $\mathbf{D}_1$-norm.  Therefore $u\in \sF^\mathrm{IV}$ and $u_n$ converges to $u$ under the $\sE^\mathrm{IV}_1$-norm.  In addition, the strong locality of $(\EE^\mathrm{IV}, \FF^\mathrm{IV})$ is obvious, and its irreducibility is implied by that of $(\sE^\mathrm{I},\sF^\mathrm{I})$ and \cite[Corollary~4.6.4]{FOT11}.  

To prove the recurrence,  it suffices to find out a sequence $\{u_n:n\geq 1\}\subset \sF^\mathrm{IV}$ such that $u_n\rightarrow 1$,  a.e. and $\lim_{n\rightarrow \infty}\sE^\mathrm{IV}(u_n,u_n)=0$.  To do this,  take $\varphi_n\in C_c^1([0,\infty))$ such that 
\[
\begin{aligned}
	&\varphi_n(r)=1\text{ for }0\leq r\leq n;\quad  \varphi_n(r)=0\text{ for } r\geq 2n+1; \\
	&|\varphi'_n(r)|\leq 1/n  \text{ and } 0\leq \varphi(r)\leq 1\text{ for all }r\geq 0;
\end{aligned}
\]
and a function $s\in C^1([0,\infty))$:  $s(r):=r$ for $0\leq r\leq 1$ and $s(r):=\log r+1$ for $r>1$.  For $n\geq 1$, define $u_n(x):=\varphi_n(s(|x|))$ for any $x\in \bR^2$.  We assert that $\{u_n:n\geq 1\}$ is the desirable sequence.  It is easy to verify that $u_n\in C_c^1(\bR^2)\subset \sF^\mathrm{IV}$ and $u_n\rightarrow 1$ pointwisely.  A computation yields 
\[
	\sE^\mathrm{IV}(u_n,u_n)=\pi \int_0^\infty \left|(\varphi_n\circ s)'(r)\right|^2rdr + 2\lambda \int_0^\infty \left|(\varphi_n\circ s)'(r)\right|^2 dr.  
\]
Hence we only need to show $\int_0^1\left|(\varphi_n\circ s)'(r)\right|^2 dr,  \int_1^\infty\left|(\varphi_n\circ s)'(r)\right|^2 rdr\rightarrow 0$,  which imply $\sE^\mathrm{IV}(u_n,u_n)\rightarrow 0$.  To this end,  note that for any $n\geq 1$, 
\[
	\int_0^1\left|(\varphi_n\circ s)'(r)\right|^2 dr=\int_0^1 |\varphi'_n(r)|^2dr =0,
\]
and we use the substitution $y:=\log r+1$ to find that
\[
\int_1^\infty\left|(\varphi_n\circ s)'(r)\right|^2 rdr=\int_1^\infty \left|\varphi'_n(\log r+1)\right|^2\frac{dr}{r}=\int_1^\infty \left|\varphi'_n(y)\right|^2dy\leq \int_n^{2n+1} \left(\frac{1}{n}\right)^2dy\rightarrow 0.  
\]
Eventually the recurrence of $(\sE^\mathrm{IV}, \sF^\mathrm{IV})$ is verified. 

The regularity will be proved by several steps as follows.   Firstly,  we assert that the family  $\sF^\mathrm{IV}_c$ of all functions with compact support in $\sF^\mathrm{IV}$ is $\sE^\mathrm{IV}_1$-dense in $\sF^\mathrm{IV}$.  Indeed,  take a function $\varphi\in C_c^\infty(\bR)$ such that $\varphi(r)\equiv 1$ for $|r|\leq 1$,  $\varphi(r)\equiv 0$ for $|r|>2$ and $0\leq \varphi\leq 1$.  Then for any $u\in \sF^\mathrm{IV}$,  define $u_n(x):=u(x)\varphi(x_1/n)\varphi(x_2/n)$ for $x=(x_1,x_2)\in \bR^2$.  It is easy to verify that $u_n\in \sF^\mathrm{IV}_c$ and $u_n$ converges to $u$ under the $\sE^\mathrm{IV}_1$-norm.  Secondly,  for a fixed $u\in \sF^\mathrm{IV}_c$,  define a function for any $\delta>0$
\begin{equation}\label{eq:211}
u_\delta(x_1,x_2)=\begin{cases}
u(x_1,x_2-\delta), \quad & x_2>\delta,\\
u|_{\bR}(x_1), \quad & |x_2|\leq \delta, \\
u(x_1,x_2+\delta), \quad & x_2<-\delta. 
\end{cases}
\end{equation}
We assert that $u_\delta\in \sF^\mathrm{IV}_c$ and $u_\delta$ converges to $u$ under the $\sE^\mathrm{IV}_1$-norm as $\delta\downarrow 0$.  The fact $u_\delta\in H^1(\bR^2)$ can be deduced by mimicking the proof of Lemma~\ref{LMB1},  and in addition $u_\delta|_{\bR}=u|_\bR\in H^1(\bR)$.  Hence $u_\delta\in \sF^\mathrm{IV}_c$ holds.  To show $\EE^\mathrm{IV}_1 (u_\delta-u,u_\delta-u)\rightarrow 0$,  note that
\begin{equation}\label{EQ3UGCON}
\begin{split}
\| u_\delta- u\|^2_{L^2(\mathbb{R}^2)}
& \lesssim \int_\mathbb{R}\int_{\delta}^\infty\left[ u(x_1,x_2-\delta)- u(x_1,x_2)\right]^2dx_2dx_1\\
&\qquad +\int_\mathbb{R}\int_{-\delta}^{\delta}\left[ u_\delta(x_1,x_2)- u(x_1,x_2)\right]^2dx_2dx_1\\
&\qquad +\int_\mathbb{R}\int^{-\delta}_{-\infty}\left[ u(x_1,x_2+\delta)- u(x_1,x_2)\right]^2dx_2dx_1\\
&=:I_1+I_2+I_3.
\end{split}
\end{equation}
The terms $I_1$ and $I_3$ must tend to $0$ due to the equicontinuity of the integration (see, e.g., \cite[Theorem 2.32]{AF03}).  We also have $I_2\rightarrow 0$ because
\[
\int_{\Omega_\delta}| u_\delta|^2dx=2\delta\int_\mathbb{R}u|_{\bR}(x_1)^2dx_1\rightarrow 0,
\]
Hence $\| u_\delta- u\|^2_{L^2(\mathbb{R}^2)}\rightarrow 0$.  Analogically we can obtain that $\|\nabla u_\delta-\nabla u\|^2_{L^2(\mathbb{R}^2)}\rightarrow 0$.  Since $u_\delta|_\bR=u|_\bR$,  it eventually follows that $\EE^\mathrm{IV}_1 (u_\delta-u,u_\delta-u)\rightarrow 0$.  Thirdly,  we show that $u_\delta$ in \eqref{eq:211} can be approximated by a sequence of functions in $C_c^\infty(\bR^2)$ under the $\sE^\mathrm{IV}_1$-norm.  To accomplish this,  take an even function $J\in C_c^\infty(\bR)$ such that $\text{supp}[J]\subset [-1,1]$  and $\int_\bR J(r)dr=1$,  and for any $\varepsilon>0$ set $J_\varepsilon(\cdot):=\varepsilon^{-1}J(\cdot/\varepsilon)$.  Define
\[
	v_\varepsilon(x_1,x_2):=\int_{\bR^2} J_\varepsilon(x_1-x'_1)J_\varepsilon(x_2-x'_2)u_\delta(x'_1,x'_2)dx'_1dx'_2,\quad x=(x_1,x_2)\in \bR^2.  
\]
Since $u_\delta \in H^1(\bR^2)$ has compact support,  we have $v_\varepsilon\in C_c^\infty(\bR^2)$ and $v_\varepsilon$ converges to $u_\delta$ in $H^1(\bR^2)$ as $\varepsilon\downarrow 0$.  Note that when $\varepsilon<\delta$, 
\[
	h_\varepsilon(x_1):=v_\varepsilon(x_1, 0)=\int_{\bR^2} J_\varepsilon(x_1-x'_1)J_\varepsilon(-x'_2)u|_\bR(x'_1)dx'_1dx'_2=\int_\bR J_\varepsilon(x_1-x'_1)u|_\bR(x'_1)dx'_1.  
\]
This implies $\mathbf{D}(h_\varepsilon-u|_\bR, h_\varepsilon-u|_\bR)\rightarrow 0$  as $\varepsilon\downarrow 0$.  Therefore $v_\varepsilon$ converges to $u_\delta$ under the $\sE^\mathrm{IV}_1$-norm as $\varepsilon\downarrow 0$.  Finally,  we can eventually conclude that $C_c^\infty(\bR^2)$ is $\sE^\mathrm{IV}_1$-dense in $\sF^\mathrm{IV}$ by a standard argument. 

Now we turn to formulate the representation \eqref{representationoftype4}.  The basic tool  is the so-called Fukushima's decomposition;  see,  e.g. \cite[Chapter 5]{FOT11}.  Denote
$\phi(x):=(\phi_1(x), \phi_2(x))=(x_1,x_2)$ for any $x=(x_1, x_2)\in \bR^2$.  
Clearly $\phi_i\in \FF^\mathrm{IV}_{loc}$,  the family of all functions locally in $\sF^\mathrm{IV}$.  Note that the recurrence of $(\sE^\mathrm{IV},\sF^\mathrm{IV})$ implies that $X^\mathrm{IV}$ is conservative.  By means of \cite[Theorem 5.5.1]{FOT11}, we can write the Fukushima decomposition of $\phi_i(X^\mathrm{IV}_t)$ for $i=1,2$  and any $t\geq 0$ as
\[
	\phi_i(X^\mathrm{IV}_t)-\phi_i(X^\mathrm{IV}_0)=M_t^{[\phi_i]}+N_t^{[\phi_i]},
\]
where $M_t^{[\phi_i]}$ is a continuous martingale additive functional locally of finite energy, and $N_t^{[\phi_i]}$ is a continuous additive functional locally of  zero energy.  Firstly we assert that 
\begin{equation}\label{eq:2131}
N^{[\phi_1]}_t=N^{[\phi_2]}_t=0,\quad t\geq 0.
\end{equation}
Indeed,  a straightforward computation yields $\EE^\mathrm{IV}(\phi_i, v)=0=0$ for any $v\in C_c^\infty(\bR^2)$.  Then \eqref{eq:2131} follows from \cite[Corollary~5.5.1]{FOT11}.  Secondly,  we figure out the expression of $M_t^{[\phi_i]}$ by computing the energy measures $\mu_{\langle \phi_i\rangle}$ and $\mu_{\langle \phi_1,\phi_2\rangle}$.  Note that \cite[Theorem~5.5.2]{FOT11} implies
\[\int_{\mathbb{R}^2}fd\mu_{\langle\phi_i\rangle}=2\EE^\mathrm{IV}(\phi_if,\phi_i)-\EE^\mathrm{IV}(\phi_i^2,f), \quad f\in C^\infty_c(\mathbb{R}^2).\]
The right hand side is equal to
\[\sum_{j=1}^2\int_{\mathbb{R}^2}\delta_i^j fdx_1dx_2+2\lambda\int_\mathbb{R} f|_\mathbb{R}\delta_i^1dx_1, \quad i=1,2,\]
where $\delta_i^j$ is the Kronecker delta. It follows that 
\begin{equation}\label{eq:2141}
\mu_{\langle\phi_1\rangle}(dx)=dx_1dx_2+2\lambda dx_1\delta_0(dx_2) \quad \mu_{\langle\phi_2\rangle}(dx)=dx_1dx_2,
\end{equation} 
where $\delta_0$ is the Dirac measure at $0$.  Similarly we can also obtain that
\[
\mu_{\langle \phi_1+\phi_2\rangle}=2dx_1dx_2+2\lambda dx_1\delta_0(dx_2).
\]
Then it follows from the polarization identity that
\begin{equation}\label{eq:2151}
\mu_{\langle\phi_1,\phi_2\rangle}=\frac{1}{2}\left(\mu_{\langle\phi_1+\phi_2\rangle}-\mu_{\langle\phi_1\rangle}-\mu_{\langle\phi_2\rangle}\right)=0.
\end{equation}
Hence \eqref{eq:2141} and \eqref{eq:2151} yield
\[
\langle M^{[\phi_1]}\rangle_t=t+2\lambda L_t, \quad \langle M^{[\phi_2]}\rangle_t=t, \quad \langle M^{[\phi_1]}, M^{[\phi_2]}\rangle_t=0,
\]
where $L_t$ is the local time of $X^\mathrm{IV}$, i.e. the PCAF corresponding to the Revuz measure $dx_1\delta_0(dx_2)$.  Using a martingale representation theorem such as \cite[Chapter V,  Theorem~1.9]{RY99},  we eventually arrive at \eqref{eq:211}.  That completes the proof.
\end{proof}
\begin{remark}
We should point out that $L_0=0$ and $t\mapsto L_t$ is a non-decreasing process.   Particularly,  $L_t$ increases at time $t$ only when $X^\mathrm{IV}_t$ hits the $x_1$-axis.  
\end{remark}

The following lemma gives the generator of $X^\mathrm{IV}$.  Note that $u\in H^1_\Delta(\bG^2)$ and $\gamma_+ u_+=\gamma_-u_-$ imply $u\in H^1(\bR^2)$, and thus $u|_\bR=\gamma_+ u_+$ is well defined due to Lemma~\ref{LMB1}. 

\begin{lemma}
The generator $\L^\mathrm{IV}$ of $X^\mathrm{IV}$ on $L^2(\bR^2)$ is $\frac{1}{2}\rDelta u$ restricted to the domain
\begin{equation}\label{eq:type4-generator}
{\D(\L^\mathrm{IV})=\left\{u\in H^1_\Delta(\bG^2): \gamma_+ u_+=\gamma_-u_-,  \left.\frac{\partial u_+}{\partial x_2}\right|_{x_2=0+}-\left.\frac{\partial u_-}{\partial x_2}\right|_{x_2=0-}=-2\lambda\frac{d^2 (u|_\mathbb{R})}{d x_1^2} \right\}.}
\end{equation}
\end{lemma}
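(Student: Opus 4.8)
The plan is to follow the template of the earlier lemmas in this section (types I--III). Write $\mathcal{G}$ for the set on the right-hand side of \eqref{eq:type4-generator}, and recall from \cite[Corollary~1.3.1]{FOT11} that $u\in\D(\L^\mathrm{IV})$ with $f=\L^\mathrm{IV}u$ if and only if $u\in\sF^\mathrm{IV}$ and $\sE^\mathrm{IV}(u,v)=(-f,v)_H$ for every $v\in\sF^\mathrm{IV}$. I will establish the two inclusions $\mathcal{G}\subset\D(\L^\mathrm{IV})$, on which $\L^\mathrm{IV}u=\frac12\rDelta u$, and $\D(\L^\mathrm{IV})\subset\mathcal{G}$. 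The ingredients are: the explicit description of $(\sE^\mathrm{IV},\sF^\mathrm{IV})$ from Lemma~\ref{LM24}, whose polarization is $\sE^\mathrm{IV}(u,v)=\frac12\int_{\bR^2}\nabla u\cdot\nabla v\,dx+\lambda\int_\bR\frac{d(u|_\bR)}{dx_1}\frac{d(v|_\bR)}{dx_1}\,dx_1$; the remark preceding the lemma, that $u\in H^1_\Delta(\bG^2)$ with $\gamma_+u_+=\gamma_-u_-$ forces $u\in H^1(\bR^2)$ with $u|_\bR=\gamma_\pm u_\pm$; and the Green--Gauss formula (Lemma~\ref{LMB3}) applied separately on $\bR^2_+$ and on $\bR^2_-$, which, exactly as in the proof of Lemma~\ref{LEM21}, turns $\frac12\int_{\bR^2}\nabla u\cdot\nabla v\,dx$ into $-\frac12(\rDelta u,v)_H-\frac12\langle\gamma_+^{\partial_2}u_+-\gamma_-^{\partial_2}u_-,\,v|_\bR\rangle$.

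For $\mathcal{G}\subset\D(\L^\mathrm{IV})$, take $u\in\mathcal{G}$. Since $\gamma_+u_+=\gamma_-u_-$ and $u\in H^1_\Delta(\bG^2)$, we have $u\in H^1(\bR^2)$; moreover the traces $\gamma_\pm^{\partial_2}u_\pm$ lie in $H^{-1/2}(\bR)$, so the interface identity in $\mathcal{G}$ shows $\frac{d^2(u|_\bR)}{dx_1^2}\in H^{-1/2}(\bR)$, hence $u|_\bR\in H^{3/2}(\bR)\subset H^1(\bR)$ and $u\in\sF^\mathrm{IV}$. For $v\in\sF^\mathrm{IV}$, insert $\gamma_+^{\partial_2}u_+-\gamma_-^{\partial_2}u_-=-2\lambda\frac{d^2(u|_\bR)}{dx_1^2}$ into the Green--Gauss identity above and integrate by parts once in $x_1$: the boundary term becomes $-\lambda\int_\bR\frac{d(u|_\bR)}{dx_1}\frac{d(v|_\bR)}{dx_1}\,dx_1$, which cancels the interface term of $\sE^\mathrm{IV}$, leaving $\sE^\mathrm{IV}(u,v)=(-\frac12\rDelta u,v)_H$. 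By the characterization of the generator, $u\in\D(\L^\mathrm{IV})$ and $\L^\mathrm{IV}u=\frac12\rDelta u$.

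For the reverse inclusion, let $u\in\D(\L^\mathrm{IV})\subset\sF^\mathrm{IV}\subset H^1(\bR^2)$ and $f=\L^\mathrm{IV}u$; then $\gamma_+u_+=\gamma_-u_-$ holds automatically. Testing $\sE^\mathrm{IV}(u,v)=(-f,v)_H$ against $v\in C_c^\infty(\bR^2_\pm)$, for which $v|_\bR=0$, gives $\frac12\int_{\bR^2_\pm}\nabla u_\pm\cdot\nabla v_\pm\,dx=-(f_\pm,v_\pm)_{L^2(\bG^2_\pm)}$, so $\Delta u_\pm=2f_\pm\in L^2(\bG^2_\pm)$, i.e. $u\in H^1_\Delta(\bG^2)$ and $\rDelta u=2f$. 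Now take a general $v\in\sF^\mathrm{IV}$ with compact support (dense in $\sF^\mathrm{IV}$ by the proof of Lemma~\ref{LM24}): substituting $\rDelta u=2f$ into $\sE^\mathrm{IV}(u,v)=(-f,v)_H$ and using Green--Gauss, the volume terms cancel and one is left with $\frac12\langle\gamma_+^{\partial_2}u_+-\gamma_-^{\partial_2}u_-,v|_\bR\rangle=\lambda\int_\bR\frac{d(u|_\bR)}{dx_1}\frac{d(v|_\bR)}{dx_1}\,dx_1$ for all such $v$. Integrating by parts in $x_1$ on the right, and using that $v|_\bR$ ranges over $C_c^\infty(\bR)$ as $v$ ranges over $C_c^\infty(\bR^2)\subset\sF^\mathrm{IV}$, we get $\gamma_+^{\partial_2}u_+-\gamma_-^{\partial_2}u_-=-2\lambda\frac{d^2(u|_\bR)}{dx_1^2}$ as distributions on $\bR$, i.e. $u\in\mathcal{G}$.

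The step I expect to be the main obstacle is the bookkeeping on the $x_1$-axis. The Green--Gauss formula of Lemma~\ref{LMB3}, applied to an $H^1_\Delta$ function, produces the normal-derivative trace only as an element of the dual of the $H^{1/2}$-trace space, so every interface identity must be read in that weak sense; the opposite outward-normal orientations of $\bR^2_+$ and $\bR^2_-$ on the axis contribute signs that must be tracked carefully (as in Lemma~\ref{LEM21}); and before ``cancelling'' $v|_\bR$ and integrating by parts against it one must check that $v|_\bR$, $v\in\sF^\mathrm{IV}$, exhausts a dense enough subset of the relevant function space. The only other point requiring a little extra work is the regularity upgrade $u|_\bR\in H^{3/2}(\bR)\subset H^1(\bR)$ used in the first inclusion, which follows from the interface equation by a one-dimensional elliptic-regularity argument.
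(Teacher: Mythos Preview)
Your proposal is correct and follows essentially the same route as the paper: both directions use the Green--Gauss formula on the two half-planes, the regularity upgrade $u|_\bR\in H^{3/2}(\bR)$ from $\frac{d^2(u|_\bR)}{dx_1^2}\in H^{-1/2}(\bR)$, and the density of $C_c^\infty(\bR^2)$ in $\sF^\mathrm{IV}$ to identify the interface condition. The only cosmetic difference is that in the reverse inclusion the paper tests against product functions $v(x)=v_1(x_1)v_2(x_2)$ with $v_2(0)\neq 0$, whereas you observe directly that $v|_\bR$ ranges over $C_c^\infty(\bR)$; the content is the same.
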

\begin{proof}
Denote the right hand side of \eqref{eq:type4-generator} by $\mathcal{G}$.  We first prove $\mathcal{G}\subset \mathcal{D}(\L^\mathrm{IV})$ and $(\L^\mathrm{IV} u)_\pm=\frac{1}{2}\Delta u_\pm$ for any $u\in \mathcal{G}$.  Fix $u\in \mathcal{G}$ and it suffices to show that 
\begin{equation}\label{eq:213}
	u\in \sF^\mathrm{IV},\quad \sE^\mathrm{IV}(u,v)=-\frac{1}{2}\int_{\bR^2_+} \Delta u_+  \cdot v_+ dx-\frac{1}{2}\int_{\bR^2_-} \Delta u_-  \cdot v_- dx,\quad \forall v\in C_c^\infty(\bR^2),
\end{equation}
since $C_c^\infty(\bR^2)$ is $\sE^\mathrm{IV}_1$-dense in $\sF^\mathrm{IV}$ due to Lemma~\ref{LM24}.  Indeed,  $u\in H^1(\bG^2)$ and $\gamma_+u_+=\gamma_-u_-$ imply that $u\in H^1(\bR^2)$ and $u|_{\bR}= \gamma_+u_+=\gamma_-u_-\in H^{1/2}(\bR)$ by Lemma~\ref{LMB1}.  Set $g:=u|_\bR\in H^{1/2}(\bR)$ for convenience. Since $\left.\frac{\partial u_\pm}{\partial x_2}\right|_{x_2=0\pm}\in H^{-1/2}(\bR)$ due to Lemma~\ref{LEMA2},  the last equality in \eqref{eq:type4-generator} yields $g''\in H^{-1/2}(\bR)$, i.e. 
\begin{equation}\label{eq:218}
\int_\bR \left(1+|\xi|^2\right)^{-1/2} |\widehat{g''}(\xi)|^2d\xi=	\int_\bR \left(1+|\xi|^2\right)^{-1/2}|\xi|^4 |\widehat{g}(\xi)|^2d\xi <\infty, 
\end{equation}
where $\widehat{g}$ is the Fourier transform of $g$.  Note that $\int_\bR |\widehat{g}(\xi)|^2d\xi<\infty$ due to $g\in H^{1/2}(\bR)\subset L^2(\bR)$.  Together with \eqref{eq:218},  we can obtain that
\[
\begin{aligned}
\int_\bR \left(1+|\xi|^2\right)^{3/2} |\widehat{g}(\xi)|^2d\xi&\leq 2\int_\bR \left(1+|\xi|^2\right)^{-1/2}(1+|\xi|^4) |\widehat{g}(\xi)|^2d\xi \\
&\leq 2\int_\bR |\widehat{g}(\xi)|^2d\xi+ 2\int_\bR \left(1+|\xi|^2\right)^{-1/2}|\xi|^4 |\widehat{g}(\xi)|^2d\xi <\infty. 
\end{aligned}\]
In other words,  $u|_\bR=g\in H^{3/2}(\bR)\subset H^1(\bR)$.  Particularly $u\in \sF^\mathrm{IV}$.  Furthermore,  it follows from the Green-Gauss formula \eqref{eq:Green2} that 
\[
-\left(\nabla u_\pm, \nabla v_\pm \right)_{L^2\left(\mathbb{R}^2_\pm\right)}=\left(\Delta u_\pm,  v_\pm\right)_{L^2\left(\mathbb{R}^2_\pm\right)}\pm\left\langle\left.\frac{\partial u_\pm}{\partial x_2}\right|_{x_2=0\pm},  v(\cdot, 0)\right\rangle.
\]
This yields 
\begin{equation}\label{eq:214}
\int_{\bR^2_0}\nabla u \cdot \nabla vdx=-\int_{\bR^2_0} \Delta u\cdot vdx-\left\langle\left.\frac{\partial u_+}{\partial x_2}\right|_{x_2=0+}-\left.\frac{\partial u_-}{\partial x_2}\right|_{x_2=0-},  v(\cdot, 0)\right\rangle.
\end{equation}
Note that 
\begin{equation}\label{eq:215}
	-\left\langle \frac{d^2(u|_\bR)}{dx_1^2}, v(\cdot, 0)\right\rangle=\int_\bR \frac{d u|_\bR}{dx_1}\frac{d v(\cdot, 0)}{dx_1}dx_1,
\end{equation}
since $\frac{d^2(u|_\bR)}{dx_1^2}=\frac{d}{dx_1} \left(\frac{d u|_\bR}{dx_1}\right)$ is the weak derivative of $\frac{d u|_\bR}{dx_1}\in L^2(\bR)$ and $v(\cdot, 0)\in C_c^\infty(\bR)$.  Eventually it follows from \eqref{eq:214}, \eqref{eq:215} and the definition of $\mathcal{G}$ that 
\[
	\sE^\mathrm{IV}(u,v)=-\frac{1}{2}\int_{\bR^2_0} \Delta u\cdot v dx. 
\]
As a result, \eqref{eq:213} is proved. 

To the contrary,  we need to prove $\mathcal{D}(\L^\mathrm{IV})\subset \mathcal{G}$.  To accomplish this, take $u\in \D(\L^\mathrm{IV})$ with $f:=\L^\mathrm{IV} u \in L^2(\bR^2)$.  Since $\mathcal{D}(\L^\mathrm{IV})\subset \sF^\mathrm{IV}$, it follows from Lemma~\ref{LMB1} that $u\in H^1(\bG^2)$ and $\gamma_+u_+=\gamma_-u_-=u|_\bR\in H^1(\bR)$.  Hence it suffices to show that $\rDelta u\in L^2(\bR^2_0)$ and the last equality on the right hand side of \eqref{eq:type4-generator} holds.  
To do this, take $v\in C_c^\infty(\bR^2)$ with $\text{supp}[v]\subset \bR^2_\pm$ and it holds that
\[(-f_\pm,v)_{L^2(\bR^2_\pm)}=\EE^\mathrm{IV}(u,v)=\frac{1}{2}\int_{\mathbb{R}^2_\pm}\nabla u_\pm \cdot\nabla v dx. \]
Using the definition of $\Delta u_\pm$, we have $\Delta u_\pm=2f_\pm \in L^2(\bR^2_\pm)$. Hence $\rDelta u\in L^2(\bR^2_0)$ and $u\in H^1_\Delta (\bG^2)$. 
On the other hand,  taking $v(x):=v_1(x_1)v_2(x_2)$ with $v_1,v_2\in C_c^\infty(\bR)$ and $v_2(0)\neq 0$ instead,  it follows from the Green-Gauss formula \eqref{eq:GreenGaussformula} that 
\[
-\left(\nabla u_\pm, \nabla v_\pm\right)_{L^2\left(\mathbb{R}^2_\pm\right)}=\left(\Delta u_\pm,  v_\pm\right)_{L^2\left(\mathbb{R}^2_\pm\right)}\pm v_2(0)\cdot \left\langle\left.\frac{\partial u_\pm}{\partial x_2}\right|_{x_2=0\pm},  v_1\right\rangle.
\]
Since $(\rDelta u, v)_{L^2(\bR^2_0)}=(2f,v)_{L^2(\mathbb{R}^2)}=2\EE^\mathrm{IV}(u,v)$,  we can obtain that for all $v_1\in C_c^\infty(\bR)$, 
\[\left\langle\left.\frac{\partial u_+}{\partial x_2}\right|_{x_2=0+}-\left.\frac{\partial u_-}{\partial x_2}\right|_{x_2=0-},  v_1\right\rangle=2\lambda \left(\frac{d (u|_\mathbb{R})}{d x_1}, \frac{d v_1}{d x_1}\right)_{L^2(\mathbb{R})}=-2\lambda \left\langle \frac{d^2 (u|_\mathbb{R})}{d x_1^2}, v_1  \right\rangle,\]
where the second equality is due to the definition of Schwartz distribution $\frac{d^2(u|_\bR)}{dx_1^2}=\frac{d}{dx_1} \left(\frac{d u|_\bR}{dx_1}\right)$.  Therefore  
 \[-2\lambda\frac{d^2 (u|_\mathbb{R})}{d x_1^2}=\left.\frac{\partial u_+}{\partial x_2}\right|_{x_2=0+}-\left.\frac{\partial u_-}{\partial x_2}\right|_{x_2=0-}\in H^{-1/2}(\bR). \]
 That completes the proof. 
\end{proof}

\subsection{Phase of type V}

The phase of type V  is given by the Dirichlet form on $L^2\left(\mathbb{R}^2_0\right)$:
\begin{equation}\label{eq:type5}
\begin{aligned}
 & \FF^\mathrm{V}=\left\{u\in L^2\left(\mathbb{R}^2_0\right): u_\pm\in H^1_0\left(\mathbb{R}^2_\pm\right) \right\},\\
 &\EE^\mathrm{V} (u,u)=
 \frac{1}{2}\int_{\mathbb{\mathbb{R}}^2_0} |\nabla u|^2dx,\quad u\in \FF^\mathrm{V}.
 \end{aligned}
\end{equation}
Clearly,  $(\EE^\mathrm{V}, \FF^{\mathrm{V}})$ is a regular but not irreducible Dirichlet form on $L^2\left(\mathbb{R}^2_0\right)$. The associated Markov process $X^\mathrm{V}$ is the absorbing Brownian motion on $\bR^2_0$, namely,  
\[X^\mathrm{V}_t=\begin{cases}
X^\mathrm{I}_t, \quad & t<\zeta,\\
\partial, \quad & t\geq\zeta,
\end{cases}
\]
where $\zeta:=\inf\{t>0: X^\mathrm{I}_t\notin \mathbb{R}^2_0\}$ and $\mathbb{R}^2_0\cup \{\partial\}$ is the one-point compactification of $\mathbb{R}^2_0$.
The generator $\L^\mathrm{V}$ is $\frac{1}{2}\rDelta u$ restricted to the domain
\begin{equation}\label{eq:type5-generator}
\D(\L^\mathrm{V})=\left\{u\in H^1_\Delta(\bG^2): \gamma_+ u_+=\gamma_-u_-=0\right\}.\end{equation}


\subsection{Phase of type VI}
The phase of type VI is given by the Dirichlet form on $L^2(\bG^2)$: 
\begin{equation}\label{eq:type6}
\begin{aligned}
  \FF^\mathrm{VI}&=H^1(\mathbb{G}^2),\\
 \EE^\mathrm{VI} (u,u)&=
 \frac{1}{2}\int_{\mathbb{G}^2} |\nabla u|^2dx+ \frac{\mu}{4\pi}\int_{\mathbb{R}\times\mathbb{R}}\frac{\left(u(x_1+)-u(x_1'-)\right)^2}{\left(\frac{2\ell}{\pi}\right)^2\left(\cosh\left(\frac{\pi}{2\ell}(x_1-x_1')\right)+1\right)}dx_1dx_1'\\
&\qquad \qquad+\frac{\mu}{8\pi}\int_{\mathbb{R}\times\mathbb{R}}\frac{\left(u(x_1+)-u(x_1'+)\right)^2+\left(u(x_1-)-u(x_1'-)\right)^2}{\left(\frac{2\ell}{\pi}\right)^2\left(\cosh\left(\frac{\pi}{2\ell}(x_1-x_1')\right)-1\right)}dx_1dx_1',\quad u\in \sF^\mathrm{VI},\\
 \end{aligned}
\end{equation}
where $\ell,\mu >0$ are given constants.  The following lemma states the basic facts about $(\cE^\mathrm{VI}, \cF^\mathrm{VI})$. 

\begin{lemma}\label{LEM27}
 $(\EE^\mathrm{VI},\FF^\mathrm{VI})$ is a regular and irreducible Dirichlet form on $L^2(\mathbb{G}^2)$. Furthermore, its generator $\L^\mathrm{VI}$ on $L^2(\mathbb{G}^2)$ is $\frac{1}{2}\rDelta$ restricted to the domain
\begin{equation}\label{eq:type6-generator}
\D(\L^\mathrm{VI})=\left\{u\in  H^1_\Delta(\mathbb{G}^2):
 \left.\frac{\partial u_{\pm}}{\partial x_2}\right|_{x_2=0\pm}=
{ \frac{\mu}{2\pi}}\int_\mathbb{R}\frac{\left(u(x_1+)-{u(x'_1-)}\right)}{\left(\frac{2\ell}{\pi}\right)^2\left(\cosh\left(\frac{\pi}{2\ell}(x_1-x_1')\right)+1\right)}dx'_1{\mp\frac{\mu}{4\pi}}\check{\mathcal{L}}_\ell(\gamma_\pm u)\right\},
\end{equation}
where $\check{\mathcal{L}}_\ell$ is a symmetric L\'evy type operator: For any $w\in {H}^{\frac{1}{2}}(\mathbb{R})$, 
\begin{equation}\label{eq:Ll}
\check{\mathcal{L}}_\ell w(x)=\int_{\mathbb{R}}\frac{w(x+y)-2w(x)+w(x-y)}{\left(\frac{2\ell}{\pi}\right)^2\left(\cosh\left(\frac{\pi}{2\ell}y\right)-1\right)}dy.
\end{equation}
\end{lemma}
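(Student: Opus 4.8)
The plan is to treat $(\EE^{\mathrm{VI}},\FF^{\mathrm{VI}})$ as a bounded jump-type perturbation of the reflecting Brownian form $(\EE^{\mathrm{III}},\FF^{\mathrm{III}})$ of \eqref{eq:p1} and then to read off the generator from an integration by parts. For the first assertion, I would estimate the two double integrals in \eqref{eq:type6} by a constant multiple of $\|u\|_{H^1(\bG^2)}^2$. In the first (interacting) integral the kernel $k(z):=\bigl(\tfrac{2\ell}{\pi}\bigr)^{-2}\bigl(\cosh(\tfrac{\pi}{2\ell}z)+1\bigr)^{-1}$ is bounded and integrable over $\bR$, with $\int_{\bR}k\,dz=\pi/\ell$; expanding the square and using Young's convolution inequality, this term is controlled by $\|\gamma_+u_+\|_{L^2(\bR)}^2+\|\gamma_-u_-\|_{L^2(\bR)}^2$, hence by $\|u\|_{H^1(\bG^2)}^2$ through the trace estimate \eqref{EQ2TRH2}. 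In the second (self-interacting) integral the reciprocal kernel $\bigl(\tfrac{2\ell}{\pi}\bigr)^{-2}\bigl(\cosh(\tfrac{\pi}{2\ell}z)-1\bigr)^{-1}$ is comparable to $|z|^{-2}$ near $z=0$ and decays exponentially at infinity, so each of its two summands is comparable to the Gagliardo seminorm $[\gamma_\pm u_\pm]_{H^{1/2}(\bR)}^2$, which is dominated by $\|u_\pm\|_{H^1(\bR^2_\pm)}^2$ by the trace theorem. Since the perturbation is nonnegative, the norm $\|\cdot\|_{\EE^{\mathrm{VI}}_1}$ is equivalent to $\|\cdot\|_{\EE^{\mathrm{III}}_1}$ on $H^1(\bG^2)$; closedness of $(\EE^{\mathrm{VI}},\FF^{\mathrm{VI}})$ follows, and its Markovian property is immediate since the unit contraction $\bar u=(0\vee u)\wedge1$ decreases each of the three terms pointwise. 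Regularity is then inherited from that of $(\EE^{\mathrm{III}},\FF^{\mathrm{III}})$ exactly as in the proof of Lemma~\ref{LEM21}, with $C_c^\infty(\bar{\bR}^2_+)\oplus C_c^\infty(\bar{\bR}^2_-)$ serving as a core for both forms.

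For irreducibility, since both the strongly local part $u\mapsto\tfrac12\int_{\bG^2}|\nabla u|^2\,dx$ and the jump part of $\EE^{\mathrm{VI}}$ are themselves Dirichlet forms, any $\EE^{\mathrm{VI}}$-invariant set is invariant for each of them. Invariance under the reflecting Brownian part forces it to equal, modulo $m$-null sets, one of $\emptyset,\bG^2_+,\bG^2_-,\bG^2$, each $\bG^2_\pm$ carrying an irreducible reflecting Brownian motion; and $\bG^2_\pm$ is excluded because, for a strictly positive $u\in\FF^{\mathrm{VI}}$, the jump part evaluated at $(1_{\bG^2_+}u,\,1_{\bG^2_-}u)$ reduces to $-\tfrac{\mu}{4\pi}\int_{\bR\times\bR}u(x_1+)\,u(x_1'-)\,k(x_1-x_1')\,dx_1dx_1'<0$ (the self-interacting contribution drops out, the traces on the two boundaries having disjoint supports), which contradicts invariance. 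Hence $(\EE^{\mathrm{VI}},\FF^{\mathrm{VI}})$ is irreducible.

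For the generator, let $\mathcal{G}$ denote the right-hand side of \eqref{eq:type6-generator}. For $u\in\mathcal{G}$ and $v\in\FF^{\mathrm{VI}}=H^1(\bG^2)$, the Green--Gauss formula of Lemma~\ref{LMB3} gives
\[
\tfrac12\int_{\bG^2}\nabla u\cdot\nabla v\,dx=-\tfrac12\int_{\bG^2}\rDelta u\cdot v\,dx-\tfrac12\langle\gamma_+^{\partial_2}u_+,\gamma_+v\rangle+\tfrac12\langle\gamma_-^{\partial_2}u_-,\gamma_-v\rangle .
\]
I would then rewrite the two jump terms, by Fubini and the evenness of the kernels, as boundary pairings: the self-interacting integral contributes $-\tfrac{\mu}{8\pi}\langle\check{\mathcal{L}}_\ell(\gamma_\pm u),\gamma_\pm v\rangle$ on the $\pm$-boundary via the identity $\int_{\bR\times\bR}\bigl(w(x)-w(y)\bigr)\bigl(h(x)-h(y)\bigr)n(x-y)^{-1}\,dxdy=-\int_{\bR}(\check{\mathcal{L}}_\ell w)\,h\,dx$ for $w,h\in H^{1/2}(\bR)$, where $n(y)=\bigl(\tfrac{2\ell}{\pi}\bigr)^{2}\bigl(\cosh(\tfrac{\pi}{2\ell}y)-1\bigr)$ is the kernel of \eqref{eq:Ll}, while the interacting integral contributes on each boundary the convolution pairing appearing in \eqref{eq:type6-generator}. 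Collecting terms, $\EE^{\mathrm{VI}}(u,v)+\tfrac12(\rDelta u,v)_H$ becomes a sum of pairings of $\gamma_+v$ and of $\gamma_-v$ against the respective defects in the boundary identity defining $\mathcal{G}$ (the opposite signs of the two Green--Gauss boundary terms turning the common coefficient $-\tfrac{\mu}{8\pi}$ of $\check{\mathcal{L}}_\ell$ into $\mp\tfrac{\mu}{4\pi}$); since $u\in\mathcal{G}$ these defects vanish, so $\EE^{\mathrm{VI}}(u,v)=(-\tfrac12\rDelta u,v)_H$ for all $v$, i.e.\ $u\in\D(\L^{\mathrm{VI}})$ with $\L^{\mathrm{VI}}u=\tfrac12\rDelta u$. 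Conversely, given $u\in\D(\L^{\mathrm{VI}})$ with $f:=\L^{\mathrm{VI}}u$, testing against $v\in C_c^\infty(\bR^2_\pm)$ yields $\Delta u_\pm=2f_\pm\in L^2(\bR^2_\pm)$, so $u\in H^1_\Delta(\bG^2)$ and $\gamma_\pm^{\partial_2}u_\pm\in H^{-1/2}(\bR)$ by Lemma~\ref{LEMA2}; running the same identities backwards against general $v$ shows each defect pairs to zero against every $\gamma_\pm v$, and as these exhaust a dense subset of $H^{1/2}(\bR)$ the boundary identity of \eqref{eq:type6-generator} holds in $H^{-1/2}(\bR)$. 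This proves $\D(\L^{\mathrm{VI}})=\mathcal{G}$.

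The technical heart is the bookkeeping of the jump terms in the generator step. One must justify Fubini for the double integrals --- because of the $|z|^{-2}$ singularity of the self-interacting kernel this term should be handled through the bilinear pairing $\langle\check{\mathcal{L}}_\ell w,\cdot\rangle$ defined on $H^{1/2}(\bR)\times H^{1/2}(\bR)$ rather than pointwise --- and one must track every sign, in particular that the self-interacting part enters the $\pm$-boundary with the sign $\mp$ (as in \eqref{eq:type6-generator}), reflecting that it couples each boundary component to itself, whereas the interacting part enters with the convolution kernel because it couples the two opposite components. A subsidiary point is to verify that $\{\gamma_\pm v:v\in\FF^{\mathrm{VI}}\}$ is dense in $H^{1/2}(\bR)$, which is what upgrades the scalar identities to the claimed identity in $H^{-1/2}(\bR)$.
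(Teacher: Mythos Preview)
Your proposal is correct and follows essentially the same route as the paper: norm equivalence with $(\EE^{\mathrm{III}},\FF^{\mathrm{III}})$ for regularity, the cross-term contradiction for irreducibility, and the Green--Gauss formula for the generator. The only noteworthy differences are cosmetic: for irreducibility the paper works with the part process on $\bR^2_\pm$ (the absorbing Brownian motion) rather than decomposing $\EE^{\mathrm{VI}}$ into its local and jump pieces, and in the converse generator step the paper tests against product functions $v_1(x_1)v_2(x_2)$ with $v_2|_{\bG_\mp}\equiv 0$ to isolate each boundary separately, verifying along the way that the right-hand side of the boundary identity actually lies in $L^2(\bR)$ (not merely $H^{-1/2}(\bR)$).
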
 
\begin{proof}
Note that for any fixed $x$, the function 
$\ell\mapsto\left(\frac{2\ell}{\pi}\right)^2\left(\cosh\left(\frac{\pi}{2\ell}x\right)-1\right)$ is decreasing on $(0,\infty)$, and 
\begin{equation}\label{EQcosh}
\left(\frac{2\ell}{\pi}\right)^2\left(\cosh\left(\frac{\pi}{2\ell}x\right)-1\right)\geq\lim_{\ell\uparrow \infty}\left(\frac{2\ell}{\pi}\right)^2\left(\cosh\left(\frac{\pi}{2\ell}x\right)-1\right)=\frac{x^2}{2}. 
\end{equation}
In addition, for $f\in H^{\frac{1}{2}}(\mathbb{R})$,
\begin{equation}\label{eq:221}
\|f\|^2_{H^{\frac{1}{2}}(\mathbb{R})}=\int_\mathbb{R}f^2dx_1+c\int_{\mathbb{R}\times\mathbb{R}}\frac{\left(f(x_1)-f(x_1')\right)^2}{(x_1-x_1')^2}dx_1dx_1',
\end{equation}
where $c$ is an absolute constant; see, e.g., \cite[Proposition 1.37]{BCD11}.  Hence the third term in $\sE^\mathrm{VI}(u,u)$ is bounded by 
\begin{equation}\label{eq:228}
	\frac{\mu}{4\pi c}\cdot \left(\|\gamma_+ u_+\|^2_{H^{\frac{1}{2}}(\mathbb{R})}+\|\gamma_- u_-\|^2_{H^{\frac{1}{2}}(\mathbb{R})}\right)
\end{equation}
On the other hand,  a straightforward computation yields that the second term in $\sE^\mathrm{VI}(u,u)$  is not greater than
\begin{equation}\label{eq:222}
	  \frac{\mu}{2\pi}\int_{\mathbb{R}\times\mathbb{R}}\frac{u(x_1+)^2+u(x_1'-)^2}{\left(\frac{2\ell}{\pi}\right)^2\left(\cosh\left(\frac{\pi}{2\ell}(x_1-x_1')\right)+1\right)}dx_1dx_1'\lesssim \| \gamma_+u_+\|_{L^2(\bR)}^2 + \| \gamma_-u_-\|_{L^2(\bR)}^2.
\end{equation}
From \eqref{eq:228}, \eqref{eq:222} and \eqref{EQ2TRH2},  we can obtain that for any $u\in H^1(\bG^2)$, 
\[\EE^\mathrm{III}_1(u,u)\leq\EE_1^{\mathrm{VI}}(u,u)\lesssim \|u\|_{H^1(\mathbb{G}^2)}^2+\|\gamma_+u_+\|_{H^{\frac{1}{2}}(\mathbb{R})}^2+\|\gamma_-u_-\|_{H^{\frac{1}{2}}(\mathbb{R})}^2\lesssim \EE^\mathrm{III}_1(u,u).\]
This obviously yields that $(\EE^\mathrm{VI},\FF^\mathrm{VI})$ is a regular Dirichlet form on $L^2(\mathbb{G}^2)$. 

Next we derive the irreducibility of $(\sE^\mathrm{VI}, \sF^\mathrm{VI})$.     Let $m$ be the Lebesgue measure on $\bG^2$ and take an $m$-invariant set $A$ of $X^\mathrm{VI}$.  Note that the part process of $X^\mathrm{VI}$ on $\bR^2_\pm$ is the absorbing Brownian motion denoted by $(B^\pm_t)_{t\geq 0}$.  By means of \cite[Theorem~1.6.1]{FOT11},  it is easy to verify that $A\cap \bR^2_\pm$ is an $m|_{\bR^2_\pm}$-invariant set of $B^\pm$.  Due to the irreducibility of $B^\pm$,  we have $A\cap \bR^2_\pm=\emptyset$ or $\bR^2_\pm$,  $m$-a.e.  It suffices to show $\bR^2_+$ or $\bR^2_-$ is not an $m$-invariant set of $X^\mathrm{IV}$.  Argue by contradiction and suppose that $\bR^2_+$ (resp. $\bR^2_-$) is an $m$-invariant set of $X^\mathrm{IV}$.  Then \cite[Theorem~1.6.1]{FOT11} tells us that for any $u\in H^1(\bG^2)$,  $u\cdot 1_{\bR^2_+}\in H^1(\bG^2)$ (resp.  $u\cdot 1_{\bR^2_-}\in H^1(\bG^2)$) and 
\begin{equation}\label{eq:230}
	\sE^\mathrm{VI}(u,u)=\sE^\mathrm{VI}(u\cdot 1_{\bR^2_+},u\cdot 1_{\bR^2_+}) + \sE^\mathrm{VI}(u\cdot 1_{\bR^2_-},u\cdot 1_{\bR^2_-}).
\end{equation}
However, the right hand side of \eqref{eq:230} is equal to 
\begin{equation}\label{eq:231}
	\sE^\mathrm{VI}(u,u)+\frac{\mu}{2\pi}\int_{\mathbb{R}\times\mathbb{R}}\frac{u(x_1+)u(x_1'-)}{\left(\frac{2\ell}{\pi}\right)^2\left(\cosh\left(\frac{\pi}{2\ell}(x_1-x_1')\right)+1\right)}dx_1dx_1'.
\end{equation}
Hence \eqref{eq:230} implies that the second term of \eqref{eq:231} must be equal to $0$ for all $u\in H^1(\bG^2)$.  This is obviously a contradiction.  Therefore the irreducibility of $(\sE^\mathrm{VI}, \sF^\mathrm{VI})$ can be concluded.  
 
Finally we formulate the generator and its domain.  Denote the family on the right hand side of \eqref{eq:type6-generator} by $\mathcal{G}$.  Similar to the proof of Lemma \ref{LEM21},  it is straightforward to verify, by using the Green-Gauss formula \eqref{eq:GreenGaussformula}, that $\mathcal{G}\subset \mathcal{D}(\L^\mathrm{VI})$ and $\L^\mathrm{VI} u=\frac{1}{2}\rDelta u$ for $u\in \mathcal{G}$.  To show $ \mathcal{D}(\L^\mathrm{VI})\subset \mathcal{G}$, take $u\in \mathcal{D}(\L^\mathrm{VI})$ with $f=\L^\mathrm{VI}u$.  Analogically we can obtain that $\Delta u_\pm=2f_\pm\in L^2(\bR^2_\pm)$ and hence $u\in H^1_\Delta(\bG^2)$.  In addition, for any $v\in H^1(\bG^2)$,  it follows from $(-\frac{1}{2}\rDelta u, v)_H=\frac{1}{2}\int_{\bG^2}\nabla u \cdot \nabla v dx+\frac{\mu}{4\pi}A_1+\frac{\mu}{8\pi}(A_++A_-)$ and the Green-Gauss formula \eqref{eq:GreenGaussformula} that
\begin{equation}\label{eq:224}
\begin{split}
\left\langle\left.\frac{\partial u_{+}}{\partial x_2}\right|_{x_2=0+}, \gamma_+v\right\rangle-\left\langle\left.\frac{\partial u_{-}}{\partial x_2}\right|_{x_2=0+}, \gamma_-v \right\rangle=\frac{\mu}{2\pi}A_1+\frac{\mu}{4\pi}(A_++A_-),
\end{split}
\end{equation}
where
\[
A_1:=\int_{\mathbb{R}\times\mathbb{R}}\frac{\left(u(x_1+)-u(x_1'-)\right)\left(v(x_1+)-v(x_1'-)\right)}{\left(\frac{2\ell}{\pi}\right)^2\left(\cosh\left(\frac{\pi}{2\ell}(x_1-x_1')\right)+1\right)}dx_1dx_1'
\]
and 
\[
A_\pm:=\int_{\mathbb{R}\times\mathbb{R}}\frac{\left(u(x_1\pm)-u(x_1'\pm)\right)\left(v(x_1\pm)-v(x_1'\pm)\right)}{\left(\frac{2\ell}{\pi}\right)^2\left(\cosh\left(\frac{\pi}{2\ell}(x_1-x_1')\right)-1\right)}dx_1dx_1'.
\]
Letting $v(x):=v_1(x_1)v_2(x_2)$ with $v_1\in C_c^\infty(\bR), v_2|_{\bG_+}\in C_c^\infty([0+,\infty))$,  $v_2|_{\bG_-}\equiv 0$ and $v_2(0+)\neq 0$, we have
\[
A_1=v_2(0+)\int_\mathbb{R}\left(\int_\mathbb{R}\frac{u(x_1+)-u(x_1'-)}{\left(\frac{2\ell}{\pi}\right)^2\left(\cosh\left(\frac{\pi}{2\ell}(x_1-x_1')\right)+1\right)}dx_1' \right)v(x_1)dx_1,
\]
and 
\[
A_+=-v_2(0+)\int_\mathbb{R}\check{\mathcal{L}}_\ell(\gamma_+u)\cdot v_1dx,\quad A_-=0.
\]
It is easy to verify that 
\[
	F(x_1):=\frac{\mu}{2\pi}\int_\mathbb{R}\frac{u(x_1+)-u(x_1'-)}{\left(\frac{2\ell}{\pi}\right)^2\left(\cosh\left(\frac{\pi}{2\ell}(x_1-x_1')\right)+1\right)}dx_1'-\frac{\mu}{4\pi}\check{\mathcal{L}}_\ell(\gamma_+u)(x_1)\in L^2(\bR),
\]	
and \eqref{eq:224} indicates 
\[
	\left\langle\left.\frac{\partial u_{+}}{\partial x_2}\right|_{x_2=0+}, v_1\right\rangle =\int_\bR F(x_1)v_1(x_1)dx_1,\quad \forall v_1\in C_c^\infty(\bR). 
\]
Thus we can conclude that 
\[
\left.\frac{\partial u_{+}}{\partial x_2}\right|_{x_2=0+}=\frac{\mu}{2\pi}\int_\mathbb{R}\frac{u(x_1+)-u(x_1'-)}{\left(\frac{2\ell}{\pi}\right)^2\left(\cosh\left(\frac{\pi}{2\ell}(x_1-x_1')\right)+1\right)}dx_1'-\frac{\mu}{4\pi}\check{\mathcal{L}}_\ell(\gamma_+u)\in L^2(\bR)
\] 
and analogically, 
\[
\left.\frac{\partial u_{+}}{\partial x_2}\right|_{x_2=0-}=\frac{\mu}{2\pi}\int_\mathbb{R}\frac{u(x_1+)-u(x_1'-)}{\left(\frac{2\ell}{\pi}\right)^2\left(\cosh\left(\frac{\pi}{2\ell}(x_1-x_1')\right)+1\right)}dx_1'+\frac{\mu}{4\pi}\check{\mathcal{L}}_\ell(\gamma_-u) \in L^2(\bR).
\] 
Note that  by \eqref{EQcosh} we have 
\[
\int_{\mathbb{R}}\frac{1\wedge |y|^2}{\left(\frac{2\ell}{\pi}\right)^2\left(\cosh\left(\frac{\pi}{2\ell}y\right)-1\right)}dy\leq \int_\bR \frac{1\wedge |y|^2}{|y|^2}dy<\infty, 
\]
which means that $\check{\mathcal{L}}_\ell$ is a symmetric L\'evy type operator.
That completes the proof.
\end{proof}

Denote the associated Markov process of $(\sE^\mathrm{VI},\sF^\mathrm{VI})$ by $X^\mathrm{VI}$.
Define a spacial transform
\[\mathbf{T}_\ell:=\mathbb{G}^2\rightarrow \Omega_\ell^c=\{x\in \mathbb{R}^2: |x_2|\geq \ell\}, \quad (x_1,x_2)\rightarrow\left(x_1, x_2+\frac{x_2}{|x_2|}\ell\right).\]
When $\mu=1$, we have the following characterization of $X^\mathrm{VI}$. 

\begin{lemma}\label{LM28}
Assume that $\mu=1$. Then $\mathbf{T}_\ell (X^\mathrm{VI})$ is the trace of a two-dimensional Brownian motion on the region $\Omega_\ell^c=\{x\in \mathbb{R}^2: |x_2|\geq \ell\}$. 
\end{lemma}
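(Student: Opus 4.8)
The plan is to realise both processes through their Dirichlet forms and to reduce the statement to a Fourier computation on the boundary data. Put $\Omega_\ell:=\{x\in\bR^2:|x_2|<\ell\}$, so that $\Omega_\ell^c$ is closed, and let $W$ be a standard two-dimensional Brownian motion, with regular Dirichlet form $\bigl(\EE^\mathrm{I},\FF^\mathrm{I}\bigr)=\bigl(\tfrac12\int_{\bR^2}|\nabla u|^2\,dx,\,H^1(\bR^2)\bigr)$ on $L^2(\bR^2)$. By \emph{the trace of $W$ on $\Omega_\ell^c$} I mean the time-changed process $W_{\tau_t}$, where $\tau$ is the right-continuous inverse of the PCAF $A_t=\int_0^t\mathbf{1}_{\Omega_\ell^c}(W_s)\,ds$; its Revuz measure is $\mathbf{1}_{\Omega_\ell^c}(x)\,dx$, whose quasi-support is all of $\Omega_\ell^c$, so by the time-change theory (see \cite[Ch.~6]{FOT11} or \cite[Ch.~5]{CF12}) this process is associated with the trace Dirichlet form $(\check\EE,\check\FF)$ of $(\EE^\mathrm{I},\FF^\mathrm{I})$ on $L^2(\Omega_\ell^c,dx)$. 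On the other hand, $\mathbf{T}_\ell$ translates each of the two components of $\bG^2$ by $\pm\ell$ in the $x_2$-direction, hence is a Lebesgue-measure-preserving homeomorphism of $\bG^2$ onto $\Omega_\ell^c$; the induced unitary $u\mapsto v:=u\circ\mathbf{T}_\ell^{-1}$ of $L^2(\bG^2)$ onto $L^2(\Omega_\ell^c)$ sends $\tfrac12\int_{\bG^2}|\nabla u|^2\,dx$ to $\tfrac12\int_{\Omega_\ell^c}|\nabla v|^2\,dx$ and carries the boundary traces $u(x_1+)$, $u(x_1-)$ to the traces of $v$ on the two lines $\{x_2=\ell\}$ and $\{x_2=-\ell\}$, i.e.\ on $\partial\Omega_\ell$. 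Since a symmetric Markov process is determined up to equivalence by its Dirichlet form, it therefore suffices to prove that $(\check\EE,\check\FF)$ equals the $\mathbf{T}_\ell$-image of $(\EE^\mathrm{VI},\FF^\mathrm{VI})$ when $\mu=1$.

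I would next use the standard description of a trace form: for $v$ in its domain, $\check\EE(v,v)=\EE^\mathrm{I}(\mathbf{H}v,\mathbf{H}v)$, where $\mathbf{H}v$ is the harmonic extension of $v$ off $\Omega_\ell^c$ — that is, $\mathbf{H}v=v$ on $\Omega_\ell^c$ and $\mathbf{H}v$ is harmonic in the open strip $\Omega_\ell$ with those boundary values. Consequently
\[
	\check\EE(v,v)=\tfrac12\int_{\Omega_\ell^c}|\nabla v|^2\,dx+\tfrac12\int_{\Omega_\ell}\bigl|\nabla(\mathbf{H}v)\bigr|^2\,dx ,
\]
and, as one checks from the trace/gluing lemmas of the kind of Lemma~\ref{LMB1}, $\check\FF$ is carried by $\mathbf{T}_\ell$ onto $H^1(\bG^2)=\FF^\mathrm{VI}$. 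Under $\mathbf{T}_\ell$ the first summand is exactly the local part of $\EE^\mathrm{VI}$, so the whole matter reduces to showing that the harmonic-extension energy $\tfrac12\int_{\Omega_\ell}|\nabla(\mathbf{H}v)|^2\,dx$, regarded as a quadratic form in the pair of boundary data $\bigl(v(\cdot,\ell),v(\cdot,-\ell)\bigr)$, coincides with the two nonlocal integrals in \eqref{eq:type6} with $\mu=1$, i.e.\ the ``cross'' term with kernel $(\tfrac{2\ell}{\pi})^{-2}\bigl(\cosh(\tfrac{\pi}{2\ell}(x_1-x_1'))+1\bigr)^{-1}$ and the ``same-side'' term with kernel $(\tfrac{2\ell}{\pi})^{-2}\bigl(\cosh(\tfrac{\pi}{2\ell}(x_1-x_1'))-1\bigr)^{-1}$.

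Both of these forms are invariant under translation in $x_1$, so I would carry out the comparison on the Fourier side. Writing $a(\xi),b(\xi)$ for the $x_1$-Fourier transforms of $v(\cdot,\ell),v(\cdot,-\ell)$, the harmonic extension into the strip solves $\partial_{x_2}^2\widehat{\mathbf{H}v}=\xi^2\,\widehat{\mathbf{H}v}$ on $(-\ell,\ell)$ with the prescribed traces, and one integration by parts in $x_2$ gives the energy density $\tfrac12\,\frac{|\xi|}{\sinh(2\ell|\xi|)}\bigl[(|a|^2+|b|^2)\cosh(2\ell|\xi|)-2\,\mathrm{Re}(a\bar b)\bigr]$, i.e.\ the symbol matrix $\tfrac12\,\frac{|\xi|}{\sinh(2\ell|\xi|)}\bigl(\begin{smallmatrix}\cosh(2\ell|\xi|)&-1\\-1&\cosh(2\ell|\xi|)\end{smallmatrix}\bigr)$. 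On the other hand, the nonlocal part of $\EE^\mathrm{VI}$ with $\mu=1$ has symbol matrix $\tfrac1{4\pi}\bigl(\begin{smallmatrix}\widehat k_+(0)+s_-(\xi)&-\widehat k_+(\xi)\\-\widehat k_+(\xi)&\widehat k_+(0)+s_-(\xi)\end{smallmatrix}\bigr)$, where $\widehat k_+(\xi)=\int_\bR\cos(\xi y)\,(\tfrac{2\ell}{\pi})^{-2}\bigl(\cosh(\tfrac{\pi}{2\ell}y)+1\bigr)^{-1}dy$ and $s_-(\xi)=\int_\bR(1-\cos\xi y)\,(\tfrac{2\ell}{\pi})^{-2}\bigl(\cosh(\tfrac{\pi}{2\ell}y)-1\bigr)^{-1}dy$ is finite (the integrand being controlled near $y=0$ just as in the proof of Lemma~\ref{LEM27}). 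Using the classical Fourier transforms of $\operatorname{sech}^2$ and $\operatorname{csch}^2$ (cf.\ \cite{BCD11}) one evaluates $\widehat k_+(\xi)=\dfrac{2\pi|\xi|}{\sinh(2\ell|\xi|)}$ (hence $\widehat k_+(0)=\pi/\ell$) and $s_-(\xi)=2\pi|\xi|\coth(2\ell|\xi|)-\pi/\ell$, whereupon the symbol matrix of the nonlocal part becomes precisely
\[
	\tfrac12\,\frac{|\xi|}{\sinh(2\ell|\xi|)}\begin{pmatrix}\cosh(2\ell|\xi|)&-1\\-1&\cosh(2\ell|\xi|)\end{pmatrix},
\]
matching the harmonic-extension symbol. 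Hence the two Dirichlet forms coincide and the lemma follows. I expect the main obstacle to be technical rather than conceptual: legitimately invoking the time-change machinery for the \emph{recurrent} form $(\EE^\mathrm{I},\FF^\mathrm{I})$ (one must pass through its extended Dirichlet form and verify that $\mathbf{1}_{\Omega_\ell^c}dx$ has quasi-support $\Omega_\ell^c$ and that the trace domain is indeed $H^1(\bG^2)$), together with making all the normalising constants in \eqref{eq:type6} — the $(\tfrac{2\ell}{\pi})^2$, the $\tfrac14$ and $\tfrac18$, and the choice $\mu=1$ — line up exactly with the two Fourier integrals above; that constant-bookkeeping is where the actual effort lies.
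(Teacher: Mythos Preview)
Your proposal is correct and lands on the same identification as the paper, but the computational route for the boundary term is genuinely different. The paper invokes the Chen--Fukushima representation of the trace Dirichlet form via the Feller measure (\cite[Cor.~5.6.1]{CF12}): since $W$ has continuous paths, the Feller measure lives on $\partial\Omega_\ell\times\partial\Omega_\ell$, and the law of $W$ inside the strip up to its first exit agrees with that of the reflecting Brownian motion on $\bar\Omega_\ell$; hence the nonlocal part of $\check\EE$ is \emph{exactly} the trace form $\check{\mathcal A}^\ell$ already computed in Appendix~\ref{APB} via the explicit Poisson kernel of the strip in position space. You instead compute the Dirichlet energy of the harmonic extension directly on the Fourier side, obtaining the $2\times2$ symbol matrix $\tfrac12\,|\xi|\,\mathrm{csch}(2\ell|\xi|)\bigl(\begin{smallmatrix}\cosh(2\ell|\xi|)&-1\\-1&\cosh(2\ell|\xi|)\end{smallmatrix}\bigr)$, and then evaluate the Fourier symbols of the two kernels in \eqref{eq:type6} to see they coincide. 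Your approach is more self-contained (it does not need the Feller-measure machinery or Appendix~\ref{APB}) and makes the Dirichlet-to-Neumann structure transparent; the paper's approach is shorter in context because the heavy lifting is delegated to Appendix~\ref{APB} and to \cite{CF12}, and it sidesteps the table-lookup of the $\mathrm{sech}^2/\mathrm{csch}^2$ transforms. Your caveat about the recurrent setting is apt: the paper handles it by working with the extended Dirichlet space $\FF_e=\{u\in L^2_{\mathrm{loc}}:\nabla u\in L^2\}$ and citing \cite[Thm.~5.2.5]{CF12} for recurrence of the trace, which is exactly what you would need to supply.
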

\begin{proof}
Fix $\ell>0$.  For simplicity of notation, let $(\sE,\sF):=(\frac{1}{2}\mathbf{D},H^1(\bR^2))$ be the Dirichlet form of the two-dimensional Brownian motion $X^\mathrm{I}$, and we write $F:=\Omega_\ell^c$ and $G:=\Omega_\ell$, and $(\check{\EE},\check{\FF})$ for the trace Dirichlet form of $(\sE,\sF)$ on $F$. 
It is known from \cite[Example 1.6.2]{FOT11} that the extended Dirichlet space of $X^\mathrm{I}$ is
\[\FF_e=\left\{u\in L^2_{\mathrm{loc}} (\mathbb{R}^2): \nabla u\in L^2(\mathbb{R}^2)\right\}.\]
Then $\check{\FF}=\FF_e|_F\cap L^2(F)=:H^1(F)$. Since $(\EE,\FF)$ is recurrent, we have from \cite[Theorem 5.2.5]{CF12} that $(\check{\EE},\check{\FF})$ is recurrent and conservative, and thus $(\check{\EE},\check{\FF})$ has no killing inside. Moreover, from \cite[Corollary 5.6.1]{CF12} we can obtain that for $u\in\check{\FF}_e$,
\begin{equation}\label{TR}
\check{\EE}(u,u)=\frac{1}{2}\mu_{\langle\mathbf{H}_Fu\rangle}(F)+\frac{1}{2}\int_{F\times F\setminus\mathtt{d}}\left(u(x)-u(y)\right)^2U(dx,dy),
\end{equation}
where $\mu_{\langle\mathbf{H}_Fu\rangle}$ is the energy measure of $(\EE,\FF)$ relative to $\mathbf{H}_Ff$, $\mathbf{H}_F$ is the hitting distribution of $X^\mathrm{I}$ on $F$ and $U$ is the so-called Feller measure of $X^\mathrm{I}$ on $F\times F\setminus\mathtt{d}$. Namely, 
\[\mathbf{H}_Fu(x)=\mathbf{E}_x[u(X^\mathrm{I}_{\sigma_F}), \sigma_F<\infty],   
\]
and $\sigma_F$ denotes the hitting time of $F$ with respect to $X^\mathrm{I}$. For $\lambda>0$, we also write
\[
\mathbf{H}^{\lambda}_Fu(x)=\mathbf{E}_x[e^{-\lambda\sigma_F}u(X^\mathrm{I}_{\sigma_F}), \sigma_F<\infty]. 
\]
Since $\mathbf{H}_Fu=u$ on $F$, we have 
\[\mu_{\langle\mathbf{H}_Fu\rangle}(F)=\int_F |\nabla u|^2dx.
\]
On the other hand, for the Feller measure $U$, taking two non-negative function $\phi$ and $\psi$ on $F$, we know from \cite[(5.5.13), (5.5.14)]{CF12} that 
\begin{equation*}
U(\phi\otimes\psi)=\uparrow\lim_{\lambda\uparrow\infty}\lambda(\mathbf{H}^{\lambda}_F\phi, \mathbf{H}_F\psi)_{G}. 
\end{equation*}
Since $X^\mathrm{I}$ has continuous trajectories, it follows that $U$ is supported on $\partial F\times \partial F$, and the distribution of $X^\mathrm{I}$ before $\sigma_F$ is the same as the reflecting Brownian motion on $\bar{\Omega}_\ell$ in Appendix~\ref{APB} before hitting the boundary. Thus the non-local part of \eqref{TR} is the same as $\check{\mathcal{A}^\ell}$ in \eqref{TRACEDF}. That completes the proof. 
\end{proof}

{As an application of Lemma \ref{LM28}, we obtain the recurrence of  $(\EE^\mathrm{VI},\FF^\mathrm{VI})$. 

\begin{corollary}
 $(\EE^\mathrm{VI},\FF^\mathrm{VI})$ is recurrent.
\end{corollary}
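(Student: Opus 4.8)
The plan is to establish recurrence first in the case $\mu=1$, where it is essentially contained in Lemma~\ref{LM28}, and then to bootstrap to arbitrary $\mu>0$ by a simple splitting of the form.

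First I would treat $\mu=1$. By Lemma~\ref{LM28}, $X^\mathrm{VI}$ is, through the homeomorphism $\mathbf{T}_\ell$, the trace of the two-dimensional Brownian motion $X^\mathrm{I}$ on the closed set $\Omega_\ell^c$. Now $X^\mathrm{I}$ is recurrent, hence so is its trace on $\Omega_\ell^c$ by \cite[Theorem~5.2.5]{CF12} --- the very fact already recorded in the proof of Lemma~\ref{LM28} --- and recurrence is evidently preserved by the measure-preserving homeomorphism $\mathbf{T}_\ell$ (which acts by translating $x_2$ on each side of the barrier and so carries the trace Dirichlet form on $\Omega_\ell^c$ onto $(\EE^\mathrm{VI},\FF^\mathrm{VI})$ with $\mu=1$). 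Therefore $(\EE^\mathrm{VI},\FF^\mathrm{VI})$ is recurrent when $\mu=1$, which in particular furnishes a sequence $u_n\in H^1(\bG^2)$ with $u_n\to 1$ $m$-a.e.\ and $\EE^\mathrm{VI}(u_n,u_n)\to 0$ (for $\mu=1$).

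To handle general $\mu>0$, write $\EE^\mathrm{VI}(u,u)=D(u)+\mu J(u)$ for $u\in H^1(\bG^2)$, where $D(u):=\tfrac12\int_{\bG^2}|\nabla u|^2\,dx$ and $J(u)$ is the sum of the two nonlocal integrals in \eqref{eq:type6} with the coefficient $\mu$ stripped off. Both $D$ and $J$ are nonnegative and independent of $\mu$, and so is the domain $\FF^\mathrm{VI}=H^1(\bG^2)$. For the sequence $u_n$ above, $D(u_n)+J(u_n)\to 0$ forces $D(u_n)\to 0$ and $J(u_n)\to 0$ separately, hence $\EE^\mathrm{VI}(u_n,u_n)=D(u_n)+\mu J(u_n)\to 0$ for every $\mu>0$; together with $u_n\to 1$ $m$-a.e.\ and the recurrence criterion \cite[Theorem~1.6.3]{FOT11}, this yields the recurrence of $(\EE^\mathrm{VI},\FF^\mathrm{VI})$ for all $\mu>0$.

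The only delicate point is the first step: one must be sure that the identification in Lemma~\ref{LM28} is an identification of Dirichlet forms (equivalently, of symmetric Markov processes together with their symmetrizing measures), not merely a description of the process on the barrier, so that the qualitative property of recurrence genuinely transfers --- but this is precisely what the proof of that lemma establishes via \cite[Theorem~5.2.5 and Corollary~5.6.1]{CF12}. As an alternative, fully self-contained route one could bypass Lemma~\ref{LM28} and test recurrence directly with the logarithmic cut-offs $u_n(x)=\varphi_n(s(|x|))$ from the recurrence part of the proof of Lemma~\ref{LM24}: there $D(u_n)\to 0$ is immediate, $u_n$ is continuous across the barrier so $u_n(x_1+)=u_n(x_1-)$, and $J(u_n)\to 0$ because the kernels in \eqref{eq:type6} decay exponentially off the diagonal while $|\varphi_n'|\le 1/n$ controls the relevant increments. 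I would nonetheless prefer the short argument through Lemma~\ref{LM28}.
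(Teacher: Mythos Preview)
Your proof is correct and follows essentially the same route as the paper: for $\mu=1$ you both invoke Lemma~\ref{LM28} together with \cite[Theorem~5.2.5]{CF12}, and then extend to general $\mu>0$. The only cosmetic difference is that the paper cites \cite[Theorem~1.6.6]{FOT11} for the extension, whereas you unpack this step by hand via the splitting $\EE^\mathrm{VI}=D+\mu J$ and the recurrence criterion \cite[Theorem~1.6.3]{FOT11}; both arguments exploit that $\FF^\mathrm{VI}=H^1(\bG^2)$ is independent of $\mu$ and that the two pieces $D,J$ are separately nonnegative.
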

\begin{proof}
When $\mu=1$, we have proved in Lemma \ref{LM28} that $\mathbf{T}_\ell (X^\mathrm{VI})$ is the trace of a two-dimensional Brownian motion on $\Omega_\ell^c=\{x\in \mathbb{R}^2: |x_2|\geq \ell\}$.  It follows from \cite[Theorem 5.2.5]{CF12} that $\mathbf{T}_\ell (X^\mathrm{VI})$ is recurrent.  Hence $X^\mathrm{VI}$ is also recurrent. The recurrence for general $\mu\in(0,\infty)$ follows from \cite[Theorem 1.6.6]{FOT11}. 
\end{proof} }

\subsection{Phase of type VII}

The phase of type VII is given by the Dirichlet form on $L^2(\bG^2)$: 
\begin{equation}\label{eq:type7}
\begin{aligned}
\FF^\mathrm{VII}&=H^1(\mathbb{G}^2)\\
 \EE^\mathrm{VII} (u,u)&=
 \frac{1}{2}\int_{\mathbb{G}^2} |\nabla u|^2dx+ \frac{\mu}{8\pi}\bigg(\int_{\mathbb{R}\times\mathbb{R}}\frac{\left((u(x_1+)-u(x_1'+)\right)^2}{(x_1-x_1')^2}dx_1dx_1' \\
 &\qquad \qquad\qquad \quad +\int_{\mathbb{R}\times\mathbb{R}}\frac{\left((u(x_1-)-u(x_1'-)\right)^2}{(x_1-x_1')^2}dx_1dx_1'\bigg),\quad u\in \sF^\mathrm{VII},
 \end{aligned}
\end{equation}
where $\mu>0$ is a given constant.  Note that the non-local part of $(\sE^\mathrm{VI},\sF^\mathrm{VI})$ in \eqref{eq:type6} converges to that of \eqref{eq:type7} as $\ell \uparrow \infty$ as we see in Lemma~\ref{LM52}.  In fact, we may regard the type VII case as the approximating case of type VI as $\ell \uparrow \infty$.  A rigorous statement for this observation will be shown in Theorem~\ref{THM33}. Analogically we have the following.

\begin{lemma}
$(\EE^\mathrm{VII},\FF^\mathrm{VII})$ is a regular but not irreducible Dirichlet form on $L^2(\bG^2)$. Furthermore,  its generator $\L^\mathrm{VII}$ on $L^2(\bG^2)$ is $\frac{1}{2}\rDelta u$ restricted to the domain
\begin{equation}\label{eq:type7-generator}
\D(\L^\mathrm{VII})=\Bigg\{u\in H^1_\Delta(\mathbb{G}^2):
\left.\frac{\partial u_{\pm}}{\partial x_2}\right|_{x_2=0\pm}=\mp\frac{\mu}{4\pi}\check{\mathcal{L}}(\gamma_\pm u)\Bigg\},
\end{equation}
where the symmetric L\'evy type operator
\begin{equation}\label{eq:Ll2}
\check{\mathcal{L}}w(x)=\int_{\mathbb{R}}\frac{w(x+y)-2w(x)+w(x-y)}{y^2}dy
\end{equation}
corresponds to the $1$-stable process (or the Cauchy process) on $\mathbb{R}$.   
\end{lemma}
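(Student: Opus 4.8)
The plan is to follow the proof of Lemma~\ref{LEM27} almost verbatim; the only structural difference is that the non-local part of $\EE^\mathrm{VII}$ carries no term coupling the two traces $w_+:=\gamma_+u_+$ and $w_-:=\gamma_-u_-$, and this decoupling is exactly what makes $(\EE^\mathrm{VII},\FF^\mathrm{VII})$ fail to be irreducible. First I would check that it is a (symmetric) regular Dirichlet form. By the identity \eqref{eq:221}, the non-local part of $\EE^\mathrm{VII}(u,u)$ equals $\frac{\mu}{8\pi c}\big(\|w_+\|_{H^{1/2}(\bR)}^2+\|w_-\|_{H^{1/2}(\bR)}^2-\|w_+\|_{L^2(\bR)}^2-\|w_-\|_{L^2(\bR)}^2\big)$, which is non-negative and, by the trace estimate \eqref{EQ2TRH2}, bounded above by $C\|u\|_{H^1(\bG^2)}^2$. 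Hence $\EE^\mathrm{VII}_1(u,u)\lesssim\|u\|_{H^1(\bG^2)}^2\lesssim\EE^\mathrm{VII}_1(u,u)$, so the $\EE^\mathrm{VII}_1$-norm is equivalent to $\|\cdot\|_{H^1(\bG^2)}$, hence to the $\EE^\mathrm{III}_1$-norm; in particular the form is closed. It is Markovian: a normal contraction of $u$ decreases $\int_{\bG^2}|\nabla u|^2$, and since $\gamma_\pm$ commutes with normal contractions it decreases the two non-local terms too. Finally, by the norm equivalence any core of the regular Dirichlet form $(\EE^\mathrm{III},\FF^\mathrm{III})$ of reflecting Brownian motion on $\bG^2$ is again a core of $(\EE^\mathrm{VII},\FF^\mathrm{VII})$, so the latter is regular.

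Next I would show that $(\EE^\mathrm{VII},\FF^\mathrm{VII})$ is not irreducible. Since $\bG^2_+$ and $\bG^2_-$ are disjoint in $\bG^2$, the map $u\mapsto 1_{\bG^2_\pm}u$ preserves $H^1(\bG^2)=\FF^\mathrm{VII}$; and because $\int_{\bG^2}|\nabla u|^2=\int_{\bG^2_+}|\nabla u_+|^2+\int_{\bG^2_-}|\nabla u_-|^2$ while the non-local part of $\EE^\mathrm{VII}(u,u)$ is a sum of a term depending only on $u_+$ and a term depending only on $u_-$, one gets $\EE^\mathrm{VII}(u,u)=\EE^\mathrm{VII}(1_{\bG^2_+}u,1_{\bG^2_+}u)+\EE^\mathrm{VII}(1_{\bG^2_-}u,1_{\bG^2_-}u)$ for every $u\in\FF^\mathrm{VII}$. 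By \cite[Theorem~1.6.1]{FOT11}, $\bG^2_+$ is then an $m$-invariant set; since $\bG^2_+$ and its complement $\bG^2_-$ both have positive Lebesgue measure, the form is not irreducible. Equivalently, $(\EE^\mathrm{VII},\FF^\mathrm{VII})$ decomposes as a direct sum of Dirichlet forms on $L^2(\bG^2_+)$ and $L^2(\bG^2_-)$, so $X^\mathrm{VII}$ splits into two parts living on $\bG^2_+$ and $\bG^2_-$.

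For the generator I would mimic the last part of the proof of Lemma~\ref{LEM27}. Let $\mathcal{G}$ denote the right-hand side of \eqref{eq:type7-generator}. For $w,\varphi\in H^{1/2}(\bR)$ one has the distributional identity $\langle\check{\mathcal{L}}w,\varphi\rangle=-\int_{\bR\times\bR}\frac{(w(x)-w(x'))(\varphi(x)-\varphi(x'))}{(x-x')^2}\,dx\,dx'$; combining it with the Green-Gauss formula \eqref{eq:GreenGaussformula} applied on each $\bR^2_\pm$, a direct computation shows that every $u\in\mathcal{G}$ satisfies $u\in\FF^\mathrm{VII}$ and $\EE^\mathrm{VII}(u,v)=(-\tfrac12\rDelta u,v)_H$ for all $v\in\FF^\mathrm{VII}$, so by \cite[Corollary~1.3.1]{FOT11} one gets $\mathcal{G}\subset\D(\L^\mathrm{VII})$ with $\L^\mathrm{VII}u=\tfrac12\rDelta u$. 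Conversely, given $u\in\D(\L^\mathrm{VII})$ with $f=\L^\mathrm{VII}u$: testing against $v\in C_c^\infty(\bR^2_\pm)$ gives $\Delta u_\pm=2f_\pm\in L^2(\bR^2_\pm)$, whence $u\in H^1_\Delta(\bG^2)$; then testing against $v=v_1(x_1)v_2(x_2)$ with $v_1\in C_c^\infty(\bR)$ and $v_2$ supported on one side of the barrier, and using that $w_\pm\in H^{1/2}(\bR)$ (Lemma~\ref{LMB1}), so $\check{\mathcal{L}}w_\pm\in H^{-1/2}(\bR)$, while $\left.\frac{\partial u_\pm}{\partial x_2}\right|_{x_2=0\pm}\in H^{-1/2}(\bR)$ (Lemma~\ref{LEMA2}), the Green-Gauss formula identifies $\left.\frac{\partial u_\pm}{\partial x_2}\right|_{x_2=0\pm}=\mp\tfrac{\mu}{4\pi}\check{\mathcal{L}}w_\pm$ as $H^{-1/2}(\bR)$-distributions, i.e.\ $u\in\mathcal{G}$. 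That $\check{\mathcal{L}}$ is a symmetric L\'evy-type operator follows from $\int_\bR(1\wedge|y|^2)y^{-2}\,dy<\infty$, and its Fourier symbol $\widehat{\check{\mathcal{L}}w}(\xi)=-2\pi|\xi|\,\widehat w(\xi)$ (via $\int_\bR(1-\cos(\xi y))y^{-2}\,dy=\pi|\xi|$) identifies it with a constant multiple of the generator of the Cauchy process on $\bR$.

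I do not expect a genuine obstacle, as the whole argument transcribes the proof of Lemma~\ref{LEM27} with $\check{\mathcal{L}}_\ell$ replaced by $\check{\mathcal{L}}$ and the cross term dropped. The point that needs care is the constant bookkeeping in the generator computation — checking that $\tfrac{\mu}{4\pi}$ is exactly the coefficient making the two boundary contributions telescope against the non-local part of $\EE^\mathrm{VII}$ — together with the fact that the boundary relation must be read as an identity of $H^{-1/2}(\bR)$-distributions, neither side being an $L^2$ function in general; the trace machinery of Lemmas~\ref{LMB1}, \ref{LMB3} and \ref{LEMA2} is precisely what makes this rigorous.
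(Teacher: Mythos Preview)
Your proposal is correct and follows essentially the same approach as the paper. The paper's own proof is very terse: it simply says regularity and the generator computation are analogous to Lemma~\ref{LEM27}, and only spells out the non-irreducibility argument via \cite[Theorem~1.6.1]{FOT11}, exactly as you do. Your write-up is in fact more careful than the paper's in two respects: you explicitly verify Markovianity, and you correctly note that for type~VII the boundary identity must be read in $H^{-1/2}(\bR)$ since $\check{\mathcal{L}}w_\pm$ need not lie in $L^2(\bR)$ when $w_\pm$ is only in $H^{1/2}(\bR)$ (whereas in Lemma~\ref{LEM27} the exponentially decaying kernel made the analogous expression land in $L^2$).
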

\begin{proof}
The proof for regularity and the formulation of the generator are analogical to those of Lemma \ref{LEM27}. Here we just prove that $(\EE^\mathrm{VII},\FF^\mathrm{VII})$ is not irreducible. Indeed, from the expression of $\EE^\mathrm{VII}$ and \cite[Theorem 1.6.1]{FOT11} we know that $\bG^2_+$ and $\bG^2_-$ are two invariant sets of $(\EE^\mathrm{VII},\FF^\mathrm{VII})$.  As a result, $(\EE^\mathrm{VII},\FF^\mathrm{VII})$ is not irreducible.
\end{proof}


\subsection{Summarization of all limiting phases}\label{SEC28}

\begin{table}
\centering
\begin{tabular}{cccccc}
\toprule  
Phase & Undetermined & Dirichlet & Associated  & State   & Generator\\
	type		&				coefficients			& form 	& Markov process & space & domain \\
\midrule  
I & -- & \eqref{eq:type3} & BM & $\bR^2$ &\eqref{eq:type3-generator}	\\
II & $\kappa$ & \eqref{eq:type2} & BM with snapping out jumps & $\bG^2$ & \eqref{eq:type2-generator} \\
III & --  & \eqref{eq:p1} & reflecting BM & $\bG^2$ & \eqref{eq:type1-generator} \\
IV & $\lambda$ & \eqref{eq:type4} & BM with drifts on $x_1$-axis  &	 $\bR^2$ & \eqref{eq:type4-generator}\\
V & --  & \eqref{eq:type5} & absorbing BM & $\bR^2_0$ &  \eqref{eq:type5-generator}	\\
VI & $\mu, \ell$ &  \eqref{eq:type6}  &	 BM with interacting jumps & $\bG^2$ & \eqref{eq:type6-generator} \\ 
VII & $\mu$ & \eqref{eq:type7} & BM with self-interacting jumps  & $\bG^2$ & \eqref{eq:type7-generator} \\
\bottomrule 
\end{tabular}
\caption{Summarization of limiting phases}
\label{table1}
\end{table}

For readers' convenience, we summarize all these phases in Table~\ref{table1}. 
Further remarks concerning the Dirichlet forms are as follows:
\begin{itemize}
\item[(1)] All Dirichlet forms are regular on its own $L^2$-space.  The Dirichlet forms of types I, II, IV and VI are irreducible, while the others are not. 
\item[(2)] The $L^2$-generators for all phases are $\frac{1}{2}\rDelta$ restricted to different subspaces of $H^1_\Delta(\bG^2)$ as presented in Table~\ref{table1}. 
\end{itemize}

Regarding their probabilistic counterparts,  all associated Markov processes are equivalent to a Brownian motion before hitting the $x_1$-axis or $\partial \bG^2$.  Hence they differ only near the $x_1$-axis or $\partial \bG^2$.   In the types II, III, VI and VII,  the state space is $\bG^2$,  and there appear three different kinds of jump,  namely \emph{snapping out jump,  interacting jump} and \emph{self-interacting jump}, on $\partial \bG^2$: 
\begin{itemize}
\item[(1)] $X^\mathrm{II}$ enjoys snapping out jumps.  To be precise, when reaches $(x_1,0+)$ (resp. $(x_1, 0-)$), $X^\mathrm{II}$ has a chance to jump to $(x_1,0-)$ (resp. $(x_1,0+)$). 
\item[(2)] $X^\mathrm{III}$ enjoys no jumps.  It reflects back immediately upon hitting the barrier.  
\item[(3)] $X^\mathrm{VI}$ enjoys interacting and self-interacting jumps.    The first kind of jump,  characterized by the second term in the expression of $\sE^\mathrm{VI}$,  takes place between two points on different components of $\partial \bG^2$,  while the second kind of jump,  characterized by the third term in the expression of $\sE^\mathrm{VI}$,  takes place between two points on the same component of $\partial \bG^2$.  
\item[(4)] $X^\mathrm{VII}$ only enjoys self-interacting jumps. 
\end{itemize}
Note that snapping out jumps and interacting jumps link one component of $\bG^2$ with the other,  while self-interacting jumps do not.  As a consequence,  both $X^\mathrm{II}$ and $X^\mathrm{VI}$ are irreducible,  but neither  $X^\mathrm{III}$ nor $X^\mathrm{VII}$ is.  Furthermore, $X^\mathrm{I}$ and $X^\mathrm{V}$ are classical,  and we only explain the diffusion process $X^\mathrm{IV}$ on $\bR^2$ by a few lines.  Comparing to a Brownian motion,  $X^\mathrm{IV}$ enjoys acceleration along the tangent direction upon hitting the $x_1$-axis. More precisely,  as we see in \eqref{representationoftype4},  the normal component of $X^\mathrm{IV}$ is a Brownian path,  while the tangent component is an accelerating Brownian path due to the additional term $2\lambda L_t$ in time,  which increases only when $X^\mathrm{IV}$ hits the $x_1$-axis.


\section{Phase transitions of two-dimensional stiff problem}\label{SEC3}

Recall that $(\sE^\varepsilon,\sF^\varepsilon)$ is the Dirichlet form defines as \eqref{defofEEVare},  which is associated with the diffusion process $X^\varepsilon$ in \eqref{eq:19}. 
For every $\varepsilon>0$,  let 
\begin{equation}
		\mathsf{C}_\varepsilon^{\raisebox{0mm}{-}}:=\varepsilon a^{\raisebox{0mm}{-}}_\varepsilon,\quad \mathsf{R}^\shortmid_\varepsilon:=\frac{\varepsilon}{a^\shortmid_\varepsilon}.
\end{equation}
be the tangent total conductivity and  the normal total resistance of $\Omega_\varepsilon$,  and
\[
	\mathsf{M}_\varepsilon:= \sqrt{\frac{\mathsf{C}^{\raisebox{0mm}{-}}_\varepsilon}{\mathsf{R}^\shortmid_\varepsilon}},
\]
be the mixing scale at $\varepsilon$.  Set further
\[
	\ell_\varepsilon:=\sqrt{\mathsf{C}^{\raisebox{0mm}{-}}_\varepsilon \mathsf{R}^\shortmid_\varepsilon}
\]
called the \emph{splitting length at $\varepsilon$}. 


\subsection{Main theorem}\label{SEC31}

Take a decreasing sequence $\varepsilon_n\downarrow 0$, and write $(\EE^{\varepsilon_n},\FF^{\varepsilon_n})$, $a^{\raisebox{0mm}{-}}_{\varepsilon_n}$, $a^{\shortmid}_{\varepsilon_n}$,  $\mathsf{C}_{\varepsilon_n}^{\raisebox{0mm}{-}}$, $\mathsf{R}^\shortmid_{\varepsilon_n}$, $\mathsf{M}_{\varepsilon_n}$, $\ell_{\varepsilon_n}$ and $\Omega_{\varepsilon_n}$ as $(\EE^n,\FF^n)$, $a^{\raisebox{0mm}{-}}_n$, $a^{\shortmid}_n$, $\mathsf{C}_n^{\raisebox{0mm}{-}}$, $\mathsf{R}^\shortmid_n$, $\mathsf{M}_n$, $\ell_n$ and $\Omega_n$.  Recall the $H=L^2(\bR^2)=L^2(\bG^2)=L^2(\bR^2_0)$ and all appearing Dirichlet forms are defined on $H$. Then the main theorem of this section is as follows.

\begin{theorem}\label{THMAIN}
Assume that the following limits exist in the wide sense:
\begin{equation}\label{eq:limitsexist}
	\mathsf{C}^{\raisebox{0mm}{-}}:=\lim_{n\rightarrow\infty}\mathsf{C}^{\raisebox{0mm}{-}}_n, \quad \mathsf{R}^\shortmid:=\lim_{n\rightarrow\infty}\mathsf{R}^\shortmid_n,\quad \mathsf{M}:=\lim_{n\rightarrow\infty}\mathsf{M}_n.
	\end{equation}
\begin{itemize}
\item[(1)] When $\mathsf{M}=0$, $(\EE^{n},\FF^{n})$ manifests a \emph{normal phase transition} as $n\rightarrow \infty$ in the following sense:
\begin{description}
\item[(N1)] $\mathsf{R}^\shortmid=0$: $(\sE^n,\sF^n)$ converges to $(\sE^\mathrm{I},\sF^\mathrm{I})$ in the sense of Mosco as $n\rightarrow\infty$;
\item[(N2)] $0<\mathsf{R}^\shortmid<\infty$: $(\sE^n,\sF^n)$ converges to $(\sE^\mathrm{II},\sF^\mathrm{II})$ with $\kappa=1/\mathsf{R}^\shortmid$ in the sense of Mosco as $n\rightarrow\infty$.;
\item[(N3)] $\mathsf{R}^\shortmid=\infty$: $(\sE^n,\sF^n)$ converges to $(\sE^\mathrm{III},\sF^\mathrm{III})$ in the sense of Mosco as $n\rightarrow\infty$. 
\end{description}
\item[(2)] When $\mathsf{M}=\infty$,  $(\EE^{n},\FF^{n})$ manifests a \emph{tangent phase transition} as $n\rightarrow \infty$ in the following sense:
\begin{description}
\item[(T1)] $\mathsf{C}^{\raisebox{0mm}{-}}=0$: $(\sE^n,\sF^n)$ converges to $(\sE^\mathrm{I},\sF^\mathrm{I})$ in the sense of Mosco as $n\rightarrow\infty$;
\item[(T2)] $0<\mathsf{C}^{\raisebox{0mm}{-}}<\infty$: $(\sE^n,\sF^n)$ converges to $(\sE^\mathrm{IV},\sF^\mathrm{IV})$ with $\lambda=\mathsf{C}^{\raisebox{0mm}{-}}$ in the sense of Mosco as $n\rightarrow\infty$;
\item[(T3)] $\mathsf{C}^{\raisebox{0mm}{-}}=\infty$: $(\sE^n,\sF^n)$ converges to $(\sE^\mathrm{V},\sF^\mathrm{V})$ in the sense of Mosco as $n\rightarrow\infty$. 
\end{description}
\item[(3)] When $0<\mathsf{M}<\infty$,  $(\EE^{n},\FF^{n})$ manifests a \emph{mixing phase transition} as $n\rightarrow \infty$ in the following sense:
\begin{description}
\item[(M1)] $\mathsf{R}^\shortmid=0$ (equivalently $\mathsf{C}^{\raisebox{0mm}{-}}=0$): $(\sE^n,\sF^n)$ converges to $(\sE^\mathrm{I},\sF^\mathrm{I})$ in the sense of Mosco as $n\rightarrow\infty$;
\item[(M2)] $0<\mathsf{R}^\shortmid<\infty$ (equivalently $0<\mathsf{C}^{\raisebox{0mm}{-}}<\infty$): $(\sE^n,\sF^n)$ converges to $(\sE^\mathrm{VI},\sF^\mathrm{VI})$ with
\[
	\mu=\mathsf{M},\quad \ell=\lim_{n\rightarrow \infty}\ell_n
\]
in the sense of Mosco as $n\rightarrow\infty$;
\item[(M3)] $\mathsf{R}^\shortmid=\infty$ (equivalently $\mathsf{C}^{\raisebox{0mm}{-}}=\infty$): $(\sE^n,\sF^n)$ converges to $(\sE^\mathrm{VII},\sF^\mathrm{VII})$ with $\mu=\sM$ in the sense of Mosco as $n\rightarrow\infty$. 
\end{description}
\end{itemize}
\end{theorem}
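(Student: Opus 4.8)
The plan is to verify, in each of the seven limiting regimes separately, the two conditions defining Mosco convergence recalled in Appendix~\ref{SECA1}: the lower bound $\liminf_n\EE^n(u_n,u_n)\ge\EE(u,u)$ whenever $u_n\rightharpoonup u$ weakly in $H$ (condition~I), and the existence, for every $u\in H$, of $u_n\to u$ strongly in $H$ with $\limsup_n\EE^n(u_n,u_n)\le\EE(u,u)$ (condition~II). Since the $x_1$-axis and the strips $\Omega_n$ are Lebesgue-null, $H=L^2(\bR^2)=L^2(\bG^2)=L^2(\bR^2_0)$, so all forms are genuinely compared on one Hilbert space and no varying-measure formalism is required; the explicit descriptions of the limit forms, their domains, and $\EE^\bullet_1$-dense cores worked out in \S\ref{SEC2} are the input.

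For condition~I, fix $u_n\rightharpoonup u$ with $C:=\sup_n\EE^n(u_n,u_n)<\infty$ (pass to a subsequence realizing the $\liminf$). On any $K\Subset\bR^2_0$ one has $K\cap\Omega_n=\emptyset$ for large $n$, so $\nabla u_n$ is bounded in $L^2(K)$, whence $u_n\rightharpoonup u$ in $H^1(K)$ and, letting $K\uparrow\bR^2_0$, $u\in H^1(\bG^2)$ with $\tfrac12\int_{\bR^2_0}|\nabla u|^2\le\liminf_n\tfrac12\int_{\bR^2\setminus\Omega_n}|\nabla u_n|^2$; this already gives the full domain in types II, III, VI, VII. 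The extra energy and the missing trace conditions are read off the strip. With $j_n(x_1):=u_n(x_1,\varepsilon_n)-u_n(x_1,-\varepsilon_n)=\int_{-\varepsilon_n}^{\varepsilon_n}\partial_{x_2}u_n\,dx_2$, Cauchy--Schwarz across $\Omega_n$ yields
\[
\frac{a^\shortmid_n}{2}\int_{\Omega_n}|\partial_{x_2}u_n|^2\,dx\ \ge\ \frac{1}{4\sR^\shortmid_n}\int_\bR j_n(x_1)^2\,dx_1,
\]
and, since $u_n(\cdot,\pm\varepsilon_n)\rightharpoonup\gamma_\pm u_\pm$ weakly in $L^2_{\mathrm{loc}}(\bR)$ (a standard approximation using the energy bound), weak lower semicontinuity produces, when $0<\sR^\shortmid<\infty$, exactly $\tfrac{\kappa}{4}\int(\gamma_+u_+-\gamma_-u_-)^2$ with $\kappa=1/\sR^\shortmid$; when $\sR^\shortmid=\infty$ the term is absent; when $\sR^\shortmid=0$ the same estimate forces $\gamma_+u_+=\gamma_-u_-$. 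The tangential direction is treated by the analogous bound on $a^-_n\int_{\Omega_n}|\partial_{x_1}u_n|^2$: for $0<\sC^\shortline<\infty$ it contributes $\sC^\shortline\int_\bR|\tfrac{d}{dx_1}(u|_\bR)|^2$ (type IV) or, in the nonlocal cases, is refined by Dirichlet's principle — for fixed traces on $\partial\Omega_n$ the strip energy dominates the energy of the harmonic extension into $\Omega_n$ for $a^-_n\partial_{x_1}^2+a^\shortmid_n\partial_{x_2}^2$, and the substitution $x_2\mapsto x_2\sqrt{a^-_n/a^\shortmid_n}$ (sending $\Omega_n$ to the isotropic slab $\bR\times(-\ell_n,\ell_n)$ and the energy to $\tfrac{\sM_n}{2}$ times the slab Dirichlet energy) identifies this with $\tfrac{\sM_n}{2}$ times the slab Dirichlet-to-Neumann form, whose kernel, coming from the classical Poisson kernel of a slab, is precisely the $\cosh$-kernel of $\EE^\mathrm{VI}$ with $\ell=\lim_n\ell_n$, $\mu=\lim_n\sM_n$, and which degenerates via \eqref{EQcosh} to the $1/(x_1-x_1')^2$ kernels of $\EE^\mathrm{VII}$ when $\ell_n\to\infty$ (this is the quantitative form of Lemma~\ref{LM28}); for $\sC^\shortline=\infty$ the tangential bound forces $\gamma_\pm u_\pm$ to be $x_1$-constant, hence $\equiv0$ in $L^2(\bR)$, giving type V and the decoupled boundary behaviour of type VII.

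For condition~II one works on a core of the limit form (smooth or compactly supported functions, or the dense subspaces of \S\ref{SEC2}) and writes $u_n$ explicitly: outside $\Omega_n$ take $u$, or its vertical translate by $\pm\varepsilon_n$ in the split cases so that $u_n\in H^1(\bR^2)$ (translation does not change the exterior Dirichlet energy); inside $\Omega_n$ interpolate the boundary traces linearly in $x_2$ for types II, III, IV — which reproduces $\kappa=1/\sR^\shortmid_n\to\kappa$ from the $\partial_{x_2}$-term and $\lambda=\sC^\shortline_n\to\lambda$ from the $\partial_{x_1}$-term directly from the definitions of $\sR^\shortmid_n,\sC^\shortline_n$ — by the slab-harmonic extension for types VI and VII, and by a cut-off through a shrinking neighbourhood of the axis for type V. In each case $u_n\to u$ in $H$ and the strip contribution converges to the prescribed extra term.

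The main obstacle, and the point where the hypothesis on the mixing scale $\sM$ enters decisively, is the construction of recovery sequences when an effect is present but subdominant. In the reflecting case \textbf{(N3)} with $\sC^\shortline>0$ a naive linear interpolation across $\Omega_n$ creates a spurious tangential term of order $\sC^\shortline_n\int|\tfrac{d}{dx_1}\gamma_\pm u_\pm|^2$; to kill it one must first insulate the interior of $\Omega_n$ from the $x_1$-variation of the boundary data by transitioning to an $x_1$-independent profile through an auxiliary sub-layer of width $w_n$ with $a^\shortmid_n\ll w_n\ll 1/a^-_n$, which is possible precisely because $\sM=0$ means $a^-_na^\shortmid_n=\sM_n^2\to0$ (and $w_n\ll\varepsilon_n$ is available since $\sR^\shortmid=\infty$). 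In the mixing case \textbf{(M2)} the strip must instead be realized as a genuine conductive bridge of nondegenerate rescaled width $2\ell$: one attaches to the translated exterior of $u$ the anisotropic harmonic filling of $\Omega_n$ and must check both that its energy converges to $\tfrac{\mu}{2}$ times the slab Dirichlet-to-Neumann form and that the boundary traces still converge to $\gamma_\pm u_\pm$; keeping track of this bookkeeping while simultaneously controlling the normal jump and the tangential flux through the strip, with the correct constants, is the technical heart of the proof. The remaining regimes (\textbf{(N1)}$=$\textbf{(T1)}$=$\textbf{(M1)}, \textbf{(N2)}, \textbf{(T2)}, \textbf{(T3)}) follow the scheme above with one of the two effects switched off and are comparatively routine.
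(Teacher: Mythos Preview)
Your overall strategy is correct and matches the paper's: for condition~I you lower-bound the strip energy by the anisotropic Dirichlet-to-Neumann form via Dirichlet's principle and the rescaling $x_2\mapsto x_2\sqrt{a^-_n/a^\shortmid_n}$ to the isotropic slab of half-width $\ell_n$, and for condition~II you build recovery sequences by gluing translated exteriors to an explicit filling of $\Omega_n$. The slab Poisson/Feller kernel computation and the convergence of the resulting $\cosh$-kernel integrals (your ``degeneration via \eqref{EQcosh}'') are exactly the content of Appendix~\ref{APB} and Lemma~\ref{LM52} in the paper.

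The one substantive difference is in the recovery sequence: you propose linear interpolation across $\Omega_n$ for types II, III, IV and reserve the harmonic extension for types VI, VII, which forces you into the ad hoc sub-layer insulation for \textbf{(N3)} (and would require a density argument through traces in $H^1(\bR)$ for \textbf{(N2)}). The paper instead uses the anisotropic-harmonic filling \eqref{EQ3UPG} \emph{uniformly} for all of \textbf{(N2)}, \textbf{(N3)}, \textbf{(T2)}, \textbf{(T3)}, \textbf{(M2)}, \textbf{(M3)}. Because this filling is energy-minimizing, the strip energy is \emph{equal} to (not merely bounded above by) the trace form $J_{n3}+J_{n4}$, and a single convergence lemma (Lemma~\ref{LM52}, plus Corollary~\ref{LMD3} for the degenerate case $\gamma_+g=\gamma_-g$ in \textbf{(T2)}) handles every limit at once. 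In particular, in \textbf{(N3)} the coefficients $(a^-_na^\shortmid_n)^{1/2}=\sM_n\to0$ and $a^\shortmid_n/\varepsilon_n=1/\sR^\shortmid_n\to0$ make both $J_{n3}$ and $J_{n4}$ vanish automatically, with no sub-layer needed. Your construction would work, but the paper's is shorter and avoids the case split.

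A minor point: your sentence ``for $\sC^\shortline=\infty$ the tangential bound forces $\gamma_\pm u_\pm$ to be $x_1$-constant, hence $\equiv0$, giving type V and the decoupled boundary behaviour of type VII'' conflates \textbf{(T3)} and \textbf{(M3)}. In \textbf{(M3)} one has $\sC^\shortline=\infty$ but $\sM_n\to\sM\in(0,\infty)$, so the DtN coefficient stays bounded and the traces are \emph{not} forced to vanish; the type~VII nonlocal term survives precisely because $\sM$ is finite. The trace-vanishing argument applies only in \textbf{(T3)}, where $\sM_n\to\infty$ (the paper in fact splits \textbf{(T3)} further according to whether $\sR^\shortmid>0$ or $\sR^\shortmid=0$, using the DtN bound in the first sub-case and vertical averaging in the second).
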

\begin{remark}
We should point out that the Mosco convergence implies the convergence of finite dimensional distributions of associated Markov processes; see, e.g., \cite[Corollary~4.1]{LS19}.  This is the sense in which we say $X^n$ converges to the limiting process throughout this paper.  
\end{remark}

\subsection{Further remarks}

 The proof of this main theorem will be completed in the next subsection.  Now we give some remarks on it.  Recall that $\mathsf{C}^{\raisebox{0mm}{-}}$,  $\mathsf{R}^\shortmid$ and $\mathsf{M}$ are called the tangent total conductivity,  the normal total resistance and the mixing scale respectively.  
 
We first describe the related thermal conduction models by some heuristic observations.  In the normal case,  $\sM=0$ roughly indicates that $\sR^\shortmid$ is much greater than $\sC^\shortline$.  Consequently,  normal resisting,  rather than tangent accelerating,  plays a crucial role in this phase transition.  Like the one-dimensional stiff problem studied in \cite{LS19},  $\sR^\shortmid$ determines the pattern of related thermal conduction models:
\begin{itemize}
\item[(1)] $\sR^\shortmid=0$:  It is called in the \emph{totally permeable pattern} ($\mathsf{P}_\mathrm{t}$) in the sense that the barrier makes no sense.  
\item[(2)] $0<\sR^\shortmid<\infty$: It is called in the \emph{normal semi-permeable pattern} ($\mathsf{P}_\mathrm{t}$).  In this case,  the heat flow can penetrate the barrier partially,  and in the probabilistic counterpart,  penetrations are realized by snapping out jumps,  as we explained in \S\ref{SEC28}. 
\item[(3)] $\sR^\shortmid=\infty$: It is called in the \emph{normal impermeable pattern} ($\mathsf{P}_\mathrm{i}^\shortmid$),  because the normal resisting is so strong that the heat flow cannot penetrate the barrier. 
\end{itemize}
In the tangent case $\sM=\infty$,  $\sC^\shortline$  instead of $\sR^\shortmid$ determines the pattern of related thermal conduction models:
\begin{itemize}
\item[(1)] $\sC^{\raisebox{0mm}{-}}=0$: Obviously it is in the {totally permeable pattern} ($\mathsf{P}_\mathrm{t}$).  
\item[(2)] $0<\sC^{\raisebox{0mm}{-}}<\infty$: It is called in the \emph{tangent semi-permeable pattern} ($\mathsf{P}_\mathrm{s}^{\raisebox{0mm}{-}}$).  The effective tangent accelerating leads to the layover of heat flow on the barrier. 
\item[(3)] $\sC^{\raisebox{0mm}{-}}=\infty$: It is called in the \emph{tangent impermeable pattern} ($\mathsf{P}_\mathrm{i}^{\raisebox{0mm}{-}}$),  because the tangent delaying is so long that the heat flow is indeed absorbed by the barrier.  In other words, it cannot penetrate the barrier. 
\end{itemize}
In the mixing case $0<\sM<\infty$,  $\sR^\shortmid$ and $\sC^{\raisebox{0mm}{-}}$ have proportionable effects on the limiting phases.  When $\sR^\shortmid=\sC^{\raisebox{0mm}{-}}=0$,  the conduction model is surely in the totally permeable pattern.   When $0<\sR^\shortmid, \sC^{\raisebox{0mm}{-}}\leq \infty$,  the heat flow may be thought of as the mixture of those in the normal and tangent cases.  We present a heuristic explanation for this mixing method by means of the probabilistic counterparts in Remark~\ref{RM33}.  Particularly,  the conduction model is called in the \emph{mixing semi-permeable  pattern} ($\mathsf{P}_\mathrm{s}^+$)  and \emph{mixing impermeable pattern} ($\mathsf{P}_\mathrm{i}^+$) for $0<\sR^\shortmid, \sC^{\raisebox{0mm}{-}}< \infty$ and $\sR^\shortmid, \sC^{\raisebox{0mm}{-}}= \infty$ respectively.  
All these patterns are illustrated in Figure~\ref{FIG5}.  

 \begin{figure}
 \centering
 \includegraphics[scale=0.55]{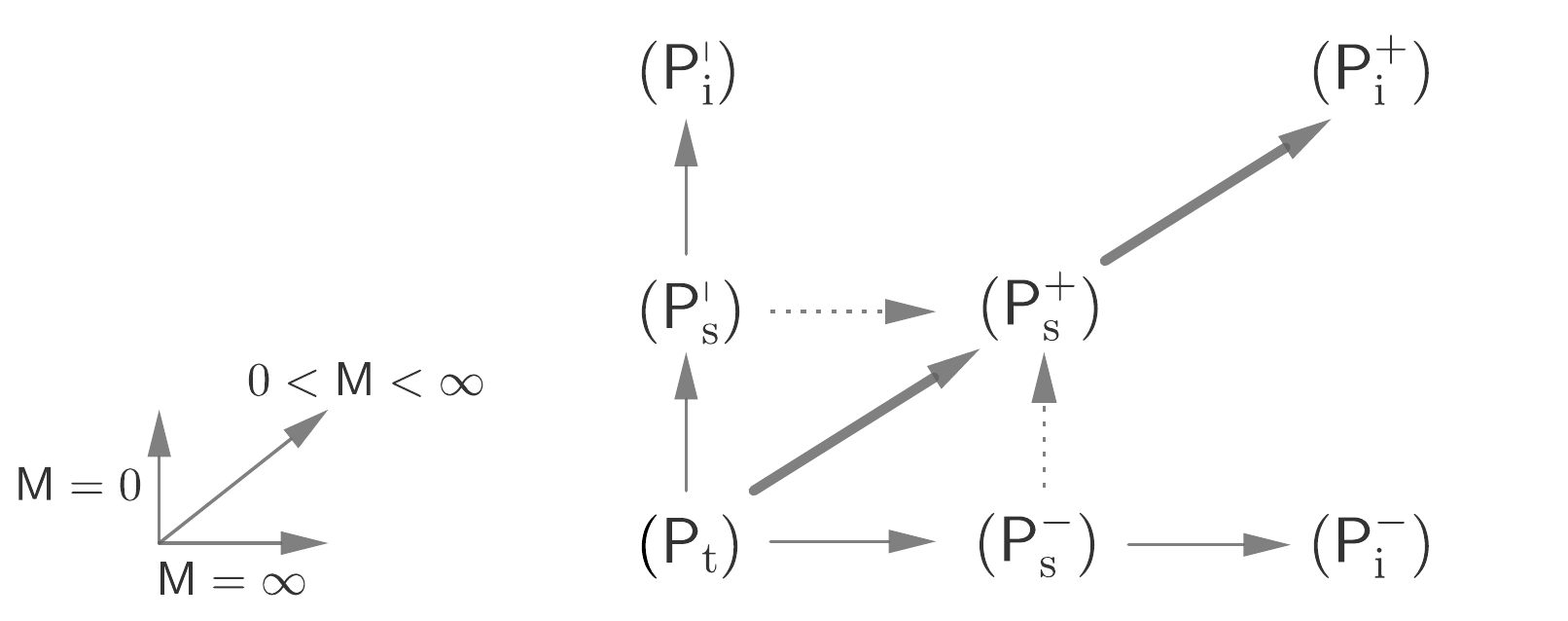}
 \caption{Patterns of thermal conduction models}
 \label{FIG5}
 \end{figure}

\begin{remark}\label{RM33}
We present a heuristic method to obtain a path of $X^\mathrm{VI}$ enjoying interacting and self-interacting jumps on $\partial \bG^2$ by mixing those of $X^\mathrm{II}$ and $X^\mathrm{IV}$ as follows.  
By splitting the barrier into two distinct components due to normal resisting,  a path of $X^\mathrm{IV}$ is cut into two pieces that may enjoy jumps between two points on the same component of the barrier.  These  jumps,  in collaboration with the snapping out jumps enjoyed by $X^\mathrm{II}$ or not,  yield interacting jumps or self-interacting jumps.  Nevertheless,  it is insensible to regard $X^\mathrm{VII}$ as a mixture of a reflecting Brownian motion and an absorbing Brownian motion.  Instead we may think of it as the limit of $X^\mathrm{VI}$ as $\ell$ increases to $\infty$;  see the notes below or Theorem~\ref{THM33}.   
\end{remark}

Next, let us turn to figure out the significance of the parameter $\ell$,  called the \emph{splitting length}.  Thanks to the argument in Lemma~\ref{LM28},  when $\mu=1$,  $X^\mathrm{IV}$ (up to a spatial transform) is the trace of two-dimensional Brownian motion on $\{x\in \bR^2: |x_2|\geq \ell\}$.  Heuristically,  $\ell$ measures the ``real" distance between the two components of $\bG^2$ (with respect to $X^\mathrm{IV}$).  It is worth noting that the normal component $\beta^\s$ of $X^\mathrm{II}$ has a similar representation as shown in \cite[Theorem~3.2]{LS19}, i.e. $\beta^\s$ (up to a spatial transform) is the trace of one-dimensional Brownian motion on $\{x\in \bR: |x|\geq 1/\kappa\}$.  In \textbf{(N2)}, $1/\kappa=\sR^\shortmid$ and it is easy to verify that $\ell$ is equal to the product of $1/\kappa$ and the mixing scale $\sM$, i.e.
\[
	\ell =\sM\cdot \frac{1}{\kappa}.
\]
On the other hand,  $\ell$ also gives insight into understanding the mixing impermeable phase.  In fact,  when $\sR^\shortmid\uparrow \infty$, we have $\ell=\sM\sR^\shortmid \uparrow \infty$.  This leads to the disappearing of interacting jumps,  because the two components of $\bG^2$ are essentially separate. However the self-interacting term in \eqref{eq:type6} has a non-trivial limit as $\ell\uparrow \infty$ and as a result,  $X^\mathrm{VII}$ still enjoys self-interacting jumps. 



Finally we emphasize that all the Dirichlet forms in the totally permeable case (i.e. $(\mathsf{P}_t)$) and semi-permeable cases (i.e. $(\mathsf{P}^\shortmid_\mathrm{s})$,  $(\mathsf{P}^\shortline_\mathrm{s})$ and $(\mathsf{P}^+_\mathrm{s})$) are irreducible, while those in the impermeable cases (i.e. $(\mathsf{P}^\shortmid_\mathrm{i})$,  $(\mathsf{P}^\shortline_\mathrm{i})$ and $(\mathsf{P}^+_\mathrm{i})$) are not.  This classification is in agreement with the behaviour of the heat flow:  It can penetrate a permeable or semi-permeable barrier, but not an impermeable barrier.   

\subsection{Proof of Theorem~\ref{THMAIN}}

The definition of Mosco convergence, consisting of two parts (a) and (b), is reviewed in Definition~\ref{DEF41}.  Since the proof is a little technical and involved,  we split it up in several parts.  
For convenience, we break the case {\bf(T3)} into two cases $\bf {(T3_1)}$ and ${\bf(T3_2)}$, depending on whether $\mathsf{R}^\shortmid>0$ or $\mathsf{R}^\shortmid=0$. 

\subsubsection{Proof of the first part of Mosco convergence}

At first let us give a tactic for the cases in $\mathcal{T}_1:= \{\textbf{(N2)},  \textbf{(N3)},   \textbf{(M2)}, {\bf (M3)},  {\bf (T3_1)} \}$.  To prove the first part (a) of Mosco convergence,  we need to prepare two lemmas concerning a sequence $\{f_n: n\geq 1\}\subset H$ such that $f_n$ converges weakly to $f$ in $H$ and $\sup_{n\geq 1}\sE^n(f_n,f_n)<\infty$.  For each $n$,  set a function $\tilde{f}_n$ on $\bG^2$: 
\begin{equation}\label{EQ32L1}
 \tilde{f}_n(x_1,x_2):=\begin{cases}
f_n(x_1, x_2+\varepsilon_n), \quad & x_2\in [0+, \infty),\\
f_n(x_1, x_2-\varepsilon_n), \quad & x_2\in (-\infty,0-].
\end{cases}
 \end{equation}
 The first lemma shows the convergences of $\tilde{f}_n$ and $\gamma_\pm \tilde{f}_n$.  
 
 \begin{lemma}\label{LM34}
 Let $\{f_n\}\subset H^1(\bR^2)$ be a sequence such that $f_n$ converges weakly to $f$ in $H$ and $$\sup_{n\geq 1}\sE^n(f_n,f_n)<\infty.$$
 Further let $\tilde{f}_n$ be defined as \eqref{EQ32L1}. Then the following hold:
 \begin{itemize}
 \item[(1)] $\tilde{f}_n, f\in H^1(\bG^2)$.
 \item[(2)] $\tilde{f}_n$ converges weakly to $f$ in $H$.
 \item[(3)] A subsequence of $\{\gamma_{\pm}\tilde{f}_n\}$ converges weakly to $\gamma_{\pm}f$ both  in $H^{\frac{1}{2}}(\mathbb{R})$ and $L^2(\bR)$. 
\end{itemize}
 \end{lemma}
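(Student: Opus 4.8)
The plan is to push the uniform energy bound through the cut‑and‑glue map \eqref{EQ32L1}, so as to obtain a uniform $H^1(\bG^2_\pm)$ bound on the $\tilde f_n$, extract a weakly convergent subsequence, and identify its limit with $f$ with the help of part~(2). First I would record the elementary bounds. Weak convergence of $f_n$ in $H$ makes $\{\|f_n\|_H\}$ bounded. On $\bR^2\setminus\Omega_n=\bR\times\bigl((-\infty,-\varepsilon_n]\cup[\varepsilon_n,\infty)\bigr)$ the form $\sE^n$ carries the full gradient with coefficient $\tfrac12$, so $\int_{\bR^2\setminus\Omega_n}|\nabla f_n|^2\,dx\le 2\sup_m\sE^m(f_m,f_m)<\infty$. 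Since $\tilde f_n|_{\bG^2_\pm}$ is, by \eqref{EQ32L1}, the translate by $\mp\varepsilon_n$ of the restriction of $f_n$ to $\{\pm x_2\ge\varepsilon_n\}$, it follows that $\tilde f_n|_{\bG^2_\pm}\in H^1(\bR^2_\pm)$ with $\|\nabla\tilde f_n\|_{L^2(\bR^2_\pm)}^2\le 2\sup_m\sE^m(f_m,f_m)$ and $\|\tilde f_n\|_{L^2(\bG^2)}\le\|f_n\|_{L^2(\bR^2)}$; in particular $\tilde f_n\in H^1(\bG^2)$ and $\{\tilde f_n|_{\bG^2_\pm}\}$ is bounded in $H^1(\bR^2_\pm)$. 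The assertion $f\in H^1(\bG^2)$ is postponed to the last step.

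For part~(2), since the $\tilde f_n$ are bounded in $H$, it suffices to test against $\varphi\in C_c^\infty(\bR^2)$. A change of variables gives $(\tilde f_n,\varphi)_H=(f_n,\varphi_n)_H$ with $\varphi_n:=\varphi(\cdot,\cdot-\varepsilon_n)\mathbf{1}_{\{x_2>\varepsilon_n\}}+\varphi(\cdot,\cdot+\varepsilon_n)\mathbf{1}_{\{x_2<-\varepsilon_n\}}$. Using continuity of translation in $L^2$, together with the facts that $\{0<\pm x_2<\varepsilon_n\}$ has Lebesgue measure tending to $0$ and that $\varphi$ is bounded, one checks $\varphi_n\to\varphi$ strongly in $H$; then, since $f_n\rightharpoonup f$ weakly in $H$, the weak‑times‑strong pairing yields $(\tilde f_n,\varphi)_H\to(f,\varphi)_H$, that is, $\tilde f_n\rightharpoonup f$ in $H$.

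For the identification and part~(3): by reflexivity of $H^1(\bR^2_\pm)$ and the bound of the first step, I would pass to a subsequence along which $\tilde f_n|_{\bG^2_\pm}\rightharpoonup g_\pm$ weakly in $H^1(\bR^2_\pm)$. Weak $H^1$‑convergence forces weak $L^2$‑convergence, so part~(2) and uniqueness of weak limits give $g_\pm=f|_{\bG^2_\pm}$; hence $f\in H^1(\bG^2)$, completing (1) (and in fact the whole sequence converges, though a subsequence suffices here). Since the trace operator $\gamma_\pm:H^1(\bR^2_\pm)\to H^{1/2}(\bR)$ is bounded and linear, it is weakly continuous, whence $\gamma_\pm\tilde f_n\rightharpoonup\gamma_\pm f$ in $H^{1/2}(\bR)$ along that subsequence; and since $H^{1/2}(\bR)\hookrightarrow L^2(\bR)$ is continuous, the convergence also holds weakly in $L^2(\bR)$.

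The only genuinely delicate point is this last identification step: $f$ enters the lemma merely as an $L^2$‑limit, so one must manufacture its $H^1(\bG^2)$‑regularity from the energy bound while simultaneously matching the weak $H^1(\bG^2_\pm)$‑limit of $\tilde f_n$ with its weak $H$‑limit. This is precisely where part~(2), the identification $L^2(\bR^2)=L^2(\bG^2)=L^2(\bR^2_0)$, and uniqueness of weak limits are used together; the rest is bookkeeping with translations and the boundedness of the trace map.
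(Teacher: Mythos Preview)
Your proof is correct and follows the same overall strategy as the paper's, but the identification step in (1) and (3) is handled differently. The paper, after getting the uniform $H^1(\bG^2)$ bound on $\tilde f_n$, invokes the Banach--Saks theorem: it passes to a subsequence along which the Ces\`aro means $\tfrac{1}{n}\sum_{k=1}^n\tilde f_k$ converge \emph{strongly} in $H^1(\bG^2)$ to some $h$, identifies $h=f$ via the weak $H$-limit, and then runs the same Ces\`aro argument on the traces before finishing with Banach--Alaoglu. You instead use reflexivity of $H^1(\bR^2_\pm)$ directly to extract a weak $H^1$-limit, identify it with $f|_{\bG^2_\pm}$ by uniqueness of weak $L^2$-limits, and then transport weak convergence through the bounded linear trace map $\gamma_\pm$ and the embedding $H^{1/2}\hookrightarrow L^2$. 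Your route is more streamlined and avoids the detour through Ces\`aro averages; the paper's approach, on the other hand, yields strong convergence of the means as a byproduct (though this is not used elsewhere). Your parenthetical remark that the full sequence in fact converges is correct, since every subsequence has a further subsequence with the same identified limit.
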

 \begin{proof}
At first,  we claim that $\{\tilde{f}_n\}$ converge to $f$ weakly in $H$. Indeed, for any $g\in H$, we have
\[(f_n-\tilde{f}_n,g)_H=(f_n-\tilde{f}_n,g)_{L^2(\mathbb{G}^2_+)}+(f_n-\tilde{f}_n,g)_{L^2(\mathbb{G}^2_-)}, \]
where
\[
\begin{split}
 -(f_n-&\tilde{f}_n,g)_{L^2(\mathbb{G}^2_+)}\\
&= \int_\mathbb{R}\int_0^\infty f_n(x_1, x_2+\varepsilon_n)g(x_1,x_2)dx_1dx_2-\int_\mathbb{R}\int_0^\infty f_n(x_1, x_2)g(x_1,x_2)dx_1dx_2\\
&
= \int_\mathbb{R}\int_{\varepsilon_n}^\infty f_n(x_1,x_2)(g(x_1,x_2-\varepsilon_n)-g(x_1,x_2))dx_1dx_2-\int_\mathbb{R}\int_0^{\varepsilon_n}f_n(x_1, x_2)g(x_1,x_2)dx_1dx_2
\\
&=:I_1+I_2. 
\end{split}
\]
Note that $\sup_{n\geq 1}\|f_n\|_H<\infty$ due to the weak convergence of $f_n$.
Then it follows from $\int_{\Omega_n}g^2dx\rightarrow 0$ as $n\rightarrow\infty$ and the Cauchy-Schwarz inequality that $I_2\rightarrow 0$.   As for the term $I_1$, we have
\begin{equation}\label{EQ3EQUI}
I_1\leq \left(\int_{\mathbb{R}}\int^\infty_{\varepsilon_n}f_n^2dx_1dx_2\right)^{\frac{1}{2}} \left(\int_\mathbb{R}\int_{\varepsilon_n}^\infty |(g(x_1,x_2-\varepsilon_n)-g(x_1,x_2))|^2dx_1dx_2\right)^{\frac{1}{2}}.
\end{equation}
The second term on the right hand side tends to zero due to the equicontinuity of the integral (see, e.g., \cite[Theorem 2.32]{AF03}).
Hence $(f_n-\tilde{f}_n,g)_{L^2(\mathbb{G}^2_+)}\rightarrow 0$ as $n\rightarrow\infty$, and analogically we can also obtain that $(f_n-\tilde{f}_n,g)_{L^2(\mathbb{G}^2_-)}\rightarrow 0$.  Since $f_n$ converges to $f$ weakly in $H$,  we eventually conclude that $\tilde{f}_n$ converges to $f$ weakly in $H$. 

By the definition of $\tilde{f_n}$, 
\[
\int_{\mathbb{G}^2}|\tilde{f}_n|^2dx=\int_{\Omega_n^c}|f_n|^2dx,\quad \int_{\mathbb{G}^2}|\nabla \tilde{f}_n|^2dx=\int_{\Omega_n^c}|\nabla f_n|^2dx. 
\]
which imply $\tilde{f}_n\in H^1(\bG^2)$ and 
\[\sup_n\|\tilde{f_n}\|_{H^1(\mathbb{G}^2)}<\infty. \]
Hence,  taking a subsequence if necessary,  we may assume that $\frac{1}{n}\sum_{k=1}^n \tilde{f}_k$ converges to a function $h\in H^1(\bG^2)$ in $H^1(\bG^2)$.  Since $\tilde{f_n}$ converges to $f$ weakly in $H$,  it is easy to see that $$f=h\in H^1(\bG^2).$$ 
In addition, the trace theorem implies 
\begin{equation}\label{eq:35}
\sup_n\|\gamma_{\pm}\tilde{f_n}\|_{L^{2}(\mathbb{R})}\leq \sup_n\|\gamma_{\pm}\tilde{f_n}\|_{H^{\frac{1}{2}}(\mathbb{R})}\lesssim \sup_n\|\tilde{f_n}\|_{H^1(\mathbb{G}^2)} <\infty,
\end{equation}
and $\frac{1}{n}\sum_{k=1}^n \gamma_\pm \tilde{f}_k$ converges to $\gamma_\pm f$ both in $H^{1/2}(\bR)$ and $L^2(\bR)$.   
By means of Banach-Alaoglu theorem,  we can eventually conclude that a subsequence of $\{\gamma_\pm \tilde{f}_n\}$ converges weakly to $\gamma_\pm f$ both in $H^{1/2}(\mathbb{R})$ and $L^2(\bR)$.  That completes the proof.
 \end{proof}

Recall that $\ell_n=\sqrt{\frac{a^{\raisebox{0mm}{-}}_n}{a^{\shortmid}_n}}\varepsilon_n$.  Let $B^{\ell_n}$ be the reflecting Brownian motion on $\Omega_{\ell_n}$,  whose associated Dirichlet form is 
\[
\begin{aligned}
  \mathcal{G}^{\ell_n}&=H^1(\Omega_{\ell_n}),\\
 \mathcal{A}^{\ell_n} (u,u)&=
 \frac{1}{2}\int_{\Omega_{\ell_n}} |\nabla u|^2dx,\quad u\in \mathcal{G}^{\ell_n}. 
 \end{aligned}
\]
Further let $\tau:=\inf\{t>0: B^{\ell_n}_t\in\partial \Omega_{\ell_n}\}$ be the first hitting time of the boundary $\partial \Omega_{\ell_n}$, and for a suitable function $g$ on $\partial \Omega_{\ell_n}$, define 
\begin{equation}\label{EQ3EXPE}
\mathbf{H}g(x):=\mathbf{E}_x\left[g(B^{\ell_n}_\tau), \tau<\infty\right].
\end{equation} The second lemma obtains a lower bound involving $\tilde{f}_n$ for the energy of $f_n$ restricted to $\Omega_n$.  

\begin{lemma}\label{LM35}
Let $f_n$ and $\tilde{f}_n$ be in Lemma~\ref{LM34}. 
For any $n\geq 1$, it holds that
\begin{equation}\label{EQ3EXPLI}
	\dfrac{a^{\raisebox{0mm}{-}}_n}{2}\int_{\Omega_{n}} |\partial_{x_1} f_n|^2dx+\dfrac{a^{\shortmid}_n}{2}\int_{\Omega_{n}} |\partial_{x_2} f_n|^2dx \geq J_{n1}+J_{n2},
\end{equation}
where 
\[
J_{n1}:=\frac{(a^{\raisebox{0mm}{-}}_na^{\shortmid}_n)^{1/2}}{4 \ell_n}\int_{\mathbb{R}\times\mathbb{R}}\frac{\left(\tilde{f}_n(x_1+)-\tilde{f}_n(x_1'-)\right)^2}{2\left(\frac{2\ell_n}{\pi}\right)\left(\cosh\left(\frac{\pi}{2\ell_n}(x_1-x_1')\right)+1\right)}dx_1dx_1'
\]
and 
\[
J_{n2}:=\frac{(a^{\raisebox{0mm}{-}}_na^{\shortmid}_n)^{1/2}}{8\pi}\int_{\mathbb{R}\times\mathbb{R}}\frac{\left((\tilde{f}_n(x_1+)-\tilde{f}_n(x_1'+)\right)^2+\left((\tilde{f}_n(x_1-)-\tilde{f}_n(x_1'-)\right)^2}{\left(\frac{2\ell_n}{\pi}\right)^2\left(\cosh\left(\frac{\pi}{2\ell_n}(x_1-x_1')\right)-1\right)}dx_1dx_1'. 
\]
\end{lemma}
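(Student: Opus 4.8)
\emph{Proof plan.} The idea is to straighten the anisotropic thin strip $\Omega_n$ into an isotropic strip of half-width $\ell_n$, on which the left-hand side of \eqref{EQ3EXPLI} is a constant multiple of the ordinary Dirichlet energy, and then to bound that energy from below by the energy of the harmonic extension of its boundary trace, which is precisely the trace Dirichlet form $\check{\mathcal A}^{\ell_n}$ of $B^{\ell_n}$ computed in Appendix~\ref{APB}.

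First I would introduce the linear map $\Phi_n\colon\Omega_n\to\Omega_{\ell_n}$, $(x_1,x_2)\mapsto(x_1,(\ell_n/\varepsilon_n)x_2)$, and set $v_n:=(f_n|_{\Omega_n})\circ\Phi_n^{-1}\in H^1(\Omega_{\ell_n})$. Since $\ell_n/\varepsilon_n=\sqrt{a^{\raisebox{0mm}{-}}_n/a^{\shortmid}_n}$, the factors produced by the $x_2$-derivative and by the Jacobian combine so that both coefficients become the single constant $(a^{\raisebox{0mm}{-}}_na^{\shortmid}_n)^{1/2}$; explicitly,
\[
\frac{a^{\raisebox{0mm}{-}}_n}{2}\int_{\Omega_n}|\partial_{x_1}f_n|^2\,dx+\frac{a^{\shortmid}_n}{2}\int_{\Omega_n}|\partial_{x_2}f_n|^2\,dx=(a^{\raisebox{0mm}{-}}_na^{\shortmid}_n)^{1/2}\,\mathcal A^{\ell_n}(v_n,v_n).
\]
Moreover, since $\tilde f_n\in H^1(\bG^2)$ by Lemma~\ref{LM34}(1), the traces of $v_n$ on the two components $\bR\times\{\ell_n\}$ and $\bR\times\{-\ell_n\}$ of $\partial\Omega_{\ell_n}$ are $v_n(\cdot,\pm\ell_n)=f_n(\cdot,\pm\varepsilon_n)=\gamma_\pm\tilde f_n$, with $\tilde f_n$ as in \eqref{EQ32L1}.

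Next I would use that $B^{\ell_n}$ hits $\partial\Omega_{\ell_n}$ in a.s.\ finite time, so that $\mathbf H$ in \eqref{EQ3EXPE} is an honest harmonic extension and, by the variational characterisation of the trace Dirichlet form, $\mathbf H(\gamma v_n)$ minimises $\mathcal A^{\ell_n}$ among all functions in $H^1(\Omega_{\ell_n})$ with boundary trace $\gamma v_n$; as $v_n$ is such a function,
\[
\mathcal A^{\ell_n}(v_n,v_n)\ \geq\ \mathcal A^{\ell_n}\big(\mathbf H(\gamma v_n),\mathbf H(\gamma v_n)\big)\ =\ \check{\mathcal A}^{\ell_n}(\gamma v_n,\gamma v_n).
\]
Then I would invoke the explicit formula \eqref{TRACEDF} for $\check{\mathcal A}^{\ell_n}$: it splits into a cross term between the two boundary lines, carrying the kernel with $\cosh(\tfrac{\pi}{2\ell_n}(x_1-x_1'))+1$ in the denominator, and a self term on each line, carrying the kernel with $\cosh(\tfrac{\pi}{2\ell_n}(x_1-x_1'))-1$ in the denominator. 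Multiplying the last inequality by $(a^{\raisebox{0mm}{-}}_na^{\shortmid}_n)^{1/2}$ and matching the constants identifies $(a^{\raisebox{0mm}{-}}_na^{\shortmid}_n)^{1/2}\check{\mathcal A}^{\ell_n}(\gamma v_n,\gamma v_n)$ with $J_{n1}+J_{n2}$, which together with the displayed identity yields \eqref{EQ3EXPLI}.

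The genuinely non-routine ingredient is the explicit Dirichlet-to-Neumann computation on the strip $\Omega_{\ell_n}$ behind \eqref{TRACEDF}, most cleanly carried out by a Fourier transform in $x_1$, where the $\cosh$-kernels arise from the Poisson kernel $\sinh(2|\xi|\ell_n)^{-1}\sinh(|\xi|(\ell_n\pm x_2))$ of the strip; everything preceding it is bookkeeping. A small point worth recording is that $\gamma v_n$ indeed lies in the domain of $\check{\mathcal A}^{\ell_n}$, which is automatic because $v_n\in H^1(\Omega_{\ell_n})$ forces $\gamma v_n\in H^{1/2}(\partial\Omega_{\ell_n})$.
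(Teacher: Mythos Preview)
Your proposal is correct and follows essentially the same approach as the paper: rescale $x_2$ by $\ell_n/\varepsilon_n=\sqrt{a^{\raisebox{0mm}{-}}_n/a^{\shortmid}_n}$ to turn the anisotropic strip energy into $(a^{\raisebox{0mm}{-}}_na^{\shortmid}_n)^{1/2}\,\mathcal A^{\ell_n}$, bound this below by the harmonic extension energy, and identify the latter with the explicit trace form \eqref{TRACEDF}. The only cosmetic difference is that the paper absorbs the factor $(a^{\raisebox{0mm}{-}}_na^{\shortmid}_n)^{1/2}$ into the function by setting $u_n:=(a^{\raisebox{0mm}{-}}_na^{\shortmid}_n)^{1/4}v_n$, so that $\mathcal A^{\ell_n}(u_n,u_n)$ equals the left-hand side directly; the orthogonality step $\mathcal A^{\ell_n}(\mathbf H g,\,u_n-\mathbf H g)=0$ and the variational minimality you invoke are of course equivalent.
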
  
\begin{proof}
Set
\[
u_n(x_1,x_2):=(a^{\raisebox{0mm}{-}}_na^{\shortmid}_n)^{1/4}f_n\left(x_1, \sqrt{\frac{a^{\shortmid}_n}{a^{\raisebox{0mm}{-}}_n}} x_2\right), \quad |x_2|<\sqrt{\frac{a^{\raisebox{0mm}{-}}_n}{a^{\shortmid}_n}}\varepsilon_n.
\]
A straightforward computation yields that
\begin{equation}\label{EQ3NORMSC}
 \mathcal{A}^{\ell_n}(u_n,u_n)=\dfrac{a^{\raisebox{0mm}{-}}_n}{2}\int_{\Omega_{n}} |\partial_{x_1} f_n|^2dx+\dfrac{a^{\shortmid}_n}{2}\int_{\Omega_{n}} |\partial_{x_2} f_n|^2dx<\infty.
\end{equation}
Particularly,  $u_n\in \mathcal{G}^{\ell_n}$ due to $\|u_n\|_{L^2(\Omega_{\ell_n})}\lesssim \|f\|_{L^2(\Omega_n)}$.  Then it follows from \cite[Proposition 3.4.1 and Theorem 3.4.8]{CF12} that
\begin{equation*}
\mathcal{A}^{\ell_n} (\mathbf{H}(u_n|_{\partial\Omega_{\ell_n}}),u_n-\mathbf{H}(u_n|_{\partial\Omega_{\ell_n}}))=0,
\end{equation*}
and thus 
\begin{equation}\label{EQ3GQTR}
 \mathcal{A}^{\ell_n}(u_n,u_n)\geq \mathcal{A}^{\ell_n}(\mathbf{H}(u_n|_{\partial\Omega_{\ell_n}}),\mathbf{H}(u_n|_{\partial\Omega_{\ell_n}}))=\check{\mathcal{A}}^{\ell_n}(u_n|_{\partial\Omega_{\ell_n}},u_n|_{\partial\Omega_{\ell_n}}),
\end{equation}
where $\check{\mathcal{A}}^{\ell_n}$ is the trace Dirichlet form of $(\mathcal{A}^{\ell_n}, \mathcal{G}^{\ell_n})$ on $\partial \Omega_{\ell_n}$, see Appendix~\ref{APB}.  Note that 
\[u_n(x_1, \pm \ell_n)=(a^{\raisebox{0mm}{-}}_na^{\shortmid}_n)^{1/4}f_n(x_1, \pm\varepsilon_n)=(a^{\raisebox{0mm}{-}}_na^{\shortmid}_n)^{1/4}\tilde{f}_n(x_1\pm).\]
By means of \eqref{TRACEDF},  we can verify that the last term in \eqref{EQ3GQTR} is equal to $J_{n1}+J_{n2}$.  Therefore \eqref{EQ3GQTR} leads to \eqref{EQ3EXPLI}.  That completes the proof.
\end{proof}

Now we have a position to prove the first part (a) of Mosco convergence for the cases in $\mathcal{T}_1$. 

\begin{proof}[of Mosco (a) for cases in $\mathcal{T}_1$]
Let $\{f_n\}\subset H$ that converges weakly to $f$ in $H$.  Without loss of generality, we may assume that 
\[\lim_{n\rightarrow \infty}\sE^n(f_n,f_n)\leq \sup_{n\geq 1}\EE^n(f_n,f_n)=:M<\infty. \]
Particularly,  $f_n\in H^1(\bR^2)$ and,  by Lemma~\ref{LM34}, $f\in H^1(\bG^2)$. Set a function $\tilde{f}_n$ for any $n$ as in \eqref{EQ32L1}. 
By Lemma~\ref{LM34},  $\tilde{f}_n\in H^1(\bG^2)$ converges to $f$ weakly in $H$,  and we may further assume without loss of generality that $\gamma_\pm \tilde{f}_n$ converges to $\gamma_\pm f$ weakly in $H^{1/2}(\bR)$.  Note that
 \begin{equation}\label{EQ3INF}
\liminf_{n\rightarrow\infty}\int_{\Omega_n^c}|\nabla f_n|^2dx=\liminf_{n\rightarrow\infty}\int_{\mathbb{G}^2}|\nabla \tilde{f}_n|^2dx\geq \int_{\mathbb{G}^2}|\nabla f|^2dx.
\end{equation}
The last inequality is due to the lower semi-continuity of $\mathscr{E}^\mathrm{III}$; see Remark~\ref{RMA3}.  Note further that the coefficients  in Lemma~\ref{LM35} converge to the constants below:
\begin{equation}\label{eq:constants}
\frac{(a^{\raisebox{0mm}{-}}_na^{\shortmid}_n)^{1/2}}{\ell_n}=\frac{a^{\shortmid}_n}{\varepsilon_n}\rightarrow \frac{1}{\mathsf{R}^\shortmid}\in [0,\infty], \quad (a^{\raisebox{0mm}{-}}_na^{\shortmid}_n)^{1/2}\rightarrow \mathsf{M}\in [0,\infty].
\end{equation}
In what follows,  we will prove \eqref{eqMoscoa} regarding $f_n$ and $f$ for the cases in $\mathcal{T}_1$: 
\begin{itemize}
\item[{\bf (N2)}]  In this case $\sM=0, 0<\sR^\shortmid<\infty$,  $\ell_n\rightarrow \ell=\mathsf{M}\mathsf{R}^\shortmid=0$ and $f\in \sF^\mathrm{II}$. Since $\gamma_{\pm}\tilde{f}_n$ converges weakly to $\gamma_{\pm}f$ in $H^{\frac{1}{2}}(\mathbb{R})$,  it follows from Lemma \ref{LM52} and \eqref{eq:constants} that 
\[
\liminf_{n\rightarrow\infty}(J_{n1}+J_{n2})\geq \frac{1}{4\mathsf{R}^\shortmid}\int_\mathbb{R}\left(f(x_1+)-f(x_1-)\right)^2dx_1. 
\]
Combining with \eqref{EQ3EXPLI} and \eqref{EQ3INF}, we can conclude that 
\begin{equation}\label{EQ3INF3}
\liminf_{n\rightarrow\infty}\EE^n (f_n,f_n)
\geq \frac{1}{2} \int_{\mathbb{G}^2}|\nabla f|^2dx+\frac{1}{4\mathsf{R}^\shortmid}\int_\mathbb{R}\left(f(x_1+)-f(x_1-)\right)^2dx_1
= \EE^{\mathrm{II}}(f,f)
\end{equation}
with $\kappa=1/\mathsf{R}^\shortmid$. 
\item[{\bf (N3)}] Note that $f\in \sF^\mathrm{III}$. Then it follows from \eqref{EQ3INF} that 
\[
\liminf_{n\rightarrow\infty}\EE^n (f_n,f_n)\geq \liminf_{n\rightarrow\infty}\frac{1}{2}\int_{\Omega_n^c}|\nabla f_n|^2dx \geq\frac{1}{2} \int_{\mathbb{G}^2}|\nabla f|^2dx=\EE^\mathrm{III}(f,f).
\]

\item[{\bf (M2)}]  In this case $0<\sM, \sR^\shortmid<\infty$,  $\ell_n\rightarrow \ell=\mathsf{M}\mathsf{R}^\shortmid\in (0,\infty)$ and $f\in \sF^\mathrm{VI}$.  Then it follows from \eqref{EQ3EXPLI},  \eqref{eq:constants}, \eqref{EQ3INF} and Lemma \ref{LM52} that 
\[
\liminf_{n\rightarrow\infty}\EE^n (f_n,f_n)\geq \liminf_{n\rightarrow\infty}\frac{1}{2}\int_{\Omega_n^c}|\nabla f_n|^2dx+\liminf_{n\rightarrow\infty} (J_{n1}+J_{n2})\geq \EE^\mathrm{VI}(f,f)
\]
with $\mu=M$,  $\ell=\lim_{n\rightarrow \infty}\ell_n$.  
\item[{\bf (M3)}] In this case $0<\sM<\infty,  \sR^\shortmid=\infty$,  $\ell_n\rightarrow \ell=\mathsf{M}\mathsf{R}^\shortmid=\infty$ and $f\in \sF^\mathrm{VII}$.  Similarly we have $\liminf_{n\rightarrow\infty}\EE^n (f_n,f_n)\geq \EE^{\mathrm{VII}}(f,f)$ with $\mu=M$.
\item[$\bf {(T3_1)}$] In this case $\ell_n\rightarrow \ell =\infty$ due to $\sM=\infty$ and $\sR^\shortmid>0$.  We first show $\gamma_\pm f=0$,  so that Lemma~\ref{LMB1} leads to $f\in\FF^{\mathrm{V}}$.   To do this,  note that $\sE^n(f_n,f_n)\leq M$.  It follows from \eqref{EQ3EXPLI} that  
\[
\int_{\mathbb{R}\times\mathbb{R}}\frac{\left((\tilde{f}_n(x_1+)-\tilde{f}_n(x_1'+)\right)^2+\left((\tilde{f}_n(x_1-)-\tilde{f}_n(x_1'-)\right)^2}{\left(\frac{2\ell_n}{\pi}\right)^2\left(\cosh\left(\frac{\pi}{2\ell_n}(x_1-x_1')\right)-1\right)}dx_1dx_1'\lesssim\frac{M}{(a^{\raisebox{0mm}{-}}_na^{\shortmid}_n)^{1/2}}
\]
Again from $\ell_n\rightarrow\infty$, \eqref{eq:constants} and Lemma \ref{LM52}, we obtain that 
\[
\int_{\mathbb{R}\times\mathbb{R}}\frac{\left(\gamma_\pm f(x_1)-\gamma_\pm f(x'_1)\right)^2}{(x_1-x'_1)^2}  dx_1dx_1' \lesssim \frac{M}{(a^{\raisebox{0mm}{-}}_na^{\shortmid}_n)^{1/2}}\rightarrow 0.
\]
Hence $\gamma_\pm f$ is a constant,  while $\gamma_\pm f \in H^{1/2}(\bR)$.  It must hold $\gamma_\pm f=0$.  Furthermore,  \eqref{EQ3INF} and $f\in \sF^\mathrm{V}$ imply that 
$$\liminf_{n\rightarrow\infty}\EE^n(f_n,f_n)\geq \frac{1}{2}\int_{\bG^2} |\nabla f|^2dx = \EE^\mathrm{V}(f,f).$$ 
\end{itemize}
Eventually the proof of the first part (a) of Mosco convergence is completed for the cases in $\mathcal{T}_1$. 
\end{proof}


Next we provide another tactic for the rest cases in $\mathcal{T}_2:= \{{\bf (N1)}, {\bf (T1)}, {\bf (M1)}, {\bf (T2)}, \bf {(T3_2)}\}$.  For all these cases,  it holds $\mathsf{R}^\shortmid=0$.  We still consider $f$ and $f_n$ in Lemma~\ref{LM34}.  The lemma below shows that $f\in H^1(\bR^2)$ for the cases under consideration.

\begin{lemma}\label{LM38}
Let $f$ and $f_n$ be in Lemma~\ref{LM34}, and assume $\sR^\shortmid=0$.  Then $f\in H^1(\bR^2)$.  
\end{lemma}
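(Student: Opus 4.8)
The plan is to invoke the trace-gluing criterion of Lemma~\ref{LMB1}. Since Lemma~\ref{LM34}(1) already provides $f\in H^1(\bG^2)$, to upgrade this to $f\in H^1(\bR^2)$ it suffices to verify that the two one-sided traces of $f$ across the $x_1$-axis agree, i.e. that $\gamma_+ f_+=\gamma_- f_-$ where $f_\pm:=f|_{\bG^2_\pm}$. Thus the entire task reduces to showing that the jump of $f$ at the barrier vanishes, and the hypothesis $\mathsf{R}^\shortmid=0$, i.e. $\varepsilon_n/a^\shortmid_n\to 0$, is precisely what makes it vanish.

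First I would bound the jump of the shifted functions $\tilde f_n$ from \eqref{EQ32L1} by the normal part of the energy $\EE^n(f_n,f_n)$. Choosing the canonical $H^1$-representative, the slice $x_2\mapsto f_n(x_1,x_2)$ is absolutely continuous for a.e. $x_1$, so for a.e. $x_1$,
\[
	\gamma_+\tilde f_n(x_1)-\gamma_-\tilde f_n(x_1)=f_n(x_1,\varepsilon_n)-f_n(x_1,-\varepsilon_n)=\int_{-\varepsilon_n}^{\varepsilon_n}\partial_{x_2}f_n(x_1,s)\,ds.
\]
Writing $M:=\sup_{n\ge 1}\EE^n(f_n,f_n)<\infty$, squaring, applying Cauchy--Schwarz and integrating in $x_1$ gives
\[
	\|\gamma_+\tilde f_n-\gamma_-\tilde f_n\|_{L^2(\bR)}^2\le 2\varepsilon_n\int_{\Omega_n}|\partial_{x_2}f_n|^2\,dx=\frac{4\varepsilon_n}{a^\shortmid_n}\cdot\frac{a^\shortmid_n}{2}\int_{\Omega_n}|\partial_{x_2}f_n|^2\,dx\le 4\,\mathsf{R}^\shortmid_n M.
\]
Since $\mathsf{R}^\shortmid_n\to\mathsf{R}^\shortmid=0$, it follows that $\gamma_+\tilde f_n-\gamma_-\tilde f_n\to 0$ strongly in $L^2(\bR)$.

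Second, I would combine this with the weak convergence of the traces furnished by Lemma~\ref{LM34}(3): along a suitable subsequence $\gamma_\pm\tilde f_n$ converges weakly to $\gamma_\pm f$ in $L^2(\bR)$, hence $\gamma_+\tilde f_n-\gamma_-\tilde f_n$ converges weakly to $\gamma_+ f-\gamma_- f$ in $L^2(\bR)$ along that subsequence. Uniqueness of limits together with the strong convergence to $0$ forces $\gamma_+ f_+=\gamma_- f_-$, and Lemma~\ref{LMB1} then yields $f\in H^1(\bR^2)$.

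I do not anticipate a genuine obstacle here: the argument is a one-line energy estimate followed by a weak/strong limit comparison. The only points requiring a word of care are the identification $\gamma_\pm\tilde f_n=f_n(\cdot,\pm\varepsilon_n)$ and the absolute continuity of the vertical slices of $f_n\in H^1(\bR^2)$, both standard facts about Sobolev functions that are in effect already used in the construction of $\tilde f_n$ in \eqref{EQ32L1}.
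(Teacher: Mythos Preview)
Your proposal is correct and follows essentially the same route as the paper: bound $\|\gamma_+\tilde f_n-\gamma_-\tilde f_n\|_{L^2(\bR)}^2$ by $C\,\mathsf{R}^\shortmid_n\,\sup_k\EE^k(f_k,f_k)$ via Cauchy--Schwarz on the vertical integral of $\partial_{x_2}f_n$, send $n\to\infty$, and combine with the weak $L^2$ convergence of the traces from Lemma~\ref{LM34}(3) to force $\gamma_+f=\gamma_-f$, then apply Lemma~\ref{LMB1}. The only cosmetic difference is that the paper phrases the last step via lower semicontinuity of the $L^2$-norm under weak convergence (their ``Fatou's lemma''), whereas you use uniqueness of weak limits given strong convergence to $0$; these are equivalent here.
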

\begin{proof}
It suffices to show $\gamma_+f=\gamma_-f$ by Lemma~\ref{LMB1}.  To do this,  note that 
\begin{equation}\label{EQ3G1FU}
\begin{split}
\int_{\Omega_n}\left(\frac{\partial f_n}{\partial x_2}\right)^2dx_2dx_1
& \geq \frac{1}{2\varepsilon_n}\int_\mathbb{R}\left(\int_{-\varepsilon_n}^{\varepsilon_n}\frac{\partial f_n}{\partial x_2}dx_2\right)^2dx_1\\
& =\frac{1}{2\varepsilon_n}\int_\mathbb{R}\left(f_n(x_1,\varepsilon_n)-f_n(x_1, -\varepsilon_n) \right)^2dx_1\\
&= \frac{1}{2\varepsilon_n}\int_\mathbb{R}\left(\tilde{f_n}(x_1+)-\tilde{f_n}(x_1-) \right)^2dx_1.
\end{split}
\end{equation}
Thus
\begin{equation}\label{EQ3G1FU2}
\int_\mathbb{R}\left(\tilde{f_n}(x_1+)-\tilde{f_n}(x_1-) \right)^2dx_1\leq\frac{\varepsilon_n}{a^{\shortmid}_n}\cdot a^{\shortmid}_n\int_{\Omega_n}\left(\frac{\partial f_n}{\partial x_2}\right)^2dx\lesssim \frac{\varepsilon_n}{a^\shortmid_n} \sE^n(f_n,f_n)\leq  \frac{\varepsilon_n}{a^{\shortmid}_n} M\rightarrow 0,
\end{equation}
due to $\varepsilon_n/a^{\shortmid}_n\rightarrow \mathsf{R}^\shortmid=0$. Since a subsequence of $\{\gamma_{\pm}\tilde{f}_n\}$ converges to $\gamma_{\pm}f$ weakly in $L^2(\mathbb{R})$ by \eqref{eq:35},  the Fatou's lemma (see, e.g., \cite[Theorem 7 of \S8.2]{R88}) implies that 
\[
\int_\mathbb{R}\left(f(x_1+)-f(x_1-) \right)^2dx_1\leq \liminf_{n\rightarrow\infty}\int_\mathbb{R}\left(\tilde{f_n}(x_1+)-\tilde{f_n}(x_1-) \right)^2dx_1= 0.
\]
Therefore $\gamma_+ f=\gamma_-f$.  That completes the proof. 
\end{proof}

Another lemma introduces a pair of auxiliary functions that will play a crucial role in this tactic of the proof.

\begin{lemma}\label{LM39}
Let $f$ and $f_n$ be in Lemma~\ref{LM34}.  Set two functions on $\bR$:
\begin{equation}\label{eq:321}
F_n^+(\cdot):=\frac{1}{\varepsilon_n}\int_{0}^{\varepsilon_n}f_n(\cdot,x_2)dx_2,\quad  F^-_n(\cdot ):=\frac{1}{\varepsilon_n}\int^{0}_{-\varepsilon_n}f_n(\cdot,x_2)dx_2.\end{equation}
Then the following hold:
\begin{itemize}
\item[(1)] $F^\pm_n \in H^1(\bR)$ and 
\begin{equation}\label{EQ34L1}
\mathbf{D}(F^\pm_n,F^\pm_n):=\int_\mathbb{R}\left|\frac{dF^\pm_n}{d {x_1}}(x_1)\right|^2dx_1\leq \frac{a^{\raisebox{0mm}{-}}_n}{\sC^\shortline_n}\int_{\Omega^\pm_n}|\partial_{x_1} f_n|^2dx,
\end{equation}
where $\Omega^+_n:=\{(x_1,x_2)\in \Omega_n: x_1>0\}$ and $\Omega^-_n:=\{(x_1,x_2)\in \Omega_n: x_1<0\}$.
\item[(2)] Assume further $\sR^\shortmid=0$.  Then a subsequence of $\{F^\pm_n\}$ converges to $f|_\bR=\gamma_\pm f$ weakly in $L^2(\bR)$.   Particularly if $0<\sC^\shortline<\infty$ (resp. $\sC^\shortline=\infty$),  then $f|_\bR\in H^1(\bR)$ (resp.  $f|_\bR=0$).  
\end{itemize}
\end{lemma}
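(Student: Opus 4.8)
The plan is to handle part~(1) by a single averaging (Jensen/Cauchy--Schwarz) inequality, and part~(2) by comparing the vertical averages $F_n^\pm$ with the boundary traces $\gamma_\pm\tilde f_n$ of the shifted functions $\tilde f_n$ of \eqref{EQ32L1}, whose weak convergence is already furnished by Lemma~\ref{LM34}. For part~(1): since $f_n\in H^1(\bR^2)$, for a.e.\ $x_2$ the slice $f_n(\cdot,x_2)$ lies in $H^1(\bR)$, so Fubini's theorem applied to \eqref{eq:321} shows that $F_n^\pm$ is weakly differentiable with $\frac{d F_n^\pm}{d x_1}(x_1)=\frac{1}{\varepsilon_n}\int\partial_{x_1}f_n(x_1,x_2)\,dx_2$, the $x_2$-integral running over the relevant strip ($0$ to $\varepsilon_n$, resp.\ $-\varepsilon_n$ to $0$). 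Jensen's inequality gives $\bigl|\tfrac{d F_n^\pm}{d x_1}(x_1)\bigr|^2\le\frac{1}{\varepsilon_n}\int|\partial_{x_1}f_n(x_1,x_2)|^2\,dx_2$; integrating in $x_1$ and recalling $\sC^\shortline_n=\varepsilon_n a^\shortline_n$ yields \eqref{EQ34L1}, and the same estimate with $f_n$ in place of $\partial_{x_1}f_n$ shows $F_n^\pm\in L^2(\bR)$, hence $F_n^\pm\in H^1(\bR)$. This step is routine.

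For part~(2) I would first invoke Lemma~\ref{LM38}: under $\sR^\shortmid=0$ one has $f\in H^1(\bR^2)$, so $\gamma_+f=\gamma_-f=f|_\bR\in H^{1/2}(\bR)$. Note from \eqref{EQ32L1} that $\gamma_+\tilde f_n=f_n(\cdot,\varepsilon_n)$ and $\gamma_-\tilde f_n=f_n(\cdot,-\varepsilon_n)$. Writing $f_n(x_1,x_2)-f_n(x_1,\pm\varepsilon_n)$ as the $x_2$-integral of $\partial_{x_2}f_n$ and averaging over the strip gives the pointwise bound $|F_n^\pm(x_1)-\gamma_\pm\tilde f_n(x_1)|\le\int|\partial_{x_2}f_n(x_1,s)|\,ds$ over that strip; squaring, integrating, applying Cauchy--Schwarz, and using the normal energy bound $a^\shortmid_n\int_{\Omega_n}|\partial_{x_2}f_n|^2\le 2\sup_n\sE^n(f_n,f_n)$ then yield $\|F_n^\pm-\gamma_\pm\tilde f_n\|_{L^2(\bR)}^2\lesssim\varepsilon_n/a^\shortmid_n=\sR^\shortmid_n\to0$. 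This is the one place the hypothesis $\sR^\shortmid=0$ is used, and it is the heart of the matter: it forces the vertical average to collapse onto the boundary trace in the limit. Combining with the subsequence of Lemma~\ref{LM34}(3) along which $\gamma_\pm\tilde f_n\rightharpoonup\gamma_\pm f=f|_\bR$ weakly in $L^2(\bR)$, I conclude $F_n^\pm\rightharpoonup f|_\bR$ weakly in $L^2(\bR)$.

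For the regularity dichotomy I would feed the tangent energy bound $a^\shortline_n\int_{\Omega_n}|\partial_{x_1}f_n|^2\le 2\sup_n\sE^n(f_n,f_n)$ into \eqref{EQ34L1}, obtaining $\mathbf{D}(F_n^\pm,F_n^\pm)\le 2\sup_n\sE^n(f_n,f_n)/\sC^\shortline_n\to 2\sup_n\sE^n(f_n,f_n)/\sC^\shortline$. If $0<\sC^\shortline<\infty$, this bounds $\{F_n^\pm\}$ in $H^1(\bR)$, so a further subsequence converges weakly in $H^1(\bR)$ to a limit that must coincide with $f|_\bR$ by the already-established $L^2$ convergence, whence $f|_\bR\in H^1(\bR)$. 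If $\sC^\shortline=\infty$, then $\frac{d F_n^\pm}{d x_1}\to0$ strongly in $L^2(\bR)$, so testing against $\varphi'$ for $\varphi\in C_c^\infty(\bR)$ shows $f|_\bR$ has vanishing weak derivative, hence is a.e.\ constant, and being in $L^2(\bR)$ it must vanish. I expect no genuine obstacle here; the only care needed is bookkeeping — carrying the same extracted subsequence through Lemma~\ref{LM34}(3) and the $H^1$ weak-compactness step so that every weak limit is identified with the single function $f|_\bR$ — while the analytic content is just the two averaging inequalities.
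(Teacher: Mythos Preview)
Your proposal is correct and follows essentially the same route as the paper: part~(1) is the same averaging/Cauchy--Schwarz computation, and for part~(2) the paper likewise proves $\|F_n^\pm-\gamma_\pm\tilde f_n\|_{L^2(\bR)}^2\lesssim\sR^\shortmid_n\to0$ (doing the FTC step first for $f_n\in C_c^\infty(\bR^2)$ and then approximating, a detail you glossed over) and then combines this with Lemma~\ref{LM34}(3). For the dichotomy the paper invokes the weak-$L^2$ lower semicontinuity of $\mathbf{D}$ directly rather than your weak-$H^1$ compactness/test-function argument, but these are equivalent formulations of the same fact.
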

\begin{proof}
\begin{itemize}
\item[(1)] We only treat $F^+_n$.  Note that $F^+_n\in L^2(\bR)$ since
\begin{equation*}
\int_\mathbb{R}|F^+_n(x_1)|^2dx_1=\frac{1}{\varepsilon_n^2}\int_{\mathbb{R}}\left(\int_{0}^{\varepsilon_n}f_n(x_1,x_2)dx_2\right)^2dx_1\leq \frac{1}{\varepsilon_n}\int_{\Omega^+_n}|f_n|^2dx<\infty. 
\end{equation*}
Denote the weak derivative of $F^+_n$ by $F'^{+}_n$.  We claim that $F'^+_n\in L^2(\bR)$ so that $F^+_n\in H^1(\bR)$.  Indeed,  for any $g, h\in C_c^\infty(\bR)$ with $\text{supp}[h]\subset (0,\varepsilon)$,  
\[
	\int_{\Omega^+_n} \frac{\partial f_n}{\partial x_1}(x_1,x_2)g(x_1)h(x_2)dx_1dx_2=-\int_{\Omega^+_n} f_n(x_1,x_2)g'(x_1)h(x_2)dx_1dx_2.    
\]
Clearly $\frac{\partial f_n}{\partial x_1} g, f_ng'\in L^1(\Omega^+_n)$.   By letting $h\uparrow 1_{(0,\varepsilon)}$,  it follows from the dominated convergence theorem and Fubini theorem that 
\[
	\int_{\Omega^+_n} \frac{\partial f_n}{\partial x_1}(x_1,x_2)g(x_1)dx_1dx_2=-\int_{\Omega^+_n} f_n(x_1,x_2)g'(x_1)dx_1dx_2.
\]
Then the Fubini theorem indicates 
\begin{equation}\label{eq:322}
	\int_{\bR} \left(\frac{1}{\varepsilon_n}\int_0^\varepsilon \frac{\partial f_n}{\partial x_1}(x_1,x_2)dx_2\right)g(x_1)dx_1=-\int_\bR F^+_n(x_1)g'(x_1)dx_1,\quad \forall g\in C_c^\infty(\bR).
\end{equation}
Note that $G(x_1):=\frac{1}{\varepsilon_n}\int_0^\varepsilon \frac{\partial f_n}{\partial x_1}(x_1,x_2)dx_2\in L^2(\bR)$ due to
\begin{equation}\label{eq:323}
\frac{1}{\varepsilon_n^2}\int_{\mathbb{R}}\left(\int_{0}^{\varepsilon_n}\frac{\partial f_n}{\partial x_1} (x_1,x_2)dx_2\right)^2dx_1\leq \frac{1}{\varepsilon_n}\int_{\Omega^+_n}|\partial_{x_1} f_n|^2dx<\infty.
\end{equation}
As a consequence,  \eqref{eq:322} leads to $F'^+_n=G\in L^2(\bR)$.  Furthermore,  \eqref{eq:323} also implies 
\[
	\int_\bR F'^+_n(x_1)^2dx_2=\int_\bR G(x_1)^2dx_1\leq  \frac{a^{\raisebox{0mm}{-}}_n}{\sC^\shortline_n}\int_{\Omega^+_n}|\partial_{x_1} f_n|^2dx. 
\]
In other words,  \eqref{EQ34L1} holds. 
\item[(2)] We first prove that
\begin{equation}\label{eq:324}
	\|F^+_n-\gamma_+\tilde{f}_n\|^2_{L^2(\mathbb{R})}\lesssim \sR^\shortmid_n \cdot  a^{\shortmid}_n\int_{\Omega^+_n}\left(\frac{\partial f_n}{\partial x_2}\right)^2dx.
\end{equation}
When $f_n\in C_c^\infty(\bR^2)$,  we have $\gamma_+\tilde{f}_n=f_n(x_1, \varepsilon_n)$ and the left hand side of \eqref{eq:324} is equal to
\[
\begin{split}
\int_{\mathbb{R}}\left(\frac{1}{\varepsilon_n}\int_{0}^{\varepsilon_n}\left[f_n(x_1,\varepsilon_n)-f_n(x_1,x_2)\right]dx_2\right)^2dx_1 \lesssim \sR^\shortmid_n\cdot a^{\shortmid}_n\int_{\Omega^+_n}\left(\frac{\partial f_n}{\partial x_2}\right)^2dx.
\end{split}
\]
For a general $f_n\in H^1(\bR^2)$,  take a sequence $g^k\in C_c^\infty(\bR^2)$  such that $g^k$ converges to $f_n$ in $H^1(\bR^2)$ as $k\rightarrow \infty$.  Particularly,  $\frac{1}{\varepsilon_n}\int_0^{\varepsilon_n}g^k(\cdot, x_2)dx_2$ converges to $F^+_n$ and $g^k(\cdot, \varepsilon_n)$ converges to $\gamma_+ \tilde{f}_n$   both in $L^2(\bR)$.  Since \eqref{eq:324} holds for $g^k$ in place of $f_n$,  by letting $k\uparrow \infty$, we can obtain \eqref{eq:324} for this $f_n$.  Next, since $\sR^\shortmid_n\rightarrow 0$ and $a^{\shortmid}_n\int_{\Omega^+_n}\left(\frac{\partial f_n}{\partial x_2}\right)^2dx\leq 2\sE^n(f_n,f_n)\leq 2\sup_{n}\sE^n(f_n,f_n)<\infty$,  it follows from \eqref{eq:324} that $\|F^+_n-\gamma_+\tilde{f}_n\|^2_{L^2(\mathbb{R})}\rightarrow 0$.  Note that a subsequence of $\gamma_+\tilde{f}_n$ converges to $\gamma_+ f$ ($=f|_\bR$ due to Lemma~\ref{LM38} and Lemma~\ref{LMB1}) weakly in $L^2(\bR)$ by Lemma~\ref{LM34}.  Therefore a subsequence of $F^+_n$ converges to $\gamma_+f$ weakly in $L^2(\bR)$.  

When $\sC^\shortline>0$,  assume without loss of generality that $F^+_n$ converges to $\gamma_+f$ weakly in $L^2(\bR)$.  It follows from \eqref{eq:322} and the lower semi-continuity of $\mathbf{D}$ that 
\[
	\mathbf{D}(f|_\bR,f|_\bR)\leq \liminf_{n\rightarrow \infty} \mathbf{D}(F^+_n,F^+_n)\leq \liminf_{n\rightarrow \infty} \frac{1}{\sC^\shortline_n}\sE^n(f_n,f_n)\leq \frac{1}{\sC^\shortline}\sup_{n} \sE^n(f_n,f_n)<\infty. 
\]
Particularly $f|_\bR\in H^1(\bR)$.   When $\sC^\shortline=\infty$,  the above argument leads to $f|_\bR\in H^1(\bR)$ and $\mathbf{D}(f|_\bR,f|_\bR)=0$.  Hence $f|_\bR$ has to be the constant $0$.   
\end{itemize}
Eventually we complete the proof.
\end{proof}

Now we prove the first part (a) of Mosco convergence for the cases in $\mathcal{T}_2$.

\begin{proof}[of Mosco (a) for cases in $\mathcal{T}_2$]
Let $\{f_n\}\subset H$ that converges weakly to $f$ in $H$.  Without loss of generality, we may assume that 
\[\lim_{n\rightarrow \infty}\sE^n(f_n,f_n)\leq \sup_{n\geq 1}\EE^n(f_n,f_n)=:M<\infty. \]
We need to verify \eqref{eqMoscoa} for the cases in $\mathcal{T}_2$. 

In the cases {\bf (N1)}, {\bf (T1)} and {\bf (M1)},  it follows from $\mathsf{R}^\shortmid=0$, Lemma~\ref{LM38} and \eqref{EQ3INF} that $f\in H^1(\mathbb{R}^2)=\FF^\mathrm{I}$ and
\[
\liminf_{n\rightarrow\infty}\EE^n (f_n,f_n) \geq\frac{1}{2} \int_{\mathbb{G}^2}|\nabla f|^2dx=\frac{1}{2} \int_{\mathbb{R}^2}|\nabla f|^2dx=\EE^{\mathrm{I}}(f,f).
\]
To treat the cases {\bf (T2)} and $\bf {(T3_1)}$,  let $F^\pm_n$ be given by \eqref{eq:321}.  In what follows we prove them respectively.
\begin{itemize}
\item[{\bf (T2)}] In this case $\mathsf{R}^\shortmid=0$ and $\mathsf{C}^{\raisebox{0mm}{-}}\in (0,\infty)$.  Then Lemma~\ref{LM38} implies that $f\in H^1(\bR^2)$ and $f|_\bR\in H^1(\bR)$,  i.e. $f\in \sF^\mathrm{IV}$.  It follows from \eqref{eq:322} that 
\[
\begin{split}
\EE^n(f_n,f_n)
&\geq\frac{1}{2}\int_{\mathbb{G}^2} |\nabla \tilde{f}_n|^2dx+\dfrac{a^{\raisebox{0mm}{-}}_n}{2}\int_{\Omega_{n}} |\partial_{x_1} f_n|^2dx\\
&\geq \frac{1}{2}\int_{\mathbb{G}^2} |\nabla \tilde{f}_n|^2dx+\frac{\sC^\shortline_n}{2}\left(\mathbf{D}(F^+_n,F^+_n)+\mathbf{D}(F^-_n+F^-_n)\right)
\end{split}
\]
By means of \eqref{EQ3INF},  the lower semi-continuity of $\mathbf{D}$ and Lemma~\ref{LM39}~(2),  we obtain 
\[\liminf_{n\rightarrow\infty}\EE^n(f_n,f_n)\geq\frac{1}{2}\int_{\mathbb{R}^2}|\nabla f|^2dx+\mathsf{C}^{\raisebox{0mm}{-}}\mathbf{D}(f|_\bR,f|_\bR)=\EE^\mathrm{IV}(f,f)\]
with $\lambda=\mathsf{C}^{\raisebox{0mm}{-}}$. 
\item[$\bf {(T3_2)}$]  In this case $\mathsf{R}^\shortmid=0$ and $\mathsf{C}^{\raisebox{0mm}{-}}=\infty$.  Then Lemma~\ref{LM39}~(2) yields $f\in H^1(\bR^2)$ and $f|_\bR=0$,  i.e. $f\in \sF^\mathrm{V}$.  As a result, it follows from \eqref{EQ3INF} that 
\[
	\liminf_{n\rightarrow \infty}\sE^n(f_n,f_n)\geq \frac{1}{2}\int_{\bG^2}|\nabla f|^2dx=\frac{1}{2}\int_{\bR^2}|\nabla f|^2dx=\sE^\mathrm{V}(f,f).
\]
\end{itemize}
Therefore we complete the proof of Mosco (a) for the cases in $\mathcal{T}_2$.  
\end{proof}



%

\subsubsection{Proof of the second part of Mosco convergence}

Regarding the second part (b) of Mosco convergence,  we first treat the cases in $$\mathcal{T}_3:=\{\bf {(N2)}, \bf {(N3)},  \bf {(T2)}, \bf {(T3)},  \bf {(M2)},  \bf {(M3)}\}.$$ Consider $g\in H^1(\mathbb{G}^2)$ and define another function $g_n$ for any $n\geq 1$: 
\begin{equation}\label{EQ3UPG}
g_n(x_1,x_2):=\left\lbrace 
\begin{aligned}
&g(x_1,x_2-\varepsilon_n),\quad x_2>\varepsilon_n, \\
& \mathbf{H}(\gamma_{\pm}g)\left(x_1,\sqrt{\frac{a^{\raisebox{0mm}{-}}_n}{a^{\shortmid}_n}}x_2\right),\quad |x_2|\leq \varepsilon_n, \\
&g(x_1,x_2+\varepsilon_n),\quad x_2<-\varepsilon_n,
 \end{aligned}
 \right.
\end{equation}
where $\mathbf{H}$ is defined as \eqref{EQ3EXPE} with the boundary values $\gamma_\pm g$ on $\partial \Omega_{\ell_n}$. 

\begin{lemma}\label{LM37}
Let $g\in H^1(\bG^2)$ and $g_n$ be given by \eqref{EQ3UPG}.  Then $g_n\in H^1(\mathbb{R}^2)$ and $\|g_n-g\|_H\rightarrow 0$ as $n\rightarrow \infty$.  
\end{lemma}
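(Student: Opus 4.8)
The plan is to check $g_n\in H^1(\mathbb{R}^2)$ by an $H^1$-gluing argument and then to estimate $\|g_n-g\|_H$ region by region; the only substantive point will be controlling the $L^2$-mass, on the collapsing strip $\Omega_n$, of the harmonic filler used in \eqref{EQ3UPG}. Write $h_n:=\mathbf{H}(\gamma_{\pm}g)$ for that filler, i.e.\ the function given by \eqref{EQ3EXPE} with boundary data $\gamma_+g$ on one component and $\gamma_-g$ on the other component of $\partial\Omega_{\ell_n}$. Since $g_{\pm}=g|_{\bG^2_{\pm}}\in H^1(\bR^2_{\pm})$, the trace theorem (Appendix~\ref{AP1}) gives $\gamma_{\pm}g\in H^{1/2}(\mathbb{R})$, and hence $h_n\in\mathcal{G}^{\ell_n}=H^1(\Omega_{\ell_n})$: it is the $\mathcal{A}^{\ell_n}$-orthogonal projection onto the $\mathcal{A}^{\ell_n}$-harmonic functions of any $H^1(\Omega_{\ell_n})$-extension of the boundary data, exactly as used in the proof of Lemma~\ref{LM35} via \cite[Proposition~3.4.1 and Theorem~3.4.8]{CF12}.

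For $g_n\in H^1(\mathbb{R}^2)$: on $\{x_2>\varepsilon_n\}$ (resp.\ $\{x_2<-\varepsilon_n\}$) the function $g_n$ is the normal translate of $g_+$ by $\varepsilon_n$ (resp.\ of $g_-$ by $-\varepsilon_n$), hence lies in $H^1$ there; on $\Omega_n$ it is $(x_1,x_2)\mapsto h_n(x_1,\sqrt{a^{\raisebox{0mm}{-}}_n/a^{\shortmid}_n}\,x_2)$, an affine rescaling of the normal variable of $h_n\in H^1(\Omega_{\ell_n})$, hence lies in $H^1(\Omega_n)$. The traces across the interface $\{x_2=\varepsilon_n\}$ agree: from the outer side the trace equals $\gamma_+g$, while from $\Omega_n$ the rescaled filler has trace $h_n(\cdot,\sqrt{a^{\raisebox{0mm}{-}}_n/a^{\shortmid}_n}\,\varepsilon_n)=h_n(\cdot,\ell_n)=\gamma_+g$ because $\ell_n=\sqrt{a^{\raisebox{0mm}{-}}_n/a^{\shortmid}_n}\,\varepsilon_n$; the interface $\{x_2=-\varepsilon_n\}$ is symmetric. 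Gluing $H^1$ functions whose traces match along the interfaces (exactly as for $u_\delta$ in the proof of Lemma~\ref{LM24}, cf.\ Lemma~\ref{LMB1}) then gives $g_n\in H^1(\mathbb{R}^2)$.

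For $\|g_n-g\|_H\to 0$ I would split the $L^2(\mathbb{R}^2)$-norm over the three regions. On $\{x_2>\varepsilon_n\}$ (and symmetrically $\{x_2<-\varepsilon_n\}$) the difference is the normal translate $g_+(\cdot,\cdot-\varepsilon_n)-g_+$, whose $L^2(\bR^2_+)$-norm tends to $0$ by continuity of translations in $L^2$ (equicontinuity of the integral, \cite[Theorem~2.32]{AF03}). On $\Omega_n$, $\|g\|_{L^2(\Omega_n)}\to 0$ by absolute continuity of the integral since $|\Omega_n\cap\bR^2_0|\to 0$, while the substitution $y_2=\sqrt{a^{\raisebox{0mm}{-}}_n/a^{\shortmid}_n}\,x_2$ gives the exact identity $\|g_n\|_{L^2(\Omega_n)}^2=(\varepsilon_n/\ell_n)\,\|h_n\|_{L^2(\Omega_{\ell_n})}^2$. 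So the whole matter reduces to the estimate
\[
\|h_n\|_{L^2(\Omega_{\ell_n})}^2\lesssim\ell_n\bigl(\|\gamma_+g\|_{L^2(\mathbb{R})}^2+\|\gamma_-g\|_{L^2(\mathbb{R})}^2\bigr),
\]
for then $\|g_n\|_{L^2(\Omega_n)}^2\lesssim\varepsilon_n\bigl(\|\gamma_+g\|_{L^2(\mathbb{R})}^2+\|\gamma_-g\|_{L^2(\mathbb{R})}^2\bigr)\to 0$.

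This last inequality is the main obstacle. I would obtain it from the probabilistic representation $h_n(x_1,x_2)=\mathbf{E}_x[\gamma_{\pm}g(B^{\ell_n}_{\tau})]$ (with $x=(x_1,x_2)$): because $B^{\ell_n}$ is invariant under translations in the $x_1$-direction, for each fixed $x_2\in(-\ell_n,\ell_n)$ the slice $x_1\mapsto h_n(x_1,x_2)$ is a sum of two convolutions of $\gamma_+g$ and $\gamma_-g$ against sub-probability kernels on $\mathbb{R}$ — the $x_1$-marginals of the exit law through the two sides of the strip — whose total masses sum to $1$; Young's convolution inequality then yields $\|h_n(\cdot,x_2)\|_{L^2(\mathbb{R})}\le\|\gamma_+g\|_{L^2(\mathbb{R})}+\|\gamma_-g\|_{L^2(\mathbb{R})}$, and integrating over the interval of length $2\ell_n$ gives the claim. (Equivalently, these exit kernels are the explicit $\cosh$-kernels appearing in \eqref{eq:type6}, and one may estimate $\int_{-\ell_n}^{\ell_n}(\,\cdot\,)^2\,dx_2$ directly.) The point is precisely that the filler's $L^2$-mass on the strip grows no faster than the strip width $\ell_n$, so after the $\varepsilon_n/\ell_n$ factor produced by the normal rescaling it is $O(\varepsilon_n)\to 0$; the $H^1$-gluing and the off-strip translation estimate are routine.
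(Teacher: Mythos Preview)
Your proposal is correct and follows essentially the same route as the paper: the $H^1$-gluing, the reduction to $\int_{\Omega_n}|g_n|^2\,dx\to 0$, the rescaling identity $\|g_n\|_{L^2(\Omega_n)}^2=(\varepsilon_n/\ell_n)\|h_n\|_{L^2(\Omega_{\ell_n})}^2$, and the slice-by-slice bound via Young's convolution inequality to obtain $\|h_n\|_{L^2(\Omega_{\ell_n})}^2\lesssim \ell_n(\|\gamma_+g\|_{L^2}^2+\|\gamma_-g\|_{L^2}^2)$ are exactly what the paper does. The only cosmetic difference is that the paper first rescales $\Omega_{\ell_n}$ to the fixed strip $\mathbb{R}\times[0,\pi]$ and then quotes the explicit Poisson kernel from Appendix~\ref{APB}, whereas you invoke translation invariance of the exit law directly; as you yourself note, these are equivalent.
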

\begin{proof}
Note that $g|_{\Omega^{c\pm}_n}\in H^1(\Omega^{c\pm}_n)$ and $g|_{\Omega_n}\in H^1(\Omega_n)$,  where $\Omega^{c\pm}_n:=\{x=(x_1,x_2): \pm x_2>\varepsilon_n\}$,  and the traces of $g|_{\Omega^{c\pm}_n}$ on the boundary coincide with the traces of $g|_{\Omega_n}$ on the upper and lower boundaries respectively.  
Mimicking the proof of Lemma~\ref{LMB1}~(2),  we can obtain that $g_n\in H^1(\bR^2)$.   

To prove  $\|g_n-g\|_H\rightarrow 0$, we need only to show $\int_{\Omega_n}|g_n|^2dx\rightarrow 0$,  analogical to the proof of \eqref{EQ3UGCON}. 
For convenience, set $h_n(x_1,x_2):=\mathbf{H}(\gamma_{\pm}g)\left(x_1,x_2\right)$.  Then
\begin{equation}\label{eq:313}
\int_{\Omega_n}|g_n|^2dx=\sqrt{\frac{a^{\shortmid}_n}{a^{\raisebox{0mm}{-}}_n}}\int_{\Omega_{\ell_n}}|h_n|^2dx.
\end{equation}
In addition, define
\[\tilde{h}_n(x_1,x_2):=h_n\left(\frac{2\ell_n}{\pi}x_1, \frac{2\ell_n}{\pi}x_2-\ell_n\right),\]
which is harmonic in the interior of $T:=\mathbb{R}\times[0,\pi]$ with the boundary values $\gamma_{\pm}^T\tilde{h}_n(x_1):=\gamma_{\pm}g\left(\frac{2\ell_n}{\pi}x_1\right)$.  Here $\gamma^T_\pm \tilde{h}_n$ stands for the traces of $\tilde{h}_n$ on the upper/lower boundary of $T$. A straightforward computation yields
\begin{equation}\label{eq:314}
\int_{\Omega_{\ell_n}}|h_n|^2dx=\left(\frac{2\ell_n}{\pi}\right)^2\int_{T}\tilde{h}_n^2dx.
\end{equation}
Using the Possion kernel of $T$ (see Appendix \ref{APB}), we can write $\tilde{h}_n$ explicitly as
\[
\tilde{h}_n(x_1, x_2)=\left(P_{x_2}\ast\gamma^T_-\tilde{h}_n\right)(x_1)+\left(P_{\pi-x_2}\ast\gamma^T_+\tilde{h}_n\right)(x_1),
\]
where
\[P_{x_2}(x_1)=\frac{1}{2\pi}\frac{\sin x_2}{\cosh x_1-\cos x_2}.\]
We have 
\[\|P_{x_2}\|_{L^1(\bR)}=1-\frac{x_2}{\pi}.\]
It follows from the Young's inequality for convolutions that for a fixed $x_2$, 
\[
\|\tilde{h}_n(\cdot, x_2)\|^2_{L^2(\bR)}\lesssim \left(1-\frac{x_2}{\pi}\right)^2\|\gamma^T_-\tilde{h}_n\|^2_{L^2(\mathbb{R})}+\left(\frac{x_2}{\pi}\right)^2\|\gamma^T_+\tilde{h}_n\|^2_{L^2(\mathbb{R})},
\]
and hence
\[
\int_{T}\tilde{h}_n(x_1,x_2)^2dx_1dx_2\lesssim \|\gamma^T_-\tilde{h}_n\|^2_{L^2(\mathbb{R})}+\|\gamma^T_+\tilde{h}_n\|^2_{L^2(\mathbb{R})}=\frac{\pi}{2\ell_n}\left(\|\gamma_-g\|^2_{L^2(\mathbb{R})}+\|\gamma_+g\|^2_{L^2(\mathbb{R})}\right). 
\]
Together with \eqref{eq:313} and \eqref{eq:314}, we eventually conclude that 
\[
\int_{\Omega_n}|g_n|^2dx\lesssim \varepsilon_n\left(\|\gamma_-g\|^2_{L^2(\mathbb{R})}+\|\gamma_+g\|^2_{L^2(\mathbb{R})}\right)\rightarrow 0.
\]
Therefore $\|g_n-g\|_H\rightarrow 0$ as $n\rightarrow \infty$.  
That completes the proof. 
\end{proof}

Now we proceed to prove the second part (b) of Mosco convergence for the cases in $\mathcal{T}_3$.   

\begin{proof}[of Mosco (b) for cases in $\mathcal{T}_3$]
Let $g\in H^1(\bG^2)$ and $g_n$ be given by \eqref{EQ3UPG}.  We need to verify \eqref{eqA2}.  To accomplish this,  note that
\begin{equation}\label{eq:315}
	\EE^n(g_n,g_n)
=\frac{1}{2}\int_{\mathbb{G}^2}|\nabla g|^2dx+\dfrac{a^{\raisebox{0mm}{-}}_n}{2}\int_{\Omega_n} |\partial_{x_1} g_n|^2dx+\dfrac{a^{\shortmid}_n}{2}\int_{\Omega_n} |\partial_{x_2} g_n|^2dx. 
\end{equation}
An analogical formulation to Lemma~\ref{LM35} yields that 
\begin{equation}\label{eq:316}
	\dfrac{a^{\raisebox{0mm}{-}}_n}{2}\int_{\Omega_n} |\partial_{x_1} g_n|^2dx+\dfrac{a^{\shortmid}_n}{2}\int_{\Omega_n} |\partial_{x_2} g_n|^2dx=J_{n3}+J_{n4},
\end{equation}
where 
\[
J_{n3}:= \frac{(a^{\raisebox{0mm}{-}}_na^{\shortmid}_n)^{1/2}}{4 \ell_n}\int_{\mathbb{R}\times\mathbb{R}}\frac{\left(g(x_1+)-g(x_1'-)\right)^2}{2\left(\frac{2\ell_n}{\pi}\right)\left(\cosh\left(\frac{\pi}{2\ell_n}(x_1-x_1')\right)+1\right)}dx_1dx_1'
\]
and 
\[
J_{n4}:=\frac{(a^{\raisebox{0mm}{-}}_na^{\shortmid}_n)^{1/2}}{8\pi}\int_{\mathbb{R}\times\mathbb{R}}\frac{\left((g(x_1+)-g(x_1'+)\right)^2+\left((g(x_1-)-g(x_1'-)\right)^2}{\left(\frac{2\ell_n}{\pi}\right)^2\left(\cosh\left(\frac{\pi}{2\ell_n}(x_1-x_1')\right)-1\right)}dx_1dx_1'. 
\]
Further note that 
\begin{equation}\label{eq:317}
\frac{(a^{\raisebox{0mm}{-}}_na^{\shortmid}_n)^{1/2}}{ \ell_n}=\frac{a^{\shortmid}_n}{\varepsilon_n}\rightarrow \frac{1}{\mathsf{R}^\shortmid}, \quad (a^{\raisebox{0mm}{-}}_na^{\shortmid}_n)^{1/2}\rightarrow \mathsf{M}.
\end{equation}
In what follows, we prove Mosco (b) for the cases in $\mathcal{T}_3$:
\begin{itemize}
\item[{\bf (N2)}] In this case,  $\mathsf{M}=0$, $0<\mathsf{R}^\shortmid<\infty, \ell_n\rightarrow 0$ and $g\in \sF^\mathrm{II}$.  It follows from Lemma \ref{LM52} that $J_{n4}\rightarrow 0$, and 
\[J_{n3}\rightarrow\frac{1}{4\mathsf{R}^\shortmid}\int_\mathbb{R}\left(g(x_1+)-g(x_1-)\right)^2dx_1.\]
Hence
\[
\lim_{n\rightarrow\infty}\EE^n(g_n,g_n)=\frac{1}{2}\int_{\mathbb{G}^2}|\nabla g|^2dx+\lim_{n\rightarrow\infty}J_{n3}=\EE^{\mathrm{II}}(g,g)
\]
with $\kappa=1/\mathsf{R}^\shortmid$.

\item[{\bf (N3)}] In this case  $\mathsf{M}=0$,  $\mathsf{R}^\shortmid=\infty$ and $g\in \sF^\mathrm{III}$.  Clearly we have 
$J_{n3},  J_{n4}\rightarrow 0$ no matter $\lim_{n\rightarrow \infty}\ell_n$ is finite or not,  and thus 
\[\lim_{n\rightarrow\infty}\EE^n(g_n,g_n)=\frac{1}{2}\int_{\mathbb{G}^2}|\nabla g|^2dx=\EE^\mathrm{III}(g,g).\]
\item[$\bf {(T2)}$] In this case,  we have $0<\sC^\shortline<\infty,  \sR^\shortmid=0$ and $\ell_n\rightarrow 0$.  When $g\notin \sF^\mathrm{IV}$,  \eqref{eqA2} trivially holds.  When $g\in \sF^\mathrm{IV}\subset H^1(\bR^2)$,  since $g|_{\bR}\in H^1(\bR)$,  it follows from Corollary~\ref{LMD3} and Remark~\ref{RMD4} that   
\[
	J_{n3}= \frac{\sC_n^\shortline}{\pi^2}  \int_\bR |\widehat{g|_\bR}(\xi)|^2|\xi|^2d\xi \int_\bR \frac{y^2dy}{\cosh y +1}\cdot \frac{1-\cos (2\ell_n \xi y/\pi)}{(2\ell_n \xi y/\pi)^2}\rightarrow \frac{\sC^\shortline}{3}\cdot \mathbf{D}(g|_\bR, g|_\bR) 
\]
and analogically $J_{n4}\rightarrow \frac{2\sC^\shortline}{3}\cdot \mathbf{D}(g|_\bR, g|_\bR)$.  By means of \eqref{eq:315} and \eqref{eq:316},  we eventually conclude that
\[
	\lim_{n\rightarrow \infty} \sE(g_n,g_n)=\int_{\bR^2}|\nabla g|^2dx+\sC^\shortline\cdot \mathbf{D}(g|_\bR,g|_\bR)=\sE^\mathrm{IV}(g,g). 
\]
\item[$\bf {(T3)}$] We only consider $g\in\FF^\mathrm{V}$.  Particularly, $\gamma_\pm g=0$, and thus $g_n\equiv0$ for $|x_2|\leq\varepsilon_n$.  As a result,
\[\lim_{n\rightarrow\infty}\EE^n(g_n,g_n)=\frac{1}{2}\int_{\mathbb{G}^2}|\nabla g|^2dx=\EE^\mathrm{V}(g,g). \]
\item[{\bf (M2)}]  In this case $0<\mathsf{M}<\infty$, $0<\mathsf{R}^\shortmid<\infty$, $\ell_n\rightarrow \ell= \mathsf{M}\mathsf{R}^\shortmid\in (0,\infty)$ and $g\in \sF^\mathrm{VI}$.  Then it follows from \eqref{eq:315}, \eqref{eq:316}, \eqref{eq:317} and Lemma~\ref{LM52} that 
\[
\lim_{n\rightarrow\infty}\EE^n(g_n,g_n)=\EE^{\mathrm{VI}}(g,g)
\]
with $\mu=\mathsf{M}$ and $\ell =\mathsf{M}\mathsf{R}^\shortmid$.
\item[{\bf (M3)}]  In this case $0<\mathsf{M}<\infty$, $\mathsf{R}^\shortmid=\infty$, $\ell_n\rightarrow\infty$ and $g\in \sF^\mathrm{VII}$.  Analogical to the case \textbf{(M2)},  we can obtain that
\[
\lim_{n\rightarrow\infty}\EE^n(g_n,g_n)=\EE^{\mathrm{VII}}(g,g),
\]
with $\mu=\mathsf{M}$.
\end{itemize}
Eventually we complete the proof.
\end{proof}

Let us turn to deal with the case {\bf (N1)} (also {\bf (T1)}, {\bf (M1)}).
For every function $g\in C_c^\infty(\bR^2)$,  define $g_n$ as follows: 
\begin{equation}\label{EQ33MU}
g_n(x_1,x_2)=\begin{cases}
g(x_1,x_2-\varepsilon_n), \quad & x_2>\varepsilon_n,\\
g(x_1, 0), \quad & |x_2|\leq \varepsilon_n, \\
g(x_1,x_2+\varepsilon_n), \quad & x_2<-\varepsilon_n.
\end{cases}
\end{equation}
The following lemma shows that $g_n$ converges to $g$ in $H^1(\bR^2)$. 

\begin{lemma}\label{LM310} 
Let $g\in C_c^\infty(\bR^2)$ and $g_n$ be defined as \eqref{EQ33MU}.  Then $g_n\in H^1(\bR^2)$ and $\|g_n-g\|_{H^1(\bR^2)}\rightarrow 0$ as $n\rightarrow \infty$.  
\end{lemma}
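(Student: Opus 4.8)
The plan is to follow exactly the argument used for the auxiliary function $u_\delta$ of \eqref{eq:211} in the proof of Lemma~\ref{LM24}, now with the role of $\delta$ played by $\varepsilon_n$ and of $u$ by the fixed function $g\in C_c^\infty(\bR^2)$.

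First I would check that $g_n\in H^1(\bR^2)$. Since $g$ is smooth with compact support, the three pieces in \eqref{EQ33MU} are smooth on the closed regions $\{x_2\geq\varepsilon_n\}$, $\{|x_2|\leq\varepsilon_n\}=\overline{\Omega_n}$ and $\{x_2\leq-\varepsilon_n\}$, and their boundary values match along $x_2=\pm\varepsilon_n$: at $x_2=\varepsilon_n$ both the top piece and the middle piece equal $g(x_1,0)$, and likewise at $x_2=-\varepsilon_n$. Hence $g_n$ is continuous, compactly supported and piecewise $C^1$, so — mimicking the proof of Lemma~\ref{LMB1}~(2) — it belongs to $H^1(\bR^2)$, with weak gradient obtained by differentiating each piece separately: $\nabla g_n(x)=\nabla g(x_1,x_2\mp\varepsilon_n)$ for $\pm x_2>\varepsilon_n$, while $\partial_{x_1}g_n(x)=\partial_{x_1}g(x_1,0)$ and $\partial_{x_2}g_n(x)=0$ for $x\in\Omega_n$.

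Next I would estimate $\|g_n-g\|_{L^2(\bR^2)}$ and $\|\nabla g_n-\nabla g\|_{L^2(\bR^2)}$ by splitting $\bR^2$ into the three regions, exactly as in \eqref{EQ3UGCON}. On $\{x_2>\varepsilon_n\}$ the difference is $g(x_1,x_2-\varepsilon_n)-g(x_1,x_2)$, resp.\ $\nabla g(x_1,x_2-\varepsilon_n)-\nabla g(x_1,x_2)$, and its $L^2$-norm tends to $0$ by the continuity of translations in $L^2$ (equivalently, by the equicontinuity of the integral, cf.\ \cite[Theorem~2.32]{AF03}); the region $\{x_2<-\varepsilon_n\}$ is symmetric. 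On $\Omega_n$ one uses $\int_{\Omega_n}|g_n|^2dx=2\varepsilon_n\int_\bR g(x_1,0)^2dx_1\to0$, $\int_{\Omega_n}|\partial_{x_1}g_n|^2dx=2\varepsilon_n\int_\bR|\partial_{x_1}g(x_1,0)|^2dx_1\to0$, $\int_{\Omega_n}|\partial_{x_2}g_n|^2dx=0$, together with $\int_{\Omega_n}|g|^2dx\to0$ and $\int_{\Omega_n}|\nabla g|^2dx\to0$ (which hold because $g$ is bounded and $|\Omega_n\cap\mathrm{supp}\,g|\to0$). Combining the three regions gives $\|g_n-g\|_{H^1(\bR^2)}\to0$. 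The only point requiring care is the first step — verifying that no singular contribution appears along the interfaces $x_2=\pm\varepsilon_n$ so that $\nabla g_n$ really has the asserted piecewise form — but this is immediate from the matching of traces and is exactly the computation behind Lemma~\ref{LMB1}; the rest is a routine application of $L^2$-continuity of translations to the fixed smooth function $g$.
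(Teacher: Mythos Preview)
Your proof is correct and follows essentially the same approach as the paper: verify $g_n\in H^1(\bR^2)$ via the trace-matching argument of Lemma~\ref{LMB1}, then split $\|g_n-g\|_{H^1(\bR^2)}$ into the three regions exactly as in \eqref{EQ3UGCON}, handling the outer strips by $L^2$-continuity of translations and the middle strip $\Omega_n$ by the explicit computation $\int_{\Omega_n}(|g_n|^2+|\nabla g_n|^2)dx=2\varepsilon_n\int_\bR(h^2+(h')^2)dx_1\to0$ with $h=g(\cdot,0)$. The paper's own proof is just a terser version of what you wrote.
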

\begin{proof}
The fact $g_n\in H^1(\bR^2)$ can be verified by an analogical argument to Lemma~\ref{LMB1}.  To prove $\|g_n-g\|_{H^1(\bR^2)}\rightarrow 0$,
by mimicking the proof of \eqref{EQ3UGCON},  it suffices to show
\[
	\int_{\Omega_n} \left(|g_n|^2 + |\nabla g_n|^2\right)dx\rightarrow 0.  
\]
Indeed,  set $h(\cdot):=g(\cdot, 0)\in C_c^\infty(\bR)$ and we have
\[
	\int_{\Omega_n} \left(|g_n|^2 + |\nabla g_n|^2\right)dx=2\varepsilon_n \int_\bR \left(h(x_1)^2+h'(x_1)^2\right)dx_1\rightarrow 0.
\]
That completes the proof.  
\end{proof}

Finally we complete this proof as below.  

\begin{proof}[of Mosco (b) for \textbf{(N1)},  \textbf{(T1)} and \textbf{(M1)}]
In these cases,  $\sC^\shortline_n\rightarrow \mathsf{C}^{\raisebox{0mm}{-}}=0$.  Set
\[
\mathscr{C}:=\{g_n:  g_n\text{ is defined as \eqref{EQ33MU} for some }g\in C_c^\infty(\bR^2)\text{ and }n\geq 1\}.  
\]	
Then Lemma~\ref{LM310} indicates that $\mathscr{C}$ is dense in $H^1(\bR^2)$.  We further assert that for any $g\in \mathscr{C}$,  
\[
	\lim_{n\rightarrow \infty} \sE^n(g,g)=\sE^\mathrm{I}(g,g).  
\]
To do this, it suffices to verify that 
\begin{equation}\label{eq:326}
	a^\shortline_n \int_{\Omega_n} |\partial_{x_1} g|^2dx+a^\shortmid_n\int_{\Omega_n} |\partial_{x_2} g|^2dx\rightarrow 0.
\end{equation}
Note that $g(x_1, x_2)=g(x_1,0)$ for $|x_2|<\delta$ and some $\delta>0$,  and $h(\cdot):=g(\cdot, 0)\in C_c^\infty(\bR)$.  Then when $n$ is large enough,  the left hand side of \eqref{eq:326} is equal to
\[
	2a^\shortline_n \varepsilon_n \int_\bR h'(x_1)^2dx_1=2\sC^\shortline_n  \int_\bR h'(x_1)^2dx_1 \rightarrow 0.  
\]

With these two facts at hand,  we proceed to prove the second part (b) of Mosco convergence.  When $g\notin H^1(\bR^2)$,   \eqref{eqA2} trivially holds for any sequence converging to $g$ strongly in $H$.  Now we consider $g\in H^1(\bR^2)$.  Since $\mathscr{C}$ is dense in $H^1(\bR^2)$,  we can take a sequence $\{v_k\}\subset \mathscr{C}$ such that 
\[
	\lim_{k\rightarrow \infty} \sE^\mathrm{I}(v_k,v_k)=\sE^\mathrm{I}(g,g). 
\]
Since $\lim_{n\rightarrow \infty}\sE^n(v_k,v_k)=\sE^\mathrm{I}(v_k,v_k)$,  there exists $n_k$ such that for any $n\geq n_k$, 
\[
	\left|\sE^n(v_k,v_k)-\sE^\mathrm{I}(v_k,v_k)\right|<2^{-k}. 
\]
Put $g_1=\cdots =g_{n_1-1}=0$ and for $k\geq 1$,  $g_{n_k}=\cdots =g_{n_{k+1}-1}=v_k$.  Then clearly $\{g_n\}$ forms a sequence in $\mathscr{C}$ converging to $g$ strongly in $H$,  and 
\begin{equation}\label{eq328}
	\lim_{n\rightarrow \infty}\sE^n(g_n,g_n)=\sE^\mathrm{I}(g,g).
\end{equation}
That completes the proof.
\end{proof}

\subsection{Some consequences}

In this subsection, we present some consequences of Theorem~\ref{THMAIN}.  Since the sequence $\varepsilon_n\downarrow 0$ can be chosen arbitrarily, we say $(\sE^\varepsilon,\sF^\varepsilon)$ (as $\varepsilon\downarrow 0$), instead of $(\sE^{\varepsilon_n},\sF^{\varepsilon_n})$ (as $n\uparrow \infty$), manifests a phase transition or converges to a certain Dirichlet form in a little abuse of terminology.  

At first,  we emphasize that the limiting phase is the trivial one if $\sR^\shortmid=\sC^\shortline=0$, no matter the limit $\sM=\lim_{\varepsilon\downarrow 0} \sM_\varepsilon$ exists or not.  This fact can be easily figured out in the proof of Theorem~\ref{THMAIN}.  

\begin{corollary}
Assume that $\lim_{\varepsilon\downarrow 0}\sR^\shortmid_\varepsilon=\lim_{\varepsilon\downarrow 0} \sC^\shortline_\varepsilon=0$.  Then $(\sE^\varepsilon, \sF^\varepsilon)$ converges to $(\sE^\mathrm{I}, \sF^\mathrm{I})$ in the sense of Mosco. 
\end{corollary}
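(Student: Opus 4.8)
The plan is to notice that the present hypotheses are precisely those that are used, piecemeal, in the case \textbf{(N1)} of Theorem~\ref{THMAIN}: in the proof there, the verification of the first half of the Mosco definition uses only $\sR^\shortmid=\lim_n\sR^\shortmid_n=0$, while the verification of the second half uses only $\sC^\shortline=\lim_n\sC^\shortline_n=0$, with no appeal whatsoever to the mixing scale $\sM$ or to its existence. So, after fixing an arbitrary $\varepsilon_n\downarrow 0$ and keeping the notation $(\sE^n,\sF^n)$, $a^\shortline_n$, $a^\shortmid_n$, $\sC^\shortline_n$, $\sR^\shortmid_n$, $\Omega_n$ of \S\ref{SEC31}, I would simply re-run those two arguments; the possibility that $\sM_n$ fails to converge plays no role.

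For the first part of Mosco convergence I would take $\{f_n\}\subset H$ with $f_n\to f$ weakly in $H$ and (after the usual reduction) $\sup_n\sE^n(f_n,f_n)<\infty$, so $f_n\in H^1(\bR^2)$. Applying Lemma~\ref{LM34} gives $\tilde{f}_n,f\in H^1(\bG^2)$, $\tilde{f}_n\to f$ weakly in $H$, and—along a subsequence realizing the relevant $\liminf$—$\gamma_\pm\tilde{f}_n\to\gamma_\pm f$ weakly. Since $\sR^\shortmid_n\to 0$, Lemma~\ref{LM38} applies and yields $\gamma_+f=\gamma_-f$, hence $f\in H^1(\bR^2)=\FF^\mathrm{I}$ by Lemma~\ref{LMB1}. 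Then \eqref{EQ3INF}, combined with $\sE^n(f_n,f_n)\geq\frac12\int_{\Omega_n^c}|\nabla f_n|^2dx$ and the lower semicontinuity of $\sE^\mathrm{III}$ (Remark~\ref{RMA3}), gives
\[
\liminf_{n\to\infty}\sE^n(f_n,f_n)\geq \frac12\int_{\bG^2}|\nabla f|^2dx=\frac12\int_{\bR^2}|\nabla f|^2dx=\sE^\mathrm{I}(f,f),
\]
which is \eqref{eqMoscoa} with limit $(\sE^\mathrm{I},\sF^\mathrm{I})$.

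For the second part I would use the family $\mathscr{C}$ of functions of the form \eqref{EQ33MU}, which is dense in $H^1(\bR^2)$ by Lemma~\ref{LM310}. For a fixed $g\in\mathscr{C}$ one has $g(x_1,x_2)=g(x_1,0)$ for $|x_2|$ small, so with $h:=g(\cdot,0)\in C_c^\infty(\bR)$ and $n$ large, $\sE^n(g,g)=\frac12\int_{\bR^2}|\nabla g|^2dx+2\sC^\shortline_n\int_\bR h'(x_1)^2dx_1\to\sE^\mathrm{I}(g,g)$ because $\sC^\shortline_n\to 0$. A diagonal argument—pick $v_k\in\mathscr{C}$ with $v_k\to g$ in $H^1(\bR^2)$, then $n_k\uparrow\infty$ with $|\sE^n(v_k,v_k)-\sE^\mathrm{I}(v_k,v_k)|<2^{-k}$ for $n\geq n_k$, and set $g_n:=v_k$ on $[n_k,n_{k+1})$—then produces, for every $g\in H^1(\bR^2)$, a sequence $g_n\to g$ strongly in $H$ with $\sE^n(g_n,g_n)\to\sE^\mathrm{I}(g,g)$, i.e. \eqref{eqA2}; for $g\notin H^1(\bR^2)$, \eqref{eqA2} is vacuous.

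There is no genuine obstacle here; the only point worth care—and the reason the statement deserves to be recorded—is the assertion that $\sM$ really does drop out, i.e. that $\sR^\shortmid=0$ alone carries step (a) and $\sC^\shortline=0$ alone carries step (b), which is exactly what the above inspection of the proof of Theorem~\ref{THMAIN} confirms. If one would rather not reopen that proof, an alternative is a subsequence argument: along any subsequence of $\varepsilon_n$ one passes to a further subsequence on which $\sM_n$ converges in $[0,\infty]$, whence the pertinent case among \textbf{(N1)}, \textbf{(T1)}, \textbf{(M1)} of Theorem~\ref{THMAIN} gives Mosco convergence to the same limit $(\sE^\mathrm{I},\sF^\mathrm{I})$; since Mosco convergence is equivalent to strong resolvent convergence, which is metrizable, the full sequence converges.
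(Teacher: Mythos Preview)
Your proposal is correct and follows essentially the same route as the paper: the authors' ``proof'' of this corollary is simply the observation (spelled out in the remark immediately after) that in the proof of Theorem~\ref{THMAIN}, part (a) for the trivial phase uses only $\sR^\shortmid=0$ while part (b) uses only $\sC^\shortline=0$, with no role for $\sM$. Your explicit formula $\sE^n(g,g)=\frac12\int_{\bR^2}|\nabla g|^2dx+2\sC^\shortline_n\int_\bR h'(x_1)^2dx_1$ is slightly off---the correct expression is $\frac12\int_{\Omega_n^c}|\nabla g|^2dx+\sC^\shortline_n\int_\bR h'(x_1)^2dx_1$---but this does not affect the limit, and your alternative subsequence argument via metrizability of strong resolvent convergence is a valid shortcut the paper does not mention.
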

\begin{remark}
More precisely,  $\sR^\shortmid=0$ implies the first part (a) of Mosco convergence and $\sC^\shortline=0$ implies the second part (b) of Mosco convergence in this special case. 
\end{remark}

In the reminder of this subsection, we take $a^{\raisebox{0mm}{-}}_\varepsilon$ and $a^\shortmid_\varepsilon$ to be the monomials of $\varepsilon$:
\begin{equation}\label{eq:definitionofa}
	a^{\raisebox{0mm}{-}}_\varepsilon:= c^{\raisebox{0mm}{-}} \varepsilon^{\alpha},\quad a^\shortmid_\varepsilon:=c^\shortmid \varepsilon^{\beta},
\end{equation}
where $\alpha,\beta\in \bR$ and $c^{\raisebox{0mm}{-}}, c^\shortmid>0$.   The following corollary is a straightforward consequence of Theorem~\ref{THMAIN}. 

\begin{corollary}
Let $a^{\raisebox{0mm}{-}}_\varepsilon$ and $a^\shortmid_\varepsilon$ be given by \eqref{eq:definitionofa}. Then the following hold:
\begin{itemize}
\item[(1)] $(\sE^\varepsilon, \sF^\varepsilon)$ manifests a normal phase transition if and only if $\alpha+\beta>0$.  In this case,  the following convergences as $\varepsilon\downarrow 0$ hold in the sense of Mosco:
\begin{itemize}
\itemsep=0pt \parskip=0pt
\item $\beta<1$: $(\EE^\varepsilon,\FF^\varepsilon)$ converges to $(\EE^\mathrm{I}, \FF^\mathrm{I})$;
\item $\beta=1$: $(\EE^\varepsilon,\FF^\varepsilon)$ converges to $(\EE^\mathrm{II}, \FF^\mathrm{II})$ with $\kappa=c^\shortmid$;
\item $\beta>1$: $(\EE^\varepsilon,\FF^\varepsilon)$ converges to $(\EE^\mathrm{III}, \FF^{\mathrm{III}})$.
\end{itemize}
\item[(2)] $(\sE^\varepsilon, \sF^\varepsilon)$ manifests a tangent phase transition if and only if $\alpha+\beta<0$.  In this case,  the following convergences as $\varepsilon\downarrow 0$ hold in the sense of Mosco:
\begin{itemize}
\itemsep=0pt \parskip=0pt
\item $\alpha>-1$: $(\EE^\varepsilon,\FF^\varepsilon)$ converges to $(\EE^\mathrm{I}, \FF^\mathrm{I})$;
\item $\alpha=-1$: $(\EE^\varepsilon,\FF^\varepsilon)$ converges to $(\EE^\mathrm{IV}, \FF^\mathrm{IV})$ with $\lambda=c^{\raisebox{0mm}{-}}$;
\item $\alpha<-1$: $(\EE^\varepsilon,\FF^\varepsilon)$ converges to $(\EE^\mathrm{V}, \FF^{\mathrm{V}})$.
\end{itemize}
\item[(3)] $(\sE^\varepsilon, \sF^\varepsilon)$ manifests a mixing phase transition if and only if $\alpha+\beta=0$.  In this case,  the following convergences as $\varepsilon\downarrow 0$  hold in the sense of Mosco:
\begin{itemize}
\itemsep=0pt \parskip=0pt
\item $\alpha>-1$: $(\EE^\varepsilon,\FF^\varepsilon)$ converges to $(\EE^\mathrm{I}, \FF^\mathrm{I})$;
\item $\alpha=-1$: $(\EE^\varepsilon,\FF^\varepsilon)$ converges to $(\EE^\mathrm{VI}, \FF^\mathrm{VI})$ with $\mu=\sqrt{c^{\raisebox{0mm}{-}} c^\shortmid}$ and $\ell=\sqrt{\frac{c^{\raisebox{0mm}{-}}}{c^\shortmid}}$;
\item $\alpha<-1$: $(\EE^\varepsilon,\FF^\varepsilon)$ converges to $(\EE^\mathrm{VII}, \FF^{\mathrm{VII}})$ with $\mu=\sqrt{c^{\raisebox{0mm}{-}} c^\shortmid}$.
\end{itemize}
\end{itemize} 
\end{corollary}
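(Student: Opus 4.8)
The plan is to derive the corollary directly from Theorem~\ref{THMAIN} by substituting the monomial ansatz \eqref{eq:definitionofa} into the scaling parameters and reading off which branch applies. First I would compute, straight from $\sC^\shortline_\varepsilon = \varepsilon a^\shortline_\varepsilon$, $\sR^\shortmid_\varepsilon = \varepsilon/a^\shortmid_\varepsilon$, $\sM_\varepsilon = \sqrt{\sC^\shortline_\varepsilon/\sR^\shortmid_\varepsilon}$ and $\ell_\varepsilon = \sqrt{\sC^\shortline_\varepsilon\sR^\shortmid_\varepsilon}$, the explicit expressions
\[
\sC^\shortline_\varepsilon = c^\shortline \varepsilon^{1+\alpha}, \qquad \sR^\shortmid_\varepsilon = \tfrac{1}{c^\shortmid}\,\varepsilon^{1-\beta}, \qquad \sM_\varepsilon = \sqrt{c^\shortline c^\shortmid}\;\varepsilon^{(\alpha+\beta)/2}, \qquad \ell_\varepsilon = \sqrt{c^\shortline/c^\shortmid}\;\varepsilon^{(2+\alpha-\beta)/2}.
\]
Since each of these is a positive constant times a power of $\varepsilon$, all the limits in \eqref{eq:limitsexist} (and, when needed below, $\lim_{n\to\infty}\ell_n$) automatically exist in the wide sense, so Theorem~\ref{THMAIN} is applicable. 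From the exponent $(\alpha+\beta)/2$ on $\sM_\varepsilon$ one reads off immediately that $\sM=0$, $\sM\in(0,\infty)$, or $\sM=\infty$ according as $\alpha+\beta>0$, $\alpha+\beta=0$, or $\alpha+\beta<0$; this identifies the three kinds of phase transition in Theorem~\ref{THMAIN} with the sign of $\alpha+\beta$ and in particular proves the three ``if and only if'' clauses.

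It then remains to treat the three regimes separately. In the normal regime $\alpha+\beta>0$ the relevant critical quantity is $\sR^\shortmid = \lim_{\varepsilon\downarrow 0}(1/c^\shortmid)\varepsilon^{1-\beta}$, which equals $0$, $1/c^\shortmid$, or $\infty$ according as $\beta<1$, $\beta=1$, or $\beta>1$; plugging these into Theorem~\ref{THMAIN}(1), and noting that in case \textbf{(N2)} the constant is $\kappa = 1/\sR^\shortmid = c^\shortmid$, yields the three sub-cases of part~(1). In the tangent regime $\alpha+\beta<0$ the critical quantity is $\sC^\shortline = \lim_{\varepsilon\downarrow 0} c^\shortline\varepsilon^{1+\alpha}$, which equals $0$, $c^\shortline$, or $\infty$ according as $\alpha>-1$, $\alpha=-1$, or $\alpha<-1$; Theorem~\ref{THMAIN}(2) with $\lambda = \sC^\shortline = c^\shortline$ in case \textbf{(T2)} gives part~(2). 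In the mixing regime $\alpha+\beta=0$ one has $\beta=-\alpha$, so $\sR^\shortmid_\varepsilon = (1/c^\shortmid)\varepsilon^{1+\alpha}$, $\sC^\shortline_\varepsilon = c^\shortline\varepsilon^{1+\alpha}$ and $\ell_\varepsilon = \sqrt{c^\shortline/c^\shortmid}\,\varepsilon^{1+\alpha}$ all carry the same exponent $1+\alpha$; hence $\sR^\shortmid$ (equivalently $\sC^\shortline$) is $0$, finite and positive, or $\infty$ according as $\alpha>-1$, $\alpha=-1$, $\alpha<-1$, and when $\alpha=-1$ the limit $\ell = \lim_{\varepsilon\downarrow 0}\ell_\varepsilon = \sqrt{c^\shortline/c^\shortmid}$ exists. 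Feeding this into Theorem~\ref{THMAIN}(3) with $\mu = \sM = \sqrt{c^\shortline c^\shortmid}$ and $\ell = \sqrt{c^\shortline/c^\shortmid}$ in case \textbf{(M2)}, and $\mu = \sM = \sqrt{c^\shortline c^\shortmid}$ in case \textbf{(M3)}, completes part~(3).

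There is no genuine obstacle here: the corollary is a pure specialization of Theorem~\ref{THMAIN}, and the only things requiring care are the bookkeeping of the exponents of $\varepsilon$ and the correct matching of the limiting constants $\kappa,\lambda,\mu,\ell$ with the appropriate combinations of $c^\shortline$ and $c^\shortmid$. The one point worth stating explicitly is that in the mixing case Theorem~\ref{THMAIN}(3) presupposes the existence of $\ell=\lim_{n\to\infty}\ell_n$ in addition to \eqref{eq:limitsexist}, and the monomial form \eqref{eq:definitionofa} guarantees this as well, so no extra hypothesis is needed.
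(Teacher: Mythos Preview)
Your proposal is correct and is exactly the approach the paper has in mind: the corollary is stated there as ``a straightforward consequence of Theorem~\ref{THMAIN}'' with no explicit proof, and your argument supplies precisely the routine computation of $\sC^\shortline_\varepsilon,\sR^\shortmid_\varepsilon,\sM_\varepsilon,\ell_\varepsilon$ and the case-by-case matching that the paper omits. One small remark: in the mixing case the existence of $\ell=\lim_n\ell_n$ is not actually an extra hypothesis beyond \eqref{eq:limitsexist}, since in \textbf{(M2)} both $\sC^\shortline$ and $\sR^\shortmid$ are finite and positive and hence $\ell_n=\sqrt{\sC^\shortline_n\sR^\shortmid_n}\to\sqrt{\sC^\shortline\sR^\shortmid}$ automatically.
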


Finally, we consider the special case $\alpha=\beta$, i.e. $a^{\raisebox{0mm}{-}}_\varepsilon:= c^{\raisebox{0mm}{-}} \varepsilon^{\alpha}, a^\shortmid_\varepsilon:=c^\shortmid \varepsilon^{\alpha}$, where the phases of type VI and  type VII do not appear. In other words, it links the normal phase transition with the tangent one. The proof of the following corollary is also straightforward and we omit it.

\begin{corollary}\label{COR32}
Let $a^{\raisebox{0mm}{-}}_\varepsilon$ and $a^\shortmid_\varepsilon$ be given by \eqref{eq:definitionofa} with $\alpha=\beta$.  Then the following convergences as $\varepsilon\downarrow 0$ hold in the sense of Mosco:
\begin{itemize}
\itemsep=0pt \parskip=0pt
\item[(1)] $\alpha>1$: $(\EE^\varepsilon,\FF^\varepsilon)$ converges to $(\EE^\mathrm{III}, \FF^\mathrm{III})$;
\item[(2)] $\alpha=1$: $(\EE^\varepsilon,\FF^\varepsilon)$ converges to $(\EE^\mathrm{II}, \FF^\mathrm{II})$ with $\kappa=c^\shortmid$;
\item[(3)] $-1<\alpha<1$: $(\EE^\varepsilon,\FF^\varepsilon)$ converges to $(\EE^\mathrm{I}, \FF^\mathrm{I})$;
\item[(4)] $\alpha=-1$: $(\EE^\varepsilon,\FF^\varepsilon)$ converges to $(\EE^\mathrm{IV}, \FF^\mathrm{IV})$ with $\lambda=c^{\raisebox{0mm}{-}}$;
\item[(5)]  $\alpha<-1$: $(\EE^\varepsilon,\FF^\varepsilon)$ converges to $(\EE^\mathrm{V}, \FF^{\mathrm{V}})$.
\end{itemize}
\end{corollary}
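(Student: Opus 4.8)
The plan is to deduce Corollary~\ref{COR32} directly from Theorem~\ref{THMAIN} by computing the three critical parameters for the monomial conductivities \eqref{eq:definitionofa} in the case $\alpha=\beta$. First I would substitute $a^{\raisebox{0mm}{-}}_\varepsilon=c^{\raisebox{0mm}{-}}\varepsilon^\alpha$ and $a^\shortmid_\varepsilon=c^\shortmid\varepsilon^\alpha$ into the definitions $\sC^\shortline_\varepsilon=\varepsilon a^{\raisebox{0mm}{-}}_\varepsilon$, $\sR^\shortmid_\varepsilon=\varepsilon/a^\shortmid_\varepsilon$ and $\sM_\varepsilon=\sqrt{\sC^\shortline_\varepsilon/\sR^\shortmid_\varepsilon}$, which gives
\[
	\sC^\shortline_\varepsilon=c^{\raisebox{0mm}{-}}\varepsilon^{\alpha+1},\qquad \sR^\shortmid_\varepsilon=\frac{1}{c^\shortmid}\,\varepsilon^{1-\alpha},\qquad \sM_\varepsilon=\sqrt{c^{\raisebox{0mm}{-}}c^\shortmid}\;\varepsilon^{\alpha}.
\]
Since each of these is a monomial in $\varepsilon$, all three limits in \eqref{eq:limitsexist} exist in $[0,\infty]$ along any sequence $\varepsilon_n\downarrow 0$, so the hypothesis of Theorem~\ref{THMAIN} is automatically met; it then remains only to identify which branch of that theorem applies for a given $\alpha$.

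Next I would run the case analysis, organised by the sign of $\alpha$ (which determines $\sM$) and then by the position of $\alpha$ relative to $\pm1$. If $\alpha>0$ then $\sM=0$, the normal regime, and $\sR^\shortmid=\lim_{\varepsilon\downarrow 0}(c^\shortmid)^{-1}\varepsilon^{1-\alpha}$ equals $0$ for $\alpha<1$, equals $1/c^\shortmid$ for $\alpha=1$, and equals $\infty$ for $\alpha>1$; applying \textbf{(N1)}, \textbf{(N2)}, \textbf{(N3)} gives convergence to $(\sE^\mathrm{I},\sF^\mathrm{I})$, to $(\sE^\mathrm{II},\sF^\mathrm{II})$ with $\kappa=1/\sR^\shortmid=c^\shortmid$, and to $(\sE^\mathrm{III},\sF^\mathrm{III})$ respectively. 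If $\alpha=0$ then $\sM=\sqrt{c^{\raisebox{0mm}{-}}c^\shortmid}\in(0,\infty)$, the mixing regime, while $\sR^\shortmid=\lim_{\varepsilon\downarrow 0}(c^\shortmid)^{-1}\varepsilon=0$, so \textbf{(M1)} gives convergence to $(\sE^\mathrm{I},\sF^\mathrm{I})$. If $\alpha<0$ then $\sM=\infty$, the tangent regime, and $\sC^\shortline=\lim_{\varepsilon\downarrow 0}c^{\raisebox{0mm}{-}}\varepsilon^{\alpha+1}$ equals $0$ for $\alpha>-1$, equals $c^{\raisebox{0mm}{-}}$ for $\alpha=-1$, and equals $\infty$ for $\alpha<-1$; applying \textbf{(T1)}, \textbf{(T2)}, \textbf{(T3)} gives convergence to $(\sE^\mathrm{I},\sF^\mathrm{I})$, to $(\sE^\mathrm{IV},\sF^\mathrm{IV})$ with $\lambda=\sC^\shortline=c^{\raisebox{0mm}{-}}$, and to $(\sE^\mathrm{V},\sF^\mathrm{V})$ respectively. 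Collecting these outcomes reproduces the list (1)--(5) verbatim.

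I do not expect any genuine obstacle; the whole argument is bookkeeping layered on Theorem~\ref{THMAIN}. The only point where I would be careful is that the window $-1<\alpha<1$ straddles three different regimes of that theorem --- the normal one for $0<\alpha<1$ via \textbf{(N1)}, the mixing one for $\alpha=0$ via \textbf{(M1)}, and the tangent one for $-1<\alpha<0$ via \textbf{(T1)} --- yet all three collapse to the trivial phase of type I, which is consistent because in each the governing quantity ($\sR^\shortmid$ or $\sC^\shortline$) vanishes. I would also note explicitly that the phases of type VI and VII cannot appear here: they require the genuine mixing regime $0<\sM<\infty$, but when $\alpha=\beta$ being in that regime forces $\alpha=0$, hence $\sR^\shortmid=0$ and \textbf{(M1)} applies. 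This is exactly why the corollary interpolates between the normal and tangent phase transitions with no mixing jumps surviving.
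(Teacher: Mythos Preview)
Your proposal is correct and is exactly the approach the paper has in mind: the paper states that the proof ``is also straightforward and we omit it,'' and your computation of $\sC^\shortline_\varepsilon$, $\sR^\shortmid_\varepsilon$, $\sM_\varepsilon$ followed by case-by-case invocation of Theorem~\ref{THMAIN} is precisely that straightforward argument. Your closing remark on why types VI and VII cannot arise when $\alpha=\beta$ is a nice extra observation consistent with the paper's comment that this special case ``links the normal phase transition with the tangent one.''
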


\subsection{Continuity of the phase transitions}\label{SEC34}

This subsection is devoted to showing the continuity of the phase transitions appearing in Theorem~\ref{THMAIN}.  Analogically to \cite[\S4.5]{LS19}, we will prove the continuity of related Dirichlet forms in the parameter $\sR^\shortmid$ or $\sC^{\raisebox{0mm}{-}}$ to accomplish it.  The normal case is under consideration at first.  

\begin{theorem}\label{THMCT1}
Consider the normal case $\sM=0$ in Theorem~\ref{THMAIN}. Set $(\cE^0,\cF^0):=(\sE^\mathrm{I}, \sF^\mathrm{I})$ and $(\cE^\infty, \cF^\infty):=(\sE^\mathrm{III}, \sF^\mathrm{III})$.  For $a\in (0,\infty)$, set $(\cE^a, \cF^a):=(\sE^\mathrm{II},\sF^\mathrm{II})$ with $\kappa:=1/a$.  Take a sequence $\{a^l:l\geq 1\}$ in $(0,\infty)$ such that $\lim_{l\rightarrow\infty}a^l=a\in [0, \infty]$. Then $(\cE^{a^l},\cF^{a^l})$ converges to $(\cE^a,\cF^a)$ in the sense of Mosco as $l\rightarrow \infty$. 
\end{theorem}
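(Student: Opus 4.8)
The plan is to verify directly the two conditions of Mosco convergence (Definition~\ref{DEF41}): the $\liminf$ inequality \eqref{eqMoscoa} for every weakly convergent sequence, and the existence of a strongly convergent recovery sequence \eqref{eqA2}. The key structural observation I would start from is that, writing $\kappa=1/a$ and normalising the Dirichlet part as $\tfrac12\int_{\bG^2}|\nabla u|^2$ (consistently with the common generator $\tfrac12\rDelta$), the whole family $\{(\cE^a,\cF^a):a\in[0,\infty]\}$ is
\[
\cF^a=H^1(\bG^2),\qquad \cE^a(u,u)=\tfrac12\int_{\bG^2}|\nabla u|^2\,dx+\tfrac{1}{4a}\int_\bR\big(u(x_1+)-u(x_1-)\big)^2\,dx_1
\]
for $a\in(0,\infty]$ (with $\tfrac{1}{4\infty}:=0$), while $\cF^0=\{u\in H^1(\bG^2):\gamma_+u_+=\gamma_-u_-\}=H^1(\bR^2)$ by Lemma~\ref{LMB1}, on which the jump integral vanishes identically; heuristically $a=0$ is the $\kappa=\infty$ case, where an infinite penalty forces the trace jump to vanish. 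Thus $a=0,\ a\in(0,\infty),\ a=\infty$ match $\kappa=\infty,\ \kappa\in(0,\infty),\ \kappa=0$, i.e. the forms of types I, II, III.

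For \eqref{eqA2} I would use the constant recovery sequence $g_n\equiv g$. If $g\notin\cF^a$ there is nothing to prove; otherwise $g\in H^1(\bG^2)=\cF^{a^l}$ for every $l$, so $g_n\to g$ strongly in $H$ and it only remains to check $\cE^{a^l}(g,g)\to\cE^a(g,g)$. When $a\in(0,\infty)$ this is immediate because $a^l\to a$ and $\int_\bR(g_+-g_-)^2$ is a fixed finite number; when $a=\infty$ the penalty coefficient $1/(4a^l)\to0$; and when $a=0$ we have $g\in H^1(\bR^2)$, hence $\gamma_+g=\gamma_-g$ and the jump integral is $0$, so $\cE^{a^l}(g,g)=\tfrac12\int_{\bR^2}|\nabla g|^2=\cE^0(g,g)$ for all $l$. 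So \eqref{eqA2} costs nothing.

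For \eqref{eqMoscoa}, I would take $f^l\to f$ weakly in $H$ with $M:=\sup_l\cE^{a^l}(f^l,f^l)<\infty$, and after passing to a subsequence assume $\cE^{a^l}(f^l,f^l)$ converges to $\liminf_l\cE^{a^l}(f^l,f^l)$. Since the jump term is nonnegative, $\cE^{a^l}_1(u,u)\ge\tfrac12\|u\|_{H^1(\bG^2)}^2$ uniformly in $l$, so $\{f^l\}$ is bounded in $H^1(\bG^2)$; hence $f\in H^1(\bG^2)$, and along a further subsequence $f^l\rightharpoonup f$ weakly in $H^1(\bG^2)$, whence $\gamma_\pm f^l\rightharpoonup\gamma_\pm f$ weakly in $L^2(\bR)$ by continuity of the trace operator (Appendix~\ref{AP1}). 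The Dirichlet functional $u\mapsto\tfrac12\int_{\bG^2}|\nabla u|^2$ and the jump functional $u\mapsto\int_\bR(u_+-u_-)^2$ are convex and $L^2$-lower semicontinuous, hence weakly lower semicontinuous. For $a\in(0,\infty)$, combining this with $1/a^l\to1/a$ gives $\liminf_l\cE^{a^l}(f^l,f^l)\ge\cE^a(f,f)$; for $a=\infty$ I would discard the nonnegative jump term and use only weak lower semicontinuity of the Dirichlet part. The one case needing a genuine extra step is $a=0$: there $1/a^l\to\infty$, so $\int_\bR(f^l_+-f^l_-)^2\le 4Ma^l\to0$, and weak lower semicontinuity forces $\int_\bR(f_+-f_-)^2=0$, i.e. $f\in H^1(\bR^2)=\cF^0$; then $\liminf_l\cE^{a^l}(f^l,f^l)\ge\liminf_l\tfrac12\int_{\bR^2}|\nabla f^l|^2\ge\cE^0(f,f)$.

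I expect the main (essentially the only) obstacle to be this last step in the $a=0$ regime, where $\cF^0=H^1(\bR^2)$ is strictly smaller than $\cF^{a^l}=H^1(\bG^2)$ and one must use the divergence of the penalty coefficient $1/a^l$ to force the weak limit into the smaller domain; everything else is routine weak lower semicontinuity together with continuity of $a\mapsto1/a$ on $(0,\infty)$. The companion statement for the tangent case would be proved by the same scheme, with the jump penalty replaced by the tangential energy $\int_\bR(d(u|_\bR)/dx_1)^2$ and the roles of the diverging and vanishing penalties interchanged.
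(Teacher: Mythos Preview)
Your proposal is correct and follows essentially the same route as the paper: the recovery sequence is the constant one $u_l\equiv u$ in all three regimes, and the $\liminf$ inequality is obtained by bounding $\{f^l\}$ in $H^1(\bG^2)$, extracting weak trace convergence $\gamma_\pm f^l\rightharpoonup\gamma_\pm f$ in $L^2(\bR)$, and using weak lower semicontinuity together with the behaviour of $1/a^l$, the only nontrivial point being exactly the one you flag, namely that $1/a^l\to\infty$ forces $\gamma_+f=\gamma_-f$ when $a=0$. One small caution on phrasing: the jump functional $u\mapsto\int_\bR(u_+-u_-)^2$ is \emph{not} lower semicontinuous on $L^2(\bG^2)$ by itself (mollify a function with a nonzero trace jump), so your argument really rests on the weak $H^1(\bG^2)$ convergence you already established and the weak continuity of the trace map, not on ``$L^2$-lsc''; relatedly, in the case $a\in(0,\infty)$ you implicitly use that $\int_\bR(f^l_+-f^l_-)^2$ stays bounded (which follows from your $H^1$ bound) to pass the varying coefficient $1/(4a^l)$ through the $\liminf$.
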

\begin{proof}
For the sake of brevity, write $(\cE^l, \cF^l)$ for $(\cE^{a^l},\cF^{a^l})$ in abuse of notations.  To prove the first part of Mosco convergence, 
suppose $\{u_l: l\geq 1\}$ converges weakly to $u$ in $H$, and thus there exists $K>0$ such that $\sup_l\|u_l\|^2_{H}\leq K$. Without loss of generality we can further assume that 
\begin{equation}\label{EQ4LEL}
	\lim_{l\rightarrow \infty}\mathcal{E}^l(u_l,u_l)\leq \sup_{l}\cE^l(u_l,u_l)=: M<\infty.
\end{equation}
These, together with the trace theorem, yield
 $$\sup_l \| \gamma_\pm u_l\|_{L^2(\bR)}^2 \lesssim  \sup_l\|u_l\|^2_{H^1(\mathbb{G}^2)}\leq \sup_l \|u_l\|_{H}^2+\sup_l \sE^l(u_l,u_l) \leq M+K.$$
 Consequently, mimicking the proof of Lemma~\ref{LM34}, $u\in H^1(\bG^2)$ and there exists a subsequence of $\{u_l: l\geq 1\}$, which is still denoted by $\{u_l: l\geq 1\}$, such that $\gamma_\pm u_l$ converges weakly to $\gamma_\pm u$ in $L^2(\mathbb{R})$.  Now we prove \eqref{eqMoscoa} for the following cases respectively:
\begin{itemize}
\item[(1)] $a=\infty$. Since $u_l\in \cF^l=\cF^{\infty}=H^1(\mathbb{G}^2)$, we have
\begin{equation}\label{EQ4EUU}
	\cE^{\infty}(u,u)\leq \liminf_{l\rightarrow \infty} \cE^{\infty}(u_l,u_l)\leq \liminf_{l\rightarrow\infty} \cE^l(u_l,u_l). 
\end{equation}

\item[(2)] $a\in (0,\infty)$.  We do not lose a great deal by assuming $K>a$.  Then there exists some $N$ such that $a^l<K$ for all $l>N$. It follows that 
\[
\begin{split}
\frac{1}{4K}\sup_{l>N}\int_{\mathbb{R}}&\left(u_l(x_1+)-u_l(x_1-)\right)^2dx_1 \\
& \leq \sup_{l>N} \frac{1}{4\beta^l}\int_{\mathbb{R}}\left(u_l(x_1+)-u_l(x_1-)\right)^2dx_1 \leq \sup_{l>N}\cE^l(u_l,u_l)\leq M,
\end{split}
\]
which yields that $\sup_{l>N}\int_{\mathbb{R}}\left(u_l(x_1+)-u_l(x_1-)\right)^2dx_1\leq 4KM$. As a consequence,
\[
\begin{aligned}
\cE^{a}(u,u)&\leq \liminf_{l\rightarrow \infty}\cE^{a}(u_l,u_l) \\
&=\liminf_{l>N,l\rightarrow\infty} \left(\cE^l(u_l,u_l)+\left(\frac{1}{4a}-\frac{1}{4a^l}\right)\cdot \int_{\mathbb{R}}\left(u_l(x_1+)-u_l(x_1-)\right)^2dx_1\right)\\
&=\liminf_{l>N,l\rightarrow\infty} \cE^l(u_l,u_l) \\
&=\liminf_{l\rightarrow\infty} \cE^l(u_l,u_l). 
\end{aligned}\]

\item[(3)] $a=0$. We know by \eqref{EQ4LEL} that 
\[
\sup_l\frac{1}{a^l}\int_\mathbb{R}\left(u_l(x_1+)-u_l(x_1-)\right)^2dx\leq 4M.
\]
Since $a^l\rightarrow 0$ and $\gamma_\pm u_l$ converge weakly to $\gamma_\pm u$ in $L^2(\mathbb{R})$, it follows from the Fatou's lemma (see, e.g., \cite[Theorem 7 of \S8.2]{R88})  that 
\[\int_\mathbb{R}\left(u(x_1+)-u(x_1-)\right)^2dx=0.\]
Thus $u\in\cF^0$, and 
\[
\cE^0(u,u)=\int_{\mathbb{R}^2}|\nabla u|^2dx\leq \liminf_{l\rightarrow\infty}\int_{\mathbb{G}^2}|\nabla u_l|^2dx\leq \liminf_{l\rightarrow\infty}\cE^l(u_l, u_l). 
\]

\end{itemize}

For the second part of Mosco convergence,  we only consider $u\in\cF^a$ for all $a\in[0,\infty]$.  Put $u_l:=u$.  It is easy to check that
\[
\lim_{l\rightarrow \infty}\cE^l(u_l,u_l)=\lim_{l\rightarrow \infty}\cE^l(u,u)=\cE^{a}(u,u). 
\] 
That completes the proof. 
\end{proof}

The following result is concerned with the tangent case.

\begin{theorem}\label{THM314}
Consider the tangent case $\sM=\infty$ in Theorem~\ref{THMAIN}. Set $(\cE^0,\cF^0):=(\sE^\mathrm{I}, \sF^\mathrm{I})$ and $(\cE^\infty, \cF^\infty):=(\sE^\mathrm{V}, \sF^\mathrm{V})$.  For $a\in (0,\infty)$, set $(\cE^a, \cF^a):=(\sE^\mathrm{IV},\sF^\mathrm{IV})$ with $\lambda:=a$.  Take a sequence $\{a^l:l\geq 1\}$ in $(0,\infty)$ such that $\lim_{l\rightarrow\infty}a^l=a\in [0, \infty]$. Then $(\cE^{a^l},\cF^{a^l})$ converges to $(\cE^a,\cF^a)$ in the sense of Mosco as $l\rightarrow \infty$. 
\end{theorem}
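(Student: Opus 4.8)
The plan is to run the argument of the proof of Theorem~\ref{THMCT1}, with the snapping-out penalty replaced by the tangent-accelerating term $a^l\mathbf{D}(u|_\bR,u|_\bR)$, where $\mathbf{D}(w,w):=\int_\bR|w'(x_1)|^2dx_1$ and $u|_\bR$ denotes the trace on the $x_1$-axis. Write $(\cE^l,\cF^l):=(\cE^{a^l},\cF^{a^l})$. For the first part of the Mosco convergence, let $\{u_l\}$ converge weakly to $u$ in $H$ and assume, without loss of generality, $\lim_l\cE^l(u_l,u_l)\le\sup_l\cE^l(u_l,u_l)=:M<\infty$. Since $\frac{1}{2}\int_{\bR^2}|\nabla u_l|^2dx\le M$ and $\sup_l\|u_l\|_H<\infty$, the sequence is bounded in $H^1(\bR^2)$; mimicking the proof of Lemma~\ref{LM34} one obtains $u\in H^1(\bR^2)$ and, along a subsequence, $u_l|_\bR\to u|_\bR$ weakly in $L^2(\bR)$.

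I would then split into three cases. If $a=0$, no boundary constraint is needed: since $a^l\mathbf{D}(u_l|_\bR,u_l|_\bR)\ge0$ we have $\cE^l(u_l,u_l)\ge\frac{1}{2}\int_{\bR^2}|\nabla u_l|^2dx$, and lower semi-continuity of the Dirichlet integral (cf.\ Remark~\ref{RMA3}) gives $\cE^\mathrm{I}(u,u)\le\liminf_l\cE^l(u_l,u_l)$ with $u\in\sF^\mathrm{I}=\cF^0$. If $a\in(0,\infty)$, then $a^l\to a>0$ together with $a^l\mathbf{D}(u_l|_\bR,u_l|_\bR)\le M$ forces $\sup_l\mathbf{D}(u_l|_\bR,u_l|_\bR)<\infty$, so $\{u_l|_\bR\}$ is bounded in $H^1(\bR)$ and a further subsequence converges weakly in $H^1(\bR)$; hence $u|_\bR\in H^1(\bR)$, i.e.\ $u\in\sF^\mathrm{IV}=\cF^a$, and writing $\cE^a(u_l,u_l)=\cE^l(u_l,u_l)+(a-a^l)\mathbf{D}(u_l|_\bR,u_l|_\bR)$ with the last term tending to $0$, lower semi-continuity of $\cE^\mathrm{IV}$ yields $\cE^a(u,u)\le\liminf_l\cE^a(u_l,u_l)=\liminf_l\cE^l(u_l,u_l)$, exactly as in case~(2) of Theorem~\ref{THMCT1}. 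If $a=\infty$, then $\mathbf{D}(u_l|_\bR,u_l|_\bR)\le M/a^l\to0$, so $\{u_l|_\bR\}$ is again bounded in $H^1(\bR)$ and its weak limit $u|_\bR$ satisfies $\mathbf{D}(u|_\bR,u|_\bR)\le\liminf_l\mathbf{D}(u_l|_\bR,u_l|_\bR)=0$; since $u|_\bR\in L^2(\bR)$ this forces $u|_\bR=0$, hence $u\in\sF^\mathrm{V}=\cF^\infty$ by Lemma~\ref{LMB1}, and $\cE^\mathrm{V}(u,u)\le\liminf_l\cE^l(u_l,u_l)$ by lower semi-continuity of the Dirichlet integral.

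For the second part of the Mosco convergence there is nothing to prove when $u\notin\cF^a$, and when $a\in(0,\infty]$ and $u\in\cF^a$ the constant sequence $u_l:=u$ works: $u\in\sF^\mathrm{IV}$ for every $l$ — if $a=\infty$ because $u\in\sF^\mathrm{V}\subset H^1(\bR^2)$ with $u|_\bR=0\in H^1(\bR)$ — and $\cE^l(u,u)=\frac{1}{2}\int_{\bR^2}|\nabla u|^2dx+a^l\mathbf{D}(u|_\bR,u|_\bR)\to\cE^a(u,u)$, the $\mathbf{D}$-term vanishing when $a=\infty$. The main obstacle is the case $a=0$: for $u\in\sF^\mathrm{I}=H^1(\bR^2)$ the trace $u|_\bR$ lies only in $H^{1/2}(\bR)$, so $\mathbf{D}(u|_\bR,u|_\bR)$ may be $+\infty$ and the constant sequence need not lie in $\cF^l=\sF^\mathrm{IV}$. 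Here I would reuse the device from the proof of Mosco~(b) for the case \textbf{(N1)} in Theorem~\ref{THMAIN}: for $g\in C_c^\infty(\bR^2)$ one has $g|_\bR\in C_c^\infty(\bR)$, so $g\in\sF^\mathrm{IV}$ for all $l$ and $\cE^l(g,g)=\frac{1}{2}\int_{\bR^2}|\nabla g|^2dx+a^l\mathbf{D}(g|_\bR,g|_\bR)\to\frac{1}{2}\int_{\bR^2}|\nabla g|^2dx=\cE^0(g,g)$ because $a^l\to0$; since $C_c^\infty(\bR^2)$ is dense in $H^1(\bR^2)$, a diagonal selection — take $v_k\in C_c^\infty(\bR^2)$ with $v_k\to u$ in $H^1(\bR^2)$, then $l_k\uparrow\infty$ with $|\cE^l(v_k,v_k)-\cE^0(v_k,v_k)|<2^{-k}$ for $l\ge l_k$, and set $u_l:=v_k$ for $l_k\le l<l_{k+1}$ — produces $u_l\to u$ in $H$ with $\cE^l(u_l,u_l)\to\cE^0(u,u)$. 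I expect everything else to be routine once the weak-$H^1$ trace passage of Lemma~\ref{LM34} is available; this recovery sequence for $a=0$ is the only genuinely new ingredient compared with Theorem~\ref{THMCT1}.
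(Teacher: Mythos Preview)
Your proposal is correct and follows essentially the same approach as the paper's own proof: the same three-case split for Mosco~(a) using lower semi-continuity of the Dirichlet integral and of $\mathbf{D}$, and the same recovery sequences for Mosco~(b) (constant sequence for $a\in(0,\infty]$, the $C_c^\infty(\bR^2)$-plus-diagonal argument for $a=0$). Your treatment of the intermediate case $a\in(0,\infty)$ is slightly more explicit than the paper's, which simply defers to the analogous step in Theorem~\ref{THMCT1}, but the substance is identical.
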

\begin{proof}
For the sake of brevity, write $(\cE^l, \cF^l)$ for $(\cE^{a^l},\cF^{a^l})$ in abuse of notations.  
To prove the first part of Mosco convergence, 
suppose $\{u_l: l\geq 1\}$ converges weakly to $u$ in $H$, and thus there exists $K>0$ such that $\sup_l\|u_l\|^2_{H}\leq K$. Without loss of generality we can further assume that 
\begin{equation}\label{EQ4LEL2}
	\lim_{l\rightarrow \infty}\cE^l(u_l,u_l)\leq \sup_{l}\cE^l(u_l,u_l)=: M<\infty.
\end{equation}
Mimicking the proof of Theorem~\ref{THMCT1}, we can obtain that $u\in H^1(\bR^2)$ and a subsequence of $\{u_l: l\geq 1\}$, which is still denoted by $\{u_l: l\geq 1\}$, satisfies that $ u_l|_\mathbb{R}$ converges weakly to $ u|_\mathbb{R}$ in $L^2(\mathbb{R})$.  In what follows we prove \eqref{eqMoscoa} for the three cases respectively:

\begin{itemize}
\item[(1)] $a=0$. Since $u_l\in \cF^l\subset H^1(\mathbb{R}^2)$, we have
\begin{equation}\label{EQ4EUU1}
	\cE^{0}(u,u)\leq \liminf_{l\rightarrow \infty} \cE^{0}(u_l,u_l)\leq \liminf_{l\rightarrow\infty} \cE^l(u_l,u_l). 
\end{equation}

\item[(2)] $a\in (0,\infty)$. It is similar to the case (2) in the proof of Theorem \ref{THMCT1}, and we omit it. 

\item[(3)] $a=\infty$. It suffices to show that $u\in\cF^{\infty}=\FF^{\mathrm{V}}$. 
Note that $a^l\rightarrow\infty$ and by \eqref{EQ4LEL2}, 
\begin{equation}\label{EQ42V}
\sup_l a^l\mathbf{D}(u_l|_\bR, u_l|_\bR) \leq M.
\end{equation}
Since $u_l|_\mathbb{R}$ converges weakly to $u|_\mathbb{R}$ in $L^2(\bR)$,  it follows from the lower semi-continuity of $\mathbf{D}$ that
\[
\mathbf{D}(u|_\bR, u|_\bR)\leq \liminf_{l\rightarrow\infty}\mathbf{D}(u_l|_\bR, u_l|_\bR)\leq \liminf_{l\rightarrow\infty}\frac{M}{a^l}=0.
\]
Hence $u|_\mathbb{R}\in H^1(\bR)$ is constant.  Therefore it must hold $u|_\bR=0$, which leads to $u\in\cF^{\infty}$. 
\end{itemize}

For the second part of Mosco convergence,  we only consider $u\in\cF^a$ for all $a\in[0,\infty]$.  When $a\in (0,\infty]$, put $u_l:=u$,  and it is easy to verify that
\begin{equation}\label{EQ42UP}
\lim_{l\rightarrow \infty}\cE^l(u_l,u_l)=\cE^{a}(u,u). 
\end{equation}
Regarding the case $a=0$,  note that for any $u\in C_c^\infty(\bR^2)$,  
\[
\cE^l(u_l,u_l)=\cE^0(u,u)+{a^l}\mathbf{D}(u(\cdot, 0), u(\cdot,0))\rightarrow \cE^0(u,u).
\]   
Then mimicking the argument deriving \eqref{eq328},  we can conclude that there exists a sequence $\{u_l\}$ converging to $u$ strongly in $H$ such that \eqref{EQ42UP} holds.  
That completes the proof. 
\end{proof}

Finally we prove the mixing case.

\begin{theorem}\label{THM33}
Consider the mixing case $0<\sM<\infty$ in Theorem~\ref{THMAIN}. Set $(\cE^0,\cF^0):=(\sE^\mathrm{I}, \sF^\mathrm{I})$ and $(\cE^\infty, \cF^\infty):=(\sE^\mathrm{VII}, \sF^\mathrm{VII})$ with $\mu=\sM$.  For $a\in (0,\infty)$, set $(\cE^a, \cF^a):=(\sE^\mathrm{IV},\sF^\mathrm{IV})$ with $\mu:=\sM$ and $\ell:=\sM a$.  Take a sequence $\{a^l:l\geq 1\}$ in $(0,\infty)$ such that $\lim_{l\rightarrow\infty}a^l=a\in [0, \infty]$. Then $(\cE^{a^l},\cF^{a^l})$ converges to $(\cE^a,\cF^a)$ in the sense of Mosco as $l\rightarrow \infty$. 
\end{theorem}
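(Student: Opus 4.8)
The plan is to verify the two conditions (a) and (b) of Mosco convergence (Definition~\ref{DEF41}) for the sequence $(\cE^{a^l},\cF^{a^l})$, proceeding as in the proofs of Theorems~\ref{THMCT1} and \ref{THM314}. Write $(\cE^l,\cF^l):=(\cE^{a^l},\cF^{a^l})$; thus $\cF^l=H^1(\bG^2)$ and $\cE^l$ is the type~VI form with $\mu=\sM$ and splitting length $\ell^l:=\sM a^l$, and we put $\ell^*:=\sM a\in[0,\infty]$. Denote by $K^{\ell}_\pm(s):=\big(\tfrac{2\ell}{\pi}\big)^2\big(\cosh(\tfrac{\pi s}{2\ell})\pm1\big)$ the kernels in the interacting and self-interacting terms. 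Three elementary facts will be used repeatedly: (i) by \eqref{EQcosh}, $K^{\ell}_-(s)$ is decreasing in $\ell$ with $K^{\ell}_-(s)\downarrow s^2/2$ as $\ell\uparrow\infty$ and $K^{\ell}_-(s)\uparrow\infty$ as $\ell\downarrow0$, while $K^{\ell}_+(s)\to\infty$ at both endpoints and $K^{\ell}_+(s)\le c_0\ell^2$ uniformly on $\{|s|\le\ell\}$; (ii) the $\ell$-uniform comparisons $K^{\ell}_-(s)\ge s^2/2$ and $K^{\ell}_+(s)\ge c_1 s^2$ with an absolute $c_1>0$; and (iii) since the nonlocal terms are nonnegative, $\sE^\mathrm{III}_1(v,v)\le\cE^l_1(v,v)$ for every $l$ (the reverse $\cE^l_1\lesssim\sE^\mathrm{III}_1$ also holds, uniformly as long as $\ell^l$ stays away from $0$, cf.\ Lemma~\ref{LEM27}).

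First I would establish (a). Let $u_l\rightharpoonup u$ weakly in $H$; we may assume $M:=\sup_l\cE^l(u_l,u_l)<\infty$, so by (iii) $\sup_l\|u_l\|_{H^1(\bG^2)}<\infty$, and after passing to a subsequence and averaging as in Lemma~\ref{LM34} we get $u\in H^1(\bG^2)$ with $\gamma_\pm u_l\rightharpoonup\gamma_\pm u$ weakly in $H^{1/2}(\bR)$ and in $L^2(\bR)$. Now split into three cases. If $a\in(0,\infty)$, i.e.\ $\ell^l\to\ell^*\in(0,\infty)$, lower semicontinuity of $\sE^\mathrm{III}$ for the local part together with Lemma~\ref{LM52}, which supplies the weak lower semicontinuity of the nonlocal $\ell$-forms under $\ell^l\to\ell^*$ and weak $H^{1/2}$-convergence of traces, yields $\liminf_l\cE^l(u_l,u_l)\ge\sE^\mathrm{VI}(u,u)$, the type~VI form with $\mu=\sM$ and $\ell=\ell^*$, which is precisely $\cE^a(u,u)$. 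If $a=\infty$, discard the nonnegative interacting term and use $1/K^{\ell^l}_-\uparrow2/s^2$ with Lemma~\ref{LM52} to obtain $\liminf_l\cE^l(u_l,u_l)\ge\sE^\mathrm{VII}(u,u)$ with $\mu=\sM$. If $a=0$, one must first force $\gamma_+u=\gamma_-u$: restrict the interacting integral to $\{|x_1-x_1'|<\ell^l\}$, where $K^{\ell^l}_+\le c_0(\ell^l)^2$, write $u_l(x_1+)-u_l(x_1'-)=(\gamma_+u_l-\gamma_-u_l)(x_1)+\big(\gamma_-u_l(x_1)-\gamma_-u_l(x_1')\big)$, and absorb the cross and oscillation terms using the uniform $H^{1/2}(\bR)$-bound on $\gamma_-u_l$; this gives $\|\gamma_+u_l-\gamma_-u_l\|^2_{L^2(\bR)}\lesssim\ell^l\to0$, hence $\gamma_+u=\gamma_-u$, $u\in H^1(\bR^2)=\cF^0$, and lower semicontinuity of the local part gives $\liminf_l\cE^l(u_l,u_l)\ge\tfrac12\int_{\bR^2}|\nabla u|^2=\cE^0(u,u)$.

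Next I would establish (b). Since $\cF^a\subseteq H^1(\bG^2)=\cF^l$ in every case, the constant sequence $u_l\equiv u$ is admissible whenever $u\in\cF^a$ (when $u\notin\cF^a$ the inequality is trivial, $\cE^a(u,u)=+\infty$), so it remains to check $\lim_l\cE^l(u,u)=\cE^a(u,u)$. For $a\in(0,\infty)$ this is dominated convergence: $K^{\ell^l}_\pm(s)\to K^{\ell^*}_\pm(s)$ pointwise while, by (ii), the self-interacting integrand is dominated by $2\big[(u(x_1+)-u(x_1'+))^2+(u(x_1-)-u(x_1'-))^2\big]/(x_1-x_1')^2$, which is integrable because $\gamma_\pm u\in H^{1/2}(\bR)$ (see \eqref{eq:221}), and the interacting term converges by the bounds collected in Lemma~\ref{LM52}. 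For $a=0$ the same argument, now with $K^{\ell^l}_\pm(s)\to\infty$ pointwise and $\gamma_+u=\gamma_-u$ (so the interacting term is a self-interacting-type integral of $\gamma_+u$), drives both nonlocal terms to $0$ and leaves $\cE^l(u,u)\to\tfrac12\int_{\bR^2}|\nabla u|^2=\cE^0(u,u)$. For $a=\infty$, the interacting term tends to $0$ while the self-interacting term increases to its limit by monotone convergence, so $\cE^l(u,u)\to\sE^\mathrm{VII}(u,u)$ with $\mu=\sM$, again by Lemma~\ref{LM52}.

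The genuinely delicate point is the case $a=0$ of (a). In Theorems~\ref{THMCT1} and \ref{THM314} the penalising term is literally $\kappa^l\|\gamma_+u_l-\gamma_-u_l\|^2_{L^2(\bR)}$, resp.\ $\lambda^l\mathbf{D}(u_l|_\bR,u_l|_\bR)$, so the bound forcing $u$ into $\cF^0$ is immediate; here that penalisation sits inside the interacting integral, whose kernel $K^{\ell^l}_+$ \emph{degenerates on the diagonal} as $\ell^l\downarrow0$, and one must combine its near-diagonal blow-up with the uniform $H^{1/2}$-control on the traces (from (iii)) to separate the genuine jump $\gamma_+u_l-\gamma_-u_l$ from the tangential oscillation of $\gamma_-u_l$; this step may alternatively be extracted from Lemma~\ref{LM52}. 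A secondary nuisance is that when $\ell^l\to\ell^*\in(0,\infty)$ there is no monotonicity of the kernels in $l$, so both the lower-semicontinuity step in (a) and the convergence step in (b) for the nonlocal parts must be routed through Lemma~\ref{LM52} rather than a monotone-convergence argument.
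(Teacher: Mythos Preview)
Your proof is correct and follows essentially the same route as the paper, pivoting on Lemma~\ref{LM52} and the $H^1(\bG^2)$ compactness/trace arguments from Theorems~\ref{THMCT1}--\ref{THM314}. Two minor differences in the $a=0$ endpoint are worth flagging.

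For part (a), $a=0$: your hands-on argument (restrict to $\{|x_1-x_1'|<\ell^l\}$, decompose the jump, control the oscillation via the uniform $H^{1/2}$ bound) does work, but the paper's treatment is tidier. It observes that the interacting term in $\cE^l$, after dividing through by one power of $\ell^l$, is exactly $\check{\sA}^{\ell^l,1}$ of Lemma~\ref{LM52} up to constants, hence is bounded by $C a^l M$; part~(2) of Lemma~\ref{LM52} with $\ell_0=0$ then gives $\|\gamma_+u-\gamma_-u\|_{L^2(\bR)}^2=\check{\sA}^{0,1}(\gamma u,\gamma u)\le\liminf_l\check{\sA}^{\ell^l,1}(\gamma u_l,\gamma u_l)=0$ directly, without any near-diagonal cutoff. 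You do mention this alternative, so this is just a matter of presentation.

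For part (b), $a=0$: here your argument is actually cleaner than the paper's. Since $\cF^0=H^1(\bR^2)\subset H^1(\bG^2)=\cF^l$ and $\gamma_+u=\gamma_-u=u|_\bR\in H^{1/2}(\bR)$, the constant sequence $u_l\equiv u$ is admissible and both nonlocal terms vanish in the limit by dominated convergence (equivalently, Lemma~\ref{LM52}(1) with $\ell_0=0$ gives $\check{\sA}^{0,1}=\check{\sA}^{0,2}=0$ for $f^+=f^-$). The paper instead routes this case through a density argument in $C_c^\infty(\bR^2)$ and Corollary~\ref{LMD3}/Remark~\ref{RMD4}, mirroring Theorem~\ref{THM314}; but that template was forced there because $\cF^l=\sF^\mathrm{IV}\subsetneq H^1(\bR^2)$, a constraint absent here.
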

\begin{proof}
In abuse of notations, write $(\cE^l, \cF^l)$ for $(\cE^{a^l},\cF^{a^l})$.  
To prove the first part of Mosco convergence, 
suppose $\{u_l: l\geq 1\}$ converges weakly to $u$ in $H$, and thus there exists $K>0$ such that $\sup_l\|u_l\|^2_{H}\leq K$. Without loss of generality we can further assume that 
\begin{equation}\label{EQ4LEL1}
	\lim_{l\rightarrow \infty}\cE^l(u_l,u_l)\leq \sup_{l}\cE^l(u_l,u_l)=: M<\infty.
\end{equation}
By mimicking the proof of Theorem~\ref{THMCT1},  we can obtain that $u\in H^1(\bG^2)$ and there exists a subsequence of $\{u_l: l\geq 1\}$, which is still denoted by $\{u_l: l\geq 1\}$, such that $\gamma_\pm u_l$ converges weakly to $\gamma_\pm u$ in $H^{\frac{1}{2}}(\mathbb{R})$.
\begin{itemize}
\item[(1)] $a\in (0,\infty]$. From Lemma \ref{LM52} we directly have 
\[
\liminf_{l\rightarrow\infty}\cE^l(u_l,u_l)\geq \cE^a(u,u).
\]
\item[(2)] $a=0$. In this case we only need to prove that  $u\in\cF^0=H^1(\bR^2)$,  so that
\[
	\cE^0(u,u)=\frac{1}{2}\int_{\bG^2}|\nabla u|^2dx \leq \liminf_{l\rightarrow \infty} \frac{1}{2}\int_{\bG^2}|\nabla u_l|^2dx \leq \liminf_{l\rightarrow \infty} \cE^l(u_l,u_l).  
\]
In fact  the definition of $\cE^l$ yields
\[
\int_{\mathbb{R}\times\mathbb{R}}\frac{\left(\gamma_+u_l(x_1)-\gamma_-u_l(x_1')\right)^2}{2\left(\frac{2\sM a^l}{\pi}\right)\left(\cosh\left(\frac{\pi}{2\sM a^l}(x_1-x_1')\right)+1\right)}dx_1dx_1'\lesssim  a^l \cE^l(u_l,u_l)\leq a_l M
\]
Since $a^l\rightarrow 0$,  it follows from Lemma \ref{LM52} that 
\[\int_\mathbb{R}\left(\gamma_+u(x_1)-\gamma_-u(x_1)\right)^2dx_1\leq 0.\]
Therefore $\gamma_+u=\gamma_-u$ and $u\in\cF^0=H^1(\mathbb{R}^2)$.

\end{itemize}

For the second part of Mosco convergence,  suppose $v\in H^1(\mathbb{G}^2)$.  When $a\in (0,\infty]$,  put $v_l=v$.  Still from Lemma \ref{LM52} we  have 
\[
\lim_{l\rightarrow\infty}\cE^l(v_l,v_l)=\lim_{l\rightarrow\infty}\cE^l(v,v)= \cE^a(v,v).
\]
When $a=0$,  mimicking the proof of the same case in Theorem~\ref{THM314},  it suffices to show that for any $v\in C^\infty_c(\mathbb{R}^2)$,  $\lim_{l\rightarrow \infty}\cE^l(v,v)=\cE^0(v,v)$.  
This also amounts to that for $h(\cdot):=v(\cdot, 0)\in C_c^\infty(\bR)$,  
\[
	\lim_{l\rightarrow \infty} \frac{1}{\sM^2a_l^2}\int_{\mathbb{R}\times\mathbb{R}}\frac{\left(h(x_1)-h(x_1')\right)^2}{\cosh\left(\frac{\pi}{2\sM a_l}(x_1-x_1')\right)\pm 1}dx_1dx_1'=0.
\]
In practise,  it follows from Corollary~\ref{LMD3} and Remark~\ref{RMD4} that the left hand side is not greater than 
\[
	\lim_{l\rightarrow \infty} \frac{64 \sM a_l}{3\pi}\mathbf{D}(h,h)=0.  
\]
That completes the proof.
\end{proof}



\section{Boundary conditions of limiting flux at the barrier}\label{SEC4}

In this section, we turn to study the two-dimensional stiff problem in the context of heat equations.  Particularly, the boundary conditions satisfied by the limiting flux at the barrier will be derived. 

\subsection{Heat equation with the barrier $\Omega_\varepsilon$}\label{SEC41}

Recall that $\mathbf{A}_\varepsilon(x)$ is defined as $\text{Id}_2$ in $\Omega_\varepsilon$ and \eqref{eq:Avarepsilon} outside $\Omega_\varepsilon$.  What we are concerned with is the heat equation
\begin{equation}\label{eq:heatequationwithvarepsilon}
\begin{aligned}
	&\frac{\partial u^\varepsilon}{\partial t}(t,x)=\frac{1}{2}\nabla \cdot \left(\mathbf{A}_\varepsilon(x)\nabla u^\varepsilon(t,x) \right),\quad t\geq 0,x\in \bR^2, \\
	&u^\varepsilon(0,\cdot)=u_0. 
\end{aligned}
\end{equation}
The solution to \eqref{eq:heatequationwithvarepsilon}, also called the flux of related thermal conduction model, is considered to be a weak form as follows.

\begin{definition}\label{defofsolutionvare}
A function $u^\varepsilon\in \mathscr{H}(\bR^2):= C_b\left([0,\infty), L^2(\bR^2) \right) \cap L^\infty\left([0,\infty),H^1(\bR^2) \right)$ is called a weak solution to \eqref{eq:heatequationwithvarepsilon} if $u^\varepsilon(0,\cdot)=u_0$ and
\[
	\int_{\bR^2}\left(u_0(x)-u^\varepsilon(t,x) \right)g(x)dx=\frac{1}{2}\int_0^t\int_{\bR^2}\mathbf{A}_\varepsilon(x)\nabla u^\varepsilon(s,x)\cdot \nabla g(x)dxds
\]
for any $t>0$, $g\in C_c^\infty(\bR^2)$.
\end{definition}

It is well-known (see, e.g., \cite[Lemma~5.1]{LS19}) that for every $u_0\in H^1(\bR^2)$, \eqref{eq:heatequationwithvarepsilon} is well posed, i.e.  the weak solutions exist and are unique. Furthermore, the unique weak solution is 
\begin{equation}\label{eq:uvare}
	u^\varepsilon(t,x):=P^\varepsilon_tu_0(x)=\mathbf{E}^x u_0(X^\varepsilon_t),
\end{equation}
where $X^\varepsilon_t$ is the Markov process associated with $(\sE^\varepsilon, \sF^\varepsilon)$ and $(P^\varepsilon_t)$ is its semigroup. 

\subsection{Limiting flux}\label{SEC42}

Since $\mathbf{A}_\varepsilon(x)$ converges to $\mathrm{Id}_2$ a.e. as $\varepsilon\downarrow 0$,  the limiting flux $u$ of $u^\varepsilon$ is expected to be a certain solution to the heat equation 
\begin{equation}\label{eq:heatequation}
\begin{aligned}
	&\frac{\partial u}{\partial t}(t,x)=\frac{1}{2}\Delta u(t,x), \\
	&u(0,\cdot)=u_0.
\end{aligned}
\end{equation}
In an analogical sense of Definition~\ref{defofsolutionvare},  for every $u_0\in H^1(\bR^2)$, the weak solutions to \eqref{eq:heatequation} in $\mathscr{H}(\bR^2)$ exist and are obviously unique.  Indeed, the unique solution is nothing but 
\begin{equation}\label{eq:utBrownianmotion}
	u(t,x)=\mathbf{E}^x u_0(B_t),
\end{equation}
where $B_t$ is a two-dimensional Brownian motion.  However, we have obtained another six limits for $X^\varepsilon$ or $(\sE^\varepsilon, \sF^\varepsilon)$. To figure out the counterparts of these limits in terms of the heat equation \eqref{eq:heatequation}, we will restrict \eqref{eq:heatequation} to the space-time space removing the barrier, i.e. $(t,x)\in [0,\infty)\times \bR^2_0$, where $\bR^2_0=\{(x_1,x_2)\in \bR^2: x_2\neq 0\}$.  Recall that $H=L^2(\bR^2)=L^2(\bG^2)=L^2(\bR^2_0)$. 

\begin{definition}
$u\in C_b\left([0,\infty), H\right)$ is called a weak solution to \eqref{eq:heatequation} outside the barrier if $u(0,\cdot)=u_0$ and 
\begin{equation}\label{eq:solutionoutsidebarrier}
	\int_{\bR^2}\left(u(t,x)-u_0(x)\right)g(x)dx=\frac{1}{2}\int_0^t\int_{\bR^2_0}u(s,x)\Delta g(x)dxds
\end{equation}
for any $t>0$, $g\in C_c^\infty(\bR^2_0)$.
\end{definition}

The following theorem shows the existence of the limit $u$ of $u^\varepsilon$ under the same assumptions as Theorem~\ref{THMAIN} and obtains that $u$ is a weak solution to \eqref{eq:heatequation} outside the barrier. 

\begin{theorem}\label{THM43}
Take a decreasing sequence $\varepsilon_n\downarrow 0$. Assume that \eqref{eq:limitsexist} holds and $u_0\in H^1(\bR^2)$.  Let $u^{\varepsilon_n}$ be in \eqref{eq:uvare}, i.e. the unique weak solution to \eqref{eq:heatequationwithvarepsilon}, for $\varepsilon=\varepsilon_n$.  Then for every $t\geq 0$, the limit $u_t$ of $u^{\varepsilon_n}(t,\cdot)$ exists in $H$ as $n\rightarrow \infty$.  Furthermore, $u(t,x):=u_t(x)$ is a weak solution to \eqref{eq:heatequation} outside the barrier.
\end{theorem}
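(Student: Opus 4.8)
The plan is to deduce Theorem~\ref{THM43} directly from the Mosco convergence established in Theorem~\ref{THMAIN}, using the standard fact that Mosco convergence of Dirichlet forms is equivalent to strong convergence of the associated resolvents, hence of the associated semigroups. First I would recall that for each $\varepsilon_n$ the weak solution is $u^{\varepsilon_n}(t,\cdot)=P^{\varepsilon_n}_t u_0$, where $(P^{\varepsilon_n}_t)$ is the $L^2$-semigroup of $(\sE^n,\sF^n)$; this is \eqref{eq:uvare}. By Theorem~\ref{THMAIN}, $(\sE^n,\sF^n)$ converges to one of $(\sE^\bullet,\sF^\bullet)$ (the appropriate limiting phase) in the sense of Mosco. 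By the general theory reviewed in Appendix~\ref{SECA1} (Mosco's theorem), this implies that the semigroups $P^{\varepsilon_n}_t$ converge strongly in $H$ to the limiting semigroup $P_t$, uniformly for $t$ in compact subsets of $[0,\infty)$. In particular, for each fixed $t\geq 0$, $u_t:=\lim_{n\to\infty}u^{\varepsilon_n}(t,\cdot)=\lim_{n\to\infty}P^{\varepsilon_n}_t u_0 = P_t u_0$ exists in $H$. Here one must note that since $u_0\in H^1(\bR^2)\subset H$ and $H=L^2(\bR^2)=L^2(\bG^2)=L^2(\bR^2_0)$ as the same Hilbert space, the limit lies in $H$ and the identification of state spaces across phases causes no difficulty.

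Next I would verify that $u(t,x):=u_t(x)=P_tu_0(x)$ satisfies the weak formulation \eqref{eq:solutionoutsidebarrier}. The continuity $u\in C_b([0,\infty),H)$ follows from strong continuity of the limiting semigroup $(P_t)$ together with the uniform bound $\|P_tu_0\|_H\leq\|u_0\|_H$, and $u(0,\cdot)=u_0$ is immediate. For the integral identity, I would fix $g\in C_c^\infty(\bR^2_0)$ and pass to the limit in the weak formulation satisfied by $u^{\varepsilon_n}$ (Definition~\ref{defofsolutionvare}). The key point is that $\mathbf{A}_{\varepsilon_n}(x)=\mathrm{Id}_2$ on $\bR^2_0$ for all $n$ with $\varepsilon_n$ small enough that $\mathrm{supp}[g]\cap\Omega_{\varepsilon_n}=\emptyset$ (possible since $\mathrm{supp}[g]$ is a compact subset of $\bR^2_0$ and $\Omega_{\varepsilon_n}$ collapses to the $x_1$-axis); hence for such $n$,
\[
	\int_{\bR^2}\left(u_0(x)-u^{\varepsilon_n}(t,x)\right)g(x)dx=\frac{1}{2}\int_0^t\int_{\bR^2}\nabla u^{\varepsilon_n}(s,x)\cdot\nabla g(x)dxds=-\frac{1}{2}\int_0^t\int_{\bR^2_0}u^{\varepsilon_n}(s,x)\Delta g(x)dxds,
\]
where the second equality is integration by parts with $g$ compactly supported away from the barrier. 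Then I would let $n\to\infty$: the left side converges by the $H$-convergence $u^{\varepsilon_n}(t,\cdot)\to u_t$, and the right side converges because $u^{\varepsilon_n}(s,\cdot)\to u_s$ in $H$ for each $s$, with $\sup_{s,n}\|u^{\varepsilon_n}(s,\cdot)\|_H\leq\|u_0\|_H$ giving a dominating function for the dominated convergence theorem in the $s$-integral (and $\Delta g\in C_c^\infty(\bR^2_0)$). This yields \eqref{eq:solutionoutsidebarrier}.

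The main obstacle, and the step requiring the most care, is the passage Mosco convergence $\Rightarrow$ strong convergence of semigroups together with the handling of the different underlying $L^2$-spaces for the various limiting phases; but since the paper has arranged all the Dirichlet forms to live on the common space $H$, this reduces to a direct invocation of the abstract result in Appendix~\ref{SECA1}, and no genuinely new work is needed here. A minor additional subtlety is confirming that the limiting semigroup indeed produces a function in $\mathscr{H}$-type regularity class claimed (boundedness in $H$ and strong continuity in $t$), which follows from the Markovian and contraction properties of $(P_t)$; I would state this briefly rather than belabor it.
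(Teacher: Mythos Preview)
Your proposal is correct and follows essentially the same approach as the paper's own proof: invoke Theorem~\ref{THMAIN} and Proposition~\ref{PRO42} to get $P^{\varepsilon_n}_t u_0\to P_t u_0$ in $H$, check $u\in C_b([0,\infty),H)$ via semigroup properties, then for $g\in C_c^\infty(\bR^2_0)$ use that $\mathrm{supp}[g]\subset\Omega_{\varepsilon_n}^c$ for large $n$ to integrate by parts in the weak formulation and pass to the limit by dominated convergence.
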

\begin{proof}
Note that Theorem~\ref{THMAIN} yields that $(\sE^n,\sF^n)$ converges to a Dirichlet form $(\sE,\sF)$, one of the Dirichlet forms $(\sE^\mathrm{I}, \sF^\mathrm{I}), \cdots, (\sE^\mathrm{VII}, \sF^\mathrm{VII})$, in the sense of Mosco.  Let $X$ be the associated Markov process of $(\sE,\sF)$ and $(P_t)$ be its semigroup.  Since $u_0\in H$, this Mosco convergence implies that for every $t\geq 0$,  $u^{\varepsilon_n}(t,\cdot)=P^{\varepsilon_n}_t u_0$ converges to $P_tu_0$ in $H$.  In other words, the limit $u_t$ exists and 
\begin{equation}\label{eq:ut}
	u_t=P_tu_0. 
\end{equation}

Now we show $u(t,x)$ is a weak solution to \eqref{eq:heatequation} outside the barrier.  Clearly, $t\mapsto \|P_tu_0\|_{H}$ is continuous and  $\|P_tu_0\|_{H}\leq \| u_0\|_{H}$. This indicates $u\in C_b\left([0,\infty),H \right)$.  Then it suffices to show \eqref{eq:solutionoutsidebarrier}. To accomplish it, fix $t>0$ and $g\in C_c^\infty(\bR^2_0)$.  There exists an integer $N$ large enough such that for any $n\geq N$,  $\text{supp}[g]\subset \Omega^c_{\varepsilon_n}$.  As a consequence, 
\[
	\int_{\bR^2}\left(u_0(x)-u^{\varepsilon_n}(t,x) \right)g(x)dx=\frac{1}{2}\int_0^t\int_{\bR^2}\nabla u^{\varepsilon_n}(s,x)\cdot \nabla g(x)dxds=-\frac{1}{2}\int_0^t\int_{\bR^2} u^{\varepsilon_n}(s,x)\Delta g(x)dxds. 
\]
Since $u^{\varepsilon_n}(t,\cdot)$ converges to $u(t,\cdot)$ in $H$, it follows that
\begin{equation}\label{eq:45}
	\lim_{n\rightarrow \infty}\int_{\bR^2}\left(u_0(x)-u^{\varepsilon_n}(t,x) \right)g(x)dx= \int_{\bR^2}\left(u_0(x)-u(t,x) \right)g(x)dx.
\end{equation}
Similarly we have $\lim_{n\rightarrow \infty}\int_{\bR^2} u^{\varepsilon_n}(s,x)\Delta g(x)dx=\int_{\bR^2} u(s,x)\Delta g(x)dx$ and note that
\[
\begin{aligned}
	\left| \int_{\bR^2} u^{\varepsilon_n}(s,x)\Delta g(x)dx\right|&\leq \|u^{\varepsilon_n}(s,\cdot)\|_{H} \|\Delta g\|_{H} \\
	&=\|P^{\varepsilon_n}_s u_0\|_{H} \|\Delta g\|_{H} \\
	&\leq \| u_0\|_{H} \|\Delta g\|_{H}.
\end{aligned}\]
Hence the bounded convergence theorem leads to
\begin{equation}\label{eq:46}
\lim_{n\rightarrow \infty}	-\frac{1}{2}\int_0^t\int_{\bR^2} u^{\varepsilon_n}(s,x)\Delta g(x)dxds= -\frac{1}{2}\int_0^t\int_{\bR^2} u(s,x)\Delta g(x)dxds.
\end{equation}
Eventually we can obtain \eqref{eq:solutionoutsidebarrier} by means of \eqref{eq:45} and \eqref{eq:46}. That completes the proof.
\end{proof}

Note that the limit $u_t$ in \eqref{eq:ut} varies for different cases appearing in Theorem~\ref{THMAIN}.  
In what follows, we will derive the  boundary conditions satisfied by the limiting flux at the barrier.
To do this, let us prepare some notations. For any $\alpha>0$, set 
\begin{equation}\label{eq:Ualpha}
	U_\alpha(\cdot):=\int_0^\infty \mathrm{e}^{-\alpha t} u_t(\cdot)dt.
\end{equation}
Note that $U_\alpha=R_\alpha u_0$ where $R_\alpha$ is the resolvent of the limiting Dirichlet form $(\sE,\sF)$ appearing in the proof of Theorem~\ref{THM43}. 
A function $u\in H^1_\Delta(\bG^2)$ is called to satisfy
\begin{itemize}
\item[(B.I)] The boundary condition of type I at the barrier, if
\[
	\gamma_+u_+=\gamma_-u_-,\quad  \left.\frac{\partial u_+}{\partial x_2}\right|_{x_2=0+}=\left.\frac{\partial u_-}{\partial x_2}\right|_{x_2=0-};
\]
\item[(B.II)] The boundary condition of type II (with the parameter $\kappa$) at the barrier, if
\[
\left.\frac{\partial u_{\pm}}{\partial x_2}\right|_{x_2=0\pm}=\frac{\kappa}{2}\left(u(x_1+)-u(x_1-)\right);
\]
\item[(B.III)] The boundary condition of type III at the barrier, if
\[
\left.\frac{\partial u_{\pm}}{\partial x_2}\right|_{x_2=0\pm}=0;
\]
\item[(B.IV)] The boundary condition of type IV at the barrier (with the parameter $\lambda$), if
\[
	\gamma_+u_+=\gamma_-u_-,\quad  \left.\frac{\partial u_+}{\partial x_2}\right|_{x_2=0+}-\left.\frac{\partial u_-}{\partial x_2}\right|_{x_2=0-}=-2\lambda\frac{d^2 (u|_\mathbb{R})}{d x_1^2};
\]
\item[(B.V)] The boundary condition of type V at the barrier, if
\[
	\gamma_+u_+=\gamma_-u_-=0;
\]
\item[(B.VI)] The boundary condition of type VI at the barrier (with the parameters $\mu, \ell$), if
\[
 \left.\frac{\partial u_{\pm}}{\partial x_2}\right|_{x_2=0\pm}=
 \frac{\mu}{2\pi}\int_\mathbb{R}\frac{\left(u(x_1+)-u(x_1-)\right)}{\left(\frac{2\ell}{\pi}\right)^2\left(\cosh\left(\frac{\pi}{2\ell}(x_1-x_1')\right)+1\right)}dx'_1\mp\frac{\mu}{4\pi}\check{\mathcal{L}}_\ell(\gamma_\pm u),
\]
where $\check{\mathcal{L}}_\ell$ is defined as \eqref{eq:Ll};
\item[(B.VII)]  The boundary condition of type VII at the barrier (with parameter $\mu$), if
\[
 \left.\frac{\partial u_{\pm}}{\partial x_2}\right|_{x_2=0\pm}=\mp\frac{\mu}{4\pi}\check{\mathcal{L}}(\gamma_\pm u),
\]
where $\check{\mathcal{L}}$ is defined as \eqref{eq:Ll2}.
\end{itemize}
Now we have a position to state the result concerning the boundary conditions. 

\begin{corollary}\label{COR44}
Under the same assumptions as Theorem~\ref{THM43}, let $U_\alpha$ be given by \eqref{eq:Ualpha} for any $\alpha>0$.  Then  $U_\alpha\in H^1(\bG^2)$ satisfies  (B.I) (resp.  (B.II), (B.III),  (B.IV), (B.V), (B.VI) or (B.VII)) when the trivial case (resp. \textbf{(N2)}, \textbf{(N3)}, \textbf{(T2)}, \textbf{(T3)}, \textbf{(M2)} or \textbf{(M3)}) in Theorem~\ref{THMAIN} appears.  Meanwhile if  if $u_0$ satisfies (B.I) (resp.  (B.II), (B.III),  (B.IV), (B.V), (B.VI) or (B.VII)), then so does $u(t,\cdot)$ for every $t>0$. 
\end{corollary}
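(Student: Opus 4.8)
The plan is to read off both assertions from the identity $U_\alpha=R_\alpha u_0$ recorded just before the statement, together with the explicit descriptions of the generator domains obtained throughout \S\ref{SEC2}. By Theorem~\ref{THMAIN}, the limiting Dirichlet form $(\sE,\sF)$ to which $(\sE^n,\sF^n)$ converges in the sense of Mosco is one of $(\sE^\mathrm{I},\sF^\mathrm{I}),\dots,(\sE^\mathrm{VII},\sF^\mathrm{VII})$---namely $(\sE^\mathrm{I},\sF^\mathrm{I})$ in the trivial case, $(\sE^\mathrm{II},\sF^\mathrm{II})$ with $\kappa=1/\sR^\shortmid$ in \textbf{(N2)}, $(\sE^\mathrm{III},\sF^\mathrm{III})$ in \textbf{(N3)}, $(\sE^\mathrm{IV},\sF^\mathrm{IV})$ with $\lambda=\sC^\shortline$ in \textbf{(T2)}, $(\sE^\mathrm{V},\sF^\mathrm{V})$ in \textbf{(T3)}, $(\sE^\mathrm{VI},\sF^\mathrm{VI})$ with $\mu=\sM$ and $\ell=\lim_n\ell_n$ in \textbf{(M2)}, and $(\sE^\mathrm{VII},\sF^\mathrm{VII})$ with $\mu=\sM$ in \textbf{(M3)}. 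Let $\L$, $(P_t)$ and $(R_\alpha)$ denote its generator, semigroup and resolvent; from \eqref{eq:ut} we have $u_t=P_tu_0$, so $U_\alpha=\int_0^\infty\mathrm{e}^{-\alpha t}P_tu_0\,dt=R_\alpha u_0$.

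For the first assertion, since $R_\alpha$ maps $H$ into $\D(\L)$---a standard fact of semigroup (equivalently Dirichlet form) theory---we obtain $U_\alpha\in\D(\L)$. I then invoke the descriptions of $\D(\L)$ from \S\ref{SEC2}: the lemma identifying $\L^\mathrm{I}$, Lemma~\ref{LEM21}(2), the lemma on $\L^\mathrm{III}$, the lemma on $\L^\mathrm{IV}$, the formula \eqref{eq:type5-generator}, Lemma~\ref{LEM27}, and the lemma on $\L^\mathrm{VII}$ show that in each case $\D(\L)$ equals $\frac12\rDelta$ restricted to a subspace of $H^1_\Delta(\bG^2)$ cut out by precisely the relation defining the corresponding boundary condition among (B.I)--(B.VII), with the matching value of the parameter $\kappa$, $\lambda$, $\mu$ or $\ell$. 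As $H^1_\Delta(\bG^2)\subset H^1(\bG^2)$, this immediately yields $U_\alpha\in H^1(\bG^2)$ and that $U_\alpha$ satisfies the boundary condition attached to the phase that occurs. (In the trivial case one additionally uses the lemma of \S\ref{SEC2} identifying $\D(\L^\mathrm{I})=H^1_\Delta(\bR^2)$ with the space $\tilde{\D}$, which is exactly the (B.I) space.)

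For the second assertion, observe that for $u_0\in H^1(\bR^2)$ the hypothesis that $u_0$ satisfies the boundary condition of the occurring type means, by the very definition of (B.I)--(B.VII), that $u_0\in H^1_\Delta(\bG^2)$ together with the relevant boundary relation; by the same lemmas this is exactly the condition $u_0\in\D(\L)$. Since a strongly continuous contraction semigroup leaves the domain of its generator invariant---$P_tu_0\in\D(\L)$ and $\L P_tu_0=P_t\L u_0$ for all $t\ge0$ whenever $u_0\in\D(\L)$ (see, e.g., \cite{FOT11})---we conclude $u(t,\cdot)=u_t=P_tu_0\in\D(\L)$ for every $t>0$, i.e.\ $u(t,\cdot)$ satisfies the same boundary condition.

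I do not expect a genuine analytic obstacle: the substantive work---identifying each generator domain with a space defined by an explicit boundary relation---was already carried out in \S\ref{SEC2}, while $U_\alpha=R_\alpha u_0$, the inclusion $R_\alpha H\subseteq\D(\L)$, and the invariance of $\D(\L)$ under $(P_t)$ are standard. The only care required is organizational: for each of the seven phases of Theorem~\ref{THMAIN} one must correctly pair the limiting Dirichlet form, the lemma supplying its generator, and the matching boundary condition (B.I)--(B.VII) with the matching parameter. This is precisely the correspondence asserted in the statement and is verified phase by phase.
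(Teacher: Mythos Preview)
Your proposal is correct and follows essentially the same approach as the paper's own proof: both identify $U_\alpha=R_\alpha u_0\in\D(\L)$ via the resolvent mapping, invoke the generator descriptions from \S\ref{SEC2} to read off the boundary conditions, and use the invariance of $\D(\L)$ under $(P_t)$ (the paper citing Hille--Yosida) for the second assertion. Your version is simply more detailed in spelling out the phase-by-phase correspondence, which the paper handles by a reference to Table~\ref{table1}.
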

\begin{proof}
Let $(\sE,\sF)$ and $(P_t)$ be in the proof of Theorem~\ref{THM43}. Further let $(\mathcal{L}, \mathcal{D}(\mathcal{L}))$ be the generator of $(\sE,\sF)$ on $H$.  Recall that the concrete expressions of $\mathcal{L}$ for all cases are presented in \S\ref{SEC2}; see Table~\ref{table1} for a summarization.  Then it suffices to note that $U_\alpha=R_\alpha u_0\in \mathcal{D}(\mathcal{L})$, since $u_0\in H$, and $u_0\in \mathcal{D}(\mathcal{L})$ leads to $u(t,\cdot)=P_tu_0\in \mathcal{D}(\mathcal{L})$ by virtue of the Hille-Yosida theorem.  That completes the proof.
\end{proof}

\begin{appendices}

\section{Mosco convergence of Dirichlet forms}\label{SECA1}

We shall use the Mosco convergence to describe the phase transition, and we write down its  definition for readers' convenience. Mosco convergence, first raised in \cite{U94},  is a kind of convergence for closed forms. Specifically,  let $(\EE^n,\FF^n)$ be a sequence of closed forms on a same Hilbert space $L^2(E,m)$, and $(\EE,\FF)$ be another closed form on $L^2(E,m)$. We always extend the domains of $\EE$ and $\EE_n$ to $L^2(E,m)$ by letting
\[
\begin{aligned}
	\EE(u,u)&:=\infty, \quad u\in L^2(E,m)\setminus \FF, \\ 
	\EE^n(u,u)&:=\infty,\quad u\in L^2(E,m)\setminus \FF^n.
\end{aligned}
\]
In other words, $u\in \FF$ (resp. $u\in \FF^n$) if and only if $\EE(u,u)<\infty$ (resp. $\EE^n(u,u)<\infty$). 
Furthermore, we say $u_n$ converges to $u$ weakly in $L^2(E,m)$, if for any $v\in L^2(E,m)$, $(u_n,v)_m\rightarrow (u,v)_m$ as $n\rightarrow \infty$, and (strongly) in $L^2(E,m)$, if $\|u_n-u\|_{L^2(E,m)}\rightarrow 0$. 

\begin{definition}\label{DEF41}
Let $(\EE^n,\FF^n)$ and $(\EE,\FF)$ be given above. Then $(\EE^n,\FF^n)$ is said to be convergent to $(\EE,\FF)$ in the sense of Mosco, if
\begin{itemize}
\item[(a)] For any sequence $\{u_n:n\geq 1\}\subset L^2(E,m)$ that converges weakly to $u$ in $L^2(E,m)$, it holds that
\begin{equation}\label{eqMoscoa}
	\EE(u,u)\leq \liminf_{n\rightarrow \infty}\EE^n(u_n,u_n). 
\end{equation}
\item[(b)] For any $u\in L^2(E,m)$, there exists a sequence $\{u_n:n\geq 1\}\subset L^2(E,m)$ that converges strongly to $u$ in $L^2(E,m)$ such that
\begin{equation}\label{eqA2}
	\EE(u,u)\geq \limsup_{n\rightarrow \infty}\EE^n(u_n,u_n). 
\end{equation}
\end{itemize}
\end{definition} 

Let $(T^n_t)_{t\geq 0}$ and $(T_t)_{t\geq 0}$ be the semigroups of $(\EE^n, \FF^n)$ and $(\EE,\FF)$ respectively, and $(G^n_\alpha)_{\alpha>0}, (G_\alpha)_{\alpha>0}$ be their corresponding resolvents.  The following result is well-known (see \cite{U94}).

\begin{proposition}\label{PRO42}
Let $(\EE^n, \FF^n), (\EE, \FF)$ be above. Then the following are equivalent:
\begin{itemize}
\item[(1)] $(\EE^n,\FF^n)$ converges to $(\EE,\FF)$ in the sense of Mosco;
\item[(2)] For every $t>0$ and $f\in L^2(E,m)$,  $T^n_tf$ converges to $T_tf$ strongly in $L^2(E,m)$; 
\item[(3)] For every $\alpha>0$ and $f\in L^2(E,m)$,  $G^n_\alpha f$ converges to $G_\alpha f$ strongly in $L^2(E,m)$. 
\end{itemize}
\end{proposition}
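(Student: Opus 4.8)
The plan is to exploit the standard bijections between closed symmetric forms on $H$, their non-positive self-adjoint generators, the associated $C_0$ contraction semigroups and the contraction resolvents, passing between them with the spectral calculus. Write $\mathcal{L}^n,\mathcal{L}$ for the generators of $(\EE^n,\FF^n),(\EE,\FF)$, so that $G^n_\alpha=(\alpha-\mathcal{L}^n)^{-1}$, $T^n_t=e^{t\mathcal{L}^n}$, and likewise without superscripts. The equivalence of (2) and (3) is the Trotter--Kato circle of ideas and is independent of the variational structure: one direction follows from $G^n_\alpha f=\int_0^\infty e^{-\alpha t}T^n_tf\,dt$ and dominated convergence, the other from the Yosida approximation $T^n_tf=\lim_{\beta\to\infty}e^{t\beta(\beta G^n_\beta-I)}f$ together with the uniform bound $\|\beta G^n_\beta\|\le1$ and an $\varepsilon/3$ argument; this being classical (see \cite{U94}), it remains only to prove that (1) is equivalent to (3).

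For $(1)\Rightarrow(3)$, fix $\alpha>0$, $f\in H$, and recall that $u:=G_\alpha f$ is the unique minimizer over $H$ of the strictly convex functional $\Phi_\alpha(v):=\EE(v,v)+\alpha\|v\|_H^2-2(f,v)_H$ (with the convention $\EE(v,v):=\infty$ off $\FF$), and $u_n:=G^n_\alpha f$ minimizes $\Phi^n_\alpha$. Using part (b) of Mosco convergence, choose a recovery sequence $w_n\to u$ strongly with $\limsup_n\EE^n(w_n,w_n)\le\EE(u,u)$; then $\limsup_n\Phi^n_\alpha(u_n)\le\limsup_n\Phi^n_\alpha(w_n)\le\Phi_\alpha(u)<\infty$ by minimality, which bounds $\{u_n\}$ in $H$ with $\sup_n\EE^n(u_n,u_n)<\infty$. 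Extract $u_n\rightharpoonup w$ along a subsequence; part (a) and weak lower semicontinuity of $\|\cdot\|_H$ give $\Phi_\alpha(w)\le\liminf_n\Phi^n_\alpha(u_n)\le\Phi_\alpha(u)$, so $w=u$ by uniqueness, and since the subsequence was arbitrary the whole sequence converges weakly. Squeezing then yields $\Phi^n_\alpha(u_n)\to\Phi_\alpha(u)$, and comparing this with the three separate lower-semicontinuity inequalities forces $\|u_n\|_H\to\|u\|_H$; weak convergence plus norm convergence in $H$ gives $u_n\to G_\alpha f$ strongly.

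For $(3)\Rightarrow(1)$ one establishes (b) and (a). For (b): take $u_n:=u$ if $u\notin\FF$; if $u\in\mathcal{D}(\mathcal{L})$ set $f:=(\alpha-\mathcal{L})u$ and $u_n:=G^n_\alpha f\to u$ strongly by (3), and the identity $\EE^n(G^n_\alpha f,G^n_\alpha f)=(f,G^n_\alpha f)_H-\alpha\|G^n_\alpha f\|_H^2$ together with its limiting analogue gives $\EE^n(u_n,u_n)\to\EE(u,u)$; general $u\in\FF$ follows by form-norm approximation from $\mathcal{D}(\mathcal{L})$ and a diagonal argument. For (a): let $u_n\rightharpoonup u$ in $H$; we may assume $\liminf_n\EE^n(u_n,u_n)<\infty$ along a subsequence with $\sup_n\|u_n\|_H<\infty$. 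For $\beta>0$ introduce the bounded approximating forms $\EE^{n,(\beta)}(v):=\beta\big((I-\beta G^n_\beta)v,v\big)_H\le\EE^n(v,v)$, defined on all of $H$; by the spectral theorem $I-\beta G^n_\beta\ge0$, and $I-\beta G^n_\beta\to I-\beta G_\beta$ strongly by (3). The crucial point is that positivity of $I-\beta G^n_\beta$ yields
\[
\big((I-\beta G^n_\beta)u_n,u_n\big)_H\ \ge\ \big((I-\beta G^n_\beta)u,u\big)_H+o(1),
\]
the cross terms vanishing because $(I-\beta G^n_\beta)u$ converges strongly while $u_n-u\rightharpoonup0$. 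Letting $n\to\infty$ gives $\liminf_n\EE^n(u_n,u_n)\ge\EE^{(\beta)}(u)$, and letting $\beta\uparrow\infty$ gives $\EE^{(\beta)}(u)\uparrow\EE(u,u)$ in $[0,\infty]$ by monotone convergence, so $\liminf_n\EE^n(u_n,u_n)\ge\EE(u,u)$.

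The displayed inequality in the last step is the only genuinely delicate point: since $u_n$ converges merely weakly, neither $\|u_n\|_H$ nor $(G^n_\beta u_n,u_n)_H$ need converge (think of an orthonormal sequence), so one must exploit the positivity of the approximating operators $I-\beta G^n_\beta$ rather than splitting $G^n_\beta u_n$ term by term, and then carry out the passage $\beta\uparrow\infty$ uniformly in $n$. Everything else — the variational identification of the resolvents in the $(1)\Rightarrow(3)$ half, the recovery sequence for (b), and the Trotter--Kato equivalence of (2) and (3) — is routine.
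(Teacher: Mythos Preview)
The paper does not give a proof of this proposition; it simply records the result as well known and cites Mosco's original paper \cite{U94}. Your proposal is therefore not competing with any argument in the paper, and what you have written is essentially the standard proof (indeed, close to Mosco's own): the variational characterization of $G_\alpha f$ as the minimizer of $\Phi_\alpha$ to pass from Mosco convergence to resolvent convergence, and the Yosida-type approximating forms $\EE^{(\beta)}(v)=\beta\big((I-\beta G_\beta)v,v\big)$ to pass back. Your argument is correct; a couple of minor remarks follow.

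In the step $(1)\Rightarrow(3)$, the ``three separate lower-semicontinuity inequalities'' trick is fine: once $\Phi^n_\alpha(u_n)\to\Phi_\alpha(u)$ and $(f,u_n)\to(f,u)$, you have $\EE^n(u_n,u_n)+\alpha\|u_n\|^2\to\EE(u,u)+\alpha\|u\|^2$, and together with $\liminf\EE^n(u_n,u_n)\ge\EE(u,u)$ and $\liminf\|u_n\|^2\ge\|u\|^2$ this forces both limits separately; so $\|u_n\|\to\|u\|$ and hence strong convergence. In the step $(3)\Rightarrow(1)(a)$, your expansion $(A_n u_n,u_n)=(A_n(u_n-u),u_n-u)+2(A_n u,u_n-u)+(A_n u,u)$ with $A_n=I-\beta G^n_\beta\ge0$ is exactly right, and no uniformity in $n$ is required for the final passage $\beta\uparrow\infty$: you first send $n\to\infty$ for fixed $\beta$ to obtain $\liminf_n\EE^n(u_n,u_n)\ge\EE^{(\beta)}(u)$, and only then let $\beta\uparrow\infty$ using the monotone increase $\EE^{(\beta)}(u)\uparrow\EE(u,u)$. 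Your closing remark about uniformity is thus a slight overstatement of the difficulty.
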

\begin{remark}\label{RMA3}
Particularly,  take $(\sE^n,\sF^n)=(\sE,\sF)$.  Then the Mosco convergence trivially holds, and \eqref{eqMoscoa} coincides with the lower semi-continuity of $\sE$: For any sequence $u_n$ converging weakly to $u$ in $L^2(E,m)$,  it holds that $\sE(u,u)\leq \liminf_{n\rightarrow \infty} \sE(u_n,u_n)$.  
\end{remark}

\section{Traces of functions in certain Sobolev spaces}\label{AP1}

Firstly, the trace of a function $u\in H^1(\bG^2_\pm)$ on the boundary is denoted by $u(\cdot \pm)$ or $\gamma_\pm u$.  Then the trace theorem (see, e.g.,  \cite[Theorem 1.63]{BCD11}) tells us that
\begin{equation}\label{EQ2TRH2}
\|\gamma_\pm u\|_{H^{\frac{1}{2}}(\mathbb{R})}\lesssim\|u\|_{H^1({\mathbb{G}^2_\pm})},
\end{equation}
where
\[
H^{\frac{1}{2}}(\mathbb{R}):=\left\{f\in L^2(\mathbb{R}):\|f\|^2_{H^{\frac{1}{2}}(\mathbb{R})}:=\int_\mathbb{R}(1+|\xi|)|\hat{f}(\xi)|^2d\xi<\infty\right\}
\]
and $\hat{f}$ is the Fourier transform of $f$.  In addition, the following integration by parts formula holds: For $u\in H^1(\bG^2_\pm)$ and $g\in C_c^\infty(\bG^2_\pm)$, 
\begin{equation}\label{eq:partsformula}
\begin{aligned}
	&\int_{\bG^2_\pm} \partial_{x_1} u \cdot g dx=-\int_{\bG^2_\pm} u\cdot \partial_{x_1} g  dx, \\
	&	\int_{\bG^2_\pm} \partial_{x_2} u \cdot g dx=-\int_{\bG^2_\pm} u\cdot \partial_{x_1} g  dx \mp \int_{\bR} \gamma_{\pm} u(x_1) g(x_1,0)dx_1. 
\end{aligned}\end{equation}

Next, consider the Dirichlet form $(\frac{1}{2}\mathbf{D}, H^1(\bR^2))$ of a Brownian motion on $\mathbb{R}^2$.  According to the inequality (see, e.g.,  \cite[Lemma 2.1.1]{F80})
\begin{equation}\label{traceineq}
\int_\mathbb{R}u(x_1,0)^2dx_1\leq C\mathbf{D}_1(u,u), \quad u\in C_c^\infty(\mathbb{R}^2),
\end{equation}
where $C$ is a positive constant independent of $u$,  we know that there exists a bounded linear operator 
\begin{equation}\label{eq:gamma}
\gamma: H^1(\mathbb{R}^2)\rightarrow L^2(\mathbb{R}),
\end{equation}
such that 
\[\gamma u(\cdot)=u|_\mathbb{R}(\cdot):=u(\cdot,0), \quad \text{if } u\in C_c^\infty(\mathbb{R}^2). \]
Throughout this paper,  $\gamma u$ is called the trace of $u$ on the $x_1$-axis and we also write $u|_\bR$ for $\gamma u$ if no confusions cause.  Note that for $u\in H^1(\bR^2)$, $u_\pm:=u|_{\bR^2_\pm}$ gives a function in $H^1(\bR^2_\pm)=H^1(\bG^2_\pm)$.  In the lemma below we conclude that $\gamma_\pm u_\pm$ coincides with $\gamma u$. 

\begin{lemma}\label{LMB1}
\begin{itemize}
\item[(1)] For any $u\in H^1(\bR^2)$, it holds that $\gamma u=\gamma_+ u_+=\gamma_- u_-$.  Particularly, $\gamma$ is also a bounded linear operator from $H^1(\bR^2)$ to $H^{1/2}(\bR)$. 
\item[(2)] If  $u\in H^1(\bG^2)$ and $\gamma_+ u_+=\gamma_- u_-$,  then $u\in H^1(\bR^2)$ and particularly $\gamma u=\gamma_+u_+=\gamma_- u_-$. 
\end{itemize}
\end{lemma}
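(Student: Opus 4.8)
\textbf{Proof proposal for Lemma~\ref{LMB1}.}

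The plan is to prove the two parts by unwinding the definitions of the trace operators and exploiting density of smooth functions. For part (1), the key point is that both $\gamma u$ and $\gamma_\pm u_\pm$ are defined as the continuous extensions (to the appropriate Sobolev spaces) of the restriction map $f\mapsto f(\cdot,0)$ acting on smooth functions. First I would fix $u\in H^1(\bR^2)$ and choose a sequence $\{u_k\}\subset C_c^\infty(\bR^2)$ converging to $u$ in $H^1(\bR^2)$; such a sequence exists since $C_c^\infty(\bR^2)$ is dense in $H^1(\bR^2)$. Then $u_k|_{\bR^2_\pm}\in C_c^\infty(\bar\bR^2_\pm)$ converges to $u_\pm$ in $H^1(\bR^2_\pm)=H^1(\bG^2_\pm)$, so by continuity of $\gamma_\pm$ (inequality \eqref{EQ2TRH2}) we get $\gamma_\pm u_k|_{\bR^2_\pm}=u_k(\cdot,0)\to\gamma_\pm u_\pm$ in $H^{1/2}(\bR)$, hence in $L^2(\bR)$. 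On the other hand, by \eqref{traceineq} and the definition of $\gamma$ in \eqref{eq:gamma}, $u_k(\cdot,0)=\gamma u_k\to\gamma u$ in $L^2(\bR)$. Since the limit in $L^2(\bR)$ is unique, $\gamma u=\gamma_+u_+=\gamma_-u_-$. The final assertion that $\gamma\colon H^1(\bR^2)\to H^{1/2}(\bR)$ is bounded then follows because $\gamma u=\gamma_+u_+$ and $\|\gamma_+u_+\|_{H^{1/2}(\bR)}\lesssim\|u_+\|_{H^1(\bG^2_+)}\le\|u\|_{H^1(\bR^2)}$ by \eqref{EQ2TRH2}.

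For part (2), suppose $u\in H^1(\bG^2)$ with $\gamma_+u_+=\gamma_-u_-=:g\in H^{1/2}(\bR)$. The goal is to show $\nabla_{\bR^2}u$ (distributional gradient on all of $\bR^2$) belongs to $L^2(\bR^2)$, i.e. that no singular boundary term appears across the $x_1$-axis. The natural tool is the integration-by-parts formula \eqref{eq:partsformula} applied on each half-plane. For a test function $\varphi\in C_c^\infty(\bR^2)$, I would compute $\int_{\bR^2}u\,\partial_{x_2}\varphi\,dx=\int_{\bR^2_+}u_+\,\partial_{x_2}\varphi\,dx+\int_{\bR^2_-}u_-\,\partial_{x_2}\varphi\,dx$, and apply \eqref{eq:partsformula} in each piece. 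The boundary contributions are $-\int_\bR\gamma_+u_+(x_1)\varphi(x_1,0)\,dx_1$ from the $+$ side and $+\int_\bR\gamma_-u_-(x_1)\varphi(x_1,0)\,dx_1$ from the $-$ side (note the sign discrepancy: the outward normal of $\bR^2_+$ at $x_2=0$ points in the $-x_2$ direction, while that of $\bR^2_-$ points in the $+x_2$ direction). Under the hypothesis $\gamma_+u_+=\gamma_-u_-$, these two boundary terms cancel exactly, leaving $\int_{\bR^2}u\,\partial_{x_2}\varphi\,dx=-\int_{\bR^2}v\,\varphi\,dx$ where $v\in L^2(\bR^2)$ is the function equal to $\partial_{x_2}u_\pm$ on $\bR^2_\pm$. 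Hence the distributional derivative $\partial_{x_2}u$ on $\bR^2$ equals $v\in L^2(\bR^2)$. The same (in fact simpler, with no boundary terms) computation for $\partial_{x_1}$ shows $\partial_{x_1}u\in L^2(\bR^2)$. Therefore $u\in H^1(\bR^2)$, and then part (1) applies to give $\gamma u=\gamma_+u_+=\gamma_-u_-$.

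I expect the main obstacle to be purely a matter of bookkeeping rather than depth: getting the orientation of the boundary terms in \eqref{eq:partsformula} right on the two half-planes so that the cancellation is manifest, and making sure the trace values $\gamma_\pm u_\pm$ in $H^{1/2}(\bR)\subset L^2(\bR)$ are legitimately being paired against $\varphi(\cdot,0)\in C_c^\infty(\bR)$ in a way justified by \eqref{eq:partsformula}. One should also note in passing that \eqref{eq:partsformula} as stated applies to $g\in C_c^\infty(\bG^2_\pm)$, so to use it with the restriction $\varphi|_{\bG^2_\pm}$ of a test function $\varphi\in C_c^\infty(\bR^2)$ one invokes the density of $C_c^\infty(\bG^2_\pm)$-type approximations or simply the continuity of both sides of \eqref{eq:partsformula} in $g$ with respect to the $H^1(\bG^2_\pm)$-norm; this is routine. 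No deeper tool (no harmonic extension, no Fourier analysis) is needed for either part.
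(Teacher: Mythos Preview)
Your proposal is correct and follows essentially the same route as the paper: part~(1) via density of $C_c^\infty(\bR^2)$ and uniqueness of the $L^2(\bR)$ limit of the traces, and part~(2) via the integration-by-parts formula \eqref{eq:partsformula} on each half-plane with the boundary terms cancelling. One small simplification: in the paper's conventions $C_c^\infty(\bG^2_\pm)=C_c^\infty(\bar\bR^2_\pm)$ already consists of restrictions of $C_c^\infty(\bR^2)$ functions, so \eqref{eq:partsformula} applies directly to $\varphi|_{\bG^2_\pm}$ with no extra density argument needed.
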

\begin{proof}
\begin{itemize}
\item[(1)] We first note that for $f\in C_c^\infty(\bR^2)$,  $\gamma f=\gamma_+ f_+=\gamma_-f_-=f(\cdot, 0)$.  For $u\in H^1(\bR^2)$, take a sequence $u_n\in C_c^\infty(\bR^2)$ such that $u_n\rightarrow u$ in $H^1(\bR^2)$.  Consequently, $u_{n\pm}$ converges to $u_\pm$ in $H^1(\bG^2_\pm)$.  It follows from \eqref{EQ2TRH2} that $\gamma_\pm u_{n\pm}$ converges to $\gamma_\pm u_\pm$ in $H^{1/2}(\bR)$, and \eqref{eq:gamma} yields that $\gamma u_n$ converges to $\gamma u$ in $L^2(\bR)$.  Since $\gamma_\pm u_{n\pm}=\gamma u_n$, we can conclude that $\gamma u=\gamma_\pm u_\pm \in H^{1/2}(\bR)$.  Particularly,  $\gamma: H^1(\bR^2)\rightarrow H^{1/2}(\bR)$ is bounded because 
\[
	\|\gamma u\|_{H^{1/2}(\bR)}=\|\gamma_\pm u_\pm\|_{H^{1/2}(\bR)}\lesssim \|u_\pm\|_{H^1(\bG^2_\pm)}\leq \|u\|_{H^1(\bR^2)}. 
\]
\item[(2)] For any $\varphi\in C_c^\infty(\bR^2)$, the integration by parts formula \eqref{eq:partsformula}, together with $\gamma_+u_+=\gamma_-u_-$,  implies for $i=1,2$,
\[
	\int_{\bR^2_+} \partial_{x_i} u_+ \cdot \varphi dx+\int_{\bR^2_-} \partial_{x_i} u_-\cdot \varphi dx=-\int_{\bR^2} u \cdot \partial_{x_i} \varphi dx. 
\]
This leads to $\partial_{x_i} u=\partial_{x_i} u_++\partial_{x_i} u_-\in  L^2(\bR^2)$.  Hence $u\in H^1(\bR^2)$. 
\end{itemize}
That completes the proof.
\end{proof}



Finally we present the trace of the normal derivative of a function $u\in H^1_\Delta(\mathbb{G}^2_\pm)$ on the boundary.  Define a linear functional on $H^{1/2}(\bR)$:
\begin{equation}\label{eq:FUP}
F_u(\phi):=(\Delta u, \Phi)_{L^2(\mathbb{G}_\pm^2)}+(\nabla u,\nabla\Phi)_{L^2(\mathbb{G}_\pm^2)},\quad \phi\in H^{1/2}(\bR),
\end{equation}
where $\Phi\in H^1(\mathbb{G}^2_\pm)$ is an extension of $\phi$, i.e. $\gamma_\pm\Phi=\phi$.  

\begin{lemma}\label{LEMA2}
The functional $F_u$ is a bounded linear functional on $H^{1/2}(\mathbb{R})$. 
\end{lemma}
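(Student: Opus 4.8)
The plan is to show that $F_u$ is well-defined (independent of the choice of extension $\Phi$) and then bounded.

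First I would address well-definedness. Suppose $\Phi_1, \Phi_2 \in H^1(\mathbb{G}^2_\pm)$ both satisfy $\gamma_\pm \Phi_i = \phi$. Then $\Psi := \Phi_1 - \Phi_2 \in H^1(\mathbb{G}^2_\pm)$ has zero trace, so $\Psi \in H^1_0(\mathbb{G}^2_\pm)$ by the characterization of $H^1_0$ via vanishing trace. Hence there is a sequence $\psi_k \in C_c^\infty(\mathbb{G}^2_\pm)$ with $\psi_k \to \Psi$ in $H^1(\mathbb{G}^2_\pm)$. For each $\psi_k$, the integration by parts formula \eqref{eq:partsformula} (applied componentwise, with no boundary term since $\psi_k$ has compact support away from the boundary) gives
\[
(\Delta u, \psi_k)_{L^2(\mathbb{G}^2_\pm)} + (\nabla u, \nabla \psi_k)_{L^2(\mathbb{G}^2_\pm)} = 0.
\]
Since $u \in H^1_\Delta(\mathbb{G}^2_\pm)$ means $\Delta u \in L^2$ and $\nabla u \in L^2$, both bilinear pairings are continuous in the $H^1(\mathbb{G}^2_\pm)$-norm of the second argument; letting $k \to \infty$ yields $(\Delta u, \Psi)_{L^2} + (\nabla u, \nabla \Psi)_{L^2} = 0$, i.e. $F_u(\phi)$ computed via $\Phi_1$ equals that via $\Phi_2$. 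So $F_u$ is well-defined, and it is manifestly linear in $\phi$ (one may choose the extension operator to be linear, or use well-definedness to add extensions).

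Next I would prove boundedness. Fix a bounded linear extension operator $E \colon H^{1/2}(\mathbb{R}) \to H^1(\mathbb{G}^2_\pm)$ (the standard right inverse to the trace operator, whose existence is the trace theorem), so $\|E\phi\|_{H^1(\mathbb{G}^2_\pm)} \lesssim \|\phi\|_{H^{1/2}(\mathbb{R})}$. Using $\Phi = E\phi$ in \eqref{eq:FUP} and Cauchy--Schwarz,
\[
|F_u(\phi)| \leq \|\Delta u\|_{L^2(\mathbb{G}^2_\pm)} \|E\phi\|_{L^2(\mathbb{G}^2_\pm)} + \|\nabla u\|_{L^2(\mathbb{G}^2_\pm)} \|\nabla E\phi\|_{L^2(\mathbb{G}^2_\pm)} \leq \big(\|\Delta u\|_{L^2} + \|\nabla u\|_{L^2}\big)\|E\phi\|_{H^1(\mathbb{G}^2_\pm)} \lesssim \|u\|_{H^1_\Delta(\mathbb{G}^2_\pm)} \|\phi\|_{H^{1/2}(\mathbb{R})},
\]
which is the desired bound. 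This identifies $F_u$ with an element of $H^{-1/2}(\mathbb{R})$, which is precisely the object denoted $\gamma_\pm^{\partial_2} u = \frac{\partial u}{\partial x_2}\big|_{x_2 = 0\pm}$ and used throughout the paper (e.g. in the Green--Gauss formulas \eqref{eq:Green2}, \eqref{eq:GreenGaussformula}).

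I do not anticipate a serious obstacle here; the only mild subtlety is the well-definedness argument, which hinges on the density of $C_c^\infty(\mathbb{G}^2_\pm)$ in $H^1_0(\mathbb{G}^2_\pm)$ together with the continuity of the pairing $v \mapsto (\Delta u, v)_{L^2} + (\nabla u, \nabla v)_{L^2}$ on $H^1$ — both standard facts. One should be slightly careful that the half-plane $\mathbb{G}^2_\pm = \mathbb{R}^2_\pm$ is an unbounded Lipschitz domain, but the trace theorem and the $H^1_0$ characterization via vanishing trace are valid in this setting, so nothing extra is needed.
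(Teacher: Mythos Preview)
Your proposal is correct and follows essentially the same approach as the paper: first show well-definedness by reducing to a zero-trace test function and using density of $C_c^\infty$ in $H^1_0$, then obtain boundedness via a bounded right inverse of the trace operator together with Cauchy--Schwarz. The paper's well-definedness step is phrased slightly differently (it invokes a density statement in $H^1_\Delta$ rather than approximating the test function in $H^1_0$), but your version is the standard and arguably cleaner route to the same conclusion.
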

\begin{proof}
We first show $F_u$ is well-defined, in other words, $F_u(\phi)$ does not depend on the choice of $\Phi$. 
Indeed, suppose that  $\Phi_1,\Phi_2 \in H^1(\mathbb{G}^2_\pm)$ and $\gamma_\pm\Phi_1=\gamma_\pm\Phi_1=\phi$. Denote $\omega:=\Phi_1-\Phi_2$, and we have $\omega\in H_0^1(\mathbb{G}^2_\pm)$. From the classical Green-Gauss formula for $C_c^\infty$ functions and the fact that $C_c^\infty(\mathbb{G}^2_\pm)$ is dense in $H^1_\Delta(\mathbb{G}^2_\pm)$, we can conclude that $F_u(\omega)=0$ and thus $F_u(\phi)$ is independent of the choice of the extension of $\phi$.  Furthermore,  note that for every $\phi\in H^{1/2}(\mathbb{R})$, there exists an extension $\Phi$ such that (see, e.g., \cite[Lemma 7.41]{AF03})
\[
\|\Phi\|_{H^1(\mathbb{G}^2_\pm)}\lesssim \|\phi\|_{H^{\frac{1}{2}}(\mathbb{R})},
\]
 and 
\[
|F_u(\phi)|\lesssim \|u\|_{H^1_\Delta(\mathbb{G}^2_\pm)}\|\Phi\|_{H^1(\mathbb{G}^2_\pm)}.
\]
Eventually we can conclude that $F_u$ is a bounded linear functional on $H^{1/2}(\bR)$. 
\end{proof}

Applying the Riesz representation theorem to $F_u$, there exists $\gamma_\pm^{\partial_2} u\in H^{-1/2}(\mathbb{R})$ such that 
\[
F_u(\phi)=\mp \langle \gamma_\pm^{\partial_2} u, \phi\rangle.
\]
When $u\in C_c^\infty(\mathbb{G}^2_\pm)$, the classical Green-Gauss formula implies that 
\[\gamma_\pm^{\partial_2} u=\left.\frac{\partial u}{\partial x_2}\right|_{x_2=0\pm}.\]
Throughout this paper, we also use the notation $\left.\frac{\partial u}{\partial x_2}\right|_{x_2=0\pm}:=\gamma_\pm^{\partial_2} u$ to denote the trace of the normal derivative of $u\in H^1_\Delta(\mathbb{G}^2_\pm)$. Particularly,  \eqref{eq:FUP}, together with Lemma~\ref{LMB1}, leads to the following Green-Gauss formulae.

\begin{lemma}\label{LMB3}
For $u\in H^1_\Delta(\bG^2_\pm)$ and $\Phi\in H^1(\bG^2_\pm)$, it holds 
\begin{equation}\label{eq:GreenGaussformula}
(\nabla u, \nabla \Phi)_{L^2(\bG^2_\pm)}=-(\Delta u, \Phi)_{L^2(\bG^2_\pm)}\mp \left \langle \left.\frac{\partial u}{\partial x_2}\right|_{x_2=0\pm}, \gamma_\pm\Phi\right\rangle.
\end{equation}
Particularly, for $u\in H^1_\Delta(\bR^2)$ and $\Phi\in H^1(\bR^2)$, it holds 
\begin{equation}\label{eq:Green2}
	(\nabla u_\pm, \nabla \Phi_\pm)_{L^2(\bR^2_\pm)}=-(\Delta u_\pm, \Phi_\pm)_{L^2(\bR^2_\pm)}\mp \left \langle \left.\frac{\partial (u_\pm)}{\partial x_2}\right|_{x_2=0\pm}, \gamma\Phi\right\rangle.
\end{equation}
\end{lemma}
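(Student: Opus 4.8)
The plan is to read off \eqref{eq:GreenGaussformula} directly from the definition of the normal-derivative trace $\gamma_\pm^{\partial_2}u=\left.\frac{\partial u}{\partial x_2}\right|_{x_2=0\pm}$ given in \eqref{eq:FUP}, and then to obtain the global identity \eqref{eq:Green2} by restricting to the two half-planes and invoking Lemma~\ref{LMB1}. Since everything has already been set up in Appendix~\ref{AP1}, this is essentially an exercise in unwinding definitions and keeping track of the $\mp$ signs.

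First I would prove \eqref{eq:GreenGaussformula}. Fix $u\in H^1_\Delta(\bG^2_\pm)$ and $\Phi\in H^1(\bG^2_\pm)$. By the trace theorem \eqref{EQ2TRH2}, $\phi:=\gamma_\pm\Phi\in H^{1/2}(\bR)$, and $\Phi$ is an admissible extension of $\phi$ in the sense of \eqref{eq:FUP}. Because Lemma~\ref{LEMA2} shows $F_u$ is well defined (independent of the chosen extension), I may evaluate $F_u(\phi)$ using $\Phi$ itself, which gives $F_u(\phi)=(\Delta u,\Phi)_{L^2(\bG^2_\pm)}+(\nabla u,\nabla\Phi)_{L^2(\bG^2_\pm)}$. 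On the other hand, by the Riesz-representation definition of $\gamma_\pm^{\partial_2}u$ we have $F_u(\phi)=\mp\langle\gamma_\pm^{\partial_2}u,\phi\rangle$. Equating these two expressions and moving $(\Delta u,\Phi)_{L^2(\bG^2_\pm)}$ to the other side yields \eqref{eq:GreenGaussformula} verbatim.

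For the ``particularly'' part I would take $u\in H^1_\Delta(\bR^2)$ and $\Phi\in H^1(\bR^2)$, and set $u_\pm:=u|_{\bR^2_\pm}$, $\Phi_\pm:=\Phi|_{\bR^2_\pm}$. A one-line distributional check — testing $\Delta u$ against $\varphi\in C_c^\infty(\bR^2_\pm)$ and using $\int_{\bR^2}u\,\Delta\varphi=\int_{\bR^2_\pm}u\,\Delta\varphi$ — shows $\Delta u_\pm=(\Delta u)|_{\bR^2_\pm}\in L^2(\bR^2_\pm)$, so $u_\pm\in H^1_\Delta(\bG^2_\pm)$ and \eqref{eq:GreenGaussformula} applies with $(u_\pm,\Phi_\pm)$ in place of $(u,\Phi)$. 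Then Lemma~\ref{LMB1}(1) gives $\gamma_\pm\Phi_\pm=\gamma\Phi$, and by the conventions of Appendix~\ref{AP1} we write $\left.\frac{\partial(u_\pm)}{\partial x_2}\right|_{x_2=0\pm}=\gamma_\pm^{\partial_2}u_\pm$; substituting these into the localized identity produces \eqref{eq:Green2}.

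There is no real obstacle here: the content is entirely carried by Lemma~\ref{LEMA2} (well-definedness and boundedness of $F_u$) and Lemma~\ref{LMB1} (compatibility of the boundary traces of $\Phi$ on the two half-planes). The only point needing a little care is the bookkeeping of the $\mp$ signs inherited from the integration-by-parts formula \eqref{eq:partsformula} and from the Riesz representation of $F_u$, together with the small remark that the restrictions $u_\pm$ of an $H^1_\Delta(\bR^2)$ function still lie in $H^1_\Delta(\bG^2_\pm)$.
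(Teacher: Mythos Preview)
Your proposal is correct and follows exactly the route the paper indicates: the paper itself does not give a detailed proof but simply remarks that \eqref{eq:FUP} together with Lemma~\ref{LMB1} yields the Green--Gauss formulae, which is precisely the argument you have written out. Your additional care in checking that $u_\pm\in H^1_\Delta(\bG^2_\pm)$ when $u\in H^1_\Delta(\bR^2)$ is a welcome detail that the paper leaves implicit.
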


\section{Trace Dirichlet form on the boundary of a strip}\label{APB}

In this appendix we are to derive the traces of reflecting Brownian motions on certain strips on the boundary. Firstly, let us consider the closed strip $T:=\mathbb{R}\times[0,\pi]$ in $\mathbb{R}^2$, and define 
\[
\begin{aligned}
  \mathcal{G}&=H^1(T),\\
 \mathcal{A} (u,u) &=
 \frac{1}{2}\int_{T} |\nabla u|^2dx,\quad u\in \mathcal{G}. 
 \end{aligned}
\]
Then $(\mathcal{A}, \mathcal{G})$ is a regular Dirichlet form on $L^2(T)$ associated with the reflecting Brownian motion on $T$. 
From \cite{B70} and \cite{W61} we know that the Poisson kernel for $T$ is 
\[P(x,x')=\frac{1}{2\pi}\frac{\sin x_2}{\cosh(x_1-x_1')-\cos x_2}1_{\{x_2'=0\}}+\frac{1}{2\pi}\frac{\sin x_2}{\cosh(x_1-x_1')+\cos x_2}1_{\{x_2'=\pi\}}, \]
where $x=(x_1,x_2)\in \mathring{T}:=\bR\times (0,\pi)$, $x'=(x'_1,x'_2)\in \partial T:=\bR\times \{0,\pi\}$.  
In addition, \cite[Example 5.8.1]{CF12} indicates that the Feller kernel for $T$ is
\[U(x,x')=\frac{1}{2}\frac{\partial P(x,x')}{\partial \mathbf{n}_x}, \quad x,x'\in \partial T,\]
where $\mathbf{n}_x$ denotes that inward normal vector at $x$.  A straightforward computation yields 
\[
U(x,x')=\frac{1}{4\pi(\cosh(x_1-x_1')-1)}1_{\{x_2=x_2'\}}+\frac{1}{4\pi(\cosh(x_1-x_1')+1)}1_{\{x_2\neq x_2'\}}.
\]
Set $\check{\mathcal{G}}:=\{u|_{\partial T}: u\in \mathcal{G}\}=\{f\in L^2(\partial T): f_0,f_\pi \in H^{1/2}(\bR) \}$, where $u|_{\partial T}$ stands for the trace of $u\in \mathcal{G}$ on $\partial T$, $f_0(\cdot):=f(\cdot,0)$ and $f_\pi(\cdot):=f(\cdot, \pi)$.  Then the trace Dirichlet form (on $L^2(\partial T)$) of $(\mathcal{G},\mathcal{A})$ on $\partial T$  is $(\check{\mathcal{A}}, \check{\mathcal{G}})$: For every $f\in \check{\mathcal{G}}$, 
\[
\begin{split}
\check{\mathcal{A}}(f,f)
& =\frac{1}{4\pi}\int_{\mathbb{R}\times\mathbb{R}}\frac{\left(f_0(x_1)-f_\pi(x_1')\right)^2}{\cosh(x_1-x_1')+1}dx_1dx_1'\\
&\qquad \qquad+\frac{1}{8\pi}\int_{\mathbb{R}\times\mathbb{R}}\frac{\left(f_0(x_1)-f_0(x_1')\right)^2+\left(f_\pi(x_1)-f_\pi(x_1')\right)^2}{\cosh(x_1-x_1')-1}dx_1dx_1'.
\end{split}
\] 

Next we consider the Dirichlet form on $L^2(\bar{\Omega}_\ell)$ with a constant $\ell>0$:  
\[
\begin{aligned}
  \mathcal{G}^\ell &=H^1(\bar\Omega_\ell),\\
 \mathcal{A}^\ell (u,u)&=
 \frac{1}{2}\int_{\bar\Omega_\ell} |\nabla u|^2dx,\quad u\in \mathcal{G}^\ell,
 \end{aligned}
\]
where $\bar{\Omega}_\ell:=\bR\times [-\ell, \ell]$. 
After a spatial translation and scaling on $(\mathcal{A}, \mathcal{G})$, it is easy to obtain that the trace Dirichlet form (on $L^2(\partial \bar{\Omega}_\ell)$) of $(\mathcal{A}^\ell, \mathcal{G}^\ell)$ on the boundary $\partial \bar\Omega_\ell$ is
\begin{equation}\label{TRACEDF}
\begin{split}
\check{\mathcal{G}}^\ell &=\{u|_{\partial \bar{\Omega}_\ell}: u\in \mathcal{G}^\ell\}=\{f\in L^2(\partial \bar{\Omega}_\ell): f_\ell, f_{-\ell}\in H^{1/2}(\bR)\},\\
\check{\mathcal{A}}^\ell(f,f)
& =\frac{1}{4\pi}\int_{\mathbb{R}\times\mathbb{R}}\frac{\left(f_\ell(x_1)-f_{-\ell}(x_1')\right)^2}{\left(\frac{2\ell}{\pi}\right)^2\left(\cosh\left(\frac{\pi}{2\ell}(x_1-x_1')\right)+1\right)}dx_1dx_1'\\
&\qquad \qquad+\frac{1}{8\pi}\int_{\mathbb{R}\times\mathbb{R}}\frac{\left(f_\ell(x_1)-f_\ell(x_1')\right)^2+\left(f_{-\ell}(x_1)-f_{-\ell}(x_1')\right)^2}{\left(\frac{2\ell}{\pi}\right)^2\left(\cosh\left(\frac{\pi}{2\ell}(x_1-x_1')\right)-1\right)}dx_1dx_1',\quad f\in \check{\mathcal{G}}^\ell, 
\end{split}
\end{equation}
where $f_{\ell}(\cdot):=f(\cdot,\ell)$ and $f_{-\ell}(\cdot ):=f(\cdot,-\ell)$. 

\section{A useful lemma}

Consider the inner products $\check{\mathscr{A}}^{\ell, 1}$ and $\check{\mathscr{A}}^{\ell, 2}$  on $H^{\frac{1}{2}}(\mathbb{R})\times H^{\frac{1}{2}}(\mathbb{R})$ for $\ell\in (0,\infty)$ as follows: For $f=(f^+,f^-)\in H^{\frac{1}{2}}(\mathbb{R})\times H^{\frac{1}{2}}(\mathbb{R})$,
\[
\begin{split}
&\check{\mathscr{A}}^{\ell,1}(f,f):=\int_{\mathbb{R}\times\mathbb{R}}\frac{\left(f^+(x_1)-f^-(x_1')\right)^2}{2\ell\left(\cosh\left(\ell^{-1}(x_1-x_1')\right)+1\right)}dx_1dx_1', \\
&\check{\sA}^{\ell,2}(f,f):=\int_{\mathbb{R}\times\mathbb{R}}\frac{\left(f^+(x_1)-f^+(x_1')\right)^2+\left(f^-(x_1)-f^-(x_1')\right)^2}{\ell^2\left(\cosh\left(\ell^{-1}(x_1-x_1')\right)-1\right)}dx_1dx_1'.
\end{split}
\]
When $\ell=0$, set
\[
	\check{\sA}^{0,1}(f,f):=\int_\mathbb{R}\left(f^+(x_1)-f^-(x_1)\right)^2dx_1, \quad \check{\sA}^{0,2}(f,f)=0
\]
and when $\ell=\infty$, set
\[
\check{\sA}^{\infty,1}(f,f):=0,\quad \check{\sA}^{\infty,2}(f,f):=\int_{\mathbb{R}\times\mathbb{R}}\frac{\left(f^+(x_1)-f^+(x_1')\right)^2+\left(f^-(x_1)-f^-(x_1')\right)^2}{(x_1-x_1')^2}dx_1dx_1'. 
\]	
The following lemma is very useful in proving of the main theorems of this paper.


\begin{lemma}\label{LM52}
Let $f, f_n\in H^{\frac{1}{2}}(\mathbb{R})\times H^{\frac{1}{2}}(\mathbb{R})$ satisfy  that $f^\pm_n\rightarrow f^\pm$ weakly in $ H^{\frac{1}{2}}(\mathbb{R})$, and $\ell_n$ be a sequence in $(0,\infty)$ such that $\lim_{n\rightarrow\infty}\ell_n=\ell_0\in[0,\infty]$. Then the following hold for $i=1,2$:
\begin{itemize}
\item[(1)]  $\lim_{n\rightarrow \infty} \check{\sA}^{\ell_n, i}(f,f)=\check{\sA}^{\ell_0, i}(f,f)$;
\item[(2)] $\liminf_{n\rightarrow\infty}\check{\sA}^{\ell_n,i}(f_n,f_n)\geq \check{\sA}^{\ell_0,i}(f,f)$.
\end{itemize}
\end{lemma}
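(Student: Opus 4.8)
The plan is to reduce everything to a single scalar Fourier identity and then feed it into the two standard semicontinuity/continuity machines. The starting point is the explicit evaluation of the kernels in Fourier variables. For a function $g\in H^{1/2}(\bR)$ one has, by Parseval and a change of variables,
\[
	\int_{\bR\times\bR}\frac{(g(x_1)-g(x_1'))^2}{\ell^2(\cosh(\ell^{-1}(x_1-x_1'))-1)}dx_1dx_1'
	= \int_\bR |\hat g(\xi)|^2\, \Phi_2(\ell\xi)\,d\xi,
\]
and similarly the cross-kernel $\bigl(2\ell(\cosh(\ell^{-1}y)+1)\bigr)^{-1}$ contributes a multiplier $\Phi_1(\ell\xi)$ acting on $\hat f^+,\hat f^-$; the precise shape of $\Phi_1,\Phi_2$ can be read off from Corollary~\ref{LMD3} and Remark~\ref{RMD4} (already invoked in the proofs of {\bf (T2)} and Theorem~\ref{THM33}), where one sees $\Phi_2(s)\to s^2$-type behaviour for $s\to 0$ scaled by $1/3$ and $\Phi_2(s)\to$ the $1$-stable multiplier $c|\xi|$ for $s\to\infty$, while $\Phi_1(\ell\xi)\to$ a constant as $\ell\to 0$ and $\to 0$ as $\ell\to\infty$. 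First I would record these two identities rigorously (for $i=1$ one rewrites $\check{\sA}^{\ell,1}(f,f)$ after the substitution into a genuine quadratic form in $(\hat f^+,\hat f^-)$ and uses the elementary bound $(a-b)^2\le 2a^2+2b^2$ to stay in $L^1$), so that each $\check{\sA}^{\ell,i}$ becomes $\int m_{\ell}^i(\xi)\,|\widehat{(\cdot)}(\xi)|^2\,d\xi$ against an explicit, nonnegative Fourier multiplier $m_\ell^i$ (for $i=1$ a $2\times2$ nonnegative-definite matrix multiplier), jointly continuous and monotone in $\ell\in[0,\infty]$ in the appropriate sense, with $m_0^i,m_\infty^i$ matching the boundary definitions in the statement.

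For part (1), with $f$ fixed, $|\hat f^\pm(\xi)|^2$ is a fixed $L^1$-dominating profile against which $m_{\ell_n}^i(\xi)$ converges pointwise to $m_{\ell_0}^i(\xi)$; I would produce a uniform $L^1$-integrable envelope $m_{\ell}^i(\xi)\le C\,(1+|\xi|)$ valid for all $\ell\in[0,\infty]$ (this is exactly the estimate underlying \eqref{EQcosh} and \eqref{eq:221}, combined with the $s\to 0$ Taylor control $\Phi_2(s)\lesssim s^2\wedge 1$), so that dominated convergence gives $\check{\sA}^{\ell_n,i}(f,f)\to\check{\sA}^{\ell_0,i}(f,f)$. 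One has to treat separately the three regimes $\ell_0=0$, $\ell_0\in(0,\infty)$, $\ell_0=\infty$ only to verify the pointwise limit of $m^i_{\ell_n}(\xi)$ equals $m^i_{\ell_0}(\xi)$; the dominated-convergence step is the same in all three.

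For part (2), the weak convergence $f_n^\pm\rightharpoonup f^\pm$ in $H^{1/2}(\bR)$ means $(1+|\xi|)^{1/2}\hat f_n^\pm\rightharpoonup (1+|\xi|)^{1/2}\hat f^\pm$ weakly in $L^2(\bR)$, hence for every fixed $\ell$ the form $h\mapsto\check{\sA}^{\ell,i}(h,h)$ is a weakly lower semicontinuous quadratic form on $H^{1/2}\times H^{1/2}$ (being a nonnegative Fourier quadratic form bounded by the $H^{1/2}$-norm squared), so $\liminf_n\check{\sA}^{\ell,i}(f_n,f_n)\ge\check{\sA}^{\ell,i}(f,f)$. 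To pass $\ell=\ell_n$, I would use a diagonal/truncation argument: fix $\ell_0$ first; given $\delta>0$, monotonicity of $s\mapsto\ell^2(\cosh(\ell^{-1}s)-1)$ in $\ell$ (noted right before \eqref{EQcosh}) gives, for $n$ large so that $\ell_n$ is close to $\ell_0$, a one-sided comparison $\check{\sA}^{\ell_n,i}(f_n,f_n)\ge\check{\sA}^{\ell_0',i}(f_n,f_n)$ for a suitable $\ell_0'$ near $\ell_0$, and then apply the fixed-$\ell$ lower semicontinuity followed by part~(1) to let $\ell_0'\to\ell_0$. In the boundary cases $\ell_0=0$ or $\infty$ the monotone multiplier $m^i_\ell$ converges \emph{monotonically} to $m^i_{\ell_0}$ on each of the two sides, which makes the comparison clean: for $\ell_0=\infty$, $m^2_\ell\uparrow m^2_\infty$, so $\check{\sA}^{\ell_n,2}(f_n,f_n)\ge\check{\sA}^{L,2}(f_n,f_n)$ for any fixed $L$ once $\ell_n\ge L$, giving $\liminf_n\ge\check{\sA}^{L,2}(f,f)$, and then $L\uparrow\infty$ with monotone convergence; the $\ell_0=0$ and $i=1$ cases are symmetric.

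The main obstacle I expect is the bookkeeping in part~(2) for the cross term $i=1$ when $\ell_0=0$: there $m^1_\ell$ does \emph{not} vanish in the limit but concentrates, $\check{\sA}^{\ell,1}(f,f)\to\int(f^+-f^-)^2$, so monotonicity in $\ell$ is less transparent and one must instead show directly that the rescaled kernel $\bigl(2\ell(\cosh(\ell^{-1}y)+1)\bigr)^{-1}$ behaves like an approximate identity of total mass $1$ (a direct computation: $\int_\bR\bigl(2\ell(\cosh(\ell^{-1}y)+1)\bigr)^{-1}dy=1$) and combine this with the weak $L^2$ convergence of $f_n^+-f_n^-$ plus a Rellich-type local strong-convergence input to handle the diagonal concentration; this is the one place where the soft weak-l.s.c. argument must be supplemented by a genuine compactness/approximate-identity estimate. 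Everything else is dominated convergence and nonnegativity of Fourier quadratic forms.
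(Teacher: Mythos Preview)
Your Fourier-multiplier framework is a reasonable alternative to the paper's direct physical-space computation, and part~(1) goes through as you sketch: the uniform envelope $m^i_\ell(\xi)\lesssim 1+|\xi|$ is exactly the content of \eqref{EQcosh}--\eqref{eq:221} and \eqref{EQAPM11}, and the pointwise limits of the multipliers match the boundary definitions. Part~(2) for $i=2$ also works via the monotonicity you cite: the map $\ell\mapsto\ell^2(\cosh(\ell^{-1}y)-1)$ is decreasing, so $\check{\sA}^{\ell,2}$ is increasing in $\ell$ and the sandwich-then-limit argument is sound.

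The gap is part~(2) for $i=1$ when $\ell_0\in(0,\infty)$. The $2\times 2$ matrix multiplier for $\check{\sA}^{\ell,1}$ is
\[
M_\ell(\xi)=\begin{pmatrix}1&-\hat\phi_1(\ell\xi)\\-\hat\phi_1(\ell\xi)&1\end{pmatrix},
\]
with eigenvalues $1\pm\hat\phi_1(\ell\xi)$ moving in \emph{opposite} directions as $\ell$ varies; there is no one-sided comparison $\check{\sA}^{\ell_n,1}\ge\check{\sA}^{\ell_0',1}$ in general, and your phrase ``monotone in $\ell$ in the appropriate sense'' hides this. Your last paragraph addresses only $\ell_0=0$, so the interior case is left uncovered. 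One can salvage the Fourier route by writing
\[
\check{\sA}^{\ell_n,1}(f_n,f_n)=\check{\sA}^{\ell_0,1}(f_n,f_n)+2\int\bigl(\hat\phi_1(\ell_0\xi)-\hat\phi_1(\ell_n\xi)\bigr)\,\mathrm{Re}\bigl(\hat f_n^+\overline{\hat f_n^-}\bigr)\,d\xi
\]
and killing the error term via the $H^{1/2}$-tightness $\int_{|\xi|>R}|\hat f_n^\pm|^2\le C/(1+R)$ together with uniform convergence of $\hat\phi_1(\ell_n\,\cdot)$ on compacts; but this is not the argument you proposed.

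The paper avoids monotonicity altogether by a bilinear trick that you are missing: it proves $(f_n-f,f)_{m_{ni}}\to 0$ (one argument is fixed, so this reduces to weak convergence paired with a \emph{strongly} convergent test vector), and then Cauchy--Schwarz gives
\[
(f,f)_{m_{0i}}=\lim_n(f_n,f)_{m_{ni}}\le\liminf_n(f_n,f_n)_{m_{ni}}^{1/2}(f,f)_{m_{ni}}^{1/2},
\]
which yields (2) for all $\ell_0$ and both $i$ at once. For $i=1$ the bilinear term $(f_n-f,f)_{m_{n1}}$ is expanded into four pieces and dispatched using $\phi_{\ell_n}*f^\mp\to f^\mp$ in $L^2$ (when $\ell_0=0$) or $\|\phi_{\ell_n}-\phi_{\ell_0}\|_{L^1}\to 0$ (when $\ell_0\in(0,\infty)$), together with weak $L^2$-convergence of $f_n^\pm$. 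No Rellich compactness is needed, and no monotonicity.
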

\begin{proof}
For convenience's sake, set for $f\in H^{\frac{1}{2}}(\mathbb{R})\times H^{\frac{1}{2}}(\mathbb{R})$,
 \[
 \begin{aligned}
 	&(f,f)_{m_{n1}}:=\check{\sA}^{\ell_n,1}(f,f),\quad (f,f)_{m_{n2}}:=\check{\sA}^{\ell_n,2}(f,f), \\
 	&(f,f)_{m_{01}}:=\check{\sA}^{\ell_0,1}(f,f),\quad (f,f)_{m_{02}}:=\check{\sA}^{\ell_0,2}(f,f).
 \end{aligned}\]
 \begin{itemize}
 \item[(1)] We first show that for $f\in H^{\frac{1}{2}}(\mathbb{R})\times H^{\frac{1}{2}}(\mathbb{R})$, 
\begin{equation}\label{EQAPM1}
(f,f)_{m_{01}}=\lim_{n\rightarrow\infty}(f,f)_{m_{n1}}.
\end{equation}
Indeed, it holds
\begin{equation}\label{EQAPM11}
\begin{split}
(f,f)_{m_{n1}}
& \lesssim \int_{\mathbb{R}\times\mathbb{R}}\frac{\left(f^+(x_1)\right)^2+\left(f^-(x_1')\right)^2}{2\ell_n\left(\cosh\left(\ell_n^{-1}(x_1-x_1')\right)+1\right)}dx_1dx_1'\\
& =\|f^+\|_{L^2(\mathbb{R})}+\|f^-\|_{L^2(\mathbb{R})},
\end{split}
\end{equation}
since we have $\int_{\mathbb{R}}\phi(x)dx=1$ for $\varphi(x):=\frac{1}{2(\cosh x+1)}$. Thus for $\ell_0\in(0,\infty]$, \eqref{EQAPM1} holds by the dominated convergence theorem.  For $\ell_0=0$, note that $\phi_n(x):=\ell_n^{-1}\phi(x/\ell_n)$ forms a class of approximations to identity.  Thus 
\[
((f^-)*\phi_n)(x_1)=\int_\mathbb{R}\frac{f^-(x_1')}{2\ell_n\left(\cosh\left(\ell_n^{-1}(x_1-x_1')\right)+1\right)}dx_1'\rightarrow f^-(x_1)
\]
in $L^2(\mathbb{R})$. It follows that 
\[\int_{\mathbb{R}\times\mathbb{R}}\frac{f^+(x_1)f^-(x_1')}{2\ell_n\left(\cosh\left(\ell_n^{-1}(x_1-x_1')\right)+1\right)}dx_1dx_1'\rightarrow\int_\mathbb{R}f^+(x_1)f^-(x_1)dx_1\]
Combining with the second line of \eqref{EQAPM11}, \eqref{EQAPM1} also holds for $\ell_0=0$.  On the other hand,
\[(f,f)_{m_{02}}=\lim_{n\rightarrow\infty}(f,f)_{m_{n2}}\]
holds for $\ell_n\rightarrow \ell_0\in [0,\infty]$ by means of \eqref{EQcosh} and the dominated convergence theorem.  
 \item[(2)] To prove the second assertion, it suffices to show for $i=1,2$, 
$(f_n-f,f)_{m_{ni}}\rightarrow 0$ as $n\rightarrow\infty$,  which lead to 
\[
(f,f)_{m_{0i}}=\lim_{n\rightarrow\infty}(f,f)_{m_{ni}}=\lim_{n\rightarrow\infty}(f_n,f)_{m_{ni}}\leq \lim_{n\rightarrow\infty} (f_n,f_n)_{m_{ni}}^{\frac{1}{2}}(f,f)_{m_{ni}}^{\frac{1}{2}}.
\]
Indeed, by \eqref{EQcosh}, we have
\[
|(f_n-f,f)_{m_{n2}}|\lesssim \left|(f^+_n-f^+,f^+)_{H^{\frac{1}{2}}(\mathbb{R})}\right|+ \left|(f^-_n-f^-,f^-)_{H^{\frac{1}{2}}(\mathbb{R})}\right|\rightarrow 0
\]
since $\gamma_\pm f_n\rightarrow \gamma_\pm f$ weakly in $ H^{\frac{1}{2}}(\mathbb{R})$.  Thus for $\ell_0\in [0,\infty]$, 
\begin{equation}\label{EQ5INF2}
\liminf_{n\rightarrow\infty}(f_n,f_n)_{m_{n2}}\geq (f,f)_{m_{02}}.
\end{equation}
On the other hand, it is obvious by definition that for $\ell_0=\infty$,
\begin{equation}\label{EQ5INF1}
\liminf_{n\rightarrow\infty}(f_n,f_n)_{m_{n1}}\geq (f,f)_{m_{01}}.
\end{equation}
So we only need to prove that $(f_n-f,f)_{m_{n1}}\rightarrow 0$ for $\ell_0\in [0,\infty)$. 
Note that
\[
\begin{split}
 \qquad (f_n-f,f)_{m_{n1}}&=\int_{\mathbb{R}\times\mathbb{R}}\frac{\left((f^+_n-f^+)(x_1)-(f^-_n-f^-)(x_1')\right)(f^+(x_1)-f^-(x_1'))}{2\ell_n\left(\cosh\left(\ell_n^{-1}(x_1-x_1')\right)+1\right)}dx_1dx_1'\\
&=: I_1^++I_1^-+I_2^++I_2^-,
\end{split}
\] 
where 
\[
I_1^\pm=\int_{\mathbb{R}\times\mathbb{R}}\frac{\left((\gamma_\pm f_n-\gamma_\pm f)(x_1)\right)\gamma_\pm f(x_1)}{2\ell_n\left(\cosh\left(\ell_n^{-1}(x_1-x_1')\right)+1\right)}dx_1dx_1'
\]
and 
\[
I_2^\pm=-\int_{\mathbb{R}\times\mathbb{R}}\frac{\left((\gamma_\pm f_n-\gamma_\pm f)(x_1)\right)\gamma_\mp f(x_1')}{2\ell_n\left(\cosh\left(\ell_n^{-1}(x_1-x_1')\right)+1\right)}dx_1dx_1'.
\]
For the terms $I_1^\pm$,  it follows from $\int_{\mathbb{R}}\phi_n(x)dx=1$ and
$\gamma_\pm f_n\rightarrow \gamma_\pm f$ weakly in $L^2(\mathbb{R})$ that
\[
I_1^\pm\lesssim \int_{\mathbb{R}}\left((\gamma_\pm f_n-\gamma_\pm f)(x_1)\right)\gamma_\pm f(x_1)dx_1\rightarrow 0.
\]
To treat the terms $I_2^\pm$, define 
\[
g_{n\mp}(x_1)=\int_\mathbb{R}\frac{\gamma_\mp f(x_1')}{2\ell_n\cosh\left(\ell_n^{-1}(x_1-x_1')\right)+1}dx_1'.
\]
For $\ell_0>0$, it follows from the Young inequality for convolutions that 
\[
\|g_{n\mp}-g_{0\mp}\|_{L^2(\mathbb{R})}\lesssim \|\gamma_\mp f\|_{L^2(\mathbb{R})}\int_\mathbb{R}\left|\frac{1}{\cosh \ell_n^{-1}x_1'+1}-\frac{1}{\cosh \ell_0^{-1}x_1'+1}\right|dx_1'.
\]
Since $\ell_n\rightarrow \ell_0$ and $\ell_0$ is finite,  the dominated convergence theorem yields
\[
\int_\mathbb{R}\left|\frac{1}{\cosh \ell_n^{-1}x_1'+1}-\frac{1}{\cosh \ell_0^{-1}x_1'+1}\right|dx_1'\rightarrow 0.
\]
Hence $g_{n\mp}\rightarrow g_{0\mp}$ in $L^2(\mathbb{R})$. 
For $\ell_0=0$, we have already shown that $g_{n\mp}=(\gamma_\mp f)*\phi_n\rightarrow \gamma_\mp f$ in $L^2(\mathbb{R})$. 
Thus $I_2^\pm\rightarrow 0$ since $\gamma_\pm f_n\rightarrow \gamma_\pm f$ weakly in $L^2(\mathbb{R})$.  Eventually \eqref{EQ5INF1} holds for $\ell_0\in [0,\infty)$.  
 \end{itemize}
 That completes the proof.
\end{proof}

Finally we consider the particular case $f^+=f^-$.  The following lemma formulates the concrete expression of $\check{\sA}^{\ell, 1}(f,f)$ and $\check{\sA}^{\ell, 2}(f,f)$.

\begin{corollary}\label{LMD3}
Let $f\in H^{\frac{1}{2}}(\mathbb{R})\times H^{\frac{1}{2}}(\mathbb{R})$ with $f^+=f^-=:\mathsf{f}$.  Then
\begin{equation}\label{eq:D31}
	\check{\sA}^{\ell, 1}(f,f)=\ell^2 \int_\bR |\hat{\mathsf{f}}(\xi)|^2|\xi|^2d\xi \int_\bR \frac{y^2dy}{\cosh y +1}\cdot \frac{1-\cos (\ell \xi y)}{(\ell \xi y)^2}
\end{equation}
and 
\begin{equation}\label{eq:D32}
\check{\sA}^{\ell, 2}(f,f)=4\ell \int_\bR |\hat{\mathsf{f}}(\xi)|^2|\xi|^2d\xi \int_\bR \frac{y^2dy}{\cosh y -1}\cdot \frac{1-\cos (\ell \xi y)}{(\ell \xi y)^2},
\end{equation}
where $\hat{\mathsf{f}}$ is the Fourier transform of $\mathsf{f}$.  
\end{corollary}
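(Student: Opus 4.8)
The plan is to compute $\check{\sA}^{\ell,i}(f,f)$ directly via Plancherel's theorem applied to the single-variable representation of these quadratic forms. First I would reduce $\check{\sA}^{\ell,1}$ to a convolution form: since $f^+=f^-=\mathsf{f}$, the numerator $(f^+(x_1)-f^-(x_1'))^2$ becomes $(\mathsf{f}(x_1)-\mathsf{f}(x_1'))^2$, and after the substitution $y:=\ell^{-1}(x_1-x_1')$ (keeping $x_1$ as the outer variable) the kernel depends only on the difference. Concretely, write
\[
\check{\sA}^{\ell,1}(f,f)=\int_\bR\left(\int_\bR \frac{(\mathsf{f}(x_1)-\mathsf{f}(x_1-\ell y))^2}{2(\cosh y+1)}\,dy\right)dx_1,
\]
then apply Fubini to bring the $dx_1$ integral inside. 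For fixed $y$, $\int_\bR(\mathsf{f}(x_1)-\mathsf{f}(x_1-\ell y))^2\,dx_1 = \int_\bR |\hat{\mathsf{f}}(\xi)|^2|1-e^{-i\ell y\xi}|^2\,d\xi = 2\int_\bR|\hat{\mathsf{f}}(\xi)|^2(1-\cos(\ell y\xi))\,d\xi$ by Plancherel. This gives
\[
\check{\sA}^{\ell,1}(f,f)=\int_\bR\int_\bR \frac{|\hat{\mathsf{f}}(\xi)|^2(1-\cos(\ell y\xi))}{\cosh y+1}\,d\xi\,dy.
\]
Multiplying and dividing by $(\ell y\xi)^2$ (and by $\ell^2$) rearranges this into precisely \eqref{eq:D31}. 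The computation for $\check{\sA}^{\ell,2}$ is identical in structure: the numerator is now $(\mathsf{f}(x_1)-\mathsf{f}(x_1'))^2+(\mathsf{f}(x_1)-\mathsf{f}(x_1'))^2 = 2(\mathsf{f}(x_1)-\mathsf{f}(x_1'))^2$, the denominator has $\cosh y-1$ in place of $\cosh y+1$ and an extra factor $\ell$ in the scaling (the $\ell^2$ in the original denominator versus $2\ell$ in $\check{\sA}^{\ell,1}$), which after tracking constants produces the factor $4\ell$ in \eqref{eq:D32}.

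The steps in order: (i) perform the change of variables to convolution form and justify Fubini; (ii) invoke Plancherel on the inner integral; (iii) bookkeep the constants and the powers of $\ell$ to match the stated formulae; (iv) check integrability so that all the interchanges are legitimate. For (iv), note near $y=0$ that $1-\cos(\ell y\xi)=O(y^2)$ while $\cosh y\pm1$ behaves like a constant ($+1$ case) or like $y^2/2$ ($-1$ case, using \eqref{EQcosh}), so in both cases the integrand is integrable near $0$; at infinity $\cosh y$ grows exponentially and dominates. Combined with $\mathsf{f}\in H^{1/2}(\bR)$ — which only gives $\int(1+|\xi|)|\hat{\mathsf{f}}|^2<\infty$ — one must be slightly careful: the factor $\frac{1-\cos(\ell y\xi)}{(\ell y\xi)^2}$ is bounded by $\min(1/2, 2/(\ell y\xi)^2)$, so the $\xi$-integral $\int|\hat{\mathsf{f}}(\xi)|^2|\xi|^2\cdot\frac{1-\cos(\ell y\xi)}{(\ell y\xi)^2}\,d\xi$ is controlled by $\int|\hat{\mathsf{f}}(\xi)|^2|\xi|^2\wedge(\text{const})\,d\xi\lesssim\|\mathsf{f}\|_{H^{1/2}}^2$ uniformly; this is the natural compatibility that makes both sides finite and equal.

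The main obstacle I anticipate is purely a matter of constant-tracking and keeping the two scalings straight: the $\check{\sA}^{\ell,1}$ kernel has normalization $2\ell(\cosh(\ell^{-1}(x_1-x_1'))+1)$ whereas $\check{\sA}^{\ell,2}$ has $\ell^2(\cosh(\ell^{-1}(x_1-x_1'))-1)$, and the final formulae \eqref{eq:D31}, \eqref{eq:D32} carry $\ell^2$ versus $4\ell$ respectively, so one easily drops a factor of $2$ or mis-powers $\ell$. The cleanest route is to do the $\ell=1$ case first to fix the structural identity, then restore $\ell$ by the scaling $x\mapsto \ell x$ in both variables, under which $dx_1\,dx_1'\mapsto\ell^2\,dx_1\,dx_1'$ and $\hat{\mathsf{f}}(\xi)\mapsto \ell\,\widehat{\mathsf{f}(\ell\,\cdot)}(\ell\xi)$ — this makes the powers of $\ell$ transparent. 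Everything else (Fubini, Plancherel, dominated convergence for integrability) is routine.
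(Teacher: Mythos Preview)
Your proposal is correct and follows essentially the same approach as the paper: both arguments rewrite the double integral as an integral over the difference variable, apply Plancherel to the translation-difference $\mathsf{f}(\cdot)-\mathsf{f}(\cdot-\ell y)$ to produce the factor $|\hat{\mathsf{f}}(\xi)|^2\cdot 2(1-\cos(\ell y\xi))$, then use Fubini and the substitution $y\mapsto \ell^{-1}y$ to reach the stated formulae. Your treatment is in fact more careful than the paper's about justifying Fubini and tracking the constants for $\check{\sA}^{\ell,2}$, which the paper leaves as ``analogical.''
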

\begin{proof}
We only treat $\check{\sA}^{\ell, 1}(f,f)$ and the formulation of $\check{\sA}^{\ell, 2}(f,f)$ is analogical.  Indeed,
\[
	2\ell \check{\sA}^{\ell, 1}(f,f)=\int_{\bR\times \bR}\frac{\left( \sf(x+y)-\sf(x)\right)^2}{\cosh (\ell^{-1}y) +1} dxdy.
\]
Note that the Fourier transform of $v_y(\cdot):=\sf(\cdot +y)-\sf(\cdot)$ is $\hat{v}_y(\xi)=\hat{\sf}(\xi)(\mathrm{e}^{-i\xi y}-1)$.  It follows that
\[
\begin{aligned}
	2\ell \check{\sA}^{\ell, 1}(f,f)&=\int_{\bR}\frac{dy}{\cosh (\ell^{-1}y) +1} \int_\bR |\hat{v}_y(\xi)|^2d\xi \\
	&=\int_{\bR}\frac{dy}{\cosh (\ell^{-1}y) +1} \int_\bR |\hat{\sf}(\xi)|^2\cdot 2(1-\cos (\xi y))d\xi.
\end{aligned}
\]
By the Fubini theorem and the substitution $y':=\ell^{-1}y$,  we can eventually obtain \eqref{eq:D31}.  That completes the proof.
\end{proof}
\begin{remark}\label{RMD4}
We should point out that $\int_\bR |\hat{\mathsf{f}}(\xi)|^2|\xi|^2d\xi<\infty$ if and only if $\sf\in H^1(\bR)$.  In addition, 
\[
	\int_\bR \frac{y^2dy}{\cosh y +1}=\frac{2}{3}\pi^2,\quad \int_\bR \frac{y^2dy}{\cosh y -1}=\frac{4}{3}\pi^2.  
\]
and for $\ell \xi y\neq 0$,  
\[
	\left|\frac{1-\cos (\ell \xi y)}{(\ell \xi y)^2}\right|\leq 1,\quad \lim_{\ell \downarrow 0} \frac{1-\cos (\ell \xi y)}{(\ell \xi y)^2}=\frac{1}{2}.
\]
\end{remark}

\end{appendices}

\bibliographystyle{abbrv}
\bibliography{stiff3}

\end{document}